\numberwithin{equation}{section}
\newcommand{\tr}{\text{tr}}
\newcommand{\sign}{\text{sgn}}
\newcommand{\morph}{\text{End}}
\newcommand{\stab}{\text{stab}}
\newcommand{\orb}{\text{orb}}
\newcommand{\matrixspace}[1]{M_{#1 \times #1}}
\newtheorem{thm}{Theorem}[section]
\newtheorem{cor}[thm]{Corollary}
\newtheorem{prop}[thm]{Proposition}
\newtheorem{lem}[thm]{Lemma}
\newtheorem*{conj}{Conjecture}
\begin{document}
\title{Determinants of Random Unitary Pencils}
\author{Michael T. Jury}
\address{Department of Mathematics, University of Florida, PO Box 118105, Gainesville, FL 326211-8105, USA}
\email{mjury@ufl.edu} 
\author{George Roman}
\address{Department of Mathematics, University of Florida, PO Box 118105, Gainesville, FL 326211-8105, USA}
\email{g.roman@ufl.edu} 
\thanks{Jury partially supported by National Science Foundation Grant DMS-2154494. Roman partially supported by National Science Foundation Grant DMS-2154494 during a visit to Washington University in St. Louis, Spring 2025}

\begin{abstract}
We investigate determinants of random unitary pencils (with scalar or matrix coefficients), which generalize the characteristic polynomial of a single unitary matrix. In particular we examine moments of such determinants, obtained by integrating against the Haar measure on the unitary group. We obtain an exact formula in the case of scalar coefficients, and conjecture an asymptotic formula in the general case, and prove a special case of the conjecture. 
\end{abstract}

\maketitle

\section{Introduction}\label{sec:intro}

Let $U(d)\subset M_{d\times d}(\mathbb C)$ denote the $d\times d$ unitary group. $U(d)$ is a compact Lie group, equipped with a unique, unimodular Haar probability measure which we denote $dU$. By a {\it unitary pencil} we mean an expression of the form
\begin{equation}\label{eqn:intro-pencil-definition}
  L_{\mathcal X}(\mathcal U)\vcentcolon= I_k\otimes I_d + \sum_{j=1}^g X_j \otimes U_j
\end{equation}
where $\mathcal X\vcentcolon=(X_1, \dots, X_g)$ is a fixed tuple of $k\times k$ matrices, the $\mathcal U\vcentcolon=(U_1, \dots, U_g)$ are unitary matrices, and $I_m$ is the $m \times m$ identity matrix (or just $I$ when the size is clear from context). The symbol $\otimes$ denotes the usual Kronecker tensor product. We may consider the $U_j$ as independent random variables, sampled according to the Haar measure on $U(d)$, in which case we will call (\ref{eqn:intro-pencil-definition}) a {\it random unitary pencil}.

We will be interested in the determinants of random unitary pencils:
\begin{equation}\label{eqn:det-pencil}
  \det L_{\mathcal X}(\mathcal U) = \det \left(I_k\otimes I_d + \sum_{j=1}^g X_j \otimes U_j\right).
\end{equation}
When $g=1$ and $k=1$, this expression reduces to $\det(I+xU)$, which is essentially the characteristic polynomial of $U$, up to the change of variable $U\to -U^*$ (which leaves Haar measure invariant). For $g=1$ and $k>1$, we obtain a product of characteristic polynomials---indeed, in that case one sees that if $x_1, \dots, x_k$ are the eigenvalues of the coefficient matrix $X$, then we have
\begin{equation}\label{eqn:matrix-form-det}
  \det (I_k\otimes I_d +X\otimes U) = \prod_{l=1}^k \det(I_d+x_lU)
\end{equation}
Returning to the general expression (\ref{eqn:det-pencil}), if we think of the $U_j$ as fixed, we may view $\det L_{\mathcal X}(\mathcal U)$ as a function of the coefficient matrices $\mathcal X=(X_1, \dots, X_g)$, of varying sizes $k$. From this point of view it can be seen as a kind of ``joint characteristic polynomial'' of the tuple $\mathcal U=(U_1, \dots, U_g)$. 

There has been extensive work on the statistics of random characteristic polynomials drawn from various random matrix ensembles, a small sample is \cite{Baik-Rains-2001, Basor-Conrey-2024, Borodin-Strahov-2006, Bump-Gamburd-2006, Conrey-Farmer-Keating-Rubinstein-Snaith-2003, Keating-Snaith-2000}. This work also makes significant contact with other areas of mathematics, notably combinatorics and number theory. In particular the behavior of random characteristic polynomials for single unitary matrices has received a great deal of attention because of its connections to the Riemann zeta function \cite{Keating-Snaith-2000,Conrey-Farmer-Keating-Rubinstein-Snaith-2003}. In that context, what is wanted are expressions for correlations between products (and quotients) of random characteristic polynomials, i.e. one would like to understand quantities like
\begin{equation}\label{eqn:classical-problem}
  \int_{U(d)} \prod_{l=1}^k \det (I_d+x_lU)\prod_{l^\prime=1}^{k^\prime} \det(I_d+\overline{y_{l^\prime}}U^*)\, dU
\end{equation}
for complex scalars $x_1, \dots, x_k, y_1, \dots, y_{k^\prime}$, where the integral is performed with respect to the normalized Haar measure on $U(d)$. It turns out that exact formulas for the expression (\ref{eqn:classical-problem}) are known (see for example \cite[Formula 2.16]{Conrey-Farmer-Keating-Rubinstein-Snaith-2003} or \cite[Proposition 4]{Bump-Gamburd-2006}; the integral can also be expressed in terms of Toeplitz determinants via the so-called ``Heine-Szeg\H{o} identity'', see for example \cite{Bump-Diaconis-2002} and its references). These formulas can become somewhat complicated but often become simpler in the large size limit (as the size $d$ of the unitaries $U$ goes to infinity). For example in (\ref{eqn:classical-problem}) one can show that 
\begin{equation}\label{eqn:classical-limit}
  \lim_{d\to \infty} \int_{U(d)} \prod_{l=1}^k \det (I_d+x_lU)\prod_{l^\prime=1}^{k^\prime} \det(I_d+\overline{y_{l^\prime}}U^*)\, dU = \prod_{l=1}^k\prod_{l^\prime=1}^{k^\prime} \frac{1}{1-x_l\overline{y_{l^\prime}}}.
\end{equation}
where the limit exists only under the additional assumption that all the $|x_l|, |y_{l^\prime}|$ are less than $1$. (For example, one can prove this by combining the Heine-Szeg\H{o} identity with the Strong Szeg\H{o} Limit Theorem, which can be found in \cite{Bump-Diaconis-2002}.) In the special case of single factors ($k=k^\prime=1$) we have the exact formula
\begin{equation}\label{eqn:HS-ident}
		\int_{U(d)} \det(1 + x U) \det(1 + \overline{y} U^\ast) \, dU = 1 + x\overline{y} + \cdots + (x \overline{y})^d
\end{equation}
for any $x,y \in \mathbb{C}$, and the limiting formula
\begin{equation}\label{eqn:HS-limit}
		\lim_{d \to \infty} \int_{U(d)} \det(1 + x U) \det(1 + \overline{y} U^\ast) \, dU = \frac{1}{1 - x\overline{y}}
\end{equation}
when $|x|, |y|<1$.

Observe that using the identity (\ref{eqn:matrix-form-det}), we can rewrite (\ref{eqn:classical-limit}) in the more compact form
\begin{equation}\label{eqn:classical-matrix-limit}
  \lim_{d\to \infty} \int_{U(d)} \det(I_k\otimes I_d+X\otimes U) \overline{\det(I_{k^\prime}\otimes I_d+Y\otimes U)}\, dU = \det\left( I_k\otimes I_{k^\prime} -X\otimes \overline{Y} \right)^{-1},
\end{equation}
assuming that all the eigenvalues of the $X$ and $Y$ matrices are less than $1$, that is, that $X$ and $Y$ have spectral radius less than $1$. (Here $\overline{Y}$ denotes the {\it entrywise} complex conjugate of the matrix $Y$.)

The goal of the present paper is to formulate and investigate the following conjecture about the expression corresponding to (\ref{eqn:classical-matrix-limit}) in the multivariable case $g>1$. For a $g$-tuple of matrices $\mathcal X = (X_1, \dots, X_g)$ we define the {\it outer spectral radius} $\textbf{rad}(\mathcal X)$ to be
\[
\textbf{rad}(\mathcal X) \vcentcolon= rad \left(\sum_{j=1}^g X_j\otimes \overline{X_j}\right)^{1/2},
\]
where $rad(X)$ denotes the ordinary spectral radius of a matrix.

We have:

\begin{conj}\label{conj}
  Let $\mathcal X, \mathcal Y$ be $g$-tuples of square matrices of size $k\times k$, $k^\prime \times k^\prime$ respectively, with $\textbf{rad}(\mathcal X)$, $\textbf{rad}(\mathcal Y)<1$. Then
  \begin{equation}\label{eqn:intro-conj}
    \lim_{d\to \infty} \int_{U(d)^g} \det (L_{\mathcal X}(\mathcal U))\overline{\det (L_{\mathcal Y}(\mathcal U))}\, d\mathcal U = \det\left( I_k\otimes I_{k^\prime} -\sum_{j=1}^g X_j\otimes \overline{Y_j}\right)^{-1}.
  \end{equation}
\end{conj}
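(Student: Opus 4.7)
The plan is to take logarithms and exploit the asymptotic joint Gaussianity of word-traces in independent Haar unitaries. Setting $A = \sum_j X_j \otimes U_j$ and expanding formally,
$$\log\det L_{\mathcal X}(\mathcal U) = \sum_{n\geq 1} \frac{(-1)^{n+1}}{n}\,\tr(A^n) = \sum_{n\geq 1} \frac{(-1)^{n+1}}{n}\sum_{|w|=n}\tr(X_w)\,\tr(U_w),$$
where the inner sum is over words $w=j_1\cdots j_n$ in the alphabet $\{1,\ldots,g\}$ with $X_w = X_{j_1}\cdots X_{j_n}$ and $U_w$ defined analogously. Both $\tr(X_w)$ and $\tr(U_w)$ are cyclic invariants, so the sum regroups by cyclic classes $[w]$ as $\sum_{[w]}|[w]|\,\tr(X_w)\,\tr(U_w)$. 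The hypothesis $\textbf{rad}(\mathcal X)<1$ is equivalent to the completely positive map $\Phi(A)=\sum_j X_j A X_j^*$ having spectral radius less than $1$; since $\sum_{|w|=n}\|X_w\|_F^2 = \tr(\Phi^n(I))$, this forces geometric decay and absolute convergence of the relevant word series. The analogous expansion applies to $\overline{\det L_{\mathcal Y}(\mathcal U)}$.

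\textbf{Key input and computation.} A multivariable extension of Diaconis-Shahshahani (equivalent to the second-order freeness of independent Haar unitaries; see Mingo-Speicher) says that, as $d\to\infty$, the collection $\{\tr(U_w)\}_{[w]}$ over nontrivial cyclic classes of positive words converges jointly in moments to independent centered circular complex Gaussians, with $\mathbb E|\tr(U_w)|^2 \to |w|/|[w]|$ and $\mathbb E[\tr(U_w)\tr(U_v)]\to 0$. For any centered circular Gaussian $Z$ of variance $\sigma^2$ one has $\mathbb E[\exp(\alpha Z + \beta\overline Z)] = \exp(\alpha\beta\sigma^2)$, so applying this (formally) to the exponential of the two logarithms, and noting that the $(-1)^{n+1}$ factors from the two expansions multiply to $1$, yields
$$\lim_{d\to\infty}\int_{U(d)^g}\det L_{\mathcal X}(\mathcal U)\,\overline{\det L_{\mathcal Y}(\mathcal U)}\,d\mathcal U \;=\; \exp\!\left(\sum_{[w]}\frac{|[w]|}{|w|}\,\tr(X_w)\overline{\tr(Y_w)}\right).$$
Regrouping by length $n$ and using $\sum_{[w]:|w|=n}|[w]|\,\tr(X_w)\overline{\tr(Y_w)} = \tr(Z^n)$ with $Z=\sum_j X_j\otimes\overline{Y_j}$, the exponent equals $\sum_{n\geq 1}\frac{1}{n}\tr(Z^n) = -\log\det(I-Z)$, matching the right-hand side of the conjecture after exponentiation.

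\textbf{Main obstacle.} The delicate point is the rigorous interchange of the $d\to\infty$ limit, the expectation, and the outer exponential of an infinite word sum. Asymptotic Gaussianity is standard only for finite linear combinations of word-traces, whereas the log-determinants here are infinite series; moreover $\|A\|<1$ may fail for fixed $\mathcal U$, so the termwise log expansion requires careful interpretation. The natural approach is a truncation: prove the Gaussian identity for partial sums restricted to $|w|\leq N$, then bound the tail from $|w|>N$ uniformly in $d$. Uniform tail control should follow from the geometric decay $\sum_{|w|=n}\|X_w\|_F^2 \lesssim \rho^n$ (with $\rho<1$ from the outer spectral radius hypothesis) together with $L^p$-concentration for polynomials in independent Haar unitaries, while joint analyticity of both sides in the entries of $\mathcal X,\mathcal Y$ near the origin would allow extension to the full spectral-radius regime by analytic continuation. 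Making the uniform-in-$d$ tail bound precise is the principal technical hurdle and is presumably why only a special case of the conjecture is established in the paper.
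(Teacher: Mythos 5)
Your formal calculation is correct and reproduces the right-hand side of (\ref{eqn:intro-conj}), but be aware that what you have established is exactly the \emph{formal} termwise convergence, which the paper also establishes (Lemmas~\ref{lem:pencil-expansion-matrix}, \ref{lem:target-expansion} and Corollary~\ref{cor:termwise-limit}); the statement is an open conjecture and neither your proposal nor the paper proves it in full --- the paper proves it only for scalar coefficients (Theorem~\ref{thm:scalar-closed-form}) and triangular coefficients (Theorem~\ref{thm:triangular-limit}). Your route differs from the paper's at the expansion stage. The paper expands $\det L_{\mathcal X}(\mathcal U)$ directly via the Cauchy identity into a finite sum of trace monomials $p_{\sigma,\alpha}(\mathcal U)$ (Lemma~\ref{lem:pencil-expansion-matrix} --- an exact polynomial identity of degree $\leq kd$) and then applies R\u{a}dulescu's asymptotic orthogonality (Theorem~\ref{thm:asymptotic-orthogonality}). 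You instead take $\log\det$ and expand in word traces $\tr(U_w)$, then feed the asymptotic joint Gaussianity into the Gaussian moment-generating function. The two asymptotic inputs are the same underlying theorem --- the formula $b_1!\cdots b_p!\, j_1^{b_1}\cdots j_p^{b_p}$ in R\u{a}dulescu's theorem \emph{is} the mixed-moment formula for independent circular Gaussians --- but your packaging has an extra wrinkle: the series $\sum_n \frac{(-1)^{n+1}}{n}\tr(A^n)$ with $A=\sum_j X_j\otimes U_j$ does not converge pointwise in $\mathcal U$ (the norm of $A$ can exceed $1$ even when $\|\mathcal X\|_{\mathrm{row}}<1$, since $\|\sum U_j^*U_j\|=g$), so even the formal manipulation needs interpretation; the determinant expansion the paper uses has no such issue since it is a finite sum.

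The gap you flag --- uniform-in-$d$ control of tails so that the $d\to\infty$ limit can be interchanged with the exponential/infinite sum --- is exactly the analytic obstruction the paper isolates. Proposition~\ref{prop:L2-bound-is-sufficient} shows the conjecture would follow from a uniform $L^2$ bound on $\det L_{\mathcal X}(\mathcal U)$ over row balls (combined with Montel's theorem and the termwise limit). Your sketched route ("geometric decay $\sum_{|w|=n}\|X_w\|_F^2\lesssim\rho^n$ plus $L^p$-concentration of polynomials in Haar unitaries") is not enough as stated: to bound $\mathbb{E}|\log\det L_{\mathcal X}|$ you would need moment estimates on $\tr(U_w)$ that are simultaneously uniform in $d$ \emph{and} summable against $g^n$ words of length $n$, and the known second-order-freeness concentration results give bounds only for fixed finite collections of words, not sums over all words of growing length; moreover, since $\log\det L_{\mathcal X}$ can have singularities as $\mathcal U$ ranges over $U(d)^g$, uniform integrability of $\exp(\cdot)$ is not obviously available. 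The paper closes this gap only in special cases, via an exact Weingarten-style calculation for scalar coefficients and, for triangular coefficients, a delicate combinatorial bound on content ratios (Theorem~\ref{thm:content-ratio-bound}); neither of those arguments is obtainable from your tail-bound sketch. In short, your heuristic is sound and matches the paper's formal convergence, but the core analytic step remains unfilled and your suggestions for filling it would not go through as stated.
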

\noindent where $d\mathcal{U} \vcentcolon= dU_1 \times \cdots \times dU_g$ is the $g$-fold product measure on $U(d)^g$.

The outer spectral radius hypothesis is natural in the sense that if $\textbf{rad}(\mathcal X)$, $\textbf{rad}(\mathcal Y)<1$ then the spectral radius of $\sum_{j=1}^g X_j\otimes \overline{Y_j}$ is strictly less than $1$, so that the right hand side exists, and can be expanded into a suitably convergent series (Lemmas~\ref{lem:cauchy-identities-redux} and \ref{lem:target-expansion}, but see also the remark following Proposition~\ref{prop:CS-for-outer}). We will show that the conjecture holds in a formal sense (by treating both sides as formal power series in a suitable way), and then give a rigorous proof of the conjecture in an important special case. First, in the very special case of scalar coefficients ($k=k^\prime =1$), we are able to obtain an explicit expression for the integral at each finite sized $d$, analogous to (\ref{eqn:HS-ident}). Precisely, we shall prove
\begin{thm}\label{thm:scalar-closed-form}
  For vectors  $x=(x_1, \dots, x_g)$ and $y=(y_1, \dots, y_g)\in \mathbb C^g$, we have for all $d\geq 1$
  \begin{equation}\label{eqn:scalar-closed-form}
    \int_{U(d)^g} \det (L_x(\mathcal U))\overline{\det (L_y(\mathcal U))}\, d\mathcal U =  \sum_{n=0}^d \sum_{ |\alpha| = n } c(d, \alpha) \binom{n}{\alpha} x^\alpha \overline{y^\alpha}
  \end{equation}
	
	\noindent where
	
	\begin{equation}
		c(d, \alpha) = \binom{d}{n} \binom{n}{\alpha} \prod_{j=1}^g \binom{d}{\alpha_j}^{-1}.
	\end{equation}
\end{thm}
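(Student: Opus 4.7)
The approach is to expand $\det(L_x(\mathcal U))$ as a polynomial in $x$ with coefficients that are polynomials in the entries of $\mathcal U$, and then use Schur orthogonality for the antisymmetric irreducible representations of $U(d)$ to integrate term-by-term. To begin, I would apply the standard cofactor identity $\det(I+M)=\sum_{S\subseteq [d]}\det(M[S,S])$ to get
\[
\det(L_x(\mathcal U)) \;=\; \sum_{S\subseteq [d]} \det\!\Big(\sum_{j=1}^g x_j\, U_j[S,S]\Big).
\]
For a fixed $S$ of size $n$, row-multilinearity distributes this determinant over ordered partitions $(A_1,\dots,A_g)$ of $S$ with $|A_j|=\alpha_j$, producing a matrix whose $i$-th row (for $i\in A_j$) is the $i$-th row of $U_j$ restricted to columns $S$. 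A Laplace expansion then splits the resulting $n\times n$ determinant across ordered column partitions $(C_1,\dots,C_g)$ of $S$, giving a decomposition
\[
\det(L_x(\mathcal U)) \;=\; \sum_\alpha x^\alpha\, P_\alpha(\mathcal U),\qquad P_\alpha(\mathcal U) \;=\; \sum_{S,(A_j),(C_j)} \epsilon\, \prod_{j=1}^g \det\bigl(U_j[A_j,C_j]\bigr),
\]
where $\epsilon\in\{\pm 1\}$ is the Laplace sign and the outer sum ranges over $n$-subsets $S\subseteq[d]$ and ordered partitions $(A_j),(C_j)$ of $S$ with $|A_j|=|C_j|=\alpha_j$.

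Next I would compute $\int P_\alpha\,\overline{P_\beta}\, d\mathcal U$. Independence of the $U_j$ factors this integral over $j$, and the minor $\det(U_j[A,C])$ is precisely the matrix coefficient of the antisymmetric representation $\bigwedge^{|A|}(U_j)$ between basis vectors indexed by $A$ and $C$; hence Schur orthogonality yields the clean identity
\[
\int_{U(d)} \det\bigl(U[A,C]\bigr)\, \overline{\det\bigl(U[A',C']\bigr)}\, dU \;=\; \frac{\delta_{A,A'}\,\delta_{C,C'}}{\binom{d}{|A|}}.
\]
Factor-by-factor application forces $\alpha=\beta$, $S=S'$, and matched partitions $(A_j)=(A'_j)$, $(C_j)=(C'_j)$. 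In particular the two Laplace signs coincide so their product is $+1$, and no sign-tracking affects the final sum.

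A direct count then finishes the proof: there are $\binom{d}{n}$ subsets $S$, and $\binom{n}{\alpha}$ ordered partitions of $S$ of each type (rows and columns), with each surviving term contributing $\prod_j \binom{d}{\alpha_j}^{-1}$. Hence
\[
\int_{U(d)^g} |P_\alpha(\mathcal U)|^2\, d\mathcal U \;=\; \binom{d}{n}\binom{n}{\alpha}^2 \prod_{j=1}^g \binom{d}{\alpha_j}^{-1} \;=\; c(d,\alpha)\binom{n}{\alpha},
\]
and summing over $\alpha$ recovers (\ref{eqn:scalar-closed-form}). The main obstacle is purely combinatorial bookkeeping: aligning the triple sum over $(S,(A_j),(C_j))$ cleanly with the factorization induced by independence of the $U_j$'s, and verifying that the Laplace signs square to $+1$ in every surviving term. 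No asymptotic or analytic input is needed; the formula is proved exactly at each finite $d$.
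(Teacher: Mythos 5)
Your proof is correct, and it takes a genuinely different route from the paper's. The paper expands $\det(L_x(\mathcal U))$ into trace monomials $p_{\sigma,\alpha}(\mathcal U)$, then uses Schur--Weyl duality to recast the integral $\int p_{\sigma,\alpha}\overline{p_{\tau,\alpha}}\,d\mathcal U$ as the trace $\tr(\mathbb E_\alpha\circ C_{Q_\varepsilon})$, where $\mathbb E_\alpha$ is the conditional expectation onto $\rho_{S_n}^d(\mathbb C[S_\alpha])$ and $Q_\varepsilon$ is the central projection onto the sign representation of $S_n$; this trace is then computed via Propositions~\ref{prop:trace-projection} and~\ref{thm:trace-centalt} using the combinatorics of symmetric-group characters and Stirling numbers. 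You instead work entirely on the $U(d)$ side: after the principal-minor and generalized Laplace expansions, each summand is a product over $j$ of minors $\det(U_j[A_j,C_j])$, and you recognize these as matrix coefficients of the irreducible exterior-power representations $\bigwedge^{\alpha_j}\mathbb C^d$, so Schur orthogonality for $U(d)$ evaluates the integral in one stroke without any $S_n$ representation theory. The final count $\binom{d}{n}\binom{n}{\alpha}^2\prod_j\binom{d}{\alpha_j}^{-1}=c(d,\alpha)\binom{n}{\alpha}$ is correct. Your argument is shorter and more elementary at this level of generality; it is essentially the clean, minor-based version of the Weingarten-type proof that the authors allude to having found first. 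Its drawback---and the reason the paper routes through $S_n$---is that for $k>1$ the decomposition of $(\mathbb C^k\otimes\mathbb C^d)^{\otimes n}$ no longer splits into a single family of easy $U(d)$-irreps, and the product structure you exploit (factorization of $P_\alpha$ over $j$ into single minors) breaks down, which is why Section~\ref{sec:triangular} needs the Young-subgroup/Littlewood--Richardson machinery.
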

(Here we have used the usual multi-index notation $x^\alpha$.) It will be easy to show that the factors $c(d, \alpha)$ increase to $1$ as $d\to \infty$, which will give
\begin{cor}\label{cor:scalar-limit}
	
	For any $x, y \in \mathbb{C}^g$ such that $\|x\|_2 , \|y\|_2 < 1$, we have
	
	\begin{equation}\label{eqn:scalar-limit}
		\lim_{d \to \infty} \int_{ U(d)^g } \det ( L_x( \mathcal{U} ) ) \; \overline{ \det ( L_y( \mathcal{U} ) ) } \, d\mathcal{U} = \frac{1}{1 - \langle x , y \rangle} .
	\end{equation}
	
\end{cor}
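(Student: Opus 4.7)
The plan is to apply Theorem \ref{thm:scalar-closed-form} and pass to the limit $d\to\infty$ inside the double sum, identifying the resulting series by the multinomial theorem.

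First, I would rewrite the coefficient $c(d,\alpha)$ using falling factorials. Cancelling the $\binom{n}{\alpha}$'s in the definition of $c(d,\alpha)$ against the outer $\binom{n}{\alpha}$ in the integrand (wait: only one of them cancels), one finds
\[
 c(d,\alpha) \;=\; \frac{(d)_n}{\prod_{j=1}^g (d)_{\alpha_j}},
\]
where $(d)_k = d(d-1)\cdots(d-k+1)$. Since both numerator and denominator are products of $n$ factors each, and since $(d)_k / d^k \to 1$ as $d\to\infty$ with $k$ fixed, it follows that $c(d,\alpha) \to 1$ pointwise in $\alpha$.

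Second, I would establish the uniform bound $0\le c(d,\alpha)\le 1$. The key identity is the iterated decomposition
\[
 \binom{d}{n}\binom{n}{\alpha} \;=\; \binom{d}{\alpha_1}\binom{d-\alpha_1}{\alpha_2}\cdots\binom{d-\alpha_1-\cdots-\alpha_{g-1}}{\alpha_g},
\]
and since $\binom{d-s}{\alpha_j}\le \binom{d}{\alpha_j}$ for every $s\ge 0$, dividing by $\prod_j \binom{d}{\alpha_j}$ gives $c(d,\alpha)\le 1$.

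Third, I would apply dominated convergence on the counting measure over $\alpha\in\mathbb{N}^g$. The dominating bound is
\[
 \Bigl|\,c(d,\alpha)\binom{n}{\alpha} x^\alpha\overline{y^\alpha}\,\Bigr| \;\le\; \binom{n}{\alpha}\prod_{j=1}^g (|x_j||y_j|)^{\alpha_j},
\]
and summing over all $\alpha$ with $|\alpha|=n$ via the multinomial theorem and then over $n$ gives the geometric series
\[
 \sum_{n\ge 0}\Bigl(\sum_{j=1}^g |x_j||y_j|\Bigr)^n,
\]
which converges because Cauchy--Schwarz yields $\sum_j |x_j||y_j|\le \|x\|_2\|y\|_2<1$.

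Finally, taking the limit termwise gives
\[
 \lim_{d\to\infty}\int = \sum_{n=0}^\infty \sum_{|\alpha|=n}\binom{n}{\alpha}x^\alpha\overline{y^\alpha} = \sum_{n=0}^\infty \Bigl(\sum_{j=1}^g x_j\overline{y_j}\Bigr)^n = \frac{1}{1-\langle x,y\rangle},
\]
using the multinomial theorem at each $n$ and summing the geometric series, valid because $|\langle x,y\rangle|\le \|x\|_2\|y\|_2<1$. The only mildly nontrivial step is the uniform bound $c(d,\alpha)\le 1$; the pointwise limit $c(d,\alpha)\to 1$ and the convergence of the dominating series are both immediate.
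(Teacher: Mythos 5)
Your proposal is correct and is essentially the paper's own proof: both rewrite $c(d,\alpha)$ as a ratio of falling factorials $(d)_n / \prod_j (d)_{\alpha_j}$, establish $0 \le c(d,\alpha) \le 1$ by term-by-term comparison of the $n$ integer factors (you package this via the identity $\binom{d}{n}\binom{n}{\alpha} = \prod_j \binom{d-\alpha_1-\cdots-\alpha_{j-1}}{\alpha_j}$ and monotonicity of $\binom{d-s}{k}$ in $s$, while the paper compares the two decreasing factor sequences directly, but these are the same observation), show $c(d,\alpha)\to 1$ as $d\to\infty$, and conclude by dominated convergence against the absolutely convergent multinomial expansion of $(1-\langle x,y\rangle)^{-1}$. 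The only difference is cosmetic packaging, not substance.
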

\noindent Here $\langle \cdot, \cdot\rangle$ and $\|\cdot \|_2$ denote the usual Euclidean inner product and norm on $\mathbb C^g$.

This establishes the conjecture (\ref{eqn:intro-conj}) (in sharper form, with an exact formula for each $d$) in the case $k=k^\prime=1$.

For higher values of $k, k^\prime$ we are currently able to prove the conjecture only under an additional assumption on the coefficients $\mathcal X, \mathcal Y$:

\begin{thm}\label{thm:triangular-limit} The conjecture (\ref{eqn:intro-conj}) holds if all of the coefficient matrices $\mathcal X$, $\mathcal Y$ are upper triangular.
  \end{thm}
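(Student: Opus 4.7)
The argument has three stages: a factorization reducing the upper triangular case to a product of scalar pencils, a termwise match of formal power series, and an analytic upgrade via dominated convergence.

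\textit{Stage 1: Reduction to the diagonal case.} When each $X_j$ is upper triangular, the $kd\times kd$ matrix $\sum_j X_j\otimes U_j$ is $k\times k$ block upper triangular with diagonal blocks $\sum_j(X_j)_{pp}U_j$. Setting $x^{(p)}\vcentcolon=((X_1)_{pp},\ldots,(X_g)_{pp})\in\mathbb C^g$, one obtains the factorization $\det L_{\mathcal X}(\mathcal U)=\prod_{p=1}^k\det L_{x^{(p)}}(\mathcal U)$, and similarly for $\mathcal Y$. On the right-hand side, each $X_j\otimes\overline{Y_j}$ is upper triangular in the lexicographic ordering of $[k]\times[k']$, with diagonal entries $(X_j)_{pp}(\overline{Y_j})_{qq}$; after summing over $j$ the diagonal entry at $(p,q)$ becomes $\langle x^{(p)},y^{(q)}\rangle$, so
\[
\det\Bigl(I_k\otimes I_{k'}-\sum_j X_j\otimes\overline{Y_j}\Bigr)^{-1}=\prod_{p,q}\frac{1}{1-\langle x^{(p)},y^{(q)}\rangle}.
\]
A short computation shows $\textbf{rad}(\mathcal X)=\max_p\|x^{(p)}\|_2$ in the diagonal case, so under the assumption $\textbf{rad}(\mathcal X), \textbf{rad}(\mathcal Y)<1$ the theorem reduces to showing
\[
\lim_{d\to\infty}\int_{U(d)^g}\prod_p\det L_{x^{(p)}}(\mathcal U)\prod_q\overline{\det L_{y^{(q)}}(\mathcal U)}\,d\mathcal U=\prod_{p,q}\frac{1}{1-\langle x^{(p)},y^{(q)}\rangle}.
\]

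\textit{Stage 2: Termwise matching of formal power series.} Expand both sides as power series in the scalar variables $x^{(p)}_j,\overline{y^{(q)}_j}$. Via the geometric and multinomial series the right-hand side becomes
\[
\sum_{(\alpha_{pq})}\prod_{p,q}\binom{|\alpha_{pq}|}{\alpha_{pq}}\prod_p(x^{(p)})^{\sum_q\alpha_{pq}}\prod_q\overline{(y^{(q)})^{\sum_p\alpha_{pq}}},
\]
summed over all families $\alpha_{pq}$ of $g$-multi-indices indexed by $(p,q)\in[k]\times[k']$. For the left-hand side, write $\det L_x(\mathcal U)=\sum_\alpha P_\alpha(\mathcal U)\,x^\alpha$ where $P_\alpha$ is a polynomial in the matrix entries of the $U_j$'s obtained by multilinear expansion of the determinant, and independent of the pencil coefficient $x$. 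The left-hand side then becomes $\sum_{\vec\alpha,\vec\beta}H_d(\vec\alpha,\vec\beta)\prod_p(x^{(p)})^{\alpha^{(p)}}\prod_q\overline{(y^{(q)})^{\beta^{(q)}}}$, with $H_d(\vec\alpha,\vec\beta)\vcentcolon=\int\prod_p P_{\alpha^{(p)}}\prod_q\overline{P_{\beta^{(q)}}}\,d\mathcal U$. The formal version of the conjecture announced in the introduction (i.e., the termwise identity of power-series coefficients in the limit $d\to\infty$) then gives, for each fixed $(\vec\alpha,\vec\beta)$, the convergence $\lim_d H_d(\vec\alpha,\vec\beta)=R(\vec\alpha,\vec\beta)$, where $R$ denotes the combinatorial coefficient on the right-hand side above.

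\textit{Stage 3: Dominated convergence, and the main obstacle.} To lift the termwise identity to convergence of the integrals, I would apply dominated convergence, which requires a $d$-uniform bound $|H_d(\vec\alpha,\vec\beta)|\leq M(\vec\alpha,\vec\beta)$ such that $\sum M(\vec\alpha,\vec\beta)\prod_p|x^{(p)}|^{\alpha^{(p)}}\prod_q|y^{(q)}|^{\beta^{(q)}}$ converges under $\|x^{(p)}\|_2,\|y^{(q)}\|_2<1$. For $k=k'=1$ this bound is immediate from $c(d,\alpha)\leq1$ in Theorem~\ref{thm:scalar-closed-form}, with $M=R$. For general $k,k'$, since the diagonal coefficients $H_d(\vec\alpha,\vec\alpha)=\|\prod_pP_{\alpha^{(p)}}\|^2_{L^2(U(d)^g)}$ are non-negative, Cauchy--Schwarz gives $|H_d(\vec\alpha,\vec\beta)|\leq\sqrt{H_d(\vec\alpha,\vec\alpha)\,H_d(\vec\beta,\vec\beta)}$, reducing the task to controlling the diagonal terms. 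Theorem~\ref{thm:scalar-closed-form} supplies $\int|\det L_x|^2\,d\mathcal U\leq(1-\|x\|_2^2)^{-1}$ uniformly in $d$, but the crux, and the main technical obstacle, is to propagate this into a uniform bound on $\int|\prod_p\det L_{x^{(p)}}|^2\,d\mathcal U$ of the shape predicted by the target formula, which does not follow from independence of factors. I expect this to rely on either a direct combinatorial/Weingarten analysis of the Haar integral of the products $\prod_p P_{\alpha^{(p)}}$, or on an operator-theoretic Cauchy--Schwarz estimate adapted to the outer spectral radius. Once such a uniform bound is in hand, dominated convergence combined with the termwise identity of Stage~2 yields the claimed limit.
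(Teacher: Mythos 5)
Your Stage 1 (reduction to diagonal) and Stage 2 (termwise matching of power series coefficients) essentially track the paper's Sections 2--3, and the Cauchy--Schwarz reduction in Stage 3 from off-diagonal to diagonal coefficients is a reasonable idea. However, your proposal has a genuine gap at exactly the point you flag as ``the crux'': you do not establish the $d$-uniform bound on $\int_{U(d)^g} \bigl|\prod_{p} \det L_{x^{(p)}}(\mathcal U)\bigr|^2\, d\mathcal U$, and this is where nearly all the work in the paper lies. The $L^2$ bound $\int |\det L_x|^2\,d\mathcal U \leq (1-\|x\|_2^2)^{-1}$ that you extract from Theorem~\ref{thm:scalar-closed-form} controls only the second moment; what is actually needed (after the paper's H\"older-inequality reduction in Proposition~\ref{prop:uniform-L2-bound}) is a $d$-uniform bound on the $2k$-th moments $\int|\det L_x(\mathcal U)|^{2k}\,d\mathcal U$ for all $k$, and there is no way to derive higher moments from the second moment alone.

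The paper handles this by rewriting $\det L_{xI_k}(\mathcal U)$ via Schur--Weyl duality, which brings in the central projections $Q_\lambda$ for \emph{all} irreps $\lambda$ of $S_n$ (not merely the alternating one as in the scalar case of Section~\ref{sec:scalar}). This leads to Lemma~\ref{thm:key-trace-formula}, which expresses the relevant traces through Littlewood--Richardson coefficients and ratios of content polynomials $C_\lambda(d)/(C_\mu(d)C_\nu(d))$, and the dominant technical input is Theorem~\ref{thm:content-ratio-bound}, a combinatorial estimate $r^\lambda_{\mu,\nu}(d)\leq (n+1)^{k^2}$ proved in Section~\ref{sec:content-ratio-lemma} via a dominance-order analysis. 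Without something of this kind your proposal cannot close: neither a naive Weingarten expansion nor an abstract Cauchy--Schwarz argument is known to give the needed uniformity in $d$, and indeed the paper remarks that generalizing the Weingarten proof of the scalar case to $k>1$ appears prohibitively complicated. A smaller note: the paper uses Montel's theorem (normal families plus convergence of Taylor coefficients) rather than dominated convergence on the power series, but that difference is cosmetic; the substantive missing piece is the uniform moment bound, i.e.\ Proposition~\ref{prop:moment-bounds}.
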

Here we do not obtain a very explicit expression at finite sizes $d$, though the coefficients can be written down in terms of some rather complicated combinatorial sums. Of course, it suffices to assume only that each of the tuples $\mathcal X, \mathcal Y$ is jointly similar to an upper triangular tuple, since the determinant is unaffected by an overall similarity. In particular, since a commuting system of matrices can be put into simultaneous triangular form, we obtain:
\begin{cor}\label{cor:commuting}
  The conjecture(\ref{eqn:intro-conj}) holds if each of the tuples of coefficient matrices $\mathcal X=(X_1, \dots, X_g)$, $\mathcal Y=(Y_1, \dots, Y_g)$ commute among themselves.
  \end{cor}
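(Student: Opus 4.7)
The plan is to reduce Corollary~\ref{cor:commuting} to Theorem~\ref{thm:triangular-limit} by simultaneously triangularizing each commuting tuple and showing that both sides of~(\ref{eqn:intro-conj}), as well as the hypothesis $\textbf{rad}<1$, are invariant under the similarities so obtained. Since $X_1,\dots,X_g$ mutually commute, the standard simultaneous triangularization theorem (provable by iterating a common eigenvector argument) supplies an invertible $S\in GL_k(\mathbb C)$ for which $\tilde X_j \vcentcolon= SX_jS^{-1}$ is upper triangular for every $j$. Choose $T\in GL_{k'}(\mathbb C)$ doing the same for the $Y_j$'s, and set $\tilde Y_j \vcentcolon= TY_jT^{-1}$.

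For the left-hand side, the tensor identity $(SX_jS^{-1})\otimes U_j = (S\otimes I_d)(X_j\otimes U_j)(S^{-1}\otimes I_d)$, summed over $j$ and combined with $I_k\otimes I_d = (S\otimes I_d)(I_k\otimes I_d)(S^{-1}\otimes I_d)$, gives
\[
L_{\tilde{\mathcal X}}(\mathcal U) = (S\otimes I_d)\,L_{\mathcal X}(\mathcal U)\,(S^{-1}\otimes I_d),
\]
so $\det L_{\tilde{\mathcal X}}(\mathcal U) = \det L_{\mathcal X}(\mathcal U)$ pointwise on $U(d)^g$, and analogously for $\mathcal Y$; the two integrals are therefore equal. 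For the right-hand side, using $\overline{TY_jT^{-1}} = \overline T\,\overline{Y_j}\,\overline T^{-1}$ yields
\[
\sum_{j=1}^g \tilde X_j\otimes\overline{\tilde Y_j} \;=\; (S\otimes\overline T)\Bigl(\sum_{j=1}^g X_j\otimes\overline{Y_j}\Bigr)(S\otimes\overline T)^{-1},
\]
so $\det\bigl(I_k\otimes I_{k'} - \sum_j \tilde X_j\otimes\overline{\tilde Y_j}\bigr) = \det\bigl(I_k\otimes I_{k'} - \sum_j X_j\otimes\overline{Y_j}\bigr)$.

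A parallel calculation with $\mathcal Y=\mathcal X$ and $T=S$ shows $\sum_j \tilde X_j\otimes\overline{\tilde X_j}$ is a similarity transform of $\sum_j X_j\otimes\overline{X_j}$ by $S\otimes\overline S$, so $\textbf{rad}(\tilde{\mathcal X}) = \textbf{rad}(\mathcal X) < 1$, and likewise for $\tilde{\mathcal Y}$. Theorem~\ref{thm:triangular-limit} therefore applies to the pair $(\tilde{\mathcal X},\tilde{\mathcal Y})$, and the two invariances transport the identity back to $(\mathcal X,\mathcal Y)$, proving the corollary. The main obstacle here is essentially nil, since all of the analytic work is packaged in Theorem~\ref{thm:triangular-limit}; the only genuine subtlety is the interaction of complex conjugation with the similarity $T$ on the right-hand side, which is handled cleanly by the identity $\overline{T(\cdot)T^{-1}}=\overline T(\overline{\cdot})\overline T^{-1}$ above.
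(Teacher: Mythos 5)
Your proposal is correct and follows exactly the same route as the paper: the paper notes immediately before the corollary that a commuting tuple can be simultaneously triangularized and that the determinant (and hence all relevant quantities) is unaffected by a joint similarity, then invokes Theorem~\ref{thm:triangular-limit}. You have simply spelled out the routine verifications (invariance of both sides of~(\ref{eqn:intro-conj}) and of the outer spectral radius under the similarities $S$, $T$, including the interaction with entrywise conjugation) that the paper leaves implicit.
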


From elementary properties of the determinant, one sees that the triangular case immediately reduces to the diagonal case, and if we write out the diagonal case in concrete terms we obtain
\begin{cor}\label{cor:diagonal-cor}

  For vectors
  \[ x_l = (x_l^{(1)}, \dots, x_l^{(g)}),\  l=1, \dots, k, \quad\quad y_{l^\prime}=(y_{l^\prime}^{(1)}, \dots, y_{l^\prime}^{(g)}), \  l^\prime =1, \dots k^\prime
  \]
 in $\mathbb C^g$  with $\|x_l\|_2, \|y_{l^\prime}\|_2<1$,  we have
  \begin{align}\label{eqn:diagonal-cor}
    \lim_{d\to \infty} &\int_{U(d)^g} \prod_{l=1}^k \det(I+\sum_j x_l^{(j)} U_j)\prod_{m=1}^{k^\prime}\overline{ \det(I+\sum_j y_{l^\prime}^{(j)} U_j)}\, d\mathcal{U} \\
    &= \prod_{l=1}^k\prod_{l^\prime=1}^{k^\prime} \frac{1}{1-\langle x_l, y_{l^\prime}\rangle} \nonumber
  \end{align}

  \end{cor}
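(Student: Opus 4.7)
The plan is to reduce the corollary directly to Theorem~\ref{thm:triangular-limit} by packaging the vectors $x_l$ and $y_{l^\prime}$ as the diagonal entries of diagonal (hence upper triangular) coefficient tuples. Specifically, I would define $\mathcal X = (X_1, \dots, X_g)$ with $X_j \vcentcolon= \operatorname{diag}(x_1^{(j)}, \dots, x_k^{(j)})$ of size $k \times k$, and similarly $\mathcal Y = (Y_1, \dots, Y_g)$ with $Y_j \vcentcolon= \operatorname{diag}(y_1^{(j)}, \dots, y_{k^\prime}^{(j)})$ of size $k^\prime \times k^\prime$. Since these tuples are trivially upper triangular, Theorem~\ref{thm:triangular-limit} applies provided the outer spectral radius hypothesis can be verified.

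Next, I would identify both sides of (\ref{eqn:intro-conj}) for this choice with the corresponding expressions in (\ref{eqn:diagonal-cor}). Because each $X_j$ is diagonal, the pencil matrix $I_k \otimes I_d + \sum_j X_j \otimes U_j$ is block-diagonal with $l$-th diagonal block $I_d + \sum_j x_l^{(j)} U_j$, so its determinant factors as $\prod_{l=1}^k \det(I_d + \sum_j x_l^{(j)} U_j)$---the multivariable analogue of (\ref{eqn:matrix-form-det}). The same reasoning handles $\mathcal Y$, so the integrand on the left-hand side of (\ref{eqn:intro-conj}) coincides with that of (\ref{eqn:diagonal-cor}). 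On the right-hand side, $X_j \otimes \overline{Y_j}$ is diagonal of size $kk^\prime$ with $((l,l^\prime),(l,l^\prime))$-entry equal to $x_l^{(j)}\overline{y_{l^\prime}^{(j)}}$, so $I_k \otimes I_{k^\prime} - \sum_j X_j \otimes \overline{Y_j}$ is diagonal with entries $1 - \langle x_l, y_{l^\prime}\rangle$, and its inverse determinant is exactly $\prod_{l,l^\prime}(1 - \langle x_l, y_{l^\prime}\rangle)^{-1}$.

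To finish, I would verify $\textbf{rad}(\mathcal X), \textbf{rad}(\mathcal Y) < 1$. Since $X_j$ is diagonal, $\sum_j X_j \otimes \overline{X_j}$ is a diagonal matrix whose $((l,l^{\prime\prime}),(l,l^{\prime\prime}))$-entry equals $\sum_j x_l^{(j)} \overline{x_{l^{\prime\prime}}^{(j)}} = \langle x_l, x_{l^{\prime\prime}}\rangle$. Its spectral radius is the maximum of $|\langle x_l, x_{l^{\prime\prime}}\rangle|$, which by Cauchy--Schwarz equals $\max_l \|x_l\|_2^2$ (achieved when $l = l^{\prime\prime}$). Hence $\textbf{rad}(\mathcal X) = \max_l \|x_l\|_2 < 1$, and similarly for $\mathcal Y$. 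Applying Theorem~\ref{thm:triangular-limit} then yields (\ref{eqn:diagonal-cor}). There is essentially no analytic obstacle here, as all the heavy lifting is done by the triangular theorem; the only task is the routine bookkeeping of the block-diagonal factorization and the Cauchy--Schwarz calculation for the outer spectral radius.
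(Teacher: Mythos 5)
Your proof is correct and follows essentially the same route as the paper: the authors also observe that the triangular case reduces to the diagonal case, package the vectors as diagonal coefficient tuples, factor the block-diagonal pencil determinant, and invoke Theorem~\ref{thm:triangular-limit}. Your explicit Cauchy--Schwarz verification that $\textbf{rad}(\mathcal X)=\max_l\|x_l\|_2$ is a clean way to confirm the hypothesis; the paper treats this as implicit.
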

This bears an obvious resemblance to (\ref{eqn:classical-limit}) (and, of course, recovers it exactly when $g=1$). The key thing to observe is that the Corollaries~\ref{cor:scalar-limit} and \ref{cor:diagonal-cor} identify the correct domain of convergence for the coefficients in these multivariable cases built from linear factors: it is the open unit ball in $\mathbb C^g$. The conjecture speculates that the correct domain of convergence in the general case is the {\it outer spectral radius ball} $\textbf{rad}(\mathcal X)<1$.

In the course of proving Theorem~\ref{thm:triangular-limit} we also obtain moment bounds on the random variables $|\det(I+\sum x_j U_j)|$ (in the case of scalar coefficients $x_j$). Precisely:

\begin{prop}\label{prop:moment-bounds} For each integer $k\geq 1$ and each $0\leq r<1$, there is a number $C(r,k)$ such that for all $d\geq 1$ and all vectors $x=(x_1, \dots, x_g)\in \mathbb C^g$ with $\|x\|_2\leq r$,
  \begin{equation}\label{eqn:moment-bound}
    \int_{U(d)^g} \left|\det L_x(\mathcal U)\right|^{2k}\, d\mathcal U \leq C(r,k).
  \end{equation}
\end{prop}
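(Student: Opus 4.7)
The plan is to combine the structure of the formula in Theorem~\ref{thm:scalar-closed-form} with the limit predicted by Conjecture~\ref{conj} to obtain a uniform-in-$d$ bound.

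For $k=1$, the bound is immediate from Theorem~\ref{thm:scalar-closed-form}: since $c(d,\alpha)\le 1$ (exactly the fact needed to establish Corollary~\ref{cor:scalar-limit}), we have
\[
\int_{U(d)^g}|\det L_x|^2\,d\mathcal U \;\le\; \sum_{n=0}^{d}\sum_{|\alpha|=n}\binom{n}{\alpha}|x|^{2\alpha} \;=\; \sum_{n=0}^{d}\|x\|_2^{2n} \;\le\; \frac{1}{1-r^2},
\]
so $C(r,1)=(1-r^2)^{-1}$ suffices. For general $k$, I would write $|\det L_x|^{2k}=\det L_x^k\cdot\overline{\det L_x^k}$, expand $\det L_x^k = \sum_\beta q_\beta^{(k)}(\mathcal U)\,x^\beta$ (with $q_\beta^{(k)}$ a polynomial in the matrix entries of $\mathcal U$), and use the $U(1)^g$-phase invariance of Haar measure under $U_j\mapsto e^{i\theta_j}U_j$ to annihilate the off-diagonal monomials $x^\beta\overline{x^{\beta'}}$ with $\beta\neq\beta'$. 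This yields the nonnegative series
\[
\int_{U(d)^g}|\det L_x|^{2k}\,d\mathcal U \;=\; \sum_\beta w_k(d,\beta)\,|x|^{2\beta}, \qquad w_k(d,\beta):=\int_{U(d)^g}|q_\beta^{(k)}|^2\,d\mathcal U.
\]

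Applying Conjecture~\ref{conj} with $\mathcal X=\mathcal Y=(x_1 I_k,\ldots,x_g I_k)$ (which has outer spectral radius $\|x\|_2$) predicts the formal limit $(1-\|x\|_2^2)^{-k^2}$, whose series expansion has coefficient $w_k(\infty,\beta)=\binom{|\beta|+k^2-1}{k^2-1}\binom{|\beta|}{\beta}$ at $|x|^{2\beta}$. The bound in the proposition would then follow at once from the uniform estimate $w_k(d,\beta)\le w_k(\infty,\beta)$ for every $d$, since summing this gives
\[
\int_{U(d)^g}|\det L_x|^{2k}\,d\mathcal U \;\le\; \sum_{n=0}^\infty\binom{n+k^2-1}{k^2-1}\|x\|_2^{2n} \;=\; \frac{1}{(1-\|x\|_2^2)^{k^2}} \;\le\; \frac{1}{(1-r^2)^{k^2}},
\]
yielding $C(r,k)=(1-r^2)^{-k^2}$; for $k=1$ this monotonicity is just the fact $c(d,\alpha)\le 1$.

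The main obstacle is proving $w_k(d,\beta)\le w_k(\infty,\beta)$ for $k\ge 2$, where no clean closed form analogous to $c(d,\alpha)$ is available. The natural route is to compute $w_k(d,\beta)$ via the Weingarten calculus on $U(d)$ as a sum over pairs of permutations weighted by the Weingarten function $\mathrm{Wg}(d,\sigma)$, and to compare termwise to the limit; the Collins--\'Sniady asymptotics $|\mathrm{Wg}(d,\sigma)|=O(d^{-n-|\sigma|+1})$ provide the correct scale, but extracting the inequality requires careful bookkeeping of the cancellations between the Leibniz sign structure and the Weingarten weights. Since only \emph{some} summable bound is needed rather than the sharp one, a weaker estimate of the form $w_k(d,\beta)\le C(k)(1+|\beta|)^{k^2-1}\binom{|\beta|}{\beta}$ would suffice; this should be obtainable by a crude combinatorial count of the contributing Weingarten terms, but the bookkeeping is still the technical heart of the argument.
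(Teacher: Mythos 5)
Your $k=1$ argument is complete and matches the paper: the coefficients $c(d,\alpha)\le 1$ from Theorem~\ref{thm:scalar-closed-form} give the bound directly. Your framework for general $k$ --- rewriting $|\det L_x|^{2k}=|\det(I_k\otimes I_d+\sum x_jI_k\otimes U_j)|^2$, expanding in $x$, and invoking $U(1)^g$-phase invariance to kill the off-diagonal terms so that the integral is a nonnegative series $\sum_\beta w_k(d,\beta)\,|x|^{2\beta}$ --- is also exactly what the paper does (this is the content of equations (\ref{eqn:tensor-to-2k-moment}) and (\ref{eqn:expanded-integral}) specialized to $\mathcal X=\mathcal Y=(x_jI_k)_j$).

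However, the argument then stops at precisely the point where the work begins: you do not prove any uniform-in-$d$ bound on $w_k(d,\beta)$, you only name what such a bound would accomplish. The hoped-for monotonicity $w_k(d,\beta)\le w_k(\infty,\beta)$ is stated without evidence beyond the $k=1$ case; the paper does not prove this and indeed never attempts to (it is not needed, and may well be false). The fallback polynomial bound $w_k(d,\beta)\le C(k)(1+|\beta|)^{k^2-1}\binom{|\beta|}{\beta}$ is the right kind of statement --- it closely resembles what the paper actually establishes, namely $w_k(d,\beta)\le C_k\,(1+|\beta|)^{2k^2}\binom{|\beta|}{\beta}$ --- but you explicitly defer its proof. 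So as written, this is an outline of a strategy, not a proof of the proposition.

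Your proposed route (Weingarten calculus with Collins--\'Sniady asymptotics) is also notably different from the paper's, and is in fact the one the authors describe having abandoned: the introduction remarks that the Weingarten approach works for $k=1$ but becomes ``prohibitively complicated'' for $k>1$. The paper instead passes through Schur--Weyl duality: it writes $w_k(d,\beta)$ via the traces $\tr(\mathbb E_\beta\circ C_{Q_\lambda})$ (a sum over irreducibles $\lambda$ of width at most $k$), evaluates these explicitly in terms of Littlewood--Richardson coefficients and content polynomials (Lemmas~\ref{thm:key-trace-formula}, \ref{thm:key-trace-formula-general}, and the reduction~(\ref{eqn:reduction-to-C(d)})), and then proves the key combinatorial estimate on the \emph{content ratios} $r^\lambda_{\mu,\nu}(d)\le (n+1)^{k^2}$ (Theorem~\ref{thm:content-ratio-bound}, proved in Section~\ref{sec:content-ratio-lemma}). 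That content-ratio bound is the actual technical heart, and nothing in your proposal substitutes for it. To complete your plan along Weingarten lines you would need to control the signed sum over pairs of permutations, including the cancellations coming from the Leibniz expansion against the oscillating Weingarten weights; this is genuinely delicate and is not a corollary of the $O(d^{-n-|\sigma|+1})$ asymptotics alone, since those are per-term estimates with no sign information.
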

\noindent (It is trivial that for fixed $d$ and any $x$, one has finite moments of all orders; the content of the proposition is that there are bounds independent of $d$ once $x$ is restricted to the compact set $\|x\|_2\leq r<1$.)

One might conjecture that an analog of this proposition should hold for matrix coefficients $\mathcal X$; indeed such a statement would imply (\ref{eqn:pencil-limit-conjecture}) (see Proposition~\ref{prop:L2-bound-is-sufficient} below).

\subsection{Discussion and overview of the proofs}

Our original interest in this problem was motivated by the identity (\ref{eqn:HS-limit}), the right-hand side being the well known {\em Szeg\H{o} kernel} $k(x,y)=(1-x\overline{y})^{-1}$, which is the reproducing kernel for the Hardy space $H^2$ over the unit disk (the Hilbert space of holomorphic functions with square-summable power series). There has been much activity in recent years around a particular multivariable analog of the Hardy space, called the {\em Drury-Arveson space}, which is the reproducing kernel Hilbert space over the unit ball $\mathbb B^g\subset \mathbb C^g$ with kernel $k(x,y)=(1-\langle x,y\rangle)^{-1}$. From many points of view (but primarily from the perspective of multivariable operator theory), the Drury-Arveson space is the natural multivariable analog of $H^2$. We refer to \cite{Hartz-2023} for a recent survey, with extensive references. Thus, Corollary~\ref{cor:scalar-limit} shows that the Drury-Arveson kernel also arises as the generalization of the Szeg\H{o} kernel in a context quite different from that of multivariable operator theory. It would be interesting to pursue this connection further.

Despite the evident similarity of the multivariable expressions (\ref{eqn:scalar-limit}, \ref{eqn:diagonal-cor}) to their one-variable counterparts (\ref{eqn:classical-limit}, \ref{eqn:HS-limit}), none of the one-variable proofs carry over directly to the multivariable setting. The fundamental difficulty is that in one variable, the integrands, which are products of characteristic polynomials, depend only on the eigenvalues of $U$, so one has access to the Weyl integration formula, which allows the integral to be computed as an integral over the $d$-torus against a suitable density. In our setting, it is clear that even for scalar coefficients the expression $\det(I+\sum x_j U_j)$ is not a function of the eigenvalues of the individual $U_j$, so the Weyl formula cannot be applied. One can also prove the classical versions given here by using the so-called Heine-Szeg\H{o} identity to express the integral as a Toeplitz determinant, but we know of no comparable representation in the multivariable setting. The strategy that comes closest to working is that used by Bump and Diaconis \cite{Bump-Diaconis-2002} in their proof of the Strong Szeg\H{o} Limit Theorem, which invokes Schur-Weyl duality to re-express the integral over the unitary group in terms of characters and symmetric functions. This is broadly the approach we follow here, but it cannot be applied directly since the orthogonality relations for symmetric functions (coming from the {\em Hall inner product}) do not have exact analogs in the multivariable setting. They {\it do} have asymptotic analogs, however; these were discovered indpendently, in somewhat different contexts, by Mingo, \'Sniady, and Speicher \cite{Mingo-Sniady-Speicher-2007} and by R\u{a}dulescu \cite{Radulescu-2006}. So, the approach can be adapted but it requires either the ability to do an exact calculation (which we carry out in the case of scalar coefficients), or to obtain suitable bounds (uniform in $d$) on the resulting combinatorial expressions (which we do in the $k>1$, triangular case). It is possible to give a different proof of Theorem~\ref{thm:scalar-closed-form} using the so-called {\em Weingarten calculus} (in fact we originally proved the theorem by this technique), though generalizing that proof to $k>1$ seems prohibitively complicated. However we do make use of some of the ideas behind the construction of the Weingarten calculus by Collins \cite{Collins-2003} and Collins and \'Sniady \cite{Collins-Sniady-2006} in our proofs. 

\subsection{Reader's guide}

Section~\ref{sec:background} provides an overview of the necessary background material from combinatorics, representation theory, and matrix analysis which we will need. It also serves to establish notation to be used in the remainder of the paper. Most of this material will be well-known to readers who are familiar for example with the works of Bump and Diaconis \cite{Bump-Diaconis-2002} or Bump and Gamburd \cite{Bump-Gamburd-2006}.

In Section~\ref{sec:formal} we show that the limit in Conjecture~\ref{eqn:pencil-limit-conjecture} holds in a formal sense, by representing both sides in terms of suitable series expansions (Lemmas~\ref{lem:pencil-expansion-matrix} and \ref{lem:target-expansion}), and showing that the coefficients match in the limit (Corollary~\ref{cor:termwise-limit}). The key tool is the asymptotic orthogonality alluded to above (Theorem~\ref{thm:asymptotic-orthogonality}).  We then show that the full conjecture would follow (essentially by a normal families argument) if one could prove suitable bounds on the integrals independent of the size $d$ (Proposition~\ref{prop:L2-bound-is-sufficient}).

In Section~\ref{sec:scalar} we focus on the case of scalar coefficients ($k=1$) and obtain the exact expression (\ref{eqn:scalar-closed-form}) and its limiting version (\ref{eqn:scalar-limit}), these are proved as Theorem~\ref{thm:main} and Corollary~\ref{cor:main-limit} respectively. Making use of the expansions obtained in Section~\ref{sec:formal}, after some manipulations the key computation is contained in Propositions~\ref{prop:trace-projection} and \ref{thm:trace-centalt} which compute the trace of the product of the expectation onto a certain subgroup of the symmetric group $S_n$, with the projection onto an irreducible representation of $S_n$ (specifically, the alternating representation).

Section~\ref{sec:triangular} contains the proof of Theorem~\ref{thm:triangular-limit} (restated there as Theorem~\ref{thm:true-for-triangular}). We first reduce the triangular case to the case of scalar multiplies of the identity matrix (Proposition~\ref{prop:uniform-L2-bound}), by H\"older's inequality and another normal families argument. Once this is done, the broad outline of the proof is similar to that of the scalar case in Section~\ref{sec:scalar}, but instead of just the alternating representation we have to deal with more general irreducible representations of $S_n$. As a result we are unable to obtain a tractable explicit expression, but must instead bound a certain combinatorial quantity (Theorem~\ref{thm:content-ratio-bound}). Since the proof of this bound is rather more involved it is relegated to its own section, Section~\ref{sec:content-ratio-lemma}.

Section~\ref{sec:remarks} contains some final discussion and remarks. In particular we show that the conjecture implies a more general version of itself, where the linear pencils can be replaced by certain more general polynomials in noncommuting arguments. We also apply our results to obtain asymptotics for the moments of characteristic polynomials of sums of random unitaries, reminiscent of the Strong Szeg\H{o} Limit Theorem, and indicate its relation to the {\em Brown measure} of a sum of independent Haar unitaries.

\subsection{Acknowledgments} The authors would like to thank the organizers and participants of the May 2024 Oberwolfach Workshop on Non-commutative Function Theory and Free Probability, where an early version of this work was presented.

\section{Background}\label{sec:background}

In this section we give a review of the material we will need from combinatorics, algebra, and matrix analysis, and also establish the notation to be used for the remainder of the paper. 

\subsection{Multi-indices and Words}\label{sec:words}

A multi-index is a tuple of non-negative integers, $\alpha = (\alpha_1 , \ldots , \alpha_g) \in \mathbb{N}^g$. The \textit{weight} of $\alpha$, denoted $|\alpha|$, is the sum of its entries i.e. $|\alpha| = \sum_{j=1}^g \alpha_j$. When $|\alpha| = n$, the \textit{multinomial coefficient} of $\alpha$ is
\begin{equation*}
	\binom{n}{\alpha} \vcentcolon= \frac{n!}{\alpha_1! \cdots \alpha_g!} = \frac{n!}{\alpha!}.
\end{equation*}

\noindent For $x = ( x_1 , \ldots , x_g ) \in \mathbb{C}^g$, we use the usual notation for monomials:
\begin{equation*}
	x^\alpha \vcentcolon= x_1^{\alpha_1} x_2^{\alpha_2} \cdots x_g^{\alpha_g} .
\end{equation*}

\noindent In particular we have the multinomial theorem
\begin{equation*}
	( x_1 + \cdots + x_g )^n = \sum_{|\alpha| = n} \binom{n}{\alpha} x^\alpha .
\end{equation*}

The {\it free monoid} generated by $\{ \ell_1 , \ldots , \ell_g \}$ is the set of all words (of finite length) in these letters, denoted $\mathbb{W}_g$. A typical word has the form $w = \ell_{i_1} \cdots \ell_{i_n}$ where $1 \leq i_k \leq g$, and we call $n$ the length of $w$. (We include the empty word, which has length $0$.) Let $\mathbb{W}_{g}(n)$ be the set of words of length $n$. We have a group action of the symmetric group $S_n$ on $\mathbb{W}_{g}(n)$ by permuting letters in the obvious way: 
\begin{equation*}
	\sigma(w) = \ell_{i_{\sigma^{-1}(1)}} \ell_{i_{\sigma^{-1}(2)}} \cdots \ell_{i_{\sigma^{-1}(n)}}.
\end{equation*}

\noindent Under this action, two words belong to the same orbit if and only if they have the same amount of each letter. The orbits of $\mathbb{W}_{g}(n)$ thus naturally correspond to multi-indices $\alpha$ of weight $n$, where $w$ contains $\alpha_j$ instances of the letter $\ell_j$. In this case we write (by slight abuse of notation) $w \in \alpha$. The size of the $\alpha$-orbit is $\binom{n}{\alpha}$. Each orbit has a canonical representative $w_\alpha$ which begins with $\alpha_1$-many copies of the letter $\ell_1$, followed by $\alpha_2$-many $\ell_2$'s, and so on. As an example in the free monoid on $\{ a, b, c, d \}$, the word $bacbcb \in \mathbb{W}_{4}(6)$ has letter count $\alpha = ( 1 , 3 , 2 , 0 )$, and the canonical word of this letter count is $w_{ ( 1 , 3 , 2 , 0 ) } = abbbcc$.

For a word $\ell_{i_1} \cdots \ell_{i_n} = w \in \mathbb{W}_{g}(n)$ and tuple of matrices $\mathcal{X} = (X_1 , \ldots , X_g)$, let
\begin{equation}\label{eqn:word-power}
	\mathcal{X}^{\otimes w} \vcentcolon= X_{i_1} \otimes X_{i_2} \otimes \cdots \otimes X_{i_n}.
\end{equation}
For example when $g=2$, we have $\mathcal{X}^{\otimes abba} = X_1 \otimes X_2 \otimes X_2 \otimes X_1$. For multi-index $\alpha$ we will also write
\begin{equation*}
	\mathcal{X}^{\otimes \alpha} \vcentcolon= \mathcal{X}^{\otimes w_\alpha} = X_1^{\otimes \alpha_1} \otimes X_2^{\otimes \alpha_2} \otimes \cdots \otimes X_g^{\otimes \alpha_g}.
\end{equation*}

\subsection{Partitions}\label{sec:partitions}

A {\it partition} $\lambda$ of a positive integer $n$ (denoted $\lambda \vdash n$ or $|\lambda| = n$) is a non-increasing sequence of positive integers $\lambda = ( \lambda_1 \geq \lambda_2 \geq \cdots \geq \lambda_p > 0 )$ such that $\sum_{i=1}^p \lambda_i = n$. Each $\lambda_i$ is called a {\it part} and the number of parts $p$ is interchangeably called the \textit{length} or \textit{height} of $\lambda$. We shall use ``height'' and denote it $ht(\lambda)$.

Given a partition $\lambda \vdash n$, the associated \textit{Young diagram} is a collection of $n$ boxes arranged on a grid so that the $i^{th}$ row from the top contains $\lambda_i$ boxes. For example the diagram of $\lambda = (3 , 2 , 2 , 1)$ is
\begin{equation*}
	\ytableausetup{nobaseline,boxsize=1.5em}
	\begin{ytableau}
		*(white) & *(white) & *(white) \\
		*(white) & *(white) \\
		*(white) & *(white) \\
		*(white) \\
	\end{ytableau}
\end{equation*}
Each $\lambda$ has a \textit{conjugate partition} $\lambda^\ast$ whose Young diagram is the transpose of the diagram for $\lambda$. For example $(3,2,2,1)^\ast = (4,3,1)$. We set the \textit{width} of $\lambda$ to be $wd(\lambda) \vcentcolon= ht(\lambda^\ast)$ meaning $wd(\lambda) = \lambda_1$. Thus, the height and width of $\lambda$ are the literal height and width of its Young diagram.

The boxes of $\lambda$ are labeled by coordinates $u = (i,j)$ in the style of a matrix (the first entry for the row, the second for the column). The \textit{hook length} $h_u$ is the number of boxes which are either below $u$ in the same column or to the right of $u$ in the same row, including $u$ itself, which can be computed as $h(i,j) = \lambda_i - i + \lambda^\ast_j - j - 1$. The \textit{content} $c_u$ is simply $c(i,j) = j - i$.

For example, here is the Young diagram of $(5 , 3 , 2)$ with the hook length in each box:
\begin{equation*}
	\ytableausetup{nobaseline,boxsize=1.5em}
	\begin{ytableau}
		7 & 6 & 4 & 2 & 1\\
		4 & 3 & 1\\
		2 & 1 
	\end{ytableau}
\end{equation*}
Similarly with the content in each box: 
\begin{equation*}
	\ytableausetup{nobaseline,boxsize=1.5em}
	\begin{ytableau}
		0 & 1 & 2 & 3 & 4\\
		-1 & 0 & 1\\
		-2 & -1 
	\end{ytableau}
\end{equation*}

Given partition $\lambda$, a \textit{subpartition} $\mu$ is a partition such that $\mu_i \leq \lambda_i$ for each part. In other words, the Young diagram of $\lambda$ covers the diagram of $\mu$ when aligned at their top-left corner. For such $\lambda,\mu$, the \textit{skew diagram} $\lambda / \mu$ is the diagram obtained by subtracting the boxes of $\mu$ from $\lambda$. For example, if $\lambda = (5 , 3 , 2)$ and $\mu = (4 , 1)$ then $\lambda / \mu$ looks like
\begin{equation*}
	\ytableausetup{nobaseline,boxsize=1.5em}
	\begin{ytableau}
		\none & \none & \none & \none & *(white) \\
		\none & *(white) & *(white) \\
		*(white) & *(white)
	\end{ytableau}
\end{equation*}

\subsection{Symmetric Group}\label{sec:symmetric-group} 

We let $S_n$ be the symmetric group, i.e. the group of permutations on the set $\{ 1 , 2 , \ldots , n \}$. There is a natural group action of $S_n$ on itself given by conjugation $\sigma \cdot \tau = \sigma \tau \sigma^{-1}$. The orbits of this action are called conjugacy classes and are in natural bijection with partitions $\lambda \vdash n$. Namely, any $\sigma\in S_n$ has a decomposition into disjoint cycles $(i_1\cdots i_{\lambda_1})\cdots (j_1\cdots j_{\lambda_p})$, and the associated partition is the partition formed by the lengths of the cycles. An element $\sigma$ in the orbit labeled by $\lambda$ is said to have {\it cycle type} $\lambda$. For $\sigma$ of type $\lambda$, let $c_\lambda$ be the size of its orbit and $z_\lambda$ be the size of its stabilizer. The orbit-stabilizer theorem tells us that $c_\lambda z_\lambda = n!$, or
\begin{equation}\label{eqn:orb-stab}
	z_\lambda^{-1} = \frac{c_\lambda}{n!}
\end{equation}
as we will use it.

Alongside cycle type, the elements of $S_n$ can be distinguished by their \textit{sign} $\sign(\sigma)$ which is equal to $1$ (resp. $-1$) if $\sigma$ can be written as a product of an even (resp. odd) number of transpositions. An $\ell$-cycle can be written as a product of $\ell-1$ transpositions, namely
\begin{equation*}
	(i_1 \cdots i_\ell) = (i_1 i_\ell) (i_1 i_{\ell-1}) \cdots (i_1 i_3) (i_1 i_2)
\end{equation*}
so $\sign(i_1 \cdots i_\ell) = (-1)^{\ell-1}$. For general $\sigma \in S_n$, applying this to each disjoint cycle reveals that 
\begin{equation}\label{eqn:sign}
	\sign(\sigma) = (-1)^{n - c(\sigma)}
\end{equation}
where $c(\sigma)$ is the number of disjoint cycles in $\sigma$ (in fact $c(\sigma) = ht(\lambda)$ when $\sigma$ is of type $\lambda$).

Given a partition $\mathcal I$ of the set $\{1, \dots, n\}$ into disjoint subsets $\mathcal I =(I_1, \dots, I_p)$, the associated {\it Young subgroup} $S_\mathcal I$ is the subgroup of $S_n$ which leaves each of the subsets $I_j\subset \{1, \dots, n\}$ invariant. If $\alpha_j=|I_j|$ is the cardinality of $I_j$, then $S_{\mathcal I}$ is isomorphic to the group $S_{\alpha_1}\times \cdots \times S_{\alpha_p}$. We single out a distinguished family of Young subgroups as follows:

Given multi-index $\alpha$ with $|\alpha| = n$, let
\[
P_i = \left\{ 1 + \sum_{j=1}^{i-1} \alpha_j , 2 + \sum_{j=1}^{i-1} \alpha_j , \ldots , \alpha_i + \sum_{j=1}^{i-1} \alpha_j \right\} \; , \; i = 1 , \ldots , g
\]
so that $P_j$ contains the first $\alpha_j$ letters of $\{1 , \ldots , n\} \setminus \cup_{i=1}^{j-1} P_i$. Then $\mathcal{P} = (P_1 , \ldots , P_g)$ is a partition of $\{1 , \ldots , n\}$ and the Young subgroup of $\alpha$ is $S_\alpha \vcentcolon= S_{\mathcal{P}}$. Note that the cardinality of $S_\alpha$ is $|S_\alpha| = \alpha_1! \cdots \alpha_g! =\vcentcolon \alpha!$.

\subsection{Symmetric Polynomials}\label{sec:symmetric-polys}

A \textit{symmetric polynomial} in $d$ variables is a polynomial $f \in \mathbb{C}[x_1 , \ldots , x_d]$ that is invariant under permutation of variables. That is, $f(x_1 , \ldots , x_d) = f(x_{\sigma(1)} , \ldots , x_{\sigma(d)})$ for every permutation $\sigma \in S_d$. Given a matrix $A \in \matrixspace{d}$ with eigenvalues $a_1 , \ldots , a_d$ and symmetric polynomial $f$, we write (by abuse of notation) $f(A) \vcentcolon= f(a_1 , \ldots , a_d)$. 

Stanley \cite{Stanley-2024} and MacDonald \cite{Macdonald-2015} both give an extensive treatment of symmetric polynomials and symmetric functions. Here we shall recite only the results which are necessary for our work.

For $n \in \mathbb{N}$, the \textit{power sum symmetric polynomials} are defined as
\begin{equation*}
	p_n(x_1 , \ldots , x_d) \vcentcolon= x_1^n + \cdots + x_d^n
\end{equation*}

\noindent and for a partition $\lambda = (\lambda_1 \geq \cdots \geq \lambda_r)$, put
\begin{equation*}
	p_\lambda \vcentcolon= \prod_{i=1}^r p_{\lambda_i}.
\end{equation*}
In our notation, for a matrix $A$ we have $p_n(A) = \tr(A^n)$ and $p_\lambda(A) =\prod_{i=1}^p \tr(A^{\lambda_i})$.

The \textit{complete symmetric polynomial} $h_n$ is the sum of all monomials of degree $n$ (e.g. $h_2(x,y,z) = x^2 + y^2 + z^2 + xy + yz + xz$), and the \textit{elementary symmetric polynomial} $e_n$ is the sum of all monomials of degree $n$ with no repeated variables (e.g. $e_2(x,y,z) = xy + yz + xz$). These arise in the following products:
\begin{gather}
	\label{eqn:det-to-complete} \prod_{i=1}^{d} (1 - x_i t)^{-1} = \sum_{n=0}^{\infty} h_n t^n \\
	\label{eqn:det-to-elementary} \prod_{i=1}^{d} (1 + x_i t) = \sum_{n=0}^{d} e_n t^n .
\end{gather}
Equation~(\ref{eqn:det-to-elementary}) is obtained by distribution of terms, while (\ref{eqn:det-to-complete}) is formally obtained by writing out the geometric series for each $\frac{1}{1 - x_it}$ and taking the Cauchy product of formal power series.

We also need the relations
\begin{gather}
	\label{eqn:complete-to-power-sum} h_n = \sum_{\lambda \vdash n} z_\lambda^{-1} p_\lambda \\
	\label{eqn:elementary-to-power-sum} e_n = \sum_{\lambda \vdash n} \sign(\lambda) z_\lambda^{-1} p_\lambda 
\end{gather}
where $\sign(\lambda) \vcentcolon= \sign(\sigma)$ for any $\sigma \in S_n$ of cycle type $\lambda$ (see \cite[equation $(2.14^\prime)$]{Macdonald-2015}). Combining these equations yields the following lemma:

\begin{lem}\label{thm:cauchy-identities}
For any matrix $A \in \matrixspace{d}$,
\begin{equation}\label{eqn:det-to-power-sum}
	\det(I + A) = \sum_{n=0}^d \sum_{\lambda \vdash n} \sign(\lambda) z_\lambda^{-1} p_\lambda(A)
\end{equation}
Moreover, if the spectral radius $rad(A)<1$, then 
\begin{equation}\label{eqn:dual-cauchy}
  \det(I-A)^{-1} = \sum_{n=0}^\infty \sum_{\lambda \vdash n} z_\lambda^{-1} p_\lambda(A)
\end{equation}
where the series is absolutely convergent in the sense that
\[
\sum_{n=0}^\infty\left| \sum_{\lambda \vdash n} z_\lambda^{-1} p_\lambda(A)\right| <\infty.
\]

\end{lem}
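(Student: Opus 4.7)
The proof will amount to combining the four identities (\ref{eqn:det-to-complete})--(\ref{eqn:elementary-to-power-sum}) displayed just above, evaluated at the eigenvalues of $A$. Let $a_1, \dots, a_d$ denote the eigenvalues of $A$, so that $\det(I+A) = \prod_{i=1}^d (1+a_i)$ and, whenever all $|a_i|<1$, $\det(I-A)^{-1} = \prod_{i=1}^d (1-a_i)^{-1}$.

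For the first identity (\ref{eqn:det-to-power-sum}), the plan is to set $t=1$ in (\ref{eqn:det-to-elementary}) evaluated at $(a_1,\dots,a_d)$, yielding
\[
\det(I+A) \;=\; \prod_{i=1}^d (1+a_i) \;=\; \sum_{n=0}^d e_n(A),
\]
and then substitute the expansion (\ref{eqn:elementary-to-power-sum}) of $e_n$ into power sums. Since this is a finite polynomial identity in the $a_i$, there are no convergence issues to address.

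For the second identity (\ref{eqn:dual-cauchy}), I would set $t=1$ in (\ref{eqn:det-to-complete}). The generating-function identity (\ref{eqn:det-to-complete}) is a priori a formal identity, so the step requiring care is verifying that evaluating at $t=1$ is legitimate when $rad(A)<1$. The key observation is that each geometric series $(1-a_i)^{-1} = \sum_{k \geq 0} a_i^k$ converges absolutely since $|a_i| \leq rad(A) < 1$, so the Cauchy product of the $d$ absolutely convergent series is itself absolutely convergent and sums to $\prod_i (1-a_i)^{-1}$. After substituting (\ref{eqn:complete-to-power-sum}) for $h_n$, this gives the desired identity.

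For the absolute convergence claim, since $h_n$ is a sum of monomials of degree $n$ with nonnegative (in fact $1$) coefficients, the triangle inequality gives $|h_n(A)| = |h_n(a_1,\dots,a_d)| \leq h_n(|a_1|,\dots,|a_d|)$. Summing,
\[
\sum_{n=0}^\infty \left|\sum_{\lambda \vdash n} z_\lambda^{-1} p_\lambda(A)\right| = \sum_{n=0}^\infty |h_n(A)| \leq \sum_{n=0}^\infty h_n(|a_1|,\dots,|a_d|) = \prod_{i=1}^d (1-|a_i|)^{-1},
\]
which is finite since every $|a_i|<1$. The main obstacle is essentially just bookkeeping: making sure the formal identities (\ref{eqn:det-to-complete}) and (\ref{eqn:complete-to-power-sum}) can be combined and evaluated at $t=1$ under the spectral radius hypothesis, which the bound above handles cleanly.
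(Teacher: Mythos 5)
Your proposal is correct and follows essentially the same route as the paper: set $t=1$ in (\ref{eqn:det-to-elementary}) and substitute (\ref{eqn:elementary-to-power-sum}) for the first identity, then set $t=1$ in (\ref{eqn:det-to-complete}) and substitute (\ref{eqn:complete-to-power-sum}) for the second, with absolute convergence justified via the Cauchy product of the absolutely convergent geometric series $\sum_k a_i^k$. Your explicit bound $|h_n(A)| \leq h_n(|a_1|,\dots,|a_d|)$ giving $\sum_n |h_n(A)| \leq \prod_i (1-|a_i|)^{-1}$ spells out the convergence argument a little more concretely than the paper does, but the underlying reasoning is the same.
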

\begin{proof} 
	
	Equations $(\ref{eqn:det-to-elementary})$ and $(\ref{eqn:elementary-to-power-sum})$ together with $t=1$ and $x_i$'s as the eigenvalues of $A$ immediately yield $(\ref{eqn:det-to-power-sum})$. Likewise, $(\ref{eqn:det-to-complete})$ and $(\ref{eqn:complete-to-power-sum})$ give $(\ref{eqn:dual-cauchy})$, but we must justify the absolute convergence.
	
	Writing the eigenvalues of $A$ as $a_1 , \ldots , a_d$, the condition $rad(A) < 1$ simply means $|a_i| < 1$ for $i = 1 , \ldots , d$. Therefore
	\begin{equation*}
		\frac{1}{1 - t a_i } = \sum_{k = 0}^{\infty} a_i^{k} t^{k}
	\end{equation*}
	converges absolutely for each $i$ (and say $|t| \leq 1$). Thus, the Cauchy product of these series also converges absolutely, and we have
	\begin{equation*}
		\det(I - tA)^{-1} = \prod_{i=1}^{d} \frac{1}{1 - t a_i} = \prod_{i=1}^{d} \left(\sum_{k= 0}^{\infty} a_i^{k} t^{k}\right) = \sum_{n=0}^{\infty} h_n(A) t^n
	\end{equation*}
	by definition of $h_n(A)$. Taking $t=1$ and applying (\ref{eqn:complete-to-power-sum}) finishes the proof.
	
\end{proof}

Perhaps the most important symmetric polynomials are the \textit{Schur polynomials}, denoted $s_\lambda(x_1 , \ldots , x_d)$ where $\lambda$ is a partition (of any number). These polynomials have a rich theory and many definitions, but we only need to know a few facts. First, the evaluation when each $x_i = 1$ is known from \cite[Corollary 7.21.4]{Stanley-2024} to be
\begin{equation}\label{eqn:schur-hook-and-content}
	s_{\lambda}(d) \vcentcolon= s_{\lambda}(1 , \ldots , 1) = \prod_{u \in \lambda} \frac{d + c_u}{h_u}.
\end{equation}
The index $u \in \lambda$ signifies that the product is taken over each box $u$ in the Young diagram of $\lambda$. Here $h_u$ and $c_u$ are the hook length and content as defined in Section~\ref{sec:partitions} above. 

Second, the Schur functions form a basis for the symmetric polynomials, so given partitions $\mu,\nu$ there exist coefficients $c^{\lambda}_{\mu,\nu}$ so that
\begin{equation}
	s_\mu s_\nu = \sum_{\lambda} c^{\lambda}_{\mu,\nu} s_\lambda
\end{equation}
as stated in \cite[equation (7.64)]{Stanley-2024}. These $c^{\lambda}_{\mu,\nu}$ are called the \textit{Littlewood-Richardson coefficients}.

\subsection{Representations}\label{sec:representations}

Much of what follows can be found in any introductory textbook on representation theory, such as \cite{Serre-1977} or \cite{Fulton-Harris-1991}.

A \textit{representation} of a group $G$ is a pair $(\rho , V)$ where $V$ is a finite-dimensional vector space and $\rho$ is a homomorphism $\rho : G \to \morph(V)$. The group multiplication becomes composition in $\morph(V)$, but this space also comes equipped with an addition operation. This leads one to consider the \textit{group algebra} $\mathbb{C}[G]$, which is a vector space whose basis is $G$ with multiplication of vectors defined by distribution and the group operation of $G$. Thus, any representation of $G$ determines (and is determined by) an algebra homomorphism $\rho : \mathbb{C}[G] \to \morph(V)$. 

One key representation is the (left) \textit{regular representation} given by $V = \mathbb{C}[G]$ and $\rho_R$ sending $s \in \mathbb{C}[G]$ to the map of left multiplication by $s$, which is determined by $\rho_R(s) : t \mapsto st$ for $t \in \mathbb{C}[G]$.

A subspace $W$ of a representation $(\rho, V)$ is said to be a subrepresentation if $W$ is invariant under $\rho(G)$ (i.e. $\rho(g) (w) \in W$ for all $g \in G$ and $w \in W$). A representation whose only subrepresentations are $\{0\}$ and itself is called an irreducible representation (irrep). When $G$ is a finite group, there are exactly as many irreps as there are conjugacy classes of $G$ (in particular, finitely many), although there is usually no canonical way to associate an irrep to a conjugacy class.

The {\it character} of a representation $\rho$ is the function
\begin{align*}
	\chi : G &\to \mathbb{C} \\
	g &\mapsto \tr(\rho(g)).
\end{align*}
Note that for any representation, $\chi(1_G) = \dim(V)$ since the identity of $G$ is mapped to the identity of $\morph(V)$. If $V_1 , \ldots , V_r$ are the irreps, with associated characters $\chi_1, \ldots , \chi_r$, any representation and character decompose as
\begin{gather*}
	V = m_1 V_1 \oplus \cdots \oplus m_r V_r \\
	\chi = \sum_{i=1}^{r} m_i \chi_i
\end{gather*}
where $m_iV_i$ is the direct sum of $V_i$ with itself $m_i$ times. Serre \cite{Serre-1977} refers to this as the \textit{canonical decomposition}. The canonical decomposition of the regular representation has $m_i = \dim(V_i)$, so every irrep occurs at least once in the regular representation.

In \cite[Theorem 8]{Serre-1977}, an equation for the projection $p_i$ of V onto $m_i V_i$ with respect to this decomposition is given:
\begin{equation}\label{eqn:representation-projection}
	p_i = \frac{\chi_i(1)}{|G|} \sum_{g \in G} \chi_i(g^{-1}) \rho(g) \in \morph(V).
\end{equation}
In particular this means $\sum_i p_i = 1$. We also have that $p_i$ commutes with $\rho(g)$ for each $g \in G$ \cite[Proposition 6]{Serre-1977}.

We are concerned with the representation theory of $S_n$, which is well understood (see \cite{Sagan-2001}). In fact, there is a natural correspondence between conjugacy classes and irreps, so we can label the irreps as $( V_{S_n}^{\lambda} , \rho_{S_n}^{\lambda} )$ for $\lambda \vdash n$. The character of each irrep will simply be denoted $\chi_\lambda$. Given $\lambda$, define $Q_\lambda \in \mathbb{C}[S_n]$ by
\begin{equation}\label{eqn:Q-formula}
	Q_\lambda\vcentcolon=\frac{\chi_\lambda(1)}{n!} \sum_{\sigma} \chi_\lambda(\sigma^{-1}) \sigma.
\end{equation}
The dimension $\chi_\lambda(1)$ is known \cite[Corollary 7.21.6]{Stanley-2024} to be
\begin{equation}\label{eqn:character-hook-length}
	\chi_\lambda(1) = \frac{n!}{\prod_{u \in \lambda} h_u}.
\end{equation}

If $(\rho, V)$ is any representation of $S_n$ canonically decomposed as $V = \bigoplus_{\lambda \vdash n} m_\lambda V_{S_n}^{\lambda}$, then by (\ref{eqn:representation-projection}) we see $\rho(Q_\lambda)$ is the projection of $V$ onto $m_\lambda V_{S_n}^{\lambda}$. By considering the regular representation, we learn
\begin{prop}\label{prop:properties-of-Q}
	Fix $\lambda, \mu \vdash n$ and $s \in \mathbb{C}[S_n]$. Then
	\begin{itemize}
		\setlength\itemsep{-0.5em}
		\item $Q_\lambda s = s Q_\lambda$, \\
		\item $\sum_{\lambda \vdash n} Q_\lambda = 1$, \\
		\item $Q_\lambda Q_\mu = \delta_{\lambda,\mu} Q_\lambda$.
	\end{itemize}
\end{prop}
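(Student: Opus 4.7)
The plan is to deduce all three identities from facts about the canonical decomposition by exploiting a faithful representation. The key observation is that the left regular representation $\rho_R$ of $\mathbb{C}[S_n]$ on itself is injective: if $\rho_R(s) = 0$ then $\rho_R(s)(e) = s \cdot e = s = 0$, where $e$ is the identity of $S_n$. Hence any identity in $\mathbb{C}[S_n]$ can be checked after applying $\rho_R$. Under $\rho_R$, the canonical decomposition of the paragraph preceding the proposition gives a splitting $\mathbb{C}[S_n] = \bigoplus_{\lambda \vdash n} m_\lambda V_{S_n}^{\lambda}$, and by the discussion following equation~(\ref{eqn:Q-formula}) (which is an instance of equation~(\ref{eqn:representation-projection})), $\rho_R(Q_\lambda)$ is exactly the projection $p_\lambda$ onto the $\lambda$-isotypic summand.

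First I would prove the second identity $\sum_\lambda Q_\lambda = 1$: applying $\rho_R$, the left side becomes $\sum_\lambda p_\lambda$, and this equals the identity operator on $\mathbb{C}[S_n]$ because the $p_\lambda$ are the projections onto the summands of a direct sum decomposition of the whole space (which is restated in the text right after equation~(\ref{eqn:representation-projection})). Since $\rho_R(1) = \mathrm{id}$ and $\rho_R$ is injective, we conclude $\sum_\lambda Q_\lambda = 1$.

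Next, for the third identity I would observe that $\rho_R(Q_\lambda Q_\mu) = p_\lambda p_\mu$. The projections $p_\lambda$ associated to distinct isotypic components of a direct sum decomposition satisfy $p_\lambda p_\mu = \delta_{\lambda,\mu} p_\lambda$, so $\rho_R(Q_\lambda Q_\mu) = \delta_{\lambda,\mu} \rho_R(Q_\lambda)$, and injectivity of $\rho_R$ finishes the argument. For the first identity, I would apply $\rho_R$ to both sides and use the fact (cited in the excerpt from \cite[Proposition~6]{Serre-1977}) that $p_\lambda$ commutes with every $\rho_R(g)$, hence by linearity with every $\rho_R(s)$. Thus $\rho_R(Q_\lambda s) = p_\lambda \rho_R(s) = \rho_R(s) p_\lambda = \rho_R(s Q_\lambda)$, and once again injectivity yields $Q_\lambda s = s Q_\lambda$.

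There is no serious obstacle here; the proof is essentially bookkeeping once one recognizes that the faithful regular representation lets us transfer known identities for projections (commutation, partition of unity, orthogonality of idempotents) back to identities in $\mathbb{C}[S_n]$. The only point requiring a moment of care is the faithfulness of $\rho_R$, which is the sole mechanism by which each step is promoted from an operator identity to an algebra identity. An alternative, completely self-contained route for the third identity would be a direct computation using the character orthogonality relations $\sum_\sigma \chi_\lambda(\sigma^{-1}) \chi_\mu(\pi^{-1}\sigma) = \frac{n!}{\chi_\lambda(1)} \delta_{\lambda,\mu} \chi_\lambda(\pi^{-1})$, but this is less conceptual and duplicates reasoning already present in the background cited.
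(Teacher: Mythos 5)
Your proof is correct and follows essentially the same route as the paper: both arguments pass to the regular representation, use the projection identities from Serre's Theorem~8 and Proposition~6, and then pull the identities back to $\mathbb{C}[S_n]$. The only cosmetic difference is that you phrase the pullback step as ``faithfulness of $\rho_R$,'' whereas the paper phrases it as evaluating $\rho_R(\cdot)$ on $t=1$; these are the same mechanism.
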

\begin{proof}
	
	Let $\rho_R : \mathbb{C}[S_n] \to \morph(\mathbb{C}[S_n])$ be the regular representation. Then $\rho_R(Q_\lambda)$ is of the form (\ref{eqn:representation-projection}), so as mentioned previously, $\rho_R(Q_\lambda) \rho_R(\sigma) = \rho_R(\sigma) \rho_R(Q_\lambda)$ for all $\sigma \in S_n$, thus $\rho_R(Q_\lambda) \rho_R(s) = \rho_R(s) \rho_R(Q_\lambda)$. Now for any $t \in \mathbb{C}[S_n]$ we have
	\begin{equation*}
		Q_\lambda s t = \rho_R(Q_\lambda s)t = \rho_R(s Q_\lambda)t = sQ_\lambda t
	\end{equation*}
	and choosing $t = 1$ gives the first claim of the proposition. The other results also follow from applying the properties of projections to the regular representation and evaluating on $t=1$; the second property can be stated as ``the sum of all projections is the identity'' and the third property says ``each projection is idempotent and distinct projections are orthogonal.''
	
\end{proof}

Each $\lambda$ also induces an irrep on $U(d)$ -- written $( V_{U(d)}^{\lambda} , \rho_{U(d)}^{\lambda} )$ -- by \cite[Theorem 36.2]{Bump-2004}. Moreover, this theorem states that the character of this irrep is $U \mapsto s_\lambda(U)$ where $s_\lambda$ is a Schur polynomial. In particular, we have $\dim(V_{U(d)}^{\lambda}) = s_\lambda(d)$.

Next, $S_n$ and $U(d)$ can both be represented on $(\mathbb{C}^d)^{\otimes n}$ via
\begin{gather*}
	\rho_{S_n}^d : S_n \to \morph(\mathbb{C}^d)^{\otimes n} \\
	\rho_{S_n}^d(\sigma) : v_1 \otimes \cdots \otimes v_n \mapsto v_{\sigma^{-1}(1)} \otimes \cdots \otimes v_{\sigma^{-1}(n)}
\end{gather*}
and 
\begin{gather*}
	\rho_{U(d)}^n : U(d) \to \morph(\mathbb{C}^d)^{\otimes n} \\
	\rho_{U(d)}^n(U) : v_1 \otimes \cdots \otimes v_n \mapsto Uv_1 \otimes \cdots \otimes Uv_n.
\end{gather*}

To reduce notational burden, we shall often identify $\rho_{S_n}^d(\sigma) = \sigma$ unless emphasis is necessary. Additionally, because the matrix representation of $\rho_{U(d)}^n(U)$ with respect to the basis of $(\mathbb{C}^d)^{\otimes n}$ induced by the standard basis of $\mathbb{C}^d$ is exactly $U^{\otimes n}$, we write $\rho_{U(d)}^n(U) = U^{\otimes n}$.

These representations of $S_n$ and $U(d)$ commute with each other, giving rise to a natural representation of $\rho_{S_n \times U(d)} : (\sigma, U) \mapsto \sigma U^{\otimes n}$ which decomposes $(\mathbb{C}^d)^{\otimes n}$ as 
\begin{gather}\label{eqn:schur-weyl-duality}	
	(\mathbb{C}^d)^{\otimes n} = \bigoplus_{\substack{\lambda \vdash n \\ \ ht(\lambda) \leq d}} V_{S_n}^{\lambda} \otimes V_{U(d)}^{\lambda} 
\end{gather}
where $S_n \times U(d)$ acts by $\rho_{S_n}^{\lambda} \otimes \rho_{U(d)}^{\lambda}$ on $V_{S_n}^{\lambda} \otimes V_{U(d)}^{\lambda}$. This is known as Schur-Weyl duality, first proven by Weyl \cite{Weyl-1997}, though Bump \cite[Theorem 36.4]{Bump-2004} provides a modern treatment. 

As observed in \cite{Collins-Sniady-2006}, this structure implies that $\rho_{S_n}^d : \mathbb{C}[S_n] \to \morph(\mathbb{C}^d)^{\otimes n}$ is injective when restricted to the subalgebra 
\begin{equation*}
	\mathbb{C}_d[S_n] \vcentcolon= \left( \sum_{\substack{\lambda \vdash n \\ \ ht(\lambda) \leq d}} Q_\lambda \right) \mathbb{C}[S_n] .
\end{equation*}
In particular, when $n \leq d$, every $\lambda \vdash n$ satisfies $ht(\lambda) \leq d$, and since $\sum_{\lambda \vdash n} Q_\lambda = 1$, this means $\mathbb{C}_d[S_n] = \mathbb{C}[S_n]$. Hence $\rho_{S_n}^d$ is injective when $n \leq d$, which will be the case throughout Section~\ref{sec:scalar}.

Because $\rho_{S_n}^d(\sigma) = \rho_{S_n \times U(d)} (\sigma, I_d)$, we see from (\ref{eqn:schur-weyl-duality}) that
\begin{equation}\label{eqn:tensor-representation-decomp}
	(\mathbb{C}^d)^{\otimes n} = \bigoplus_{\lambda\vdash n, \ ht(\lambda)\leq d} s_\lambda(d) V_{S_n}^{\lambda}.
\end{equation}
is the canonical decomposition of $(\mathbb{C}^d)^{\otimes n}$ with respect to $\rho_{S_n}^d$. In terms of characters we get
\begin{equation}\label{eqn:character-of-permutation-representation-decomposed}
	\chi_{S_n}^{d} = \sum_{\lambda \vdash n , \ ht(\lambda) \leq d} s_\lambda(d) \chi_\lambda .
\end{equation}
In fact, $\chi_{S_n}^{d}$ can also be computed directly by \cite[Proposition 1.10]{Procesi-2021}
\begin{equation}\label{eqn:character-of-permutation-representation-direct}
	\chi_{S_n}^{d}(\sigma) = d^{c(\sigma)} .
\end{equation}
Again, $c(\sigma)$ is the number of disjoint cycles of $\sigma$.

To close out this section, we return to the Littlewood-Richardson coefficients. For any representation $\rho$ of a group $G$, we obtain a representation of any subgroup $H \leq G$ by restriction $\rho|_H$. Even when $\rho$ is an irrep of $G$, the restriction is typically not an irrep of $H$, so one wonders how $\rho|_H$ decomposes into $H$-irreps. In the case of $G = S_n$ and $H = S_\alpha$, the Littlewood-Richardson coefficients answer this question. 

When $\alpha = (\alpha_1 , \alpha_2)$, it is known \cite[Equation~(4.33)]{Sagan-2001} that
\begin{equation}\label{eqn:splitting-rule}
	\chi_{\lambda} (\gamma) = \sum_{\substack{\mu\vdash \alpha_1 \\ \nu\vdash\alpha_2}} c_{\mu\nu}^\lambda \ \chi_{\mu} (\gamma_{1}) \chi_{\nu} (\gamma_{2})
\end{equation}
where $\gamma \in S_\alpha$ is written $\gamma = \gamma_{1} \gamma_{2}$ for $\gamma_{i} \in S_{\alpha_i}$ $(i=1,2)$. One can derive an extended splitting rule for general $\alpha$ by repeated application of this base rule. For example when $\alpha = (\alpha_1 , \alpha_2 , \alpha_3)$, we see
\begin{equation*}
	\chi_\lambda = \sum_{ \substack{ \mu^1 \vdash \alpha_1 \\ \nu^1 \vdash \alpha_2 + \alpha_3 } } c^{\lambda}_{\mu^1,\nu^1} \chi_{\mu^1} \chi_{\nu^1} =	\sum_{ \substack{ \mu^1 \vdash \alpha_1 \\ \nu^1 \vdash \alpha_2 + \alpha_3 \\ \mu^2 \vdash \alpha_2 \\ \mu^3 \vdash \alpha_3 } } c^{\lambda}_{\mu^1,\nu^1} c^{\nu^1}_{\mu^2,\mu^3} \chi_{\mu^1} \chi_{\mu^2} \chi_{\mu^3}
\end{equation*}
with the first equality coming from restricting $S_n$ to $S_{\alpha_1} \times S_{\alpha_2+ \alpha_3}$ and then restricting further to $S_{\alpha_1} \times S_{\alpha_2} \times S_{\alpha_3} = S_\alpha$. Proceeding inductively for general $\alpha$ yields
\begin{equation}\label{eqn:splitting-rule-general}
	\chi_\lambda = \sum_{ \substack{ \mu^i \vdash \alpha_i \\ \nu^i \vdash \alpha_{i+1} + \cdots + \alpha_{g} } } c^{\lambda}_{\mu^1 , \nu^1} \left( \prod_{i=2}^{g-2} c^{\nu^{i-1}}_{\mu^i, \nu^i} \right) c^{\nu^{g-2}}_{\mu^{g-1},\mu^{g}} \prod_{i=1}^{g} \chi_{\mu^i}
\end{equation}
The $\mu^i$'s range over $i = 1 ,\ldots , g$ while the $\nu^i$'s range over $i = 1 , \ldots , g-2$.

\subsection{Tools from matrix analysis}\label{sec:matrix-analysis}

For any $m\times m$ matrix $T$, let $L_T$ (resp. $R_T$) denote the linear maps on $m \times \ell$ (resp. $\ell \times m$) matrices $L_TX = TX$ (resp. $R_TX$ =$XT$). Direct calculation reveals
\begin{equation}\label{eqn:left-right}
	\text{vec}(L_A(R_B(X))) = (B^T \otimes A) \text{vec}(X)
\end{equation}

\noindent where $\text{vec}$ is the column vectorization map. (That is, $\text{vec}(X)$ is the column vector formed by stacking the columns of $X$ vertically, beginning with the first column and continuing downward.) Under this map, the basis of matrix units of $M_n(\mathbb C)$ is mapped onto the standard basis of $\mathbb C^{n^2}$. Consequently, $B^T \otimes A$ is a matrix representation of the composed map $L_A \circ R_B$ in the standard basis.

Two other identities we shall use occasionally are
\begin{gather*}
	\tr(A \otimes B) = \tr(A)\tr(B) \\
	(A \otimes B) \circ (C \otimes D) = (A \circ C) \otimes (B \circ D)
\end{gather*}
for matrices $A,B,C,D$ of compatible sizes. We are using $\circ$ to emphasize matrix multiplication/composition of linear maps.

The following proposition is a rephrasing of Procesi \cite{Procesi-2021} Proposition 1.5 (or Kostant \cite{Kostant-1958} Lemma 4.9) which we include for easy reference:

\begin{prop}\label{thm:kostant}
	
	Take $X_1 , \ldots , X_n \in \matrixspace{d}$. If $\sigma = (i_1 \, i_2 \, \ldots \, i_h) \ldots (j_1 \, j_2 \, \ldots \, j_\ell)$ is the disjoint cycle decomposition of $\sigma \in S_n$, then
	\begin{equation*}
		\tr( X_{i_1} X_{i_2} \cdots X_{i_h} ) \cdots \tr (X_{j_1} X_{j_2} \cdots X_{j_\ell} ) = \tr( \rho_{S_n}^d(\sigma^{-1}) \circ (X_1 \otimes X_2 \otimes \cdots \otimes X_n) ).
	\end{equation*}
	
\end{prop}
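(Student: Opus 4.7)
The proof is a direct calculation of the trace on the right-hand side in the standard tensor basis, followed by a cycle-by-cycle factorization. Let $\{e_1,\dots,e_d\}$ be the standard basis of $\mathbb{C}^d$, so $\{e_{k_1}\otimes\cdots\otimes e_{k_n}\}_{1\le k_j\le d}$ is the induced basis of $(\mathbb{C}^d)^{\otimes n}$. From the definitions,
\[
(X_1\otimes\cdots\otimes X_n)(e_{k_1}\otimes\cdots\otimes e_{k_n}) = \sum_{m_1,\dots,m_n}\Bigl(\prod_{j=1}^{n}(X_j)_{m_j,k_j}\Bigr)\,e_{m_1}\otimes\cdots\otimes e_{m_n},
\]
and the definition of $\rho_{S_n}^d$ given in Section~\ref{sec:representations} yields $\rho_{S_n}^d(\sigma^{-1})(e_{m_1}\otimes\cdots\otimes e_{m_n})=e_{m_{\sigma(1)}}\otimes\cdots\otimes e_{m_{\sigma(n)}}$. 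Taking the trace extracts the coefficient of $e_{k_1}\otimes\cdots\otimes e_{k_n}$ in the image of itself, which enforces $m_{\sigma(j)}=k_j$, equivalently $m_j = k_{\sigma^{-1}(j)}$.

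Carrying out this bookkeeping gives the compact expression
\[
\tr\bigl(\rho_{S_n}^d(\sigma^{-1})\circ(X_1\otimes\cdots\otimes X_n)\bigr)
= \sum_{k_1,\dots,k_n=1}^{d}\prod_{j=1}^{n}(X_j)_{k_{\sigma^{-1}(j)},\,k_j}.
\]
The next step is to observe that this sum splits as a product over the disjoint cycles of $\sigma$, since the indices $k_j$ associated to distinct cycles appear in disjoint groups of factors. For a single cycle $(i_1\,i_2\,\cdots\,i_h)$ one has $\sigma^{-1}(i_r)=i_{r-1}$ (with $i_0 \vcentcolon= i_h$), so the contribution of that cycle is
\[
\sum_{k_{i_1},\dots,k_{i_h}}(X_{i_1})_{k_{i_h},k_{i_1}}(X_{i_2})_{k_{i_1},k_{i_2}}\cdots(X_{i_h})_{k_{i_{h-1}},k_{i_h}},
\]
which collapses telescopically via the matrix multiplication rule to $\tr(X_{i_1}X_{i_2}\cdots X_{i_h})$. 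Multiplying these contributions over all cycles of $\sigma$ yields the claimed identity.

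The only real pitfall is keeping track of conventions: the exponent $\sigma^{-1}$ on the left of the tensor-permutation factor is exactly what is needed to produce the product in the cyclic order $X_{i_1}X_{i_2}\cdots X_{i_h}$ rather than its reverse. Once the definition of $\rho_{S_n}^d$ from Section~\ref{sec:representations} is correctly transcribed, there are no further difficulties; cyclicity of the trace shows that the answer is independent of which element of each cycle one designates as the starting point, matching the unambiguous way the right-hand side of the statement is written.
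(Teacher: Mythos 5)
The paper does not prove this proposition; it is stated as a rephrasing of Procesi, \emph{Proposition 1.5} (or Kostant, \emph{Lemma 4.9}) and cited for reference. Your proof is correct and is the standard direct computation: expand the trace in the induced tensor basis, observe that the constraint $m_{\sigma(j)}=k_j$ forces $m_j=k_{\sigma^{-1}(j)}$, and then factor the resulting index sum cycle by cycle, where each cycle $(i_1\cdots i_h)$ contributes $\sum_{k_{i_1},\dots,k_{i_h}}(X_{i_1})_{k_{i_h},k_{i_1}}\cdots(X_{i_h})_{k_{i_{h-1}},k_{i_h}}=\tr(X_{i_1}\cdots X_{i_h})$ by the matrix multiplication rule. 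Your bookkeeping of the $\sigma^{-1}$ convention matches the paper's definition $\rho_{S_n}^d(\sigma)(v_1\otimes\cdots\otimes v_n)=v_{\sigma^{-1}(1)}\otimes\cdots\otimes v_{\sigma^{-1}(n)}$, and the remark about cyclicity of trace justifying the independence of the choice of starting point within each cycle is a nice sanity check. Nothing to correct.
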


\noindent In particular, when all $X_i = A \in \matrixspace{d}$ this equation becomes
\begin{equation}\label{eqn:kostant-corollary}
	p_\lambda(A) = \tr( \sigma^{-1} \circ A^{\otimes n} )
\end{equation}
where $\lambda$ is the cycle type of $\sigma$.

From this we obtain an extremely important (for us) consequence, which is that we can rewrite the identities in Lemma~\ref{thm:cauchy-identities} in terms of sums over the symmetric group rather than sums over partitions. In particular we have
\begin{lem}\label{lem:cauchy-identities-redux}
  For any matrix $A \in \matrixspace{d}$,
\begin{equation}\label{eqn:det-to-power-sum-redux}
	\det(I + A) = \sum_{n=0}^d \frac{1}{n!} \sum_{\sigma\in S_n} \sign(\sigma) \tr(\rho_{S_n}^d(\sigma^{-1}) \circ A^{\otimes n}) 
\end{equation}
Moreover, if $rad(A)<1$, then 
\begin{equation}\label{eqn:dual-cauchy-redux}
  \det(I-A)^{-1} = \sum_{n=0}^\infty \frac{1}{n!} \sum_{\sigma\in S_n} \tr(\rho_{S_n}^d(\sigma^{-1}) \circ A^{\otimes n}) 
\end{equation}
where the series is absolutely convergent in the sense that
\begin{equation*}
	\sum_{n=0}^\infty\left| \sum_{\sigma\in S_n} \tr(\rho_{S_n}^d(\sigma^{-1}) \circ A^{\otimes n})  \right| <\infty.
\end{equation*}

\end{lem}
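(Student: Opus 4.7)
The plan is to reduce the lemma to Lemma~\ref{thm:cauchy-identities} by rewriting the partition sums there as sums over the symmetric group. The single key tool for this is Kostant's identity (\ref{eqn:kostant-corollary}), which asserts that $p_\lambda(A) = \tr(\rho_{S_n}^d(\sigma^{-1}) \circ A^{\otimes n})$ for any $\sigma \in S_n$ of cycle type $\lambda$.

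First I would observe that the right-hand side of Kostant's identity is a class function of $\sigma$, and the same is evidently true of $\sign(\sigma)$ (both depend only on cycle type). Therefore, grouping $\sigma \in S_n$ by conjugacy class and using the orbit-stabilizer relation (\ref{eqn:orb-stab}) to count each class, I would show
\begin{equation*}
  \frac{1}{n!}\sum_{\sigma \in S_n} \tr(\rho_{S_n}^d(\sigma^{-1}) \circ A^{\otimes n}) = \sum_{\lambda \vdash n} \frac{c_\lambda}{n!}\, p_\lambda(A) = \sum_{\lambda \vdash n} z_\lambda^{-1} p_\lambda(A),
\end{equation*}
and the analogous identity with $\sign$ inserted on both sides, using that $\sign(\sigma) = \sign(\lambda)$ for $\sigma$ of type $\lambda$. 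Substituting these into the formulas (\ref{eqn:det-to-power-sum}) and (\ref{eqn:dual-cauchy}) of Lemma~\ref{thm:cauchy-identities} immediately yields (\ref{eqn:det-to-power-sum-redux}) and (\ref{eqn:dual-cauchy-redux}). Note that the finite sum in (\ref{eqn:det-to-power-sum-redux}) truncates at $n=d$ for exactly the same reason as in Lemma~\ref{thm:cauchy-identities}: the elementary symmetric polynomials $e_n$ vanish for $n>d$.

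The absolute convergence claim in (\ref{eqn:dual-cauchy-redux}) then follows directly from the corresponding claim in Lemma~\ref{thm:cauchy-identities}, since under the above reindexing the inner sums match up to the normalization $1/n!$ that defines the terms of the series in (\ref{eqn:dual-cauchy-redux}).

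There is essentially no obstacle here; the lemma is a routine bookkeeping reformulation of Lemma~\ref{thm:cauchy-identities}. Its importance is that, in the form given, the expressions $\tr(\rho_{S_n}^d(\sigma^{-1}) \circ A^{\otimes n})$ with $A = \sum_j X_j \otimes U_j$ will interact naturally with the Haar integral over $U(d)^g$ via Schur--Weyl duality (\ref{eqn:schur-weyl-duality}), which is the mechanism driving the subsequent analysis of determinants of random unitary pencils.
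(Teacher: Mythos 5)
Your argument is essentially identical to the paper's proof: both partition $S_n$ into conjugacy classes, apply the Kostant corollary~(\ref{eqn:kostant-corollary}) to replace each $\tr(\rho_{S_n}^d(\sigma^{-1})\circ A^{\otimes n})$ by $p_\lambda(A)$, use the orbit-stabilizer relation~(\ref{eqn:orb-stab}) to turn $c_\lambda/n!$ into $z_\lambda^{-1}$, and then substitute into Lemma~\ref{thm:cauchy-identities}, with absolute convergence carried over directly. The approach and all the key steps match; the proposal is correct.
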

\begin{proof}
	
	By abuse of notation write $\sigma \in \lambda$ to mean that $\sigma \in S_n$ has cycle type $\lambda \vdash n$. Since $S_n$ can be partitioned into cycle types, we see
	
	\begin{align*}
		\frac{1}{n!} \sum_{\sigma\in S_n} \sign(\sigma) \tr( \sigma^{-1} \circ A^{\otimes n}) &= \frac{1}{n!} \sum_{\lambda \vdash n} \sum_{\sigma \in \lambda} \sign(\sigma) \tr( \sigma^{-1} \circ A^{\otimes n}) \\
		&\overset{(\ref{eqn:kostant-corollary})}{=} \frac{1}{n!} \sum_{\lambda \vdash n} \sum_{\sigma \in \lambda} \sign(\lambda) p_\lambda(A) \\
		&= \sum_{\lambda \vdash n} \frac{c_\lambda}{n!} \sign(\lambda) p_\lambda(A) \\
		&\overset{(\ref{eqn:orb-stab})}{=} \sum_{\lambda \vdash n} z_\lambda^{-1} \sign(\lambda) p_\lambda(A) 
	\end{align*}
	and similarly
	\begin{equation*}
		\frac{1}{n!} \sum_{\sigma\in S_n} \tr( \sigma^{-1} \circ A^{\otimes n}) = \sum_{\lambda \vdash n} z_\lambda^{-1} p_\lambda(A) 
	\end{equation*}
	Substitute these identities into Lemma~$\ref{thm:cauchy-identities}$ to finish the proof.
	
\end{proof}

\subsubsection{Outer spectral radius and row contractions} For a $g$-tuple of $k\times k$ matrices $\mathcal{X}=(X_1, \dots, X_g)$, we defined the {\it outer spectral radius} of $\mathcal{X}$ to be
\begin{equation*}
  {\textbf{rad}}(\mathcal{X})\vcentcolon= (rad(\sum_{j=1}^g X_j\otimes \overline{X_j}))^{1/2}
\end{equation*}
where $rad$ on the right hand side denotes the ordinary spectral radius (the maximum modulus of eigenvalues of a matrix). In fact ${\textbf{rad}}(\mathcal{X})$ is the square root of the spectral radius of the linear mapping $T\to \sum_{j=1}^g X_j TX_j^*$.

We define the {\it row norm} of a matrix tuple $\mathcal{X}$ by $\|\mathcal X\|_{row} = {\|X_1X_1^* +\cdots +X_gX_g^*\|}^{1/2}$. We say that $\mathcal X$ is a {\it row contraction} if $\|\mathcal X\|_{row}<1$. We say that two tuples $\mathcal{X}, \mathcal{Y}$ are {\it jointly similar} if there is an invertible matrix $S$ such that $SX_jS^{-1} = Y_j$ for each $j=1, \dots, g$. We have the following important Rota-style theorem relating the outer spectral radius to row contraction (this is a special case of \cite[Theorem 3.8]{Popescu-2014}, see also \cite[Theorem 1.9]{Pascoe-2021} for a concrete treatment in the case of matrices):
\begin{prop}\label{prop:rota-for-outer}
  A tuple $\mathcal{X}$ is jointly similar to a row contraction if and only if ${\textbf{rad}}(\mathcal{X})<1$.
\end{prop}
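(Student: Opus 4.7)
My plan is to work throughout with the completely positive map $\Phi: T \mapsto \sum_{j=1}^g X_j T X_j^*$ on $k\times k$ matrices. Using the vec identity (\ref{eqn:left-right}) from Section~\ref{sec:matrix-analysis}, the matrix of $\Phi$ in the standard basis is $\sum_j \overline{X_j} \otimes X_j$, which has the same spectrum as $\sum_j X_j \otimes \overline{X_j}$; thus $\textbf{rad}(\mathcal{X})^2 = rad(\Phi)$, the spectral radius of $\Phi$ as a linear map on matrices. Moreover, if $Y_j = SX_jS^{-1}$ and $\Psi(T) = \sum_j Y_j T Y_j^*$, then $\Psi$ is conjugate to $\Phi$ through the invertible change of variables $T \mapsto STS^*$, so $rad(\Phi)$, and hence $\textbf{rad}(\mathcal{X})$, is invariant under joint similarity.

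For the easy direction, assume $\mathcal X$ is jointly similar to a row contraction $\mathcal Y$. The associated map $\Psi$ is completely positive, so its operator norm on matrices is attained at the identity: $\|\Psi\| = \|\Psi(I)\| = \|\mathcal Y\|_{row}^2 < 1$. Since spectral radius is dominated by operator norm, we conclude $\textbf{rad}(\mathcal X) = \textbf{rad}(\mathcal Y) < 1$.

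The main content is the converse, a multivariable Rota-style construction. Given $rad(\Phi) < 1$, I will produce a positive definite $P$ satisfying the strict Lyapunov-type inequality $\Phi(P) < P$, and then use $S = P^{-1/2}$ to conjugate $\mathcal X$ into a row contraction. The natural candidate is the Neumann sum
\[
P \vcentcolon= \sum_{n=0}^{\infty} \Phi^n(I),
\]
which converges in operator norm on the matrix space precisely because $rad(\Phi) < 1$. Complete positivity of $\Phi$ ensures each summand is positive semidefinite, and the $n = 0$ term is $I$, so $P \geq I$ is positive definite. The telescoping identity $\Phi(P) = P - I$ is then immediate from the definition, giving $\Phi(P) < P$ strictly.

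Finally, with $Y_j = P^{-1/2} X_j P^{1/2}$, a direct computation yields
\[
\sum_j Y_j Y_j^* \;=\; P^{-1/2}\Phi(P)P^{-1/2} \;=\; I - P^{-1},
\]
whose operator norm is strictly less than $1$ because $P^{-1}$ is positive definite with a strictly positive smallest eigenvalue. I do not anticipate a serious obstacle; the crucial conceptual point is that complete positivity of $\Phi$ is exactly what promotes the abstract Neumann series into a \emph{positive definite} solution of the operator inequality $\Phi(P) < P$, which is the standard device for manufacturing the similarity to a strict contraction.
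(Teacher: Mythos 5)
Your proof is correct. Note that the paper does not actually prove this proposition; it cites it from \cite[Theorem 3.8]{Popescu-2014} and \cite[Theorem 1.9]{Pascoe-2021}, so there is no internal argument to compare against. Your self-contained proof is the standard Rota-style construction (Neumann series producing a strict Lyapunov solution $P$ with $\Phi(P) < P$, then conjugating by $P^{-1/2}$), which is essentially the argument underlying the cited references as well.

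A few small checks that hold up: the matrix of $\Phi$ is indeed $\sum_j \overline{X_j}\otimes X_j$ by the vec identity, and the tensor flip preserves spectrum, so $\textbf{rad}(\mathcal X)^2 = rad(\Phi)$; the intertwining $\Psi = \Gamma\circ\Phi\circ\Gamma^{-1}$ with $\Gamma(T) = STS^*$ gives similarity invariance; $\Phi(P) = P - I$ telescopes correctly; and $P\geq I > 0$ gives $0\leq I - P^{-1} < I$ in the positive definite order, whose norm is $1 - \lambda_{\max}(P)^{-1} < 1$. The only step that leans on a nonelementary fact is $\|\Psi\| = \|\Psi(I)\|$ for completely positive $\Psi$ (Russo--Dye/Paulsen); this is standard, but if you prefer to avoid it, note that positivity alone already gives $\Psi^n(I)\leq \|\Psi(I)\|^n I$ by induction, hence $rad(\Psi) = \lim_n \|\Psi^n\|^{1/n} \leq \|\Psi(I)\|$, which is all that direction requires.
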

As a consequence also have a kind of Cauchy-Schwarz inequality for the outer spectral radius:
\begin{prop}\label{prop:CS-for-outer}
  For any $g$-tuples of square matrices $\mathcal X, \mathcal Y$ (possibly of different sizes), we have
  \begin{equation}
    rad\left(\sum_{j=1}^g X_j\otimes \overline{Y_j}\right)\leq {\textbf{rad}}(\mathcal{X}) {\textbf{rad}}(\mathcal{Y})
  \end{equation}
\end{prop}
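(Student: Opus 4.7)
The plan is to combine the Rota-style Proposition~\ref{prop:rota-for-outer} with a ``row times column'' block factorization of $\sum_j X_j \otimes \overline{Y_j}$ whose norm can be estimated directly.

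Set $r_X = \textbf{rad}(\mathcal X)$, $r_Y = \textbf{rad}(\mathcal Y)$, and fix $\epsilon > 0$. I would first record two elementary properties of the outer spectral radius: it is homogeneous, $\textbf{rad}(\lambda\mathcal X) = |\lambda|\,\textbf{rad}(\mathcal X)$, and invariant under the adjoint operation, $\textbf{rad}(\mathcal Y^*) = \textbf{rad}(\mathcal Y)$, because $\sum_j Y_j^* \otimes \overline{Y_j^*}$ is the adjoint of $\sum_j Y_j \otimes \overline{Y_j}$ and spectral radius is unaffected by taking adjoints. Then Proposition~\ref{prop:rota-for-outer} applied to $\mathcal X/(r_X + \epsilon)$ yields an invertible $S$ making $A_j := SX_jS^{-1}$ a \emph{row} contraction of bound $r_X + \epsilon$, i.e.\ $\|\sum_j A_jA_j^*\|^{1/2} \leq r_X + \epsilon$, while the same proposition applied to $\mathcal Y^*/(r_Y+\epsilon)$ (and unwinding the definition of the adjoint tuple) yields an invertible $T$ making $B_j := T^{-1}Y_jT$ \emph{column} contractive of bound $r_Y + \epsilon$, i.e.\ $\|\sum_j B_j^*B_j\|^{1/2} \leq r_Y + \epsilon$.

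Since $\sum_j A_j \otimes \overline{B_j} = (S \otimes \overline{T^{-1}})\bigl(\sum_j X_j \otimes \overline{Y_j}\bigr)(S^{-1} \otimes \overline{T})$, these two operators are similar and share the same spectral radius, so it suffices to bound $\|\sum_j A_j \otimes \overline{B_j}\|$ by $(r_X+\epsilon)(r_Y+\epsilon)$. For this, I would factor the sum as a block row times a block column,
\[
\sum_{j=1}^g A_j \otimes \overline{B_j} \;=\; \bigl(A_1 \otimes I \;\; \cdots \;\; A_g \otimes I \bigr) \begin{pmatrix} I \otimes \overline{B_1} \\ \vdots \\ I \otimes \overline{B_g} \end{pmatrix} \;=:\; R\,C,
\]
and then compute $\|R\|^2 = \|\sum_j A_jA_j^* \otimes I\| = \|\sum_j A_jA_j^*\|$, together with $\|C\|^2 = \|\sum_j (\overline{B_j})^*\overline{B_j}\| = \|\overline{\sum_j B_j^*B_j}\| = \|\sum_j B_j^*B_j\|$, where the last equality uses that entrywise conjugation preserves operator norm (the singular values are preserved). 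Multiplying these bounds gives $\|RC\| \leq (r_X+\epsilon)(r_Y+\epsilon)$, and letting $\epsilon \to 0^+$ yields the result.

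There is no serious obstacle; the only subtlety is the asymmetry produced by the entrywise conjugation on $\mathcal Y$, which forces us to secure a \emph{row} contraction for $\mathcal X$ but a \emph{column} contraction for $\mathcal Y$, and hence to invoke Proposition~\ref{prop:rota-for-outer} separately for $\mathcal X$ and $\mathcal Y^*$.
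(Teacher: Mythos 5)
Your proof is correct, and the engine of the argument is the same as the paper's: reduce to contractions via Proposition~\ref{prop:rota-for-outer}, then use a row--column factorization to bound the operator norm (and hence the spectral radius). The paper packages this a bit more slickly: it first swaps the tensor factors and identifies $\sum_j \overline{Y_j}\otimes X_j$ via the vectorization identity (\ref{eqn:left-right}) with the linear map $\Psi\colon T\mapsto \sum_j X_j T Y_j^*$. Because $Y_j^*$ (rather than $\overline{Y_j}$) appears there, one can apply Rota to \emph{both} $\mathcal X$ and $\mathcal Y$ to get row contractions and conclude directly that $\|\Psi\|<1$ from the factorization $\Psi(T)=[X_1\,\cdots\,X_g](I_g\otimes T)[Y_1\,\cdots\,Y_g]^*$. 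By contrast, you work with the Kronecker matrix $\sum_j X_j\otimes \overline{Y_j}$ as is; since the second factor is $\overline{B_j}$ and not $B_j^*$, your block--row/block--column factorization requires $\mathcal Y$ to be a \emph{column} contraction, which you obtain by the extra detour of applying Rota to $\mathcal Y^*$, taking adjoints, and using that entrywise conjugation preserves singular values. Both the extra invariance property $\textbf{rad}(\mathcal Y^*)=\textbf{rad}(\mathcal Y)$ and the $\epsilon$-regularization (in place of the paper's homogeneity reduction to $\textbf{rad}<1$) are correct and do the job; they just make the argument a touch longer than necessary.
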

\begin{proof}
Switching the order of the tensor factors (which does not change the spectral radius), the matrix $\sum_{j=1}^g \overline{Y_j} \otimes X_j$ is the matrix of the linear transformation $\Psi:T\to \sum_{j=1}^gX_jTY_j^*$. By homogeneity it suffices to assume ${\textbf{rad}}(\mathcal{X}),  {\textbf{rad}}(\mathcal{Y})<1$ and prove that the left hand side is strictly less than $1$. But then Proposition~\ref{prop:rota-for-outer} lets us assume, by applying appropriate similarities, that $\mathcal X$ and $\mathcal Y$ are (strict) row contractions, so that the map $\Psi$ is a strict contraction, hence $rad(\Psi)<1$. 
\end{proof}

\section{General considerations and formal convergence}\label{sec:formal}

We recall our conjecture from the introduction: 
\begin{conj}
  Let $\mathcal X, \mathcal Y$ be $g$-tuples of square matrices of size $k\times k$, $k^\prime \times k^\prime$ respectively, with $\textbf{rad}(\mathcal X)$, $\textbf{rad}(\mathcal Y)<1$. Then
  \begin{equation}\label{eqn:pencil-limit-conjecture}
    \lim_{d\to \infty} \int_{U(d)^g} \det (L_{\mathcal X}(\mathcal U))\overline{\det (L_{\mathcal Y}(\mathcal U))}\, d\mathcal U = \det\left( I_k\otimes I_{k^\prime} -\sum_{j=1}^g X_j\otimes \overline{Y_j}\right)^{-1}.
  \end{equation}
\end{conj}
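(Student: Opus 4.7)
The plan is to combine a formal convergence argument with a uniform moment bound, and then upgrade coefficient-wise matching into genuine convergence. By the reduction sketched in the introduction (Proposition~\ref{prop:L2-bound-is-sufficient}), it suffices to establish (i) formal convergence: both sides of (\ref{eqn:pencil-limit-conjecture}) agree termwise as formal series in the matrix entries of $\mathcal X$ and $\overline{\mathcal Y}$; and (ii) a uniform-in-$d$ bound
\begin{equation*}
\sup_{d \geq 1} \int_{U(d)^g} |\det L_{\mathcal X}(\mathcal U)|^2 \, d\mathcal U \leq C(\mathcal X)
\end{equation*}
holding locally uniformly on the outer spectral radius ball $\textbf{rad}(\mathcal X) < 1$. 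A normal families argument then delivers the claimed limit in (\ref{eqn:pencil-limit-conjecture}).

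For step (i), I would expand the left-hand side using Lemma~\ref{lem:cauchy-identities-redux} with $A = \sum_j X_j \otimes U_j$, distribute each tensor power over words $w \in \mathbb W_g(n)$, and regroup to separate the fixed factors $\mathcal X^{\otimes w}$ from the random factor $\mathcal U^{\otimes w}$; the conjugated determinant produces an analogous expansion in $(\tau,w')$. Because $\textbf{rad}(\mathcal X)\cdot\textbf{rad}(\mathcal Y) < 1$ by Proposition~\ref{prop:CS-for-outer}, the right-hand side also admits an absolutely convergent expansion via (\ref{eqn:dual-cauchy-redux}). Coefficient matching then reduces to computing the large-$d$ asymptotics of Haar integrals of the form $\tr(\rho^{kd}_{S_n}(\sigma^{-1}) \circ \mathcal U^{\otimes w}) \, \overline{\tr(\rho^{k'd}_{S_{n'}}(\tau^{-1}) \circ \mathcal U^{\otimes w'})}$, which is exactly the content of the Mingo-\'Sniady-Speicher/R\u{a}dulescu asymptotic orthogonality referenced in the introduction. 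A bookkeeping check using the Schur-Weyl decomposition (\ref{eqn:tensor-representation-decomp}) and the idempotence of the $Q_\lambda$'s (Proposition~\ref{prop:properties-of-Q}) should show the surviving terms reproduce the Taylor expansion of the right-hand side.

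For step (ii), I would first invoke similarity invariance: both sides of (\ref{eqn:pencil-limit-conjecture}) are unchanged if $X_j$ is replaced by $SX_jS^{-1}$ (and $Y_j$ by $TY_jT^{-1}$). By Proposition~\ref{prop:rota-for-outer}, the outer spectral radius hypothesis allows reduction to the case where $\mathcal X$ and $\mathcal Y$ are strict row contractions. One would then expand $|\det L_{\mathcal X}(\mathcal U)|^2$ in the character/word series from step (i), integrate termwise via asymptotic orthogonality, and seek a summable bound on the resulting combinatorial coefficients. This mirrors the triangular argument of Theorem~\ref{thm:triangular-limit}, where reduction to diagonal coefficients and the hook/content estimates of Theorem~\ref{thm:content-ratio-bound} carry everything through.

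The hard part will be controlling those combinatorial sums in the genuinely non-commuting matrix case. When $\mathcal X$ is not jointly triangularizable, the Littlewood-Richardson expansion (\ref{eqn:splitting-rule-general}) couples the matrix coefficients across all $g$ letters rather than factoring over diagonal entries, and the hook/content bounds that power the triangular argument appear to lose too much. A promising alternative route is via free probability: for free Haar unitaries $u_1,\dots,u_g$ in a tracial factor and a row contraction $\mathcal X$, the operator $\sum_j X_j \otimes u_j$ has bounded operator norm, so any polynomial moment of its ``determinant'' is finite in the limit; transferring such a bound to finite-dimensional Haar integrals via strong asymptotic freeness of independent Haar unitaries (Voiculescu, Haagerup-Thorbj{\o}rnsen) would, in principle, yield the required $L^2$ bound. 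Making this transfer quantitatively uniform in $d$, rather than merely asymptotic, is in my view the essential obstruction to the full conjecture.
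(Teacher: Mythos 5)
First, note that the statement in question is a conjecture that the paper itself does not prove in full generality; only the scalar case ($k = k' = 1$, Theorem~\ref{thm:scalar-closed-form}) and the jointly triangularizable case (Theorem~\ref{thm:triangular-limit}) are established. Your proposed framework --- term-by-term (formal) convergence via expansion into trace monomials $p_{\sigma,\alpha}$ and R\u{a}dulescu/Mingo--\'Sniady--Speicher asymptotic orthogonality, followed by a uniform-in-$d$ moment bound and Montel's theorem --- is precisely the strategy of Section~\ref{sec:formal}: your step (i) matches Lemmas~\ref{lem:pencil-expansion-matrix} and \ref{lem:target-expansion}, Theorem~\ref{thm:asymptotic-orthogonality}, and Corollary~\ref{cor:termwise-limit}, and your reduction is exactly Proposition~\ref{prop:L2-bound-is-sufficient}. (A minor notational slip: after the tensor-factor rearrangement in the proof of Lemma~\ref{lem:pencil-expansion-matrix}, the Haar integrals that remain involve $\rho_{S_n}^{d}$, not $\rho_{S_n}^{kd}$, since the $\mathcal X$-part of the trace splits off as a $d$-independent scalar.)

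You correctly identify the missing ingredient as the uniform-in-$d$ $L^2$ bound of hypothesis (\ref{eqn:local-L2-bound}) for arbitrary $\mathcal X$ with $\textbf{rad}(\mathcal X)<1$, and you are right that this is where both your proposal and the paper stop. The paper obtains the bound only in the triangularizable case (Proposition~\ref{prop:uniform-L2-bound}); the crucial reduction there is H\"older's inequality applied to the factorization of $\det L_{\mathcal X}(\mathcal U)$ into a product of scalar-coefficient determinants, which reduces everything to coefficients of the form $x_j I_k$. For such coefficients $p_{\sigma,\alpha}(\mathcal X) = x^\alpha k^{c(\sigma)}$ is a class function of $\sigma$, and only then does the character decomposition plus the content-ratio estimate (Theorem~\ref{thm:content-ratio-bound}) become available. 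For a generic non-triangularizable $\mathcal X$ this product factorization does not exist, so the H\"older step fails at the outset; the Littlewood--Richardson coupling you mention is a downstream symptom rather than the root cause of the failure.

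Your free-probability suggestion faces an additional difficulty beyond the quantitative-uniformity issue you flag. What is well-controlled in the free limit is the Fuglede--Kadison determinant (equivalently the Brown measure), not the ordinary finite-$d$ determinant, which is an unnormalized degree-$kd$ polynomial in matrix entries. Strong asymptotic freeness controls operator norms and normalized traces of fixed noncommutative polynomials; neither directly controls determinants, and passing between them with constants uniform in $d$ is exactly the open problem. In sum, your proposal is a faithful reconstruction of the paper's partial progress, and you have correctly located where the conjecture remains open.
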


In this section we show that the conjecture is formally true, meaning that the limit exists in the sense of formal power series. Subsequent sections deal with estimates for special cases of $\mathcal{X}, \mathcal{Y}$ which will allow us to prove convergence rigorously. We begin by computing homogeneous expansions of the integral and the target limit in (\ref{eqn:pencil-limit-conjecture}). In light of (\ref{eqn:kostant-corollary}), define
\begin{equation}\label{eqn:general-power-sum-poly}
	p_{\sigma, \alpha}(\mathcal{X}) \vcentcolon= \tr (\rho_{S_n}^{k}(\sigma^{-1}) \circ \mathcal{X}^{\otimes \alpha}).
\end{equation}
This allows us to state the following lemma more concisely.  (We remark that, by a famous theorem of Procesi \cite[Theorem 1.3]{Procesi-1976}, these ``trace monomials'' (as one varies $\sigma$ and $\alpha$) generate the ring of $GL(d)$-invariants of matrix tuples $\mathcal X$.)

\begin{lem}\label{lem:pencil-expansion-matrix}
	
	For $\mathcal{X} \in \matrixspace{k}^g$ and $\mathcal{U} \in \matrixspace{d}^g$, we have
	
	\begin{equation}\label{eqn:pencil-expansion-matrix}
	  \det ( L_\mathcal{X}( \mathcal{U} ) ) =  \sum_{n=0}^{kd }\frac{1}{n!} \sum_{|\alpha| = n} \binom{n}{\alpha} \sum_{\sigma \in S_n} \sign(\sigma) p_{\sigma, \alpha}(\mathcal{X})p_{\sigma, \alpha}(\mathcal{U})          
	\end{equation}
	
\end{lem}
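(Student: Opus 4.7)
\noindent\textbf{Proof plan for Lemma~\ref{lem:pencil-expansion-matrix}.} The plan is to apply the determinant expansion (\ref{eqn:det-to-power-sum-redux}) from Lemma~\ref{lem:cauchy-identities-redux} to the $kd \times kd$ matrix $A \vcentcolon= \sum_{j=1}^g X_j \otimes U_j$, yielding
\[
  \det(L_{\mathcal X}(\mathcal U)) = \det(I+A) = \sum_{n=0}^{kd} \frac{1}{n!} \sum_{\sigma \in S_n} \sign(\sigma)\,\tr\!\left(\rho_{S_n}^{kd}(\sigma^{-1}) \circ A^{\otimes n}\right).
\]
The rest of the argument is to unfold the trace on the right-hand side into the stated double sum over multi-indices and group elements.

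The first step is to expand $A^{\otimes n}$ multinomially as
\[
  A^{\otimes n} = \sum_{w \in \mathbb W_g(n)} (X_{i_1}\otimes U_{i_1}) \otimes \cdots \otimes (X_{i_n}\otimes U_{i_n}),
\]
where $w = \ell_{i_1}\cdots\ell_{i_n}$. I would then invoke the canonical isomorphism $(\mathbb C^k \otimes \mathbb C^d)^{\otimes n} \cong (\mathbb C^k)^{\otimes n} \otimes (\mathbb C^d)^{\otimes n}$ obtained by regrouping the $2n$ tensor slots. Under this isomorphism, each word term above becomes $\mathcal X^{\otimes w} \otimes \mathcal U^{\otimes w}$ in the notation of (\ref{eqn:word-power}), and the permutation action $\rho_{S_n}^{kd}(\sigma)$ (which permutes the $n$ ``big'' tensor factors) becomes the simultaneous action $\rho_{S_n}^k(\sigma) \otimes \rho_{S_n}^d(\sigma)$. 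Applying $(A \otimes B) \circ (C \otimes D) = (AC) \otimes (BD)$ and $\tr(A \otimes B) = \tr(A)\tr(B)$ from Section~\ref{sec:matrix-analysis} then gives
\[
  \tr\!\left(\rho_{S_n}^{kd}(\sigma^{-1}) \circ A^{\otimes n}\right) = \sum_{w \in \mathbb W_g(n)} \tr\!\left(\rho_{S_n}^k(\sigma^{-1}) \circ \mathcal X^{\otimes w}\right) \tr\!\left(\rho_{S_n}^d(\sigma^{-1}) \circ \mathcal U^{\otimes w}\right).
\]

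The final step is to replace the sum over all words $w$ by a sum over $S_n$-orbits (i.e.\ over multi-indices $\alpha$ with $|\alpha|=n$) and reduce each orbit to its canonical representative $w_\alpha$. For $w = \tau(w_\alpha)$ with $\tau \in S_n$, conjugation-equivariance gives $\mathcal X^{\otimes w} = \rho_{S_n}^k(\tau)\,\mathcal X^{\otimes w_\alpha}\,\rho_{S_n}^k(\tau^{-1})$ (and the same for $\mathcal U$), so trace-cyclicity produces $\tr(\rho_{S_n}^k(\sigma^{-1}) \circ \mathcal X^{\otimes w}) = p_{\tau^{-1}\sigma\tau,\alpha}(\mathcal X)$. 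Using the fact that the stabilizer of $w_\alpha$ is $S_\alpha$ (so each $w$ in the $\alpha$-orbit is hit by exactly $\alpha!$ permutations $\tau$), together with the class-invariance of $\sign$, one rewrites the inner sum over $w$ as $\frac{1}{\alpha!}\sum_\tau$, and then the substitution $\sigma' = \tau^{-1}\sigma\tau$ shows the sum over $\tau$ factors out, contributing an overall $\frac{n!}{\alpha!} = \binom{n}{\alpha}$. Reassembling and inserting back into the determinant expansion yields (\ref{eqn:pencil-expansion-matrix}).

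The argument involves no deep obstacle; the only subtle point is the bookkeeping of the two tensor regroupings and the conjugation reindexing needed to pass from a sum over all words in an orbit to the canonical representative $w_\alpha$ with the correct multiplicity.
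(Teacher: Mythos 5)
Your proof is correct and follows essentially the same approach as the paper: apply (\ref{eqn:det-to-power-sum-redux}) to $A=\sum_j X_j\otimes U_j$, expand $A^{\otimes n}$ multinomially over words, regroup the tensor slots via $S:(\mathbb C^k\otimes\mathbb C^d)^{\otimes n}\to(\mathbb C^k)^{\otimes n}\otimes(\mathbb C^d)^{\otimes n}$, and then use the conjugation identity $\mathcal A^{\otimes\tau(w_\alpha)}=\tau\mathcal A^{\otimes\alpha}\tau^{-1}$ with a reindexing of $\sigma$ to collapse the sum over an orbit to the factor $\binom{n}{\alpha}$. The only cosmetic difference is that you apply the tensor-regrouping isomorphism before the orbit reduction (introducing the $\frac{1}{\alpha!}\sum_\tau$ lift explicitly), whereas the paper reduces to the canonical representative $w_\alpha$ first and applies $S$ last; the bookkeeping is interchangeable.
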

\begin{proof}
	
	We put $A = \sum_{j=1}^g X_j\otimes U_j\in M_{kd\times kd}$ in Lemma~\ref{lem:cauchy-identities-redux} to obtain
	\begin{equation}
		\det ( L_\mathcal{X}( \mathcal{U} ) ) = \sum_{n=0}^{kd} \frac{1}{n!} \sum_{\sigma\in S_n} \sign(\sigma)\tr \left(\rho_{S_n}^{kd}(\sigma^{-1}) \circ \left(\sum_{j=1}^g X_j\otimes U_j\right)^{\otimes n}\right)
	\end{equation}

	Temporarily set $A_j = X_j \otimes U_j$. For \textit{any} $g$-tuple of matrices $\mathcal{A} = (A_1 , \ldots , A_g)$, the $n$-fold tensor product can be expanded as
	\begin{equation*}
		\left( \sum_{j=1}^g A_j \right)^{\otimes n} = \sum_{w \in \mathbb{W}_{g}(n)} \mathcal{A}^{\otimes w}.
	\end{equation*}
  
	We also observe that
	\begin{equation}\label{eqn:word-tensor-conjugate}
	 	\tau \mathcal{A}^{\otimes w} \tau^{-1} = \mathcal{A}^{\otimes \tau(w)}.
	\end{equation}
	
	\noindent This follows directly from the definitions of these operators. Since any word $w$ of letter count $\alpha$ is a permutation of $w_\alpha$ (i.e. $w = \tau_w(w_\alpha)$ for some $\tau_w \in S_n$), we proceed with simplifying:
	\begin{align*}
	 	\det \left( I + \sum_{j=1}^{g} A_j \right) &= \sum_{n=0}^{kd} \frac{1}{n!} \sum_{\sigma\in S_n} \sign(\sigma)\tr \left( \sigma^{-1} \circ \left(\sum_{j=1}^g A_j\right)^{\otimes n}\right) \\
	 	&= \sum_{n=0}^{kd} \frac{1}{n!} \sum_{\sigma\in S_n} \sum_{w \in \mathbb{W}_{g}(n)} \sign(\sigma)\tr \left( \sigma^{-1} \circ \mathcal{A}^{\otimes w} \right) \\
	 	&= \sum_{n=0}^{kd} \frac{1}{n!} \sum_{|\alpha| = n} \sum_{w \in \alpha} \sum_{\sigma\in S_n} \sign(\sigma)\tr \left( \sigma^{-1} \circ \mathcal{A}^{\otimes w} \right) \\
	 	&= \sum_{n=0}^{kd} \frac{1}{n!} \sum_{|\alpha| = n} \sum_{w \in \alpha} \sum_{\sigma\in S_n} \sign(\sigma)\tr \left( \sigma^{-1} \circ \mathcal{A}^{\otimes \tau_w(w_\alpha)} \right) \\
	 	&= \sum_{n=0}^{kd} \frac{1}{n!} \sum_{|\alpha| = n} \sum_{w \in \alpha} \sum_{\sigma\in S_n} \sign(\sigma)\tr \left( \sigma^{-1} \circ \tau_w \mathcal{A}^{\otimes \alpha} \tau_w^{-1} \right) \\
	 	&\overset{(\ast)}{=} \sum_{n=0}^{kd} \frac{1}{n!} \sum_{|\alpha| = n} \sum_{w \in \alpha} \sum_{\sigma\in S_n} \sign(\sigma)\tr \left( \sigma^{-1} \circ \mathcal{A}^{\otimes \alpha} \right) \\
	 	&= \sum_{n=0}^{kd} \frac{1}{n!} \sum_{|\alpha| = n} \binom{n}{\alpha} \sum_{\sigma\in S_n} \sign(\sigma) \tr( \rho_{S_n}^{kd} (\sigma^{-1}) \circ \mathcal{A}^{\otimes \alpha} ) .
	\end{align*}
	The equality $(\ast)$ holds by cyclicity of trace and the change of variables $\sigma \mapsto \tau_w \sigma \tau_w^{-1}$. In summary,
	\begin{equation}\label{eqn:pencil-expansion-step}
		\det ( L_\mathcal{X}( \mathcal{U} ) ) = \sum_{n=0}^{kd} \frac{1}{n!} \sum_{|\alpha| = n} \binom{n}{\alpha} \sum_{\sigma \in S_n} \sign(\sigma) \; \tr( \rho_{S_n}^{kd}(\sigma^{-1}) \circ ( \mathcal{X} \otimes \mathcal{U} )^{\otimes \alpha} ).
	\end{equation}
	where $\mathcal{X} \otimes \mathcal{U} \vcentcolon= (X_1 \otimes U_1 , \ldots , X_g \otimes U_g)$.
	
	The operators $( \mathcal{X} \otimes \mathcal{U} )^{\otimes \alpha}$ and $\rho_{S_n}^{kd}(\sigma)$ both act on $( \mathbb{C}^k \otimes \mathbb{C}^d )^{\otimes n}$. An elementary tensor in this space has the form
	\begin{equation*}
		v_1^{(k)} \otimes v_1^{(d)} \otimes \cdots \otimes v_n^{(k)} \otimes v_n^{(d)} = \bigotimes_{j=1}^{n} ( v_j^{(k)} \otimes v_j^{(d)})
	\end{equation*}
	where $v_j^{(k)} \in \mathbb{C}^k$ and $v_j^{(d)} \in \mathbb{C}^d$. The map $S : ( \mathbb{C}^k \otimes \mathbb{C}^d )^{\otimes n} \to (\mathbb{C}^k)^{\otimes n} \otimes (\mathbb{C}^d)^{\otimes n}$ defined by
	\begin{equation*}
		S : \bigotimes_{i=1}^{n} ( v_i^{(k)} \otimes v_i^{(d)}) \mapsto \left( \bigotimes_{i=1}^{n} v_i^{(k)} \right) \otimes \left( \bigotimes_{i=1}^{n} v_i^{(d)} \right)
	\end{equation*}
	is invertible (as is any permutation of tensor factors). Moreover, one can directly verify that $S \circ ( \mathcal{X} \otimes \mathcal{U} )^{\otimes \alpha} \circ S^{-1} = \mathcal{X}^{\otimes \alpha} \otimes \mathcal{U}^{\otimes \alpha}$ and $S \circ \rho_{S_n}^{kd}(\sigma) \circ S^{-1} = \rho_{S_n}^{k}(\sigma) \otimes \rho_{S_n}^{d}(\sigma)$.
	
	Thus
	\begin{align*}
		\tr( \rho_{S_n}^{kd}(\sigma^{-1}) \circ ( \mathcal{X} \otimes \mathcal{U} )^{\otimes \alpha} ) &= \tr( (\rho_{S_n}^{k}(\sigma^{-1}) \otimes \rho_{S_n}^{d}(\sigma^{-1})) \circ (\mathcal{X}^{\otimes \alpha} \otimes \mathcal{U}^{\otimes \alpha}) ) \\
		&= \tr( \rho_{S_n}^k(\sigma^{-1}) \circ \mathcal{X}^{\otimes \alpha} ) \; \tr( \rho_{S_n}^d(\sigma^{-1}) \circ \mathcal{U}^{\otimes \alpha} ) \\
		&= p_{\sigma, \alpha}(\mathcal{X}) p_{\sigma, \alpha}(\mathcal{U}).
	\end{align*}
	since trace is conjugation-invariant. Substituting this into (\ref{eqn:pencil-expansion-step}) completes the proof.
\end{proof}

\begin{lem}\label{lem:target-expansion} Let $\mathcal{X}$, $\mathcal{Y}$ be $g$ tuples of square matrices with ${\textbf{rad}}(\mathcal X), {\textbf{rad}}(\mathcal Y)<1$. Then 
  \begin{equation}\label{eqn:target-expansion}
  \det \left(I\otimes I -\sum_{j=1}^g X_j\otimes \overline{Y_j}\right)^{-1} = \sum_{n=0}^\infty \frac{1}{n!} \sum_{|\alpha|=n} \binom{n}{\alpha} \sum_{\sigma\in S_n} p_{\sigma, \alpha}(\mathcal{X})\overline{p_{\sigma, \alpha}(\mathcal{Y)}}
  \end{equation}
  and the series on the right is absolutely convergent in the sense that
  \[
  \sum_{n=0}^\infty \frac{1}{n!} \left| \sum_{|\alpha|=n} \binom{n}{\alpha} \sum_{\sigma\in S_n} p_{\sigma, \alpha}(\mathcal{X})\overline{p_{\sigma, \alpha}(\mathcal{Y)}}\right|<\infty
  \]
  
\end{lem}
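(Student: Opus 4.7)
The plan is to apply the dual Cauchy identity~(\ref{eqn:dual-cauchy-redux}) of Lemma~\ref{lem:cauchy-identities-redux} to the matrix $A \vcentcolon= \sum_{j=1}^g X_j \otimes \overline{Y_j}$ of size $kk^\prime \times kk^\prime$, and then rearrange the resulting expansion in exactly the same way as in the proof of Lemma~\ref{lem:pencil-expansion-matrix}, but without the $\sign(\sigma)$ factor (since the dual identity carries no sign). Proposition~\ref{prop:CS-for-outer} gives $rad(A) \leq \textbf{rad}(\mathcal{X})\textbf{rad}(\mathcal{Y}) < 1$, so the hypotheses of~(\ref{eqn:dual-cauchy-redux}) are met and one obtains the absolutely convergent expansion
\[
\det(I - A)^{-1} = \sum_{n=0}^\infty \frac{1}{n!}\sum_{\sigma \in S_n}\tr\bigl(\rho_{S_n}^{kk^\prime}(\sigma^{-1}) \circ A^{\otimes n}\bigr).
\]

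Next I would expand $A^{\otimes n} = \bigl(\sum_j X_j \otimes \overline{Y_j}\bigr)^{\otimes n}$ as a sum over words in $\mathbb{W}_g(n)$, group words by letter count $\alpha$, and apply the change of variable $\sigma \mapsto \tau_w\sigma\tau_w^{-1}$ using (\ref{eqn:word-tensor-conjugate}) and cyclicity of trace. This is precisely the derivation leading to~(\ref{eqn:pencil-expansion-step}), minus the sign factor, and produces
\[
\tr\bigl(\rho_{S_n}^{kk^\prime}(\sigma^{-1}) \circ A^{\otimes n}\bigr) = \sum_{|\alpha|=n}\binom{n}{\alpha}\tr\bigl(\rho_{S_n}^{kk^\prime}(\sigma^{-1}) \circ (\mathcal{X}\otimes\overline{\mathcal{Y}})^{\otimes\alpha}\bigr).
\]
The shuffle map $S \colon (\mathbb{C}^k\otimes\mathbb{C}^{k^\prime})^{\otimes n}\to(\mathbb{C}^k)^{\otimes n}\otimes(\mathbb{C}^{k^\prime})^{\otimes n}$ from the proof of Lemma~\ref{lem:pencil-expansion-matrix} then factors this trace as $p_{\sigma,\alpha}(\mathcal{X}) \cdot p_{\sigma,\alpha}(\overline{\mathcal{Y}})$. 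Since $\rho_{S_n}^{k^\prime}(\sigma^{-1})$ is a $\{0,1\}$-permutation matrix in the standard basis, complex conjugation commutes with the trace, giving $p_{\sigma,\alpha}(\overline{\mathcal{Y}}) = \overline{p_{\sigma,\alpha}(\mathcal{Y})}$.

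Assembling these pieces yields the claimed expansion~(\ref{eqn:target-expansion}) term-by-term in $n$, and the absolute convergence statement follows immediately from the corresponding conclusion of Lemma~\ref{lem:cauchy-identities-redux}, since the two series have matching $n$-th terms. There is no real obstacle here: the argument is essentially a verbatim adaptation of the proof of Lemma~\ref{lem:pencil-expansion-matrix}, with the only substantive points being the appeal to Proposition~\ref{prop:CS-for-outer} to verify the spectral radius hypothesis and the observation that complex conjugation passes through the trace factoring.
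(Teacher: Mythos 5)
Your proof is correct and follows the paper's argument essentially verbatim: apply (\ref{eqn:dual-cauchy-redux}) to $A = \sum_j X_j \otimes \overline{Y_j}$ (justified via Proposition~\ref{prop:CS-for-outer}), then rerun the word-expansion and shuffle-map manipulation from Lemma~\ref{lem:pencil-expansion-matrix} without the sign factor. You actually supply a small detail the paper leaves implicit, namely that $p_{\sigma,\alpha}(\overline{\mathcal{Y}}) = \overline{p_{\sigma,\alpha}(\mathcal{Y})}$ because the permutation matrix has real entries, so the trace commutes with conjugation.
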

\begin{proof}
  Since each of $\mathcal{X}, \mathcal{Y}$ has outer spectral radius less than $1$, it follows that $rad(\sum_{j=1}^g X_j\otimes \overline{Y_j})<1$ by Proposition~\ref{prop:CS-for-outer}. Applying (\ref{eqn:dual-cauchy-redux}) with $A = \sum_{j=1}^g X_j\otimes\overline{Y_j}$ we obtain
  \[
  \det \left(I\otimes I -\sum_{j=1}^g X_j\otimes \overline{Y_j}\right)^{-1} = \sum_{n=0}^\infty \frac{1}{n!} \sum_{\sigma\in S_n} \tr\left(\rho_{S_n}(\sigma^{-1})\circ \left(\sum_{j=1}^g X_j\otimes \overline{Y_j}\right)^{\otimes n}\right).
  \]
 which is absolutely convergent as in Lemma~\ref{lem:cauchy-identities-redux}. The identity
  \[
  \sum_{\sigma\in S_n} \tr\left(\rho_{S_n}(\sigma^{-1})\circ \left(\sum_{j=1}^g X_j\otimes \overline{Y_j}\right)^{\otimes n}\right) = \sum_{|\alpha|=n} \binom{n}{\alpha}\sum_{\sigma\in S_n} p_{\sigma, \alpha}(\mathcal{X})\overline{p_{\sigma,\alpha}(\mathcal{Y})}
  \]
  follows as in the proof of Lemma~\ref{lem:pencil-expansion-matrix}.
\end{proof}

So far we have only written out the determinants found in $(\ref{eqn:pencil-limit-conjecture})$ in terms of the trace monomials $p_{\sigma,\alpha}$. Next, we apply the Lemma~\ref{lem:pencil-expansion-matrix} to our integral expression in the left hand side of $(\ref{eqn:pencil-limit-conjecture})$ to obtain something of the form
\begin{align*}
  \int_{U(d)^g} & \det( L_\mathcal{X}(\mathcal{U}) ) \overline{ \det( L_\mathcal{Y}(\mathcal{U}) ) } \, d\mathcal{U} = \\
  &= \sum_{n=0}^{kd} \sum_{m=0}^{k'd} \sum_{|\alpha| = n} \sum_{|\beta| = m} \sum_{\sigma \in S_n} \sum_{\tau \in S_m} ( \cdots ) \int_{ U(d)^g } p_{\sigma,\alpha}(\mathcal{U}) \overline{ p_{\tau,\beta}(\mathcal{U}) } \, d\mathcal{U}
\end{align*}

Basic symmetry of the measure $d\mathcal{U}$ reveals that the integrals $\int p_{\sigma,\alpha}(\mathcal{U}) \overline{ p_{\tau,\beta}(\mathcal{U}) }$ vanish unless $\alpha = \beta$. Indeed, in one variable the Haar measure $dU$ is invariant to the change of coordinates $U \mapsto zU$ for a unimodular complex number $z$. So if some $\alpha_j \neq \beta_j$, we apply such a change in the $j^{th}$ coordinate to obtain
\begin{equation*}
	\int_{ U(d)^g } p_{\sigma,\alpha}(\mathcal{U}) \overline{ p_{\tau,\beta}(\mathcal{U}) } \, d\mathcal{U} = z^{\alpha_j - \beta_j} \int_{ U(d)^g } p_{\sigma,\alpha}(\mathcal{U}) \overline{ p_{\tau,\beta}(\mathcal{U}) } \, d\mathcal{U}.
\end{equation*}
The fact that this is true for all unimodular $z$ means the integral vanishes, and so the earlier expansion reduces to
\begin{gather}\label{eqn:expanded-integral}
	\int_{U(d)^g} \det( L_\mathcal{X}(\mathcal{U}) ) \overline{ \det( L_\mathcal{Y}(\mathcal{U}) ) } \, d\mathcal{U}  \\
	= \sum_{n=0}^{ \min ( kd , k'd ) } \frac{1}{(n!)^2} \sum_{ |\alpha| = n } \binom{n}{\alpha}^2  \sum_{ \sigma , \tau \in S_n } \sign(\sigma) \sign(\tau) p_{\sigma,\alpha}(\mathcal{X}) \overline{ p_{\tau,\alpha}(\mathcal{Y}) } \int_{ U(d)^g } p_{\sigma,\alpha}(\mathcal{U}) \overline{ p_{\tau,\alpha}(\mathcal{U}) } \, d\mathcal{U} . \nonumber
\end{gather}

It is clear that in order to proceed, we should try to understand the correlations between the generalized power sum polynomials $p_{\sigma, \alpha}(\mathcal U)$, that is, we wish to understand the integrals
\begin{equation*}
	\int_{ U(d)^g } p_{\sigma,\alpha}(\mathcal{U}) \overline{ p_{\tau,\alpha}(\mathcal{U}) } \, d\mathcal{U}.
\end{equation*}
It turns out that these expressions can become quite complicated at finite sizes $d$, but the in the limit we have useful orthogonality relations. These were discovered independently (in quite different contexts from ours) in \cite{Radulescu-2006} and \cite{Mingo-Sniady-Speicher-2007}  More recently these numbers have also appeared in a topological context \cite{Magee-Puder-2019}.  The limit turns out to be either $0$, or a positive integer with a helpful group-theoretic interpretation.

The next lemma re-expresses the result from \cite[Theorem 4.1]{Radulescu-2006} in a way that is useful to us:

\begin{thm}\label{thm:asymptotic-orthogonality}
	
	Let $\orb_\alpha(\sigma)$ and $\stab_\alpha(\sigma)$ be the orbit and stabilizer of $\sigma \in S_n$ under the action of conjugation by $S_\alpha$. Then
	\begin{equation}\label{eqn:asymptotic-orthogonality}
		\lim_{d \to \infty} \int_{ U(d)^g } p_{\sigma,\alpha}(\mathcal{U}) \overline{ p_{\tau,\alpha}(\mathcal{U}) } \, d\mathcal{U} = {\bf 1}_{\orb_\alpha(\sigma)}(\tau) | \stab_\alpha(\sigma) | .
	\end{equation}
	\noindent (with ${\bf1}_E$ being the indicator function of a set $E$.) In other words, the limit is nonzero if and only if there exists $\gamma \in S_\alpha$ such that $\sigma = \gamma \tau \gamma^{-1}$, in which case the value is the number of $\gamma \in S_\alpha$ such that $\sigma = \gamma \sigma \gamma^{-1}$.
	
\end{thm}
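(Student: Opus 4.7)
The plan is to recognize each $p_{\sigma,\alpha}(\mathcal U)$ as a product of traces of words in the Haar unitaries $U_1, \ldots, U_g$, and then invoke the asymptotic orthogonality theorem for such products due to R\u{a}dulescu.

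First, applying Kostant's formula (Proposition~\ref{thm:kostant}) with $X_i = U_{l(i)}$, where $l:\{1,\ldots,n\}\to\{1,\ldots,g\}$ sends each position to its letter in the canonical word $w_\alpha$, gives
\begin{equation*}
p_{\sigma,\alpha}(\mathcal U) \;=\; \prod_{(i_1\cdots i_\ell) \text{ cycle of }\sigma} \tr\!\bigl(U_{l(i_1)} U_{l(i_2)} \cdots U_{l(i_\ell)}\bigr).
\end{equation*}
Since $\overline{\tr(U_{l(i_1)}\cdots U_{l(i_\ell)})} = \tr(U_{l(i_\ell)}^*\cdots U_{l(i_1)}^*)$, the integrand $p_{\sigma,\alpha}(\mathcal U)\,\overline{p_{\tau,\alpha}(\mathcal U)}$ is a product of traces of words in the $U_j$ and $U_j^*$.

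Next, Theorem 4.1 of \cite{Radulescu-2006} computes the $d\to\infty$ limit of such products-of-traces integrals. It says the limit vanishes unless one can pair each $\sigma$-trace with a $\tau$-trace so that the $\sigma$-word of each pair coincides, as a cyclic word, with the reverse of the $\tau$-word (the reverse arises from the conjugation); when such a pairing exists, the limit counts all such pairings, weighted by the cyclic rotations within the matched cycles.

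The third step is to translate this matching-count into the orbit-stabilizer language. The data of a matching of cycles compatible with letters and cyclic orientation is precisely the data of a permutation $\gamma$ of $\{1,\ldots,n\}$ satisfying
\begin{equation*}
l \circ \gamma = l \qquad \text{and} \qquad \gamma\tau\gamma^{-1} = \sigma.
\end{equation*}
Indeed, if $\tau$ has a cycle $(i_1\cdots i_\ell)$ then $\gamma\tau\gamma^{-1}$ has the cycle $(\gamma(i_1)\cdots \gamma(i_\ell))$, whose associated $\sigma$-word reads $U_{l(\gamma(i_1))}\cdots U_{l(\gamma(i_\ell))} = U_{l(i_1)}\cdots U_{l(i_\ell)}$ precisely when $l\circ\gamma=l$, matching the corresponding $\tau$-word. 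The condition $l \circ \gamma = l$ is exactly $\gamma \in S_\alpha$. Hence the limit is nonzero iff there exists such $\gamma$, i.e.\ iff $\tau \in \mathrm{orb}_\alpha(\sigma)$, and equals $|\{\gamma \in S_\alpha : \gamma\tau\gamma^{-1}=\sigma\}|$; being a coset of $\mathrm{stab}_\alpha(\sigma)$, this set has cardinality $|\mathrm{stab}_\alpha(\sigma)|$ whenever nonempty.

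The main obstacle is the combinatorial bookkeeping in the third step: verifying that the rotation-counting weights in R\u{a}dulescu's formula combine with the count of cycle-matchings to give exactly the number of letter-preserving conjugators $\gamma$, rather than a coarser count of orbits under cyclic rotation. The cleanest way to handle this is to track labeled positions throughout so that each distinct $\gamma$ corresponds to a distinct combinatorial object on the R\u{a}dulescu side. A useful sanity check is the case $g=1$, $\alpha=(n)$, $S_\alpha=S_n$: the orbit of $\sigma$ is its conjugacy class, the stabilizer has order $z_\lambda$, and the formula reduces to the classical Diaconis--Shahshahani orthogonality $\lim_{d\to\infty}\int p_\lambda(U)\overline{p_\mu(U)}\,dU = \delta_{\lambda\mu}z_\lambda$.
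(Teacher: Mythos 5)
Your proof follows the same route as the paper's: apply Kostant's trace formula (Proposition~\ref{thm:kostant}) to write each $p_{\sigma,\alpha}(\mathcal U)$ as a product of traces of words in the $U_j$, invoke R\u{a}dulescu's Theorem~4.1 for the $d\to\infty$ limit, and then translate the resulting combinatorial count into orbit--stabilizer language. Your third step is in fact slightly more direct than the paper's: the paper computes $|\orb_\alpha(\sigma)| = \alpha!/(b_1!\cdots b_p!\,j_1^{b_1}\cdots j_p^{b_p})$ and then applies orbit--stabilizer, whereas you identify R\u{a}dulescu's count directly with $|\{\gamma\in S_\alpha : \gamma\tau\gamma^{-1}=\sigma\}|$ and observe this set is a coset of $\stab_\alpha(\sigma)$; both bookkeepings work.

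There is, however, an error in your statement of R\u{a}dulescu's criterion in the second step. The asymptotic is nonzero precisely when the $\sigma$-cycle-words coincide with the $\tau$-cycle-words \emph{as cyclic words} with matching multiplicities --- not when they coincide with the \emph{reverses} of the $\tau$-words. Although writing $\overline{\tr(W)}=\tr(W^\ast)$ does literally reverse the factors, R\u{a}dulescu's theorem is stated directly for integrals $\int\prod_i\tr(W_i)^{b_i}\overline{\tr(W_i)^{c_i}}\,d\mathcal{U}$ with the same words $W_i$ in both products, and the nonvanishing condition is simply $b_i=c_i$ for all $i$. For example,
\begin{equation*}
\int_{U(d)^3}\tr(U_1U_2U_3)\,\overline{\tr(U_3U_2U_1)}\,d\mathcal{U} \;=\; \frac{1}{d^2}\;\longrightarrow\;0,
\end{equation*}
even though $u_3u_2u_1$ is the reverse of $u_1u_2u_3$, while $\int\tr(U_1U_2U_3)\,\overline{\tr(U_1U_2U_3)}\,d\mathcal{U}=1$ identically. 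The mistake is local and does not propagate: your own verification in the third step --- that $l\circ\gamma=l$ forces the $\sigma$-word read off $(\gamma(i_1)\cdots\gamma(i_\ell))$ to \emph{equal} the $\tau$-word read off $(i_1\cdots i_\ell)$ --- already uses the correct criterion. You should simply correct the description of R\u{a}dulescu's theorem; once that is done, the remaining combinatorial bookkeeping you flagged does go through.
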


\begin{proof}
	
	For some word $w_i \in \mathbb{W}_{g}(n)$ let $W_i$ be the product of matrices obtained by replacing each letter $\ell_j$ of $w_i$ with the corresponding $U_j$. R\u{a}dalescu's Theorem 4.1 \cite{Radulescu-2006} proves that
	\begin{equation}\label{eqn:radalescu-limit}
		\lim_{d \to \infty} \int_{U(d)^g} \prod_{i=1}^{p} \tr( W_i )^{b_i} \overline{ \tr( W_i )^{c_i} } \, d\mathcal{U}
	\end{equation}
	is non-zero if and only if all $b_i = c_i$, in which case the non-zero value of this limit is $b_1! \cdots b_p! j_1^{b_1} \cdots j_p^{b_p}$ where $j_i = j(W_i)$ is the number of cyclic rotations of $W_i$ that leave $W_i$ invariant.

	Proposition~\ref{thm:kostant} tells us
	\begin{equation*}
		p_{\sigma,\alpha}(\mathcal{U}) = \prod_{i=1}^{p} \tr( W_i )^{b_i} \;,\; p_{\tau,\alpha}(\mathcal{U}) = \prod_{i=1}^{p} \tr( W_i )^{c_i}
	\end{equation*}
	for some words $w_i$ and non-negative integers $b_i,c_i$.
	
	Recalling the partition $\mathcal{P} = (P_1 , \ldots , P_g)$ used to define $S_\alpha$, we obtain the above trace products by taking the disjoint cycle form of $\sigma$ (and $\tau$) and replacing all the elements of $P_j$ with the matrix $U_j$, then multiplying and taking traces. Since this only depends on $\mathcal{P}$ -- rather than each individual element of $\{1 , 2 , \ldots , n\}$ -- it follows that $p_{\sigma , \alpha}(\mathcal{U}) = p_{\tau, \alpha}(\mathcal{U})$ (i.e. all $b_i = c_i$) if and only if $\sigma \in \orb_\alpha(\tau)$.

	To see how $b_1! \cdots b_p! j_1^{b_1} \cdots j_p^{b_p} = | \stab_\alpha(\sigma) |$, one need only imitate the derivation of the value of $z_\lambda$ \cite[Proposition 1.3.2]{Stanley-2012}. As an outline, fix a disjoint cycle form of $\sigma$, conjugate by every $\gamma \in S_\alpha$ to produce $\alpha!$ disjoint cycle forms, then consider which of these are equivalent via commuting different cycles and rotating each cycle to get the number of distinct forms. Because $\gamma \in S_\alpha$, it follows that $$| \orb_\alpha(\sigma)| = \alpha! / (b_1! \cdots b_p! j_1^{b_1} \cdots j_p^{b_p}),$$ so $b_1! \cdots b_p! j_1^{b_1} \cdots j_p^{b_p} = | \stab_\alpha(\sigma) |$ by the orbit-stabilizer theorem.
	
\end{proof}
We can now establish the formal convergence: the next corollary states that as $d\to \infty$, for each $n$ the $n^{th}$ homogeneous term in the expansion (\ref{eqn:expanded-integral}) converges to the corresponding $n^{th}$ degree term in (\ref{eqn:target-expansion}).
\begin{cor}\label{cor:termwise-limit} Let $\mathcal{X}$ and $\mathcal{Y}$ be $g$-tuples of square matrices. For each nonnegative integer $n$ we have
	\begin{align*}
		\lim_{d\to \infty} \; \frac{1}{(n!)^2} &\sum_{ |\alpha| = n } \binom{n}{\alpha}^2  \sum_{ \sigma , \tau \in S_n } \sign(\sigma)\sign(\tau) p_{\sigma,\alpha}(\mathcal{X}) \overline{ p_{\tau,\alpha}(\mathcal{Y}) } \int_{ U(d)^g } p_{\sigma,\alpha}(\mathcal{U}) \overline{ p_{\tau,\alpha}(\mathcal{U}) }  d\mathcal{U} \\
		= \frac{1}{n!} &\sum_{ |\alpha| = n } \binom{n}{\alpha} \sum_{ \sigma \in S_n } p_{\sigma,\alpha}(\mathcal{X}) \overline{ p_{\sigma,\alpha}(\mathcal{Y})}
  \end{align*}
  
\end{cor}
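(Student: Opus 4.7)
The plan is to pass the limit inside the (finite) sum over $\alpha,\sigma,\tau$ and apply the asymptotic orthogonality of Theorem~\ref{thm:asymptotic-orthogonality} term by term. Since the outer sums over $\alpha$ with $|\alpha|=n$ and over $\sigma,\tau\in S_n$ are finite, this interchange is immediate and produces
\[
\frac{1}{(n!)^2}\sum_{|\alpha|=n}\binom{n}{\alpha}^2\sum_{\sigma,\tau\in S_n}\sign(\sigma)\sign(\tau)\,p_{\sigma,\alpha}(\mathcal{X})\overline{p_{\tau,\alpha}(\mathcal{Y})}\cdot \mathbf{1}_{\orb_\alpha(\sigma)}(\tau)\,|\stab_\alpha(\sigma)|.
\]
So the task reduces to simplifying this expression and matching it to the stated right-hand side.

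The key structural observation I would use is the following $S_\alpha$-invariance of the trace monomials: for any $\gamma\in S_\alpha$,
\[
p_{\gamma\sigma\gamma^{-1},\alpha}(\mathcal{X}) = p_{\sigma,\alpha}(\mathcal{X}).
\]
This follows from cyclicity of the trace together with the fact that $\rho_{S_n}^k(\gamma)$ commutes with $\mathcal{X}^{\otimes\alpha}$ for $\gamma\in S_\alpha$: the Young subgroup $S_\alpha$ only permutes tensor slots within blocks of identical factors in $\mathcal{X}^{\otimes\alpha}$, which leaves it invariant. Combined with the conjugation-invariance of $\sign$, this shows that the entire summand $\sign(\tau)p_{\tau,\alpha}(\mathcal{X})\overline{p_{\tau,\alpha}(\mathcal{Y})}$ is constant along the orbit $\orb_\alpha(\sigma)$ in $\tau$.

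Therefore the inner $\tau$-sum collapses to $|\orb_\alpha(\sigma)|$ copies of $\sign(\sigma)\overline{p_{\sigma,\alpha}(\mathcal{Y})}$, weighted by $|\stab_\alpha(\sigma)|$. The orbit-stabilizer theorem for the action of $S_\alpha$ on $S_n$ by conjugation gives $|\orb_\alpha(\sigma)|\cdot|\stab_\alpha(\sigma)|=|S_\alpha|=\alpha!$, and $\sign(\sigma)^2=1$. The limit thus reduces to
\[
\frac{1}{(n!)^2}\sum_{|\alpha|=n}\binom{n}{\alpha}^2\,\alpha!\sum_{\sigma\in S_n}p_{\sigma,\alpha}(\mathcal{X})\overline{p_{\sigma,\alpha}(\mathcal{Y})}.
\]
A final arithmetic simplification, $\binom{n}{\alpha}^2\alpha!=(n!)^2/\alpha!=n!\binom{n}{\alpha}$, yields exactly the right-hand side of the corollary.

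There is no real obstacle here; the only point requiring care is verifying the $S_\alpha$-invariance $p_{\gamma\sigma\gamma^{-1},\alpha}(\mathcal{X})=p_{\sigma,\alpha}(\mathcal{X})$, and tracking that $|\stab_\alpha(\sigma)|\cdot|\orb_\alpha(\sigma)|=\alpha!$ absorbs one factor of $\binom{n}{\alpha}$ to produce the asymmetry between the $(n!)^{-2}\binom{n}{\alpha}^2$ on the left and the $(n!)^{-1}\binom{n}{\alpha}$ on the right.
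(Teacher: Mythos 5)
Your proposal is correct and follows essentially the same route as the paper's proof: pass the limit through the finite sums, invoke Theorem~\ref{thm:asymptotic-orthogonality}, collapse the $\tau$-sum by the $S_\alpha$-conjugation-invariance of $p_{\cdot,\alpha}(\mathcal{Y})$, and use $|\orb_\alpha(\sigma)|\,|\stab_\alpha(\sigma)|=\alpha!$ together with the arithmetic $\binom{n}{\alpha}^2\alpha!=n!\binom{n}{\alpha}$. The only difference is cosmetic: you spell out explicitly why the factor $\sign(\sigma)\sign(\tau)$ collapses to $1$ (conjugation-invariance of $\sign$ on the orbit) and why $p_{\gamma\sigma\gamma^{-1},\alpha}(\mathcal{X})=p_{\sigma,\alpha}(\mathcal{X})$ for $\gamma\in S_\alpha$ (via cyclicity of the trace and $\rho(\gamma)$ commuting with $\mathcal{X}^{\otimes\alpha}$, i.e.\ the identity~(\ref{eqn:word-tensor-conjugate}) and $\gamma(w_\alpha)=w_\alpha$), whereas the paper leaves these as implicit remarks in its step $(\ast)$.
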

\begin{proof}
  We have by Theorem~\ref{thm:asymptotic-orthogonality}
  \begin{align*}
    \lim_{d\to \infty} \; &\frac{1}{(n!)^2} \sum_{ |\alpha| = n } \binom{n}{\alpha}^2  \sum_{ \sigma , \tau \in S_n } \sign(\sigma)\sign(\tau) p_{\sigma,\alpha}(\mathcal{X}) \overline{ p_{\tau,\alpha}(\mathcal{Y}) } \int_{ U(d)^g } p_{\sigma,\alpha}(\mathcal{U}) \overline{ p_{\tau,\alpha}(\mathcal{U}) }  d\mathcal{U} \\
    &= \frac{1}{(n!)^2} \sum_{ |\alpha| = n } \binom{n}{\alpha}^2 \sum_{ \sigma \in S_n } \sum_{ \tau \in S_n } p_{\sigma,\alpha}(\mathcal{X}) \overline{ p_{\tau,\alpha}(\mathcal{Y}) } {\bf 1}_{\orb_\alpha(\sigma)}(\tau) | \stab_\alpha(\sigma) | \\
    &\overset{(\ast)}{=} \frac{1}{(n!)^2} \sum_{ |\alpha| = n } \binom{n}{\alpha}^2 \sum_{ \sigma \in S_n } \sum_{ \tau \in \text{orb}_\alpha(\sigma) } p_{\sigma,\alpha}(\mathcal{X}) \overline{ p_{\sigma,\alpha}(\mathcal{Y}) } | \stab_\alpha(\sigma) | \\
    &= \frac{1}{(n!)^2} \sum_{ |\alpha| = n } \binom{n}{\alpha}^2 \sum_{ \sigma \in S_n } |\text{orb}_\alpha(\sigma)| |\text{stab}_\alpha(\sigma)| p_{\sigma,\alpha}(\mathcal{X}) \overline{ p_{\sigma,\alpha}(\mathcal{Y}) } \\
    &= \frac{1}{n!} \sum_{ |\alpha| = n } \binom{n}{\alpha} \sum_{ \sigma \in S_n } \left[ \frac{1}{n!} \binom{n}{\alpha} | S_\alpha | \right] p_{\sigma,\alpha}(\mathcal{X}) \overline{ p_{\sigma,\alpha}(\mathcal{Y}) } \\
    &= \frac{1}{n!} \sum_{ |\alpha| = n } \binom{n}{\alpha} \sum_{ \sigma \in S_n } p_{\sigma,\alpha}(\mathcal{X}) \overline{ p_{\sigma,\alpha}(\mathcal{Y}) } .
  \end{align*}
 \noindent Note in $(\ast)$ we are using the fact that $\tau \in \text{orb}_\alpha(\sigma)$ implies $p_{\sigma , \alpha}(\mathcal{Y}) = p_{\tau, \alpha}(\mathcal{Y})$.
\end{proof} 

We conclude this section by showing that the formal convergence can be made rigorous provided we have adequate bounds on the integral, independent of $d$. We will make use of this idea in proving special cases of the conjecture in later sections. 

\begin{prop}\label{prop:L2-bound-is-sufficient}
  Suppose that for each $0\leq r<1$ and each integer $k\geq 1$ there exists a number $C(r,k)$ with the following property: for all systems of $k\times k$ matrices $\mathcal{X}=(X_1, \dots, X_g)$ satisfying $\|\mathcal{X}\|_{row}\leq r$, we have
  \begin{equation}\label{eqn:local-L2-bound}
    \sup_{d\geq 1} \int_{U(d)^g} \left|\det( L_\mathcal{X}(\mathcal{U}) )\right|^2\, d\mathcal{U} \leq C(r,k).
  \end{equation}
Then Conjecture~\ref{eqn:pencil-limit-conjecture} holds. 
\end{prop}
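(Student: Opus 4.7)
The plan is a soft normal-families argument combining the hypothesized uniform $L^2$ bound with the formal termwise convergence already established in Corollary~\ref{cor:termwise-limit}.

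First, we reduce to the case of row contractions. Both the integrand $\det L_\mathcal{X}(\mathcal{U})$ and the target
\[
T(\mathcal{X}, \mathcal{Y}) \vcentcolon= \det\Bigl(I\otimes I - \sum_{j=1}^g X_j\otimes \overline{Y_j}\Bigr)^{-1}
\]
are invariant under joint similarities applied independently to $\mathcal{X}$ and to $\mathcal{Y}$ (for the former, conjugate by $S\otimes I_d$; for the latter, by $S\otimes \overline{T}$), so Proposition~\ref{prop:rota-for-outer} lets us assume $\|\mathcal{X}\|_{row}<1$ and $\|\mathcal{Y}\|_{row}<1$.

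Next, write $f_d(\mathcal{X},\mathcal{Y})$ for the integral on the left-hand side of (\ref{eqn:pencil-limit-conjecture}). Cauchy--Schwarz combined with the hypothesis (\ref{eqn:local-L2-bound}) yields
\[
|f_d(\mathcal{X},\mathcal{Y})| \leq \sqrt{C(r,k)\,C(r,k')}
\]
for every $d$, whenever $\|\mathcal{X}\|_{row}, \|\mathcal{Y}\|_{row}\leq r<1$. Replacing $\overline{\mathcal{Y}}$ by auxiliary holomorphic variables $\mathcal{Z}$, each $f_d$ becomes a polynomial (hence holomorphic) in the entries of $(\mathcal{X},\mathcal{Z})$, and the family $\{f_d\}$ is locally uniformly bounded on the polydomain $\Omega \vcentcolon= \{(\mathcal{X},\mathcal{Z}) : \|\mathcal{X}\|_{row}<1,\ \|\mathcal{Z}\|_{row}<1\}$. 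By Montel's theorem in several complex variables, every subsequence of $\{f_d\}$ admits a further subsequence converging locally uniformly on $\Omega$ to some holomorphic $F$.

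To identify $F$ with $T$, we match Taylor coefficients at the origin. The symmetry argument preceding (\ref{eqn:expanded-integral}) shows that only bidegrees $(n,n)$ in $(\mathcal{X},\mathcal{Z})$ appear in the expansion of $f_d$, and the same holds for $T$ by Lemma~\ref{lem:target-expansion}; moreover these bidegree-$(n,n)$ homogeneous parts are exactly the polynomials appearing on the two sides of Corollary~\ref{cor:termwise-limit}. Local uniform convergence transfers Taylor coefficients via Cauchy's integral formula, so Corollary~\ref{cor:termwise-limit} forces $F$ and $T$ to have identical Taylor series at the origin and hence to agree on $\Omega$. Since every subsequential limit equals $T$, the full sequence $f_d$ converges to $T$. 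The argument is essentially soft; the real work bundled into the proposition lies not here but in verifying the hypothesis (\ref{eqn:local-L2-bound}), which is precisely the substantive content of the subsequent sections.
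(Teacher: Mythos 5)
Your proof is correct and is essentially the same normal-families argument the paper uses: reduce to row contractions via Proposition~\ref{prop:rota-for-outer}, apply Cauchy--Schwarz to the hypothesis to get local uniform boundedness, invoke Montel, identify homogeneous (Taylor) coefficients of subsequential limits via Corollary~\ref{cor:termwise-limit} and Cauchy's integral formula, and conclude by uniqueness of subsequential limits. The only cosmetic difference is that you promote $\overline{\mathcal{Y}}$ to an independent holomorphic variable $\mathcal{Z}$ and treat $(\mathcal{X},\mathcal{Z})$ jointly, whereas the paper holds $\mathcal{Y}$ fixed and lets only $\mathcal{X}$ vary; both versions work.
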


\begin{proof}  In Conjecture~\ref{eqn:pencil-limit-conjecture} we are assuming the tuple $\mathcal X, \mathcal Y$ have outer spectral radius strictly less than $1$, though by Proposition~\ref{prop:rota-for-outer} it suffices to prove the conjecture under the stronger hypothesis $\|\mathcal X\|_{row}, \|\mathcal Y\|_{row}<1$, which we impose from now on. We define $\Omega_k\vcentcolon=\{\mathcal X\in M^g_{k\times k} : \|\mathcal X\|_{row}<1\}$, which we call the {\it row ball (at level $k$)}. We may identify $M_{k\times k}^g$ with $\mathbb C^{gk^2}$ by enumerating the entries of the matrices $X_j$, so that $\Omega_k$ is identified with an open subset of $\mathbb C^{gk^2}$.  
  We examine the expression
  \[
  f_d(\mathcal{X})\vcentcolon= \int_{U(d)^g} \det( L_\mathcal{X}(\mathcal{U}) ) \overline{ \det( L_\mathcal{Y}(\mathcal{U}) ) } \, d\mathcal{U}
  \]
  where we consider $\mathcal{Y}$ to be held fixed, and we allow $\mathcal{X}$ to vary over the row ball $\Omega_k$ at level $k$. Then for each $d$, the scalar valued function $f_d(\mathcal{X})$ is a polynomial in the $g k^2$ complex variables $\{x_{ij}^{(k)}:1\leq i,j \leq k; \ 1\leq k\leq g\}$ corresponding to each entry of each of the matrices $X_1, \dots, X_g$.  The hypothesis (\ref{eqn:local-L2-bound}), together with the Cauchy-Schwarz inequality, then implies that the sequence of polynomials $(f_d(\mathcal{X}))_{d=1}^\infty$ is uniformly bounded on each compact subset of $\Omega_k$. It follows from Montel's theorem that every subsequence of $f_d$ has a subsequence which converges uniformly on compact subsets of $\Omega_k$, to some holomorphic function $f$ (which a priori will depend on the choice of subsequence, but we will show that it does not). Indeed, any $f$ holomorphic in $\Omega_k$ has a unique expansion into homogeneous polynomials $f(\mathcal{X}) = \sum_{n=1}^\infty f^{(n)}(\mathcal{X})$, where each $f^{(n)}$ is homogeneous of degree $n$.
  
  Equation~(\ref{eqn:expanded-integral}) tells us that
  \begin{equation*}
  	f_{d}^{(n)}(\mathcal{X}) = \frac{1}{(n!)^2} \sum_{ |\alpha| = n } \binom{n}{\alpha}^2  \sum_{ \sigma , \tau \in S_n } \sign(\sigma)\sign(\tau) p_{\sigma,\alpha}(\mathcal{X}) \overline{ p_{\tau,\alpha}(\mathcal{Y}) } \int_{ U(d)^g } p_{\sigma,\alpha}(\mathcal{U}) \overline{ p_{\tau,\alpha}(\mathcal{U}) }  d\mathcal{U}
  \end{equation*}
  or $0$ when $\min(kd , k'd) \leq n$. From Corollary~\ref{cor:termwise-limit} we see that in each degree $n$, we have \begin{equation*}
  	\lim_{d\to \infty} f_d^{(n)}(\mathcal{X})=\frac{1}{n!} \sum_{ |\alpha| = n } \binom{n}{\alpha} \sum_{ \sigma \in S_n } p_{\sigma,\alpha}(\mathcal{X}) \overline{ p_{\sigma,\alpha}(\mathcal{Y}) }.
  \end{equation*} By Lemma~\ref{lem:target-expansion}, this is exactly the homogeneous term of degree $n$ appearing in the expansion of the (rational) function $f(\mathcal X)=\det( I_{k} \otimes I_{k'} - \sum X_j \otimes \overline{Y_j} )^{-1}$, which is holomorphic in $\Omega_k$ by our spectral radius assumption. On the other hand, whenever $f_d\to f$ locally uniformly in the domain $\Omega_k$, it follows from the multivariable Cauchy integral formula that the homogeneous terms also converge: $f_d^{(n)}\to f^{(n)}$ for each $n$. We conclude that every subsequence of $f_d(\mathcal{X})$ has a subsequence converging to $\det( I_{k} \otimes I_{k'} - \sum X_j \otimes \overline{Y_j} )^{-1}$, which proves the proposition. 
\end{proof}

\section{Scalar Coefficient Pencils}\label{sec:scalar}

In this section, we consider (\ref{eqn:pencil-limit-conjecture}) for $\mathcal{X} , \mathcal{Y} \in \mathbb{C}^g$ i.e. for scalar-coefficient pencils. This affords us an explicit expression analogous to (\ref{eqn:HS-ident}) which we can easily take the limit of.

\begin{thm}\label{thm:main}
	
	For any $x , y \in \mathbb{C}^g$, we have
	
	\begin{equation}\label{eqn:main-integral}
		\int_{ U(d)^g } \det ( L_x( \mathcal{U} ) ) \; \overline{ \det ( L_y( \mathcal{U} ) ) } \, d\mathcal{U} = \sum_{n=0}^d \sum_{ |\alpha| = n } c(d, \alpha) \binom{n}{\alpha} x^\alpha \overline{y^\alpha}
	\end{equation}
	
	\noindent where
	
	\begin{equation}\label{eqn:main-coefficient}
		c(d, \alpha) = \binom{d}{n} \binom{n}{\alpha} \prod_{j=1}^g \binom{d}{\alpha_j}^{-1}.
	\end{equation}
\end{thm}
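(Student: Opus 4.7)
The plan is to specialize Lemma~\ref{lem:pencil-expansion-matrix} to $k=1$, where $\rho_{S_n}^1$ is the trivial one-dimensional representation so each $p_{\sigma,\alpha}(x)$ collapses to $x^\alpha$ for every $\sigma\in S_n$. Recognizing the antisymmetrizer $\sum_\sigma \sign(\sigma)\sigma = n!\,Q_{(1^n)}$, the pencil expansion cleans up to
\[
\det L_x(\mathcal U) = \sum_{n=0}^d\sum_{|\alpha|=n}\binom{n}{\alpha}\,x^\alpha\,\tr\!\bigl(\rho_{S_n}^d(Q_{(1^n)})\,\mathcal U^{\otimes\alpha}\bigr).
\]
Pairing with the conjugate of the analogous expansion for $\det L_y(\mathcal U)$, integrating, and invoking Haar-invariance under $U_j\mapsto z_j U_j$ kills all cross-terms with $\alpha\neq\beta$ (exactly as in the reduction leading to (\ref{eqn:expanded-integral})). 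The theorem then reduces to the single identity
\[
I_\alpha := \int_{U(d)^g}\bigl|\tr\!\bigl(\rho_{S_n}^d(Q_{(1^n)})\,\mathcal U^{\otimes\alpha}\bigr)\bigr|^2\,d\mathcal U = \frac{\binom{d}{n}}{\prod_{j=1}^g \binom{d}{\alpha_j}},
\]
since $\binom{n}{\alpha}^2 I_\alpha = c(d,\alpha)\binom{n}{\alpha}$ is a short bookkeeping check.

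The key structural observation for computing $I_\alpha$ is that each factor $U_j^{\otimes\alpha_j}$ preserves the antisymmetric subspace $\wedge^{\alpha_j}\mathbb C^d\subset(\mathbb C^d)^{\otimes\alpha_j}$ and acts there as $\wedge^{\alpha_j} U_j$. Hence $\mathcal U^{\otimes\alpha}$ commutes with the projection onto $W := \bigotimes_{j=1}^g \wedge^{\alpha_j}\mathbb C^d$, where it descends to $\pi(\mathcal U) := \bigotimes_j \wedge^{\alpha_j} U_j$. On the other hand, by Schur--Weyl duality $\rho_{S_n}^d(Q_{(1^n)})$ is the orthogonal projection of $(\mathbb C^d)^{\otimes n}$ onto $\wedge^n\mathbb C^d$, and since a totally antisymmetric tensor is in particular antisymmetric within each block $P_j$ of indices, we have $\wedge^n\mathbb C^d\subset W$. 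These two facts together permit restricting the trace to $W$:
\[
\tr\!\bigl(\rho_{S_n}^d(Q_{(1^n)})\,\mathcal U^{\otimes\alpha}\bigr) = \tr_W\!\bigl(R\cdot\pi(\mathcal U)\bigr),
\]
where $R$ is the orthogonal projection $W\to\wedge^n\mathbb C^d$.

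The finish is pure representation theory: $\pi$ is an irreducible representation of $U(d)^g$ on $W$, as the external tensor product of the irreducible exterior-power representations of each $U(d)$-factor, and $\dim W = D_\alpha := \prod_j\binom{d}{\alpha_j}$. The Schur orthogonality relations for matrix coefficients of an irrep of a compact group then yield
\[
I_\alpha = \frac{\|R\|_{\mathrm{HS}}^2}{D_\alpha} = \frac{\tr(R)}{D_\alpha} = \frac{\binom{d}{n}}{\prod_j\binom{d}{\alpha_j}},
\]
since $R$ is an orthogonal projection of rank $\binom{d}{n}$. The conceptual heart of the argument --- and arguably the only nontrivial step --- is spotting that the mismatched tensor $\mathcal U^{\otimes\alpha}$ genuinely descends to an honest \emph{irreducible} action on the partially antisymmetrized subspace $W$. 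Once this is in hand, standard Peter--Weyl orthogonality replaces both the Weingarten calculus and the $S_\alpha$-averaging machinery mentioned in the introduction.
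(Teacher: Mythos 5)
Your proof is correct, and it takes a genuinely different route from the paper's. Both proofs start identically: specialize Lemma~\ref{lem:pencil-expansion-matrix} to $k=1$, recognize the antisymmetrizer $n!\,Q_\varepsilon = \sum_\sigma \sign(\sigma)\sigma$, kill the cross-terms by the $U_j \mapsto z_j U_j$ symmetry, and reduce to the single integral $I_\alpha = \int |\tr(\rho_{S_n}^d(Q_\varepsilon)\,\mathcal U^{\otimes\alpha})|^2\,d\mathcal U$. From that point the paper grinds through the Collins--\'Sniady machinery: it interprets $I_\alpha$ as $\tr(\mathbb E_\alpha \circ C_{Q_\varepsilon})$ via the vectorization identity~(\ref{eqn:trace-integral}), computes $\mathbb E_\alpha(Q_\varepsilon)$ by pushing $Q_\varepsilon$ through the $\Phi_\alpha = \Phi_{\alpha_1}\otimes\cdots\otimes\Phi_{\alpha_g}$ maps (Proposition~\ref{prop:trace-projection}), and evaluates $\tr(\rho_{S_n}^d(Q_\varepsilon)) = \binom{d}{n}$ by a Stirling-number calculation (Proposition~\ref{thm:trace-centalt}). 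You replace all of this with a single geometric observation and Schur orthogonality: since $\rho_{S_n}^d(Q_\varepsilon)$ is the orthogonal projection onto $\wedge^n\mathbb C^d \subset W := \bigotimes_j \wedge^{\alpha_j}\mathbb C^d$, and $\mathcal U^{\otimes\alpha}$ commutes with $P_W$ and restricts to the irrep $\bigotimes_j \wedge^{\alpha_j}U_j$ of $U(d)^g$ on $W$, Peter--Weyl orthogonality of matrix coefficients hands you $I_\alpha = \tr(R)/\dim W = \binom{d}{n}/\prod_j\binom{d}{\alpha_j}$ immediately. Your argument is cleaner and more conceptual for this theorem; what the paper's heavier $\mathbb E_\alpha/\Phi_\alpha$ framework buys is reusability: Section~\ref{sec:triangular} runs the same machinery with $Q_\varepsilon$ replaced by arbitrary $Q_\lambda$ (Lemma~\ref{thm:key-trace-formula}), whereas your shortcut crucially uses the inclusion $\wedge^n\mathbb C^d \subset W$, which has no analogue for general $\lambda$. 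One small remark worth making explicit if you were to write this up: the restriction step $\tr(\rho_{S_n}^d(Q_\varepsilon)\,\mathcal U^{\otimes\alpha}) = \tr_W(R\,\pi(\mathcal U))$ uses both $P_W Q_\varepsilon = Q_\varepsilon P_W = Q_\varepsilon$ (the image of $Q_\varepsilon$ lies in $W$) and $[P_W, \mathcal U^{\otimes\alpha}]=0$; both are true but the reader should see them stated.
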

Since the corollary about the large $d$ limit follows quickly, we state and prove it first, then return to the proof of the theorem. 

\begin{cor}\label{cor:main-limit}
	
	For any $x, y \in \mathbb{C}^g$ such that $\|x\|_2 , \|y\|_2 < 1$, we have
	
	\begin{equation}\label{eqn:main-limit}
		\lim_{d \to \infty} \int_{ U(d)^g } \det ( L_x( \mathcal{U} ) ) \; \overline{ \det ( L_y( \mathcal{U} ) ) } \, d\mathcal{U} = \frac{1}{1 - \langle x , y \rangle} .
	\end{equation}

\end{cor}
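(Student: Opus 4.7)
The plan is to derive the corollary directly from the exact formula of Theorem~\ref{thm:main} via a dominated-convergence argument on the counting measure on $\mathbb{N}^g$. First I would rewrite the coefficient $c(d,\alpha)$ in a form better suited to asymptotic analysis. Using $\binom{d}{n}\binom{n}{\alpha} = (d)_n / \alpha!$ and $\prod_{j=1}^g \binom{d}{\alpha_j} = \prod_{j=1}^g (d)_{\alpha_j} / \alpha!$, where $(d)_k \vcentcolon= d(d-1)\cdots(d-k+1)$ denotes the falling factorial, the $\alpha!$'s cancel and one obtains
\[
c(d,\alpha) = \frac{(d)_n}{\prod_{j=1}^g (d)_{\alpha_j}}, \qquad n = |\alpha|.
\]
From this representation two facts needed for the passage to the limit are essentially immediate: for fixed $\alpha$, $c(d,\alpha) \to 1$ as $d \to \infty$ (since $(d)_k \sim d^k$), and the uniform bound $0 \leq c(d,\alpha) \leq 1$ holds for all $d \geq n$.

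The upper bound is the only nontrivial step; I would establish it by comparing the two products termwise after sorting their factors in decreasing order. Specifically, the $k$-th largest factor of $\prod_j (d)_{\alpha_j}$ is $\geq d-k+1$ for each $1 \leq k \leq n$, which reduces to the inequality $\sum_j \min(\alpha_j, k) \geq k$. To see the latter, observe that if $\alpha_j < k$ for every $j$ then $\sum_j \min(\alpha_j,k) = \sum_j \alpha_j = n \geq k$, and otherwise some term $\min(\alpha_j, k)$ already equals $k$.

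To finish, expand the target using the geometric series and multinomial theorem,
\[
\frac{1}{1 - \langle x,y\rangle} = \sum_{n=0}^\infty \langle x,y\rangle^n = \sum_{n=0}^\infty \sum_{|\alpha|=n}\binom{n}{\alpha} x^\alpha \overline{y^\alpha},
\]
noting that $\sum_j |x_j \overline{y_j}| \leq \|x\|_2 \|y\|_2 < 1$ by Cauchy--Schwarz, which ensures this series is absolutely convergent. Setting $c(d,\alpha) = 0$ when $|\alpha| > d$, the integral in Theorem~\ref{thm:main} becomes $\sum_\alpha c(d,\alpha)\binom{|\alpha|}{\alpha} x^\alpha \overline{y^\alpha}$; its summands are dominated uniformly in $d$ by the absolutely summable $\binom{|\alpha|}{\alpha}|x_1|^{\alpha_1}\cdots|x_g|^{\alpha_g}|y_1|^{\alpha_1}\cdots|y_g|^{\alpha_g}$ and converge pointwise to $\binom{|\alpha|}{\alpha} x^\alpha \overline{y^\alpha}$, so the dominated convergence theorem delivers the result. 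There is no real obstacle here---Theorem~\ref{thm:main} does the heavy lifting, and only the elementary combinatorial verification $c(d,\alpha) \leq 1$ requires a moment's thought.
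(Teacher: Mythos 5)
Your proposal is correct and follows essentially the same route as the paper: both rewrite $c(d,\alpha)$ as a ratio of falling-factorial products, use the termwise bound $0\leq c(d,\alpha)\leq 1$ together with pointwise convergence $c(d,\alpha)\to 1$, and finish by dominated convergence against the absolutely convergent geometric/multinomial series for $(1-\langle x,y\rangle)^{-1}$. Your sorted factor-by-factor comparison (via $\sum_j \min(\alpha_j,k)\geq k$) is somewhat more explicit than the paper's rather terse justification that the denominator's factors dominate the numerator's, but the underlying observation is the same.
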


\begin{proof}
	
	Let us expand the terms in (\ref{eqn:main-coefficient}).
	
	\begin{align*}
		c(d, \alpha) &= \binom{d}{n} \binom{n}{\alpha} \prod_{j=1}^g \binom{d}{\alpha_j}^{-1} \\
		&= \frac{ \frac{d (d-1) \cdots (d - n + 1 ) }{ n! } }{ \frac{d (d-1) \cdots (d - \alpha_1 + 1 ) }{ \alpha_1 ! } \cdots \frac{d (d-1) \cdots (d - \alpha_g + 1 ) }{ \alpha_g ! } } \cdot \frac{n!}{\alpha_1!\alpha_2!\cdots\alpha_g!} \\
		&= \frac{d (d-1) \cdots (d - n + 1 ) }{ [d (d-1) \cdots (d - \alpha_1 + 1)] \cdots [d (d-1) \cdots (d - \alpha_g + 1)] }.
	\end{align*}
	
	Both the numerator and denominator are a product of $n$ integers. The factors of the numerator begin at $d$ and decrease consecutively for $n$ steps. But the factors of the denominator begin at $d$, decrease for $\alpha_1$ steps, return to $d$, decrease for $\alpha_2$ steps, and so on. Additionally, the numerator and denominator are both monic polynomials in $d$ of degree $n$. Together, we see that $0\leq c(d, \alpha)\leq 1$ for all $d$ and $\alpha$ and that $\lim_{d\to \infty} c(d, \alpha)=1$ for all $\alpha$.
	
	Since the series
	\[
	\sum_{n=0}^\infty \sum_{|\alpha|=n} \binom{n}{\alpha} x^\alpha\overline{y}^\alpha = \frac{1}{1-\langle x,y\rangle}
	\]
	is absolutely convergent for $\|x\|_2, \|y\|_2<1$, the result follows from Theorem~\ref{thm:main} and dominated convergence.
	
\end{proof}
We now turn to the proof of Theorem~\ref{thm:main}.  The strategy is to expand the determinants as sums of trace polynomials, and then reduce the integrals of these trace polynomials to combinatorial expressions which can be evaluated explicitly. We will require a sequence of lemmas.

\begin{lem}\label{lem:pencil-expansion}
	
	\begin{gather}\label{eqn:pencil-integral-pruned}
		\int_{U(d)^g} \det(L_x(\mathcal{U})\overline{\det(L_y(\mathcal{U})}\, d\mathcal{U} \\
		= \sum_{n=0}^d \frac{1}{(n!)^2} \sum_{ |\alpha| = n } \binom{n}{\alpha}^2 x^\alpha \overline{y}^\alpha  \sum_{ \sigma , \tau \in S_n } \sign(\sigma)\sign(\tau) \int_{ U(d)^g } p_{\sigma,\alpha}(\mathcal{U}) \overline{ p_{\tau,\alpha}(\mathcal{U}) } \, d\mathcal{U} \nonumber
	\end{gather}
	
\end{lem}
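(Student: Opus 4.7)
The plan is to obtain this expansion as a direct specialization, to the scalar case $k=k'=1$, of the matrix-coefficient analysis already carried out in Section~\ref{sec:formal}. First I would invoke Lemma~\ref{lem:pencil-expansion-matrix} with $k=1$ to write
\begin{equation*}
  \det(L_x(\mathcal U)) = \sum_{n=0}^{d}\frac{1}{n!}\sum_{|\alpha|=n}\binom{n}{\alpha}\sum_{\sigma \in S_n}\sign(\sigma)\, p_{\sigma,\alpha}(x)\, p_{\sigma,\alpha}(\mathcal U),
\end{equation*}
and similarly for $\det(L_y(\mathcal U))$ with indexing variable $\tau\in S_m$ and multi-index $\beta$.

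The critical simplification in the scalar case is that the trace monomials $p_{\sigma,\alpha}(x)$ collapse to ordinary monomials $x^\alpha$. Indeed, when $k=1$ the representation $\rho_{S_n}^1$ acts on the one-dimensional space $(\mathbb{C})^{\otimes n}\cong \mathbb C$ (where every linear operator is a scalar, hence necessarily the identity), and $x^{\otimes \alpha}$ is just the scalar $x^\alpha$. Thus $p_{\sigma,\alpha}(x)=\tr(\rho_{S_n}^1(\sigma^{-1})\circ x^{\otimes \alpha})=x^\alpha$ for every $\sigma$, independent of the permutation.

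Next I would form the product $\det(L_x(\mathcal U))\overline{\det(L_y(\mathcal U))}$, exchange the finite sums with the integral over $U(d)^g$, and then use the vanishing argument already employed in Section~\ref{sec:formal} (immediately preceding equation~(\ref{eqn:expanded-integral})): the Haar-invariant coordinate changes $U_j\mapsto z_jU_j$ with $z_j$ unimodular force $\int_{U(d)^g} p_{\sigma,\alpha}(\mathcal U)\overline{p_{\tau,\beta}(\mathcal U)}\,d\mathcal U$ to vanish unless $\alpha_j=\beta_j$ for every $j$. In particular $n=m$, and collapsing $\beta$ onto $\alpha$ combines the two $\tfrac{1}{n!}\binom{n}{\alpha}$ factors into $\tfrac{1}{(n!)^2}\binom{n}{\alpha}^2$, producing precisely the right-hand side of~(\ref{eqn:pencil-integral-pruned}).

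No real obstacle is present here; the lemma is essentially bookkeeping once the scalar reduction $p_{\sigma,\alpha}(x)=x^\alpha$ is observed. Its purpose is to isolate the $x^\alpha\overline{y^\alpha}$ monomial content so that the remaining work in this section can focus on evaluating the purely combinatorial quantity $\sum_{\sigma,\tau}\sign(\sigma)\sign(\tau)\int p_{\sigma,\alpha}\overline{p_{\tau,\alpha}}\,d\mathcal U$ for each fixed $\alpha$.
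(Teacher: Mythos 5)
Your proposal is correct and is essentially the same argument the paper gives: the paper observes that $k=1$ forces $p_{\sigma,\alpha}(x)=x^\alpha$ and then cites equation~(\ref{eqn:expanded-integral}) directly, whereas you re-derive~(\ref{eqn:expanded-integral}) in the scalar case from Lemma~\ref{lem:pencil-expansion-matrix} together with the Haar-rotation vanishing argument, but the substance is identical.
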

\begin{proof}
	Since $x$ and $y$ are scalar tuples, we have $k=1$ in (\ref{eqn:general-power-sum-poly}). All matrix/tensor products become scalar products, thus one checks easily that $p_{\sigma, \alpha}(x) = x^\alpha$, so the formula follows from $(\ref{eqn:expanded-integral})$. 
\end{proof}

To proceed, we must recall (\ref{eqn:Q-formula}). Specifically, we consider the one-dimensional (hence irreducible) $S_n$-representation $\rho_{S_n}^{\varepsilon} : \sigma \to \sign(\sigma)$, whose corresponding partition is $\varepsilon = (1 , 1 , \ldots , 1)$ \cite[p. 116, Example 1]{Macdonald-2015}. Then $Q_\varepsilon$ is given by
\begin{equation*}
	Q_\varepsilon \vcentcolon= \frac{1}{n!} \sum_{\sigma \in S_n} \sign(\sigma) \sigma .
\end{equation*}
\noindent For each $\tau \in S_n$, we observe
\begin{align*}
	Q_\varepsilon \tau &= \frac{1}{n!} \sum_{\sigma \in S_n} \sign(\sigma) \sigma \tau \\
	&= \frac{1}{n!} \sum_{\sigma \in S_n} \sign(\sigma \tau^{-1}) \sigma \\
	&= \sign(\tau) \frac{1}{n!} \sum_{\sigma \in S_n} \sign(\sigma) \sigma \\
	&= \sign(\tau) Q_\varepsilon 
\end{align*}

\noindent by re-indexing the sum via $\sigma \mapsto \sigma \tau^{-1}$. Furthermore,
\begin{align*}
	\rho_{S_n}^d (Q_\varepsilon)^T &= \frac{1}{n!} \sum_{\sigma \in S_n} \sign(\sigma) \rho_{S_n}^d(\sigma)^T \\
	&= \frac{1}{n!} \sum_{\sigma \in S_n} \sign(\sigma) \rho_{S_n}^d(\sigma)^{-1} \\
	&= \frac{1}{n!} \sum_{\sigma \in S_n} \sign(\sigma^{-1}) \rho_{S_n}^d(\sigma^{-1}) \\
	&= \rho_{S_n}^d (Q_\varepsilon)
\end{align*}

\noindent since each $\rho_{S_n}^d(\sigma)$ is a permutation matrix. This also means $\rho_{S_n}^d (Q_\varepsilon)$ has real entries, hence it is self-adjoint and symmetric. Using the definition of $Q_\varepsilon$ and summing over $\sigma$ and $\tau$, we may rewrite (\ref{eqn:pencil-integral-pruned}) as
\begin{equation}\label{eqn:pencil-integral-centalt}
	(\ref{eqn:pencil-integral-pruned}) = \sum_{n=0}^d \sum_{ |\alpha| = n } \binom{n}{\alpha}^2 x^\alpha \overline{y}^\alpha \int_{ U(d)^g } \tr( Q_\varepsilon \circ \mathcal{U}^{\otimes \alpha} ) \overline{ \tr( Q_\varepsilon \circ \mathcal{U}^{\otimes \alpha} ) }\, d\mathcal{U} .
\end{equation}
Technically, we should write $\rho_{S_n}^d (Q_\varepsilon)$ in the above trace, but again we are suppressing the dependence on $n$ and $d$. Ultimately we shall compute the value of this integral, but achieving this requires some more rearranging.

\begin{lem}
	Define an operator $\mathbb{E}_\alpha$ on $\morph(\mathbb{C}^d)^{\otimes n}$ by
	\begin{gather*}
		\mathbb{E}_{\alpha}(X) \vcentcolon= \int_{U(d)^g} \mathcal{U}^{\otimes \alpha} X (\mathcal{U}^{\otimes \alpha})^\ast \, d\mathcal{U} \\
	\end{gather*}
	
	Then for any $T_1, T_2\in \morph(\mathbb{C}^d)^{\otimes n}$ we have 
	\begin{equation}\label{eqn:trace-integral}
		\int_{ U(d)^g } \tr( T_1\circ \mathcal{U}^{\otimes \alpha} ) \overline{ \tr( T_2 \circ \mathcal{U}^{\otimes \alpha} ) }\, d\mathcal{U} = \tr ( \mathbb{E}_\alpha \circ L_{T_1} \circ R_{T_2^*} ).
	\end{equation}
	
\end{lem}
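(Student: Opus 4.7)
The plan is to reduce the product of two traces in the integrand to a single trace of a linear operator on matrices, and then pull the integral inside that trace to recognize $\mathbb{E}_\alpha$.

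The starting observation is the identity $\tr(A)\tr(B) = \tr(L_A \circ R_B)$ for matrices $A, B$ of the same size. This follows directly from (\ref{eqn:left-right}): the operator $L_A \circ R_B$ on matrices has matrix representation $B^T \otimes A$, hence trace $\tr(B^T)\tr(A) = \tr(A)\tr(B)$. Applying this with $A = T_1 \circ \mathcal{U}^{\otimes \alpha}$ and $B = (\mathcal{U}^{\otimes \alpha})^* \circ T_2^*$, together with $\overline{\tr(T_2 \circ \mathcal{U}^{\otimes \alpha})} = \tr((\mathcal{U}^{\otimes\alpha})^*\circ T_2^*)$, I would first rewrite the integrand as
\begin{equation*}
  \tr( T_1\circ \mathcal{U}^{\otimes \alpha} ) \,\overline{ \tr( T_2 \circ \mathcal{U}^{\otimes \alpha} ) } = \tr\bigl( L_{T_1 \circ \mathcal{U}^{\otimes \alpha}} \circ R_{(\mathcal{U}^{\otimes \alpha})^*\circ T_2^*}\bigr).
\end{equation*}

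Next, I would use the elementary facts $L_{AB} = L_A \circ L_B$ and $R_{AB} = R_B \circ R_A$, together with the fact that every $L$-type and $R$-type operator commute, to regroup:
\begin{equation*}
  L_{T_1}\circ L_{\mathcal{U}^{\otimes \alpha}}\circ R_{T_2^*}\circ R_{(\mathcal{U}^{\otimes \alpha})^*} = L_{T_1}\circ R_{T_2^*}\circ \bigl(L_{\mathcal{U}^{\otimes \alpha}}\circ R_{(\mathcal{U}^{\otimes \alpha})^*}\bigr).
\end{equation*}
The key point is that the composite in parentheses is precisely the conjugation map $X \mapsto \mathcal{U}^{\otimes\alpha} X (\mathcal{U}^{\otimes\alpha})^*$ whose integral against Haar measure defines $\mathbb{E}_\alpha$.

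Finally, I would pull the integral inside the trace (justified by linearity and the fact that everything is a polynomial in the entries of $\mathcal U$, so Fubini applies), obtaining
\begin{equation*}
  \int_{U(d)^g}\tr\bigl(L_{T_1}\circ R_{T_2^*}\circ L_{\mathcal{U}^{\otimes\alpha}}\circ R_{(\mathcal{U}^{\otimes\alpha})^*}\bigr)\,d\mathcal{U} = \tr\bigl(L_{T_1}\circ R_{T_2^*}\circ \mathbb{E}_\alpha\bigr),
\end{equation*}
and then one application of cyclicity of trace gives $\tr(\mathbb{E}_\alpha \circ L_{T_1}\circ R_{T_2^*})$, as desired.

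There is no substantive obstacle here: the whole argument is a bookkeeping exercise in manipulating left and right multiplication operators on the matrix algebra. The only step that requires any thought is the opening move of identifying $\tr(A)\tr(B)$ with $\tr(L_A\circ R_B)$; once that trick is in hand, the rest is automatic.
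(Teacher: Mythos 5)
Your proof is correct and takes essentially the same approach as the paper: both reduce the product of traces to the trace of a single operator on the matrix algebra using (\ref{eqn:left-right}) and then recognize the inner composite $L_{\mathcal{U}^{\otimes\alpha}}\circ R_{(\mathcal{U}^{\otimes\alpha})^*}$ as the conjugation map whose Haar average is $\mathbb{E}_\alpha$. The only cosmetic difference is that you manipulate $L$ and $R$ operators directly, whereas the paper first writes the integrand as $\tr[(\overline{T_2}\otimes T_1)\circ(\overline{\mathcal{U}}^{\otimes\alpha}\otimes\mathcal{U}^{\otimes\alpha})]$ and then identifies each Kronecker factor with the corresponding $L\circ R$ operator.
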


\begin{proof}
  We have
  \begin{align*}
    \tr( T_1\circ \mathcal{U}^{\otimes \alpha} ) \overline{ \tr( T_2 \circ \mathcal{U}^{\otimes \alpha} )} &=\overline{ \tr( T_2 \circ \mathcal{U}^{\otimes \alpha})} \tr( T_1\circ \mathcal{U}^{\otimes \alpha} )\\
    &= \tr[(\overline{T_2}\otimes T_1)\circ  (\mathcal{\overline{U}}^{\otimes \alpha}\otimes  \mathcal{U}^{\otimes \alpha})]
  \end{align*}
  The matrix $(\overline{T_2}\otimes T_1)$ is the matrix of the linear map $L_{T_1}\circ R_{T_2^*}:X\to T_1XT_2^*$ by $(\ref{eqn:left-right})$. Similarly $(\mathcal{\overline{U}}^{\otimes \alpha}\otimes  \mathcal{U}^{\otimes \alpha})$ is the matrix of $X\to  \mathcal{U}^{\otimes \alpha} X (\mathcal{U}^{\otimes \alpha})^*$. Integrating over $U$ gives the result.
\end{proof}

We are interested in the case when $T_1=T_2=Q_\varepsilon$, a central projection (hence self-adjoint); we will write $C_{Q_\varepsilon} \vcentcolon= L_{Q_\varepsilon} \circ R_{Q_\varepsilon}$. 

To compute $\tr ( \mathbb{E}_\alpha \circ C_{Q_\varepsilon} )$, we will first examine more closely the map $\mathbb E_\alpha$. It will turn out that this map is a conditional expectation onto the subalgebra generated by the Young subgroup $S_\alpha$ in the representation $\rho_{S_n}^d$. The calculation is a straightforward generalization of the calculation of the expectation $\mathbb E$ onto the image of the full group $S_n$ under $\rho_{S_n}^d$ as carried out by Collins and \'{S}niady \cite{Collins-Sniady-2006}. They prove that for any integer $j \geq 1$, the map
\begin{equation}\label{eqn:e-map}
	\mathbb{E}_j(X) = \int_{U(d)} U^{\otimes j} X (U^{\otimes j})^\ast \, d\mathcal{U}
\end{equation}

\noindent is an orthogonal projection of $\morph(\mathbb{C}^d)^{\otimes j}$ onto $\rho_{S_j}^d(\mathbb{C}[S_j])$. The orthogonality is with respect to the tracial inner product $\langle X , Y \rangle = \tr( Y^\ast X)$. This projection is related to another operator on $\morph(\mathbb{C}^d)^{\otimes j}$, namely
\begin{equation}\label{eqn:phi-map}
	\Phi_j(X) = \sum_{\sigma \in S_j} \tr ( X \circ \rho_{S_j}^d(\sigma^{-1}) ) \cdot \rho_{S_j}^d(\sigma)
\end{equation}

\noindent by the equation $\Phi_j(X) = \Phi_j(I_{d^j}) \cdot \mathbb{E}_j(X)$ \cite[Proposition 2.3.3]{Collins-Sniady-2006}. In fact $\Phi_j(I_{d^j})$ is invertible, thus 
\begin{equation}\label{eqn:e-and-phi-maps}
	\mathbb{E}_j(X) = \Phi_j(I_{d^j})^{-1} \circ \Phi_j(X).
\end{equation}

\noindent The following two lemmas will give us a way of generalizing $\mathbb{E}$ and $\Phi$ to multi-indices.

\begin{lem}
	
	Given $\alpha \in \mathbb{N}^g$ with $|\alpha| = n$, define $\Phi_{\alpha} \vcentcolon= \Phi_{\alpha_1} \otimes \cdots \otimes \Phi_{\alpha_g}$. Then
	\begin{equation}\label{eqn:phi-map-multi}
		\Phi_{\alpha}(X) = \sum_{\gamma \in S_\alpha} \tr ( X \circ \rho_{S_n}^d(\gamma^{-1}) ) \cdot \rho_{S_n}^d(\gamma) .
	\end{equation}
	
\end{lem}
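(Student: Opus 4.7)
The plan is to reduce by linearity to elementary tensors and then verify the formula by direct expansion. First I would invoke the canonical isomorphism
\[
\morph((\mathbb{C}^d)^{\otimes n}) \cong \bigotimes_{i=1}^{g} \morph((\mathbb{C}^d)^{\otimes \alpha_i}),
\]
under which the operator $\Phi_\alpha = \Phi_{\alpha_1} \otimes \cdots \otimes \Phi_{\alpha_g}$ acts in the natural (factorwise) way. Since both sides of (\ref{eqn:phi-map-multi}) are linear in $X$, it suffices to verify the identity on elementary tensors $X = X_1 \otimes \cdots \otimes X_g$ with $X_i \in \morph((\mathbb{C}^d)^{\otimes \alpha_i})$.

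Next I would apply the single-index formula (\ref{eqn:phi-map}) in each factor and expand the outer tensor product, producing a sum indexed by tuples $(\sigma_1, \ldots, \sigma_g) \in S_{\alpha_1} \times \cdots \times S_{\alpha_g}$. By the definition of the Young subgroup $S_\alpha$ given in Section~\ref{sec:symmetric-group}, this index set is exactly $S_\alpha$, viewed as a subgroup of $S_n$ via $\gamma \leftrightarrow (\sigma_1, \ldots, \sigma_g)$. The key compatibility statement is that under the identification above,
\[
\rho_{S_n}^d(\gamma) = \rho_{S_{\alpha_1}}^d(\sigma_1) \otimes \cdots \otimes \rho_{S_{\alpha_g}}^d(\sigma_g),
\]
which follows immediately from the fact that $\gamma \in S_\alpha$ permutes only within each of the $g$ designated blocks of tensor factors in $(\mathbb{C}^d)^{\otimes n}$. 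Combined with the multiplicativity $\tr(A \otimes B) = \tr(A) \tr(B)$, this gives
\[
\prod_{i=1}^{g} \tr\bigl(X_i \circ \rho_{S_{\alpha_i}}^d(\sigma_i^{-1})\bigr) = \tr\bigl(X \circ \rho_{S_n}^d(\gamma^{-1})\bigr),
\]
which is precisely the coefficient appearing on the right-hand side of (\ref{eqn:phi-map-multi}); reassembling the sum then yields the claim.

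The only real obstacle is bookkeeping: one must be careful that the canonical isomorphism of $\morph((\mathbb{C}^d)^{\otimes n})$ with the tensor product of the individual endomorphism algebras is simultaneously compatible with the tensor product defining $\Phi_\alpha$ and with the restriction of $\rho_{S_n}^d$ to the Young subgroup $S_\alpha$. Once these identifications are set up explicitly there is no substantive difficulty, and the lemma follows from the definitions together with the multiplicativity of trace over tensor products.
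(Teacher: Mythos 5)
Your proof is correct and follows essentially the same approach as the paper: reduce by linearity to elementary tensors, apply the single-index formula in each factor, identify $S_{\alpha_1} \times \cdots \times S_{\alpha_g}$ with the Young subgroup $S_\alpha \leq S_n$ together with the compatibility $\rho_{S_n}^d(\gamma) = \bigotimes_i \rho_{S_{\alpha_i}}^d(\sigma_i)$, and use multiplicativity of trace. The only cosmetic difference is that the paper writes out the computation in detail for $g=2$ and remarks that the general case is the same, whereas you work with general $g$ throughout.
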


\begin{proof}
	
	We shall prove the result assuming $g = 2$ for ease of notation, but the same technique applies for any value of $g$. Suppose that $\gamma \in S_\alpha$, which means $\gamma = \sigma \tau$ where $\sigma \in S_{\alpha_1}$ and $\tau \in S_{\alpha_2}$. Using the isomorphism $(\mathbb{C}^d)^{\otimes n} = (\mathbb{C}^d)^{\otimes \alpha_1} \otimes (\mathbb{C}^d)^{\otimes \alpha_2}$, it follows that 
	\begin{equation*}
		\rho_{S_n}^d(\gamma) = \rho_{S_{\alpha_1}}^d(\sigma) \otimes \rho_{S_{\alpha_2}}^d(\tau) 
	\end{equation*} 
	since $\gamma$ acts like $\sigma$ on the first $\alpha_1$ tensor factors of $(\mathbb{C}^d)^{\otimes n}$ and like $\tau$ on the last $\alpha_2$ factors. Thus, for $X_i \in \morph(\mathbb{C}^d)^{\otimes \alpha_i}$ ($i=1,2$), we see
	\begin{align*}
		\Phi_{(\alpha_1 , \alpha_2)}(X_1 \otimes X_2) &= \Phi_{\alpha_1}(X_1) \otimes \Phi_{\alpha_2}(X_2) \\
		&= \left( \sum_{\sigma \in S_{\alpha_1}} \tr ( X_1 \circ \rho_{S_{\alpha_1}}^d(\sigma^{-1}) ) \cdot \rho_{S_{\alpha_1}}^d(\sigma) \right) \otimes \left( \sum_{\tau \in S_{\alpha_2}} \tr ( X_2 \circ \rho_{S_{\alpha_2}}^d(\tau^{-1}) ) \cdot \rho_{S_{\alpha_2}}^d(\tau) \right) \\
		&= \sum_{\substack{\sigma \in S_{\alpha_1} \\ \tau \in S_{\alpha_2}}} \tr \left( ( X_1 \circ \rho_{S_{\alpha_1}}^d(\sigma^{-1}) ) \otimes ( X_2 \circ \rho_{S_{\alpha_2}}^d(\tau^{-1}) ) \right) \cdot \rho_{S_n}^d(\sigma\tau) \\
		&= \sum_{\substack{\sigma \in S_{\alpha_1} \\ \tau \in S_{\alpha_2}}} \tr \left( ( X_1 \otimes X_2 ) \circ ( \rho_{S_n}^d(\sigma^{-1}\tau^{-1}) ) \right) \cdot \rho_{S_n}^d(\sigma\tau) \\
		&= \sum_{\gamma \in S_\alpha} \tr ( (X_1 \otimes X_2) \circ \rho_{S_n}^d(\gamma^{-1}) ) \cdot \rho_{S_n}^d(\gamma) .
	\end{align*}
	Note that $\sigma$ and $\tau$ commute with each other, so $\sigma^{-1}\tau^{-1} = \gamma^{-1}$ yields the final equality. Since $\morph(\mathbb{C}^d)^{\otimes n}$ is spanned by such $X_1 \otimes X_2$, the result follows by linearity.
	
\end{proof}

\begin{lem}
	
	For any multinomial $\alpha$,
	\begin{equation*}
		\mathbb{E}_\alpha = \mathbb{E}_{\alpha_1} \otimes \cdots \otimes \mathbb{E}_{\alpha_g} ,
	\end{equation*}
	Thus $\mathbb{E}_\alpha$ is the orthogonal projection of $\morph(\mathbb{C}^d)^{\otimes n}$ onto $\rho_{S_n}^d(\mathbb{C}[S_\alpha])$.
	
\end{lem}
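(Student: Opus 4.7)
My plan is to verify the tensor factorization on elementary tensors and extend by linearity, and then deduce the projection claim from the single-variable version in \cite{Collins-Sniady-2006}.

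For the factorization, take $X = X_1 \otimes \cdots \otimes X_g$ with $X_j \in \morph(\mathbb{C}^d)^{\otimes \alpha_j}$. Under the identification $(\mathbb{C}^d)^{\otimes n} \cong (\mathbb{C}^d)^{\otimes \alpha_1} \otimes \cdots \otimes (\mathbb{C}^d)^{\otimes \alpha_g}$ used in the preceding lemma, we have $\mathcal{U}^{\otimes \alpha} = U_1^{\otimes \alpha_1} \otimes \cdots \otimes U_g^{\otimes \alpha_g}$, so conjugation factors tensorwise as
\begin{equation*}
  \mathcal{U}^{\otimes \alpha} X (\mathcal{U}^{\otimes \alpha})^\ast = \bigotimes_{j=1}^g U_j^{\otimes \alpha_j} X_j (U_j^{\otimes \alpha_j})^\ast.
\end{equation*}
Since $d\mathcal{U} = dU_1 \times \cdots \times dU_g$ is a product measure, Fubini's theorem lets us integrate one $U_j$ at a time, giving
\begin{equation*}
  \mathbb{E}_\alpha(X) = \bigotimes_{j=1}^g \int_{U(d)} U_j^{\otimes \alpha_j} X_j (U_j^{\otimes \alpha_j})^\ast \, dU_j = \bigotimes_{j=1}^g \mathbb{E}_{\alpha_j}(X_j) = (\mathbb{E}_{\alpha_1} \otimes \cdots \otimes \mathbb{E}_{\alpha_g})(X).
\end{equation*}
Extending by linearity yields the identity $\mathbb{E}_\alpha = \mathbb{E}_{\alpha_1} \otimes \cdots \otimes \mathbb{E}_{\alpha_g}$ on all of $\morph(\mathbb{C}^d)^{\otimes n}$.

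For the orthogonal projection claim, I would invoke the Collins-\'{S}niady result (cited just before (\ref{eqn:e-map})) that each $\mathbb{E}_{\alpha_j}$ is the orthogonal projection of $\morph(\mathbb{C}^d)^{\otimes \alpha_j}$ onto $\rho_{S_{\alpha_j}}^d(\mathbb{C}[S_{\alpha_j}])$ with respect to the tracial inner product $\langle X, Y \rangle = \tr(Y^\ast X)$. The key observation is that this tracial inner product is multiplicative on tensor products since $\tr(A \otimes B) = \tr(A)\tr(B)$, so the tensor product of orthogonal projections is an orthogonal projection onto the tensor product of their ranges. Thus $\mathbb{E}_\alpha$ is the orthogonal projection onto $\rho_{S_{\alpha_1}}^d(\mathbb{C}[S_{\alpha_1}]) \otimes \cdots \otimes \rho_{S_{\alpha_g}}^d(\mathbb{C}[S_{\alpha_g}])$. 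Finally, exactly as in the proof of the $\Phi_\alpha$ lemma, for $\gamma = \sigma_1 \cdots \sigma_g \in S_\alpha$ we have $\rho_{S_n}^d(\gamma) = \rho_{S_{\alpha_1}}^d(\sigma_1) \otimes \cdots \otimes \rho_{S_{\alpha_g}}^d(\sigma_g)$, and taking linear combinations identifies this tensor product of subalgebras with $\rho_{S_n}^d(\mathbb{C}[S_\alpha])$.

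There is no real obstacle here — the proof is essentially a clean tensor-product computation that mirrors the previous lemma for $\Phi_\alpha$, with Fubini replacing the algebraic factorization of the trace. The only point requiring a moment's care is confirming that the orthogonality of each $\mathbb{E}_{\alpha_j}$ is preserved under the tensor product, but this follows immediately from multiplicativity of trace.
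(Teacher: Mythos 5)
Your proof is correct and takes essentially the same approach as the paper: factoring the conjugation tensorwise under the identification $(\mathbb{C}^d)^{\otimes n} \cong \bigotimes_j (\mathbb{C}^d)^{\otimes \alpha_j}$ and using the product structure of $d\mathcal{U}$ to split the integral, then observing that the range is the tensor product of the individual ranges. The paper's version verifies the integral factorization by writing out block-matrix entries rather than invoking Fubini abstractly, but this is only a stylistic difference.
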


\begin{proof}
	
	For any matrix $A$, let $[A]_{i,j}$ denote its $(i,j)$ entry.
	
	Again, assume $g = 2$ for simplicity. The matrix $$(\mathbb{E}_{\alpha_1} \otimes \mathbb{E}_{\alpha_2})(X_1 \otimes X_2) = \mathbb{E}_{\alpha_1}(X_1) \otimes \mathbb{E}_{\alpha_2}(X_2)$$ is the block matrix whose $(i,j)$ entry is $[\mathbb{E}_{\alpha_1}(X_1)]_{i,j} \mathbb{E}_{\alpha_2}(X_2)$. The $(k , l)$ entry of this matrix is then $[\mathbb{E}_{\alpha_1}(X_1)]_{i,j} [\mathbb{E}_{\alpha_2}(X_2)]_{k,l}$. Now
	\begin{align*}
		[\mathbb{E}_{\alpha_1}(X_1)]_{i,j} [\mathbb{E}_{\alpha_2}(X_2)]_{k,l} &= \left[ \int_{U(d)} U_1^{\otimes \alpha_1} X_1 (U_1^{\otimes \alpha_1})^\ast \, dU_1 \right]_{i,j} \left[ \int_{U(d)} U_2^{\otimes \alpha_2} X_2 (U_2^{\otimes \alpha_2})^\ast \, dU_2 \right]_{k,l} \\
		&=	\int_{U(d)} \int_{U(d)} \left[ U_1^{\otimes \alpha_1} X_1 (U_1^{\otimes \alpha_1})^\ast \right]_{i,j} \left[ U_2^{\otimes \alpha_2} X_2 (U_2^{\otimes \alpha_2})^\ast \right]_{k,l} \, dU_1 dU_2
	\end{align*}
	
	\noindent But the integrand is exactly the $((i, j), (k, l))$ entry of
	\begin{equation*}
		( U_1^{\otimes \alpha_1} X_1 (U_1^{\otimes \alpha_1})^\ast ) \otimes ( U_2^{\otimes \alpha_2} X_2 (U_2^{\otimes \alpha_2})^\ast ) = ( U_1^{\otimes \alpha} \otimes U_2^{\otimes \alpha_2} ) ( X_1 \otimes X_2 ) ( U_1^{\otimes \alpha} \otimes U_2^{\otimes \alpha_2} )^\ast .
	\end{equation*}
	
	Thus,
	\begin{align*}
		(\mathbb{E}_{\alpha_1} \otimes \mathbb{E}_{\alpha_2})(X_1 \otimes X_2) &= \int_{U(d)} \int_{U(d)} ( U_1^{\otimes \alpha_1} \otimes U_2^{\otimes \alpha_2} ) ( X_1 \otimes X_2 ) ( U_1^{\otimes \alpha_1} \otimes U_2^{\otimes \alpha_2} )^\ast \, dU_1 dU_2 \\
		&= \int_{U(d)^2} \mathcal{U}^{\otimes \alpha} (X_1 \otimes X_2) (\mathcal{U}^{\otimes \alpha})^\ast \, d\mathcal{U} \\
		&= \mathbb{E}_{\alpha}(X_1 \otimes X_2).
	\end{align*}
	
	\noindent Self-adjointness, idempotence, and orthogonality of $\mathbb{E}_\alpha$ all follow since the individual $\mathbb{E}_{\alpha_j}$ have these properties. Furthermore, the range of $\mathbb{E}_\alpha$ is the tensor product of the ranges of the $\mathbb{E}_{\alpha_i}$, namely $\bigotimes_{j=1}^g \rho_{S_{\alpha_i}}^{d}( \mathbb{C}[S_{\alpha_i}] ) =\rho_{S_n}^d(\mathbb{C}[S_\alpha])$. 
	
\end{proof}

\begin{cor}
	
	For a multinomial $\alpha$ of weight $n$, 
	
	\begin{equation}\label{eqn:e-and-phi-maps-multi}
		\mathbb{E}_\alpha(X) = \Phi_\alpha(I)^{-1} \circ \Phi_\alpha(X).
	\end{equation}
	
\end{cor}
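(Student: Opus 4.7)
The plan is to prove this by directly tensoring the single-variable identity (\ref{eqn:e-and-phi-maps}) across the $g$ factors, exploiting the two tensor-product decompositions that were just established: $\mathbb{E}_\alpha = \mathbb{E}_{\alpha_1} \otimes \cdots \otimes \mathbb{E}_{\alpha_g}$ (proved in the preceding lemma) and $\Phi_\alpha = \Phi_{\alpha_1} \otimes \cdots \otimes \Phi_{\alpha_g}$ (built into the definition of $\Phi_\alpha$).

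First I would note that under the identification $\morph(\mathbb{C}^d)^{\otimes n} \cong \bigotimes_{j=1}^g \morph(\mathbb{C}^d)^{\otimes \alpha_j}$, the identity factors as $I_{d^n} = I_{d^{\alpha_1}} \otimes \cdots \otimes I_{d^{\alpha_g}}$, so that
\[
  \Phi_\alpha(I_{d^n}) = \Phi_{\alpha_1}(I_{d^{\alpha_1}}) \otimes \cdots \otimes \Phi_{\alpha_g}(I_{d^{\alpha_g}}).
\]
Each factor on the right is invertible by the Collins--\'Sniady result \cite[Proposition 2.3.3]{Collins-Sniady-2006} cited above, so $\Phi_\alpha(I_{d^n})$ is invertible as a tensor product of invertibles, with
\[
  \Phi_\alpha(I_{d^n})^{-1} = \Phi_{\alpha_1}(I_{d^{\alpha_1}})^{-1} \otimes \cdots \otimes \Phi_{\alpha_g}(I_{d^{\alpha_g}})^{-1}.
\]

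Then for any simple tensor $X = X_1 \otimes \cdots \otimes X_g$ with $X_j \in \morph(\mathbb{C}^d)^{\otimes \alpha_j}$, I would compute, using the identity $(A_1 \otimes \cdots \otimes A_g) \circ (B_1 \otimes \cdots \otimes B_g) = (A_1 \circ B_1) \otimes \cdots \otimes (A_g \circ B_g)$ from Section~\ref{sec:matrix-analysis},
\[
  \Phi_\alpha(I)^{-1} \circ \Phi_\alpha(X) = \bigotimes_{j=1}^g \bigl[\Phi_{\alpha_j}(I_{d^{\alpha_j}})^{-1} \circ \Phi_{\alpha_j}(X_j)\bigr] = \bigotimes_{j=1}^g \mathbb{E}_{\alpha_j}(X_j) = \mathbb{E}_\alpha(X),
\]
where the middle equality is (\ref{eqn:e-and-phi-maps}) applied in each tensor slot, and the last is the preceding lemma. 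Since both $\mathbb{E}_\alpha$ and $\Phi_\alpha(I)^{-1} \circ \Phi_\alpha$ are linear, and simple tensors span $\morph(\mathbb{C}^d)^{\otimes n}$, this extends to all $X$.

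There is no real obstacle here; the work was done in proving the two preceding lemmas that express $\mathbb{E}_\alpha$ and $\Phi_\alpha$ as tensor products of their single-variable counterparts. The only thing to be mildly careful about is checking that the tensor-product decomposition of $\Phi_\alpha(I_{d^n})^{-1}$ is legitimate, which follows from the standard fact that the Kronecker product of invertible matrices is invertible with inverse equal to the Kronecker product of the inverses.
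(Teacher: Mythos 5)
Your proof is correct and is exactly the approach the paper takes---the paper's own proof is the one-liner ``Tensor (\ref{eqn:e-and-phi-maps}) over the $\alpha_j$,'' and you have simply spelled out the details: factor $\Phi_\alpha(I)$ and its inverse as Kronecker products, apply the mixed-product property slot-by-slot together with the single-variable identity, and extend by linearity from simple tensors. Nothing is missing.
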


\begin{proof}
	Tensor $(\ref{eqn:e-and-phi-maps})$ over the $\alpha_j$.
\end{proof}

\begin{prop}\label{prop:trace-projection}
	
	\begin{equation}\label{eqn:trace-projection}
		\tr ( \mathbb{E}_\alpha \circ C_{Q_\varepsilon} ) = \frac{ \tr( \rho_{S_n}^d(Q_\varepsilon) ) }{ \prod_{j=1}^g \tr( \rho_{S_{\alpha_j}}^d(Q_\varepsilon) ) }
	\end{equation}

\end{prop}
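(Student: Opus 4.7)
The strategy is to reduce $\tr(\mathbb{E}_\alpha \circ C_{Q_\varepsilon})$ to a single trace on $(\mathbb{C}^d)^{\otimes n}$, then exploit the one-dimensionality of the sign representation to evaluate it. Write $\mathbb{E}_\alpha = M_\alpha \circ \Phi_\alpha$ with $M_\alpha \vcentcolon= \Phi_\alpha(I)^{-1}$, and set $P \vcentcolon= \rho_{S_n}^d(Q_\varepsilon)$. Since $Q_\varepsilon$ is central in $\mathbb{C}[S_n]$ by Proposition~\ref{prop:properties-of-Q}, $P$ is a self-adjoint projection commuting with every $\rho_{S_n}^d(\gamma)$, so $P\rho_{S_n}^d(\gamma^{-1})P = \rho_{S_n}^d(\gamma^{-1})P$. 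Using this with (\ref{eqn:phi-map-multi}) and cyclicity of the trace,
\[
(\mathbb{E}_\alpha \circ C_{Q_\varepsilon})(X) \;=\; \sum_{\gamma \in S_\alpha} \tr\bigl(\rho_{S_n}^d(\gamma^{-1})\, P X\bigr)\,M_\alpha \rho_{S_n}^d(\gamma).
\]
The elementary identity $\tr\bigl(X \mapsto \tr(BX) A\bigr) = \tr(AB)$ on $\morph(\mathbb{C}^{d^n})$ (immediate from (\ref{eqn:left-right})) applied termwise yields $\tr(M_\alpha P)$ from each $\gamma$, so
\[
\tr(\mathbb{E}_\alpha \circ C_{Q_\varepsilon}) \;=\; |S_\alpha|\,\tr(M_\alpha P) \;=\; \alpha!\,\tr(M_\alpha P).
\]

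Next, I would exploit the tensor structure. Under the identification $(\mathbb{C}^d)^{\otimes n} \cong \bigotimes_j (\mathbb{C}^d)^{\otimes \alpha_j}$, the Kronecker factorization $\Phi_\alpha = \bigotimes_j \Phi_{\alpha_j}$ gives $M_\alpha = M_{\alpha_1}\otimes\cdots\otimes M_{\alpha_g}$. Setting $P_j \vcentcolon= \rho_{S_{\alpha_j}}^d(Q_\varepsilon)$, the key geometric fact is that any fully antisymmetric tensor is antisymmetric in each block, so $(P_1\otimes\cdots\otimes P_g)\,P = P$. Since each $M_{\alpha_j}$ commutes with $P_j$ (again by centrality), cyclicity then gives
\[
\tr(M_\alpha P) \;=\; \tr\bigl((M_{\alpha_1}P_1)\otimes\cdots\otimes(M_{\alpha_g}P_g)\cdot P\bigr).
\]

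The remaining step is to compute $M_{\alpha_j}P_j$. Because the sign representation of $S_{\alpha_j}$ is one-dimensional, every element of $\rho_{S_{\alpha_j}}^d(\mathbb{C}[S_{\alpha_j}])$ acts as a scalar on the range of $P_j$; thus $M_{\alpha_j}P_j = c_j P_j$ where $c_j$ is the reciprocal of the eigenvalue of $\Phi_{\alpha_j}(I)$ on the alternating isotypic component. From (\ref{eqn:phi-map}) and (\ref{eqn:character-of-permutation-representation-direct}) this eigenvalue equals $\sum_{\gamma\in S_{\alpha_j}} d^{c(\gamma)}\sign(\gamma)$, which evaluates to $\alpha_j!\binom{d}{\alpha_j} = \alpha_j!\,\tr(P_j)$ via (\ref{eqn:elementary-to-power-sum}) applied at the $d\times d$ identity matrix (equivalently, by character orthogonality against the Schur--Weyl expansion (\ref{eqn:character-of-permutation-representation-decomposed})). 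Combining with $(P_1\otimes\cdots\otimes P_g)\,P = P$,
\[
\tr(\mathbb{E}_\alpha \circ C_{Q_\varepsilon}) \;=\; \alpha!\,\prod_j c_j \cdot \tr(P) \;=\; \frac{\tr(P)}{\prod_j \tr(P_j)},
\]
which is (\ref{eqn:trace-projection}). The main obstacle is the eigenvalue calculation for $\Phi_{\alpha_j}(I)$ on the sign isotypic component; the rest is bookkeeping of centrality, cyclicity, and the tensor factorization of $\mathbb{E}_\alpha$.
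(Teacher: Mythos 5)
Your proof is correct, and it follows a genuinely different route from the paper's. The paper restricts $\mathbb{E}_\alpha\circ C_{Q_\varepsilon}$ to the range of $\mathbb{E}_\alpha$, uses injectivity of $\rho_{S_n}^d$ (for $n\le d$) to read off group-algebra coefficients $[\,\cdot\,]_\gamma$, and ends by computing $\mathbb{E}_\alpha(Q_\varepsilon)$ explicitly as a scalar multiple of $\bigotimes_j Q^{(\alpha_j)}_\varepsilon$ through a multiply-and-divide manipulation. You instead work entirely with operator traces on $\morph(\mathbb{C}^d)^{\otimes n}$: the rank-one trace identity $\tr\bigl(X\mapsto\tr(BX)A\bigr)=\tr(AB)$ collapses the sum over $S_\alpha$ to $\alpha!\,\tr(M_\alpha P)$, and then two structural facts finish the job --- the inclusion $\bigwedge^n\mathbb{C}^d\subset\bigotimes_j\bigwedge^{\alpha_j}\mathbb{C}^d$ (which gives $(P_1\otimes\cdots\otimes P_g)P=P$), and the observation that the whole of $\rho^d_{S_{\alpha_j}}(\mathbb{C}[S_{\alpha_j}])$ acts by scalars on the range of $P_j$ because the sign representation is one-dimensional. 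Both proofs hinge on the same eigenvalue of $\Phi_{\alpha_j}(I)$ on the sign isotypic component and the same tensor factorization of $\mathbb{E}_\alpha$, but your version makes the geometric mechanism explicit and bypasses the coefficient-extraction/injectivity bookkeeping. The trade-off is that the paper's coefficient-extraction scheme is the template that is re-used more or less verbatim for general $Q_\lambda$ in Lemma~\ref{thm:key-trace-formula} (where the ``acts by a scalar'' shortcut is unavailable, since $V^\lambda_{S_n}$ is no longer one-dimensional), whereas your argument is specialized to $\varepsilon$ and would need the splitting rule and a more elaborate centrality argument to extend.
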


\noindent We remind the reader that $\mathbb{E}_\alpha , C_{Q_\varepsilon} \in \morph(\morph(\mathbb{C}^d)^{\otimes n})$ while $\rho_{S_n}^{d} (Q_\varepsilon) \in \morph(\mathbb{C}^d)^{\otimes n}$.

\begin{proof}
	
	For any finite-dimensional vector space $V$ and linear $T: V \to V$, it is true that $\tr(T) = \tr(T|_{\text{range}(T)})$ -- simply complete any basis for $\text{range}(T)$ and consider the resulting block decomposition of $T$. Hence, in computing $\tr ( \mathbb{E}_\alpha \circ C_{Q_\varepsilon} )$ we need only consider the restriction of this operator to its range. As we saw in Section~\ref{sec:representations}, $\rho_{S_n}^d$ is injective when $n \leq d$, thus the set $\rho_{S_n}^d(S_\alpha)$ is a basis for the range of $\mathbb{E}_\alpha$. 
	
	 For $v \in \mathbb{C}[S_n]$, let $[v]_\sigma$ denote the $\sigma$-coefficient of $v$ i.e. $v = \sum_{\sigma} [v]_\sigma \; \sigma$. Then
	\begin{align*}
		\tr ( \mathbb{E}_\alpha \circ C_{Q_\varepsilon} ) &= \sum_{\gamma \in S_\alpha} [ \mathbb{E}_\alpha C_{Q_\varepsilon}( \gamma ) ]_{ \gamma } \\
		&= \sum_{\gamma \in S_\alpha} [ \mathbb{E}_\alpha(Q_\varepsilon \gamma Q_\varepsilon) ]_{ \gamma } \\
		&= \sum_{\gamma \in S_\alpha} [ \mathbb{E}_\alpha((Q_\varepsilon)^2 \sign(\gamma)) ]_{ \gamma } \\
		&= \sum_{\gamma \in S_\alpha} \sign(\gamma) [ \mathbb{E}_\alpha(Q_\varepsilon) ]_{ \gamma } .
	\end{align*}
	
	\noindent Next,
	\begin{align*}
		\mathbb{E}_\alpha(\rho_{S_n}^d(Q_\varepsilon)) &\overset{(\ref{eqn:e-and-phi-maps-multi})}{=} \Phi_\alpha(I)^{-1} \circ \Phi_\alpha(\rho_{S_n}^d(Q_\varepsilon)) \\
		&\overset{(\ref{eqn:phi-map-multi})}{=} \Phi_\alpha(I)^{-1} \circ \sum_{\gamma \in S_\alpha} \tr( \rho_{S_n}^d(Q_\varepsilon) \; \gamma^{-1} ) \; \gamma \\
		&= \Phi_\alpha(I)^{-1} \circ \sum_{\gamma \in S_\alpha} \tr( \rho_{S_n}^d(Q_\varepsilon) ) \sign(\gamma) \; \gamma \\
		&= \tr( \rho_{S_n}^d(Q_\varepsilon) ) \; \Phi_\alpha(I)^{-1} \circ \sum_{\gamma \in S_\alpha} \sign(\gamma) \; \gamma \\
		&= \tr( \rho_{S_n}^d(Q_\varepsilon) ) \; \bigotimes_{j = 1}^g \left( \Phi_{\alpha_j}(I)^{-1} \circ \sum_{\gamma^{(j)} \in S_{\alpha_j}} \sign(\gamma^{(j)}) \; \gamma^{(j)} \right) \\
		&= \tr( \rho_{S_n}^d(Q_\varepsilon) ) \; \bigotimes_{j = 1}^g \left( \frac{1}{ \tr( \rho_{S_{\alpha_j}}^d(Q_\varepsilon)) } \Phi_{\alpha_j}(I)^{-1} \circ \sum_{\gamma^{(j)} \in S_{\alpha_j}} \tr( \rho_{S_{\alpha_j}}^d(Q_\varepsilon)) \sign(\gamma^{(j)}) \; \gamma^{(j)} \right) \\
		&= \frac{ \tr( \rho_{S_n}^d(Q_\varepsilon) ) }{ \prod_{j=1}^g \tr( \rho_{S_{\alpha_j}}^d(Q_\varepsilon)) } \bigotimes_{j = 1}^g \left( \Phi_\alpha(I)^{-1} \circ \Phi_{\alpha_j}(\rho_{S_{\alpha_j}}^d(Q_\varepsilon)) \right) \\
		&\overset{(\ref{eqn:e-and-phi-maps})}{=} \frac{ \tr( \rho_{S_n}^d(Q_\varepsilon) ) }{ \prod_{j=1}^g \tr( \rho_{S_{\alpha_j}}^d(Q_\varepsilon)) } \; \bigotimes_{j = 1}^g \left( \mathbb{E}_{\alpha_j} (\rho_{S_{\alpha_j}}^d(Q_\varepsilon)) \right) \\
		&= \frac{ \tr( \rho_{S_n}^d(Q_\varepsilon) ) }{ \prod_{j=1}^g \tr( \rho_{S_{\alpha_j}}^d(Q_\varepsilon)) } \; \bigotimes_{j = 1}^g Q_{\alpha_j}\\
		&= \frac{ \tr( \rho_{S_n}^d(Q_\varepsilon) ) }{ \prod_{i=1}^g \tr( \rho_{S_{\alpha_j}}^d(Q_\varepsilon)) } \; \frac{1}{ \alpha_1! \cdots \alpha_g! } \; \sum_{\gamma \in S_\alpha} \sign(\gamma) \gamma .
	\end{align*}
	
	Since $\alpha_1! \cdots \alpha_g! = |S_\alpha|$, we see
	\begin{align*}
		\tr ( \mathbb{E}_\alpha \circ C_{Q_\varepsilon} ) &= \sum_{\gamma \in S_\alpha} \sign(\gamma) [ \mathbb{E}_\alpha(Q_\varepsilon) ]_{ \gamma } \\
		&= \sum_{\gamma \in S_\alpha} \sign(\gamma) \frac{ \tr( \rho_{S_n}^d(Q_\varepsilon) ) }{ \prod_{i=1}^g \tr( \rho_{S_{\alpha_j}}^d(Q_\varepsilon)) } \frac{1}{|S_\alpha|} \sign(\gamma) \\
		&= \frac{ \tr( \rho_{S_n}^d(Q_\varepsilon) ) }{ \prod_{i=1}^g \tr( \rho_{S_{\alpha_j}}^d(Q_\varepsilon)) } \frac{1}{|S_\alpha|} \sum_{\gamma \in S_\alpha} 1 \\
		&= \frac{ \tr( \rho_{S_n}^d(Q_\varepsilon) ) }{ \prod_{i=1}^g \tr( \rho_{S_{\alpha_j}}^d(Q_\varepsilon)) } .
	\end{align*}
		
\end{proof}

\begin{prop}\label{thm:trace-centalt}
	
	For $n \leq d$,
	\begin{equation}\label{eqn:trace-centalt}
		\tr( \rho_{S_n}^d(Q_\varepsilon) ) = \binom{d}{n} .
	\end{equation}
	
\end{prop}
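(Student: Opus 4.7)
The plan is to identify $\rho_{S_n}^d(Q_\varepsilon)$ as the projection of $(\mathbb{C}^d)^{\otimes n}$ onto its antisymmetric subspace and then compute the trace as the dimension of that subspace. The starting observation is that the one-dimensional representation $\varepsilon = (1,1,\ldots,1)$ is precisely the sign representation of $S_n$, so by the general formula (\ref{eqn:representation-projection}) for projections onto isotypic components, applied to the representation $\rho_{S_n}^d$ on $(\mathbb{C}^d)^{\otimes n}$, the operator $\rho_{S_n}^d(Q_\varepsilon)$ is exactly the projection onto the $\varepsilon$-isotypic summand in the canonical decomposition.

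Next, I would invoke the canonical decomposition~(\ref{eqn:tensor-representation-decomp}), which identifies this isotypic summand as $s_\varepsilon(d)\,V_{S_n}^\varepsilon$ (this summand actually appears since $ht(\varepsilon)=n\leq d$ by hypothesis). Because $V_{S_n}^\varepsilon$ is one-dimensional, the rank of $\rho_{S_n}^d(Q_\varepsilon)$ equals its multiplicity $s_\varepsilon(d)$. Since $\rho_{S_n}^d(Q_\varepsilon)$ is a projection, its trace equals its rank, giving
\[
\tr(\rho_{S_n}^d(Q_\varepsilon)) = s_\varepsilon(d).
\]

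Finally, I would evaluate $s_\varepsilon(d)$ using the hook-content formula (\ref{eqn:schur-hook-and-content}). The Young diagram of $\varepsilon$ is a single column of $n$ boxes at positions $(i,1)$, $i=1,\dots,n$, whose hook lengths are $h_{(i,1)}=n-i+1$ and contents are $c_{(i,1)}=1-i$. Hence
\[
s_\varepsilon(d)=\prod_{i=1}^n\frac{d+1-i}{n+1-i}=\frac{d(d-1)\cdots(d-n+1)}{n!}=\binom{d}{n},
\]
which completes the proof. There is no real obstacle here; the only subtlety is the hypothesis $n\leq d$, which is exactly what guarantees that $\varepsilon$ contributes nontrivially to the decomposition~(\ref{eqn:tensor-representation-decomp}) (alternatively, $\binom{d}{n}$ vanishes when $n>d$, matching the triviality of the antisymmetric subspace in that regime).
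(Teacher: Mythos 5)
Your proof is correct, but it takes a genuinely different route from the paper's. You identify $\rho_{S_n}^d(Q_\varepsilon)$ as the projection onto the sign-isotypic component (the antisymmetric subspace of $(\mathbb{C}^d)^{\otimes n}$), read its rank off the Schur--Weyl decomposition~(\ref{eqn:tensor-representation-decomp}) as the multiplicity $s_\varepsilon(d)$, and then evaluate $s_\varepsilon(d)$ via the hook-content formula~(\ref{eqn:schur-hook-and-content}). The paper instead expands $\tr(\rho_{S_n}^d(Q_\varepsilon)) = \frac{1}{n!}\sum_{\sigma}\sign(\sigma)\chi_{S_n}^d(\sigma)$, plugs in the direct formula $\chi_{S_n}^d(\sigma) = d^{c(\sigma)}$ from~(\ref{eqn:character-of-permutation-representation-direct}), and recognizes the resulting sum as the generating function for Stirling numbers of the first kind, yielding the falling factorial $d(d-1)\cdots(d-n+1)/n!$. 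Your argument is more conceptual and in fact shows that the trace is literally $\dim\Lambda^n(\mathbb{C}^d)$; the hook-content computation could even be bypassed by that observation. The paper's route is more elementary and combinatorial, avoiding any appeal to the isotypic decomposition of the permutation representation, and it sets the pattern for the later, harder computation in Lemma~\ref{thm:key-trace-formula}, where the relevant projections are no longer rank-one and the explicit expansion via $\Phi_\alpha$ and $\mathbb{E}_\alpha$ is what one must use. Both are complete and correct.
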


\begin{proof}
	
	Recalling Equations~(\ref{eqn:character-of-permutation-representation-direct}) and (\ref{eqn:sign}), we see
	\begin{align*}
		\tr( \rho_{S_n}^d(Q_\varepsilon) ) &= \frac{1}{n!} \sum_{\sigma \in S_n} \sign(\sigma) \chi_{S_n}^{d}(\sigma) \\
		&= \frac{1}{n!} \sum_{\sigma \in S_n} (-1)^{n-c(\sigma)} d^{c(\sigma)} \\
		&= \frac{1}{n!} \sum_{j = 1}^n s(n,j) d^{j} \\
		&= \frac{1}{n!} d ( d - 1 ) \cdots ( d - ( n - 1 ) ) \\
		&= \frac{1}{n!} \cdot \frac{d!}{(d - n)!} \\
		&= \binom{d}{n} .
	\end{align*}
	where $s(n,j)$ is a Stirling number of the first kind. The value $s(n,j)$ is known to be the (signed) number of elements of $S_n$ which have $j$ disjoint cycles and also the coefficient of $d^j$ in the falling factorial $d ( d - 1 ) \cdots ( d - ( n- 1 ) )$; see Section 1.3.3 to 1.3.4 of Stanley \cite{Stanley-2012} for a proof.
	
\end{proof}

We finally finish the proof of Theorem~\ref{thm:main}, 
\begin{align*}
	&\int_{ U(d)^g } \det ( L_x( \mathcal{U} ) ) \; \overline{ \det ( L_y( \mathcal{U} ) ) } \, d\mathcal{U} \\
	\overset{(\ref{eqn:pencil-integral-centalt})}{=} &\sum_{n=0}^d \sum_{ |\alpha| = n } \binom{n}{\alpha}^2 x^\alpha \overline{y}^\alpha \int_{ U(d)^g } \tr( Q_\varepsilon \circ \mathcal{U}^{\otimes \alpha} ) \overline{ \tr( Q_\varepsilon \circ \mathcal{U}^{\otimes \alpha} ) }\, d\mathcal{U} \\
	\overset{(\ref{eqn:trace-integral})}{=} &\sum_{n=0}^d \sum_{ |\alpha| = n } \binom{n}{\alpha}^2 x^\alpha \overline{y}^\alpha \tr ( \mathbb{E}_\alpha \circ C_{Q_\varepsilon} ) \\
	\overset{(\ref{eqn:trace-projection})}{=} &\sum_{n=0}^d \sum_{ |\alpha| = n } \binom{n}{\alpha}^2 x^\alpha \overline{y}^\alpha \frac{ \tr( \rho_{S_n}^d(Q_\varepsilon) ) }{ \prod_{j=1}^g \tr( \rho_{S_{\alpha_i}}^d(Q_\varepsilon) ) } \\
	\overset{(\ref{eqn:trace-centalt})}{=} &\sum_{n=0}^d \sum_{ |\alpha| = n } \binom{n}{\alpha}^2 \binom{d}{n} \prod_{j=1}^g \binom{d}{\alpha_j}^{-1} x^\alpha \overline{y}^\alpha \\
	\overset{(\ref{eqn:main-coefficient})}{=} &\sum_{n=0}^d \sum_{ |\alpha| = n } c(d, \alpha) \binom{n}{\alpha} x^\alpha \overline{y}^\alpha .
\end{align*}

\section{Higher moments and triangular pencils}\label{sec:triangular}

In this section we give a proof of Conjecture~(\ref{eqn:pencil-limit-conjecture}) under the additional assumption that the coefficient matrices $X_1, \dots, X_g$, and separately the coefficients $Y_1, \dots, Y_g$, can be put into simultaneous upper (or lower) triangular form. This will evidently include the case that the systems of coefficients $\mathcal{X}$ and $\mathcal{Y}$ each commute among themselves. In fact, we will see that the triangular case can be reduced to the case where all the coefficient matrices are scalar multiplies of the identity matrix. Nevertheless, even the case of scalar multiplies of the identity seems substantially more difficult then the case of genuinely scalar coefficients treated in Section~\ref{sec:scalar}. We will see that this amounts to controlling higher moments of the genuinely scalar pencils. More precisely, the case of scalar multiples of the identity (of size $k\times k$) is essentially equivalent to the problem of bounding the $k^{th}$ moments of $|\det(I+\sum_{j=1}^g x_jU_j)|^2$ for $\|x\|\leq r<1$, independently of the size of $U$. After explaining this reduction, we proceed in broadly the same way as in the scalar case in Section~\ref{sec:scalar}, except that the resulting combinatorial expressions will be significantly more complicated, and we will settle for bounds rather than explicit calculations of the coefficients (which seem out of reach). 

\begin{thm}\label{thm:true-for-triangular} Conjecture~(\ref{eqn:pencil-limit-conjecture}) holds when the coefficients $X_j, Y_j$ are upper triangular. 
\end{thm}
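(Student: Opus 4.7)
The plan is to use the block-triangular structure to factor the determinant, reduce to a higher-moment bound on scalar pencils via H\"older's inequality, and then attack that bound by running the method of Section~\ref{sec:scalar} through general irreducible projections of $S_n$ rather than only the alternating one.

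\textbf{Reduction to scalar pencils.} When every $X_j$ is upper triangular with diagonal entries $x_j^{(1)},\dots,x_j^{(k)}$, the matrix $L_{\mathcal X}(\mathcal U)$ is block upper triangular as a $k\times k$ array of $d\times d$ blocks, whose $(i,j)$-block is $\delta_{ij}I_d+\sum_\ell(X_\ell)_{ij}U_\ell$. The determinant is therefore the product of the diagonal blocks:
\[
\det L_{\mathcal X}(\mathcal U)=\prod_{\ell=1}^{k}\det L_{x^{(\ell)}}(\mathcal U),\qquad x^{(\ell)}=(x_1^{(\ell)},\dots,x_g^{(\ell)})\in\mathbb C^g,
\]
and analogously for $\mathcal Y$. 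Corollary~\ref{cor:termwise-limit} already matches the integral in (\ref{eqn:pencil-limit-conjecture}) with the conjectured right-hand side degree by degree, so by Proposition~\ref{prop:L2-bound-is-sufficient} the theorem reduces to an $L^2$-bound on $\prod_\ell\det L_{x^{(\ell)}}(\mathcal U)$ that is uniform in $d$ as the $x^{(\ell)}$ range over compact subsets of the Euclidean unit ball of $\mathbb C^g$. H\"older's inequality then bounds this $L^2$-norm by a product of $2k$-th moments of the individual scalar-pencil determinants, and matters reduce to Proposition~\ref{prop:moment-bounds}.

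\textbf{Moment bound via scalar multiples of the identity.} The key identity is $|\det L_x(\mathcal U)|^{2k}=|\det L_{xI_k}(\mathcal U)|^{2}$, where $xI_k\vcentcolon=(x_1I_k,\dots,x_gI_k)$, so the moment bound is the $L^2$-bound on a matrix pencil with coefficients equal to scalar multiples of $I_k$. For such pencils the trace monomials simplify to $p_{\sigma,\alpha}(xI_k)=x^\alpha\,k^{c(\sigma)}$, and the projection $Q_\varepsilon$ that drove Section~\ref{sec:scalar} is replaced by the central element $\tfrac{1}{n!}\sum_{\sigma\in S_n}\sign(\sigma)k^{c(\sigma)}\sigma$. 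Using (\ref{eqn:character-of-permutation-representation-direct}), the decomposition (\ref{eqn:character-of-permutation-representation-decomposed}), and the standard identity $\sign(\sigma)\chi_\lambda(\sigma)=\chi_{\lambda^\ast}(\sigma)$, this element rewrites as $\sum_{\mu\vdash n,\,wd(\mu)\le k}\tfrac{s_{\mu^\ast}(k)}{\chi_\mu(1)}Q_\mu$, so the trace computation of Proposition~\ref{prop:trace-projection} can be performed one irreducible projection $Q_\mu$ at a time.

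\textbf{The main obstacle: a content-ratio estimate.} Imitating the calculation of Proposition~\ref{prop:trace-projection} with $Q_\mu$ in place of $Q_\varepsilon$, one invokes the hook--content formula (\ref{eqn:schur-hook-and-content}) for $\tr(\rho_{S_n}^{d}(Q_\mu))$ and the extended splitting rule (\ref{eqn:splitting-rule-general}) when restricting $\chi_\mu$ to the Young subgroup $S_\alpha$; the result is that $\tr(\mathbb E_\alpha\circ C_{Q_\mu})$ is a sum of products of ratios $s_\nu(d)/\prod_j s_{\nu^{(j)}}(d)$ coming from hook--content expressions, weighted by Littlewood--Richardson coefficients. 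The hard part is precisely to control these ratios uniformly in $d$ and summably in $n$ against $\binom{n}{\alpha}^{2}|x^\alpha|^{2}s_{\mu^\ast}(k)^{2}/\chi_\mu(1)^{2}$ when $\|x\|_2\le r<1$: in the alternating case ($\mu=\varepsilon$) the ratio collapses to the tidy factor $c(d,\alpha)\in[0,1]$, but for general $\mu$ one must absorb both the growth of the Schur dimensions $s_\nu(d)$ and the combinatorial multiplicity of the splitting rule. This is the content-ratio bound (Theorem~\ref{thm:content-ratio-bound}) that I would isolate into its own section. Once it is in hand, the majorization $\sum_{|\alpha|=n}\binom{n}{\alpha}|x^\alpha|^{2}\le\|x\|_2^{2n}$ yields a geometric-in-$n$ bound on each $\mu$-contribution, summability in $\mu$ follows from the polynomial-in-$k$ growth of $s_{\mu^\ast}(k)$, and Proposition~\ref{prop:moment-bounds} drops out. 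Combined with the H\"older reduction and Proposition~\ref{prop:L2-bound-is-sufficient}, this proves Theorem~\ref{thm:true-for-triangular}.
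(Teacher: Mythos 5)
Your proposal follows the same route as the paper's proof: reduce triangular to diagonal, apply H\"older to reduce to a uniform $2k$-th moment bound on scalar pencils (equivalently, an $L^2$ bound for scalar-multiple-of-identity coefficients), decompose the central element $\frac{1}{n!}\sum_\sigma\sign(\sigma)k^{c(\sigma)}\sigma$ into the projections $Q_\mu$ via the sign-twisted Schur--Weyl character, run the $\mathbb E_\alpha$/splitting-rule computation, and isolate the content-ratio bound (Theorem~\ref{thm:content-ratio-bound}) as the key technical estimate. The only nits are cosmetic: the display $\sum_{|\alpha|=n}\binom{n}{\alpha}|x^\alpha|^2=\|x\|_2^{2n}$ is an equality (multinomial theorem), and the controlling growth of $s_{\mu^\ast}(k)$ from Lemma~\ref{thm:schur-bound} is polynomial in $n$ (with exponent depending on $k$), not in $k$.
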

From elementary properties of determinants, if the $X_j$ are upper triangular, then the expression $\det\left(I\otimes I+\sum_{j=1}^g X_j\otimes U_j\right)$ is unchanged after replacing the $X_j$ by their corresponding diagonals. Thus, the upper triangular case immediately reduces to the diagonal case. Concretely, the theorem then implies
\begin{cor}
For vectors $x_l = (x_l^{(1)}, \dots, x_l^{(g)})$, $l=1, \dots, k$, and $y_m = (y_m^{(1)}, \dots, y_m^{(g)})$, $m=1, \dots k^\prime$ with $\|x_l\|_2, \|y_m\|_2<1$,  
  \[
  \lim_{d\to \infty} \int_{U(d)^g} \prod_{l=1}^k \det(I_d+\sum_{j=1}^{g} x_l^{(j)} U_j)\prod_{m=1}^{k^\prime}\overline{ \det(I_d+\sum_{j=1}^{g} y_m^{(j)} U_j)}\, d\mathcal{U} = \prod_{l=1}^k\prod_{m=1}^{k^\prime} \frac{1}{1-\langle x_l, y_m\rangle_{\mathbb C^g}}
  \]

\end{cor}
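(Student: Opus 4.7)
The plan is to derive the corollary directly from Theorem~\ref{thm:true-for-triangular} by encoding the scalar data $\{x_l^{(j)}\}$ and $\{y_m^{(j)}\}$ as diagonal coefficient matrices. Diagonal matrices are (trivially) upper triangular, so the theorem applies, and both sides of the resulting identity factor into exactly the products claimed in the corollary.

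First I would define $X_j \in M_{k \times k}(\mathbb C)$ to be the diagonal matrix with diagonal entries $x_1^{(j)}, \ldots, x_k^{(j)}$, and $Y_j \in M_{k' \times k'}(\mathbb C)$ the diagonal matrix with entries $y_1^{(j)}, \ldots, y_{k'}^{(j)}$. The hypothesis $\textbf{rad}(\mathcal{X}), \textbf{rad}(\mathcal{Y}) < 1$ must then be checked: the operator $\sum_j X_j \otimes \overline{X_j}$ is itself diagonal, with diagonal entries $\langle x_l, x_{l'} \rangle$ for $1 \leq l, l' \leq k$, and Cauchy--Schwarz gives $|\langle x_l, x_{l'}\rangle| \leq \|x_l\|_2 \|x_{l'}\|_2 < 1$, whence $\textbf{rad}(\mathcal{X}) < 1$; the same argument handles $\mathcal{Y}$.

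Next I would read off the block structure of the pencils. Writing $L_\mathcal{X}(\mathcal{U})$ as a $k \times k$ block matrix of $d \times d$ blocks, the diagonality of each $X_j$ kills all off-diagonal blocks and makes the $l$th diagonal block equal to $I_d + \sum_j x_l^{(j)} U_j$; hence $\det L_\mathcal{X}(\mathcal{U}) = \prod_{l=1}^k \det(I_d + \sum_j x_l^{(j)} U_j)$, and likewise for $L_\mathcal{Y}(\mathcal{U})$. On the target side of the conjectured limit, $\sum_j X_j \otimes \overline{Y_j}$ is a $kk' \times kk'$ diagonal matrix with entries $\langle x_l, y_m\rangle$, so
\[
\det\Bigl(I_k \otimes I_{k'} - \sum_j X_j \otimes \overline{Y_j}\Bigr)^{-1} = \prod_{l=1}^k \prod_{m=1}^{k'} \frac{1}{1-\langle x_l, y_m\rangle}.
\]

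Combining these two factorizations with Theorem~\ref{thm:true-for-triangular} applied to the diagonal tuples $\mathcal{X}, \mathcal{Y}$ yields the corollary at once. I do not expect any genuine obstacle here: all the substantive work sits inside Theorem~\ref{thm:true-for-triangular}, and this deduction is just the observation that diagonal is a special case of triangular, together with a routine block-matrix computation and a Cauchy--Schwarz verification of the outer spectral radius condition. The only conceptual point worth highlighting is that the hypothesis $\|x_l\|_2 < 1$ used in the corollary is strictly weaker than what one might naively expect from a crude row-norm bound, yet it still implies $\textbf{rad}(\mathcal{X}) < 1$ precisely because of the diagonal structure.
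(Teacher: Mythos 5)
Your proposal is correct and follows essentially the same path as the paper: specialize Theorem~\ref{thm:true-for-triangular} to diagonal coefficient tuples (diagonal being the obvious special case of upper triangular), verify that $\|x_l\|_2<1$ forces $\textbf{rad}(\mathcal X)<1$ since $\sum_j X_j\otimes\overline{X_j}$ is diagonal with entries $\langle x_l, x_{l'}\rangle$, and observe that both sides of (\ref{eqn:pencil-limit-conjecture}) factor over the diagonal entries. One small quibble with your closing remark: for diagonal tuples the row norm equals $\max_l\|x_l\|_2$, so the crude row-norm bound gives exactly the stated hypothesis rather than something stronger; this does not affect the validity of the argument.
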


We first show that the conjecture will follow from suitable $L^2$ bounds on the determinants of the pencil, which are independent of the size $d$.

\begin{prop}\label{prop:uniform-L2-bound}
	
	Suppose that for each $0\leq r<1$ and each integer $k\geq 1$ there is a constant $C(r,k)$ such that
	\begin{equation}\label{eqn:uniform-L2-bound}
		\sup_{d\geq1} \int_{U(d)^g} |\det(I_k \otimes I_d + \sum_{j=1}^{g} x_j I_k \otimes U_j)|^2 \, d\mathcal{U} \leq C(r,k)
	\end{equation}
	for all scalars $x_1, \dots, x_g$ with $|x_1|^2+\cdots+|x_g|^2\leq r$. Then Conjecture~(\ref{eqn:pencil-limit-conjecture}) holds for upper triangular coefficients $\mathcal{X}$ and $\mathcal{Y}$.
	
\end{prop}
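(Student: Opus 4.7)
The plan is a two-step reduction. First, the determinant $\det L_\mathcal{X}(\mathcal{U})$ is unchanged when each upper triangular $X_j$ is replaced by its diagonal part, since $\sum_j X_j\otimes U_j$ is then block upper triangular with diagonal blocks determined by the diagonals of the $X_j$; hence we may assume $\mathcal{X}$ and $\mathcal{Y}$ are diagonal tuples. Writing $X_j=\mathrm{diag}(x_1^{(j)},\ldots,x_k^{(j)})$ and $x_l=(x_l^{(1)},\ldots,x_l^{(g)})\in\mathbb{C}^g$, the pencil $I_k\otimes I_d+\sum_j X_j\otimes U_j$ is block diagonal with $l$-th block $I_d+\sum_j x_l^{(j)}U_j$, so that
\[
\det L_\mathcal{X}(\mathcal{U})=\prod_{l=1}^k\det\Bigl(I_d+\sum_{j=1}^g x_l^{(j)}U_j\Bigr),
\]
and one checks that $\|\mathcal{X}\|_{row}=\max_l\|x_l\|_2$.

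The key step is to upgrade the hypothesis---which controls only scalar-identity pencils in $L^2$---to an $L^2$ bound on arbitrary diagonal pencils via H\"older's inequality. Since $\det(I_k\otimes I_d+\sum_j x_j I_k\otimes U_j)=\det(I_d+\sum_j x_jU_j)^k$, the hypothesis is precisely a uniform-in-$d$ bound on the $2k$-th scalar moment: for each $r<1$ there is $C(r,k)$ with
\[
\sup_{d\geq 1}\int_{U(d)^g}\Bigl|\det\bigl(I_d+\sum_j x_jU_j\bigr)\Bigr|^{2k}\,d\mathcal{U}\leq C(r,k)\qquad\text{whenever }\|x\|_2\leq r
\]
(possibly after an innocuous rescaling of the radius). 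Applying H\"older with $k$ equal exponents to the product expression above for $|\det L_\mathcal{X}|^2$ yields
\[
\int_{U(d)^g}|\det L_\mathcal{X}(\mathcal{U})|^2\,d\mathcal{U}\leq\prod_{l=1}^k\left(\int_{U(d)^g}\Bigl|\det\bigl(I_d+\sum_j x_l^{(j)}U_j\bigr)\Bigr|^{2k}\,d\mathcal{U}\right)^{1/k},
\]
and whenever $\|\mathcal{X}\|_{row}\leq r$ each $\|x_l\|_2\leq r$, so each factor is at most $C(r,k)$, producing the desired uniform $L^2$ bound on $\det L_\mathcal{X}$ for diagonal $\mathcal{X}$ in the row ball; the same estimate applies to $\det L_\mathcal{Y}$.

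With this bound in hand, one runs the normal-families argument of Proposition~\ref{prop:L2-bound-is-sufficient} on the smaller parameter space of diagonal tuples, identified with an open subset of $\mathbb{C}^{gk}\times\mathbb{C}^{gk'}$. Cauchy--Schwarz and the uniform $L^2$ bound make the polynomials $\mathcal{X}\mapsto f_d(\mathcal{X})=\int\det L_\mathcal{X}(\mathcal{U})\overline{\det L_\mathcal{Y}(\mathcal{U})}\,d\mathcal{U}$ uniformly bounded on compacta, hence normal by Montel's theorem. Corollary~\ref{cor:termwise-limit} identifies the homogeneous components of any locally uniform subsequential limit, degree by degree, with those of $\det(I_k\otimes I_{k'}-\sum_j X_j\otimes\overline{Y_j})^{-1}$ provided by Lemma~\ref{lem:target-expansion}; since the multivariable Cauchy integral formula promotes uniform-on-compacta convergence to convergence of each homogeneous term, every subsequential limit of $f_d$ must equal this target, and so the full sequence converges. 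The only genuinely new input beyond the existing machinery is the H\"older trade-off: direct $L^2$ control of scalar pencils would not close the product $\prod_l|f_l|^2$, and what makes the hypothesis strong enough is precisely that it furnishes the $2k$-th scalar moment needed to absorb the $k$-fold product structure of diagonal pencils.
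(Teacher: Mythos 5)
Your proof is correct and follows essentially the same route as the paper's: reduce triangular to diagonal, observe that the hypothesis is a uniform $2k$-th moment bound on the scalar pencil $\Delta(x,\mathcal{U})=\det(I_d+\sum_j x_jU_j)$, apply H\"older with $k$ exponents equal to $k$ to close the $k$-fold product, and then run the Montel/normal-families argument of Proposition~\ref{prop:L2-bound-is-sufficient} restricted to the diagonal parameter space. The only difference is cosmetic (you also flag the small rescaling-of-$r$ issue and the need to restrict the normal-families argument to the diagonal submanifold, both of which the paper glosses over but which do not affect the substance).
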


\begin{proof}
	
	As noted above, we can assume from the outset that the tuples $\mathcal X$, $\mathcal Y$ are diagonal. So, let $X_1, \dots X_g$ be $k\times k$ diagonal matrices and let $Y_1, \dots, Y_g$ be $k^\prime\times k^\prime$ diagonal matrices, and let $x_l^{(j)}$ be the $l^{th}$ diagonal entry of $X_j$, similarly for the $Y_j$. For $l=1, \dots, k$ let $x_l\in\mathbb{C}^g$ be the vector $x_l = (x_l^{(1)}, \dots, x_l^{(g)})$, and similarly define vectors $y_m \in \mathbb{C}^g$, $m = 1, \dots, k^\prime$ from $\mathcal{Y}$. By the hypothesis of Conjecture~(\ref{eqn:pencil-limit-conjecture}), the spectral radii of $\sum X_j\otimes \overline{X_j}$ and $\sum Y_j\otimes \overline{Y_j}$ are strictly less than $1$, say both are less than some fixed $r<1$.  The matrix $\sum X_j\otimes \overline{X_j}$ is diagonal, and each diagonal entry has the form $\langle x_{l_1}, x_{l_2}\rangle_{\mathbb C^g}$ for pairs of indices $l_1, l_2$ in $\{1, \dots, k\}$. It follows in particular that $\|x_l\|^2\leq r<1$ for each $l=1, \dots, k$, that is, $\sum_{j=1}^g |x_l^{(j)}|^2\leq r$, and similarly for the vectors $y_m$ built from $\mathcal{Y}$.  We then have
	\begin{equation}\label{eqn:diagonal-determinant-expansion}
		\det(I \otimes I +\sum_{j=1}^{g} X_j\otimes U_j) = \prod_{l=1}^k \det(I+\sum_{j=1}^{g} x_l^{(j)}U_j)
	\end{equation}
	and
	\begin{equation*}
		\det(I \otimes I +\sum_{j=1}^{g} Y_j\otimes U_j) = \prod_{m=1}^{k^\prime} \det(I+\sum_{j=1}^{g} y_m^{(j)}U_j)
	\end{equation*}
	We have obviously for any $x\in\mathbb{C}^g$
	\begin{equation*}
		|\det(I \otimes I +\sum_{j=1}^{g} x_j I_k\otimes U_j)|^2 = |\det(I +\sum_{j=1}^{g} x_j U_j)|^{2k}.
	\end{equation*}
	Let us temporarily introduce the notation
	\begin{equation*}
		\Delta(x,\mathcal{U}) \vcentcolon= \det(I +\sum_{j=1}^{g} x_j U_j).
	\end{equation*}
	so that the previous equation becomes
	\begin{equation}\label{eqn:tensor-to-2k-moment}
		|\det(I \otimes I +\sum_{j=1}^{g} x_j I_k\otimes U_j)|^2 = |\Delta(x, \mathcal{U})|^{2k}.
	\end{equation}
	From our hypothesis (\ref{eqn:uniform-L2-bound}) about the existence of $C(r,k)$, we conclude that the function $\Delta(x,U)$ has finite $2k^{th}$ moments (that is, belongs to $L^{2k}(d\mathcal{U})$) for all $x$ in a fixed ball of radius $r<1$, and all finite $k$, and the $L^{2k}$ norms are bounded by constant which depends on $r$ and $k$, but not on $d$. We now claim that for diagonal tuples $\mathcal X$ of size $k$, satisfying ${\textbf{rad}(\mathcal X)}\leq r<1$
	\begin{equation}\label{eqn:L2-bound-triangular}
		\int_{U(d)^g} |\det(I \otimes I+\sum_{j=1}^{g} X_j\otimes U_j)|^2 \, d\mathcal{U} \leq C(r,k)
	\end{equation}
	where $C(r,k)$ is the same constant appearing in (\ref{eqn:uniform-L2-bound}). Indeed, we express the determinant as a product of determinants $\Delta(x,\mathcal{U})$ and apply the multivariable H\"older inequality. By (\ref{eqn:diagonal-determinant-expansion}) we have
	\begin{equation*}
		\det(I \otimes I+\sum_{j=1}^{g} X_j\otimes U_j) = \prod_{l=1}^k \Delta(x_l,\mathcal{U})
	\end{equation*}
	and thus
	\begin{align*}
		\int_{U(d)^g} |\det(I \otimes I+\sum_{j=1}^{g} X_j\otimes U_j)|^2\, d\mathcal U &= \int_{U(d)^g} \prod_{l=1}^k |\Delta(x_l,\mathcal{U})|^2 \, d\mathcal U \\
		&= \left\| \prod_{l=1}^k \Delta(x_l,\mathcal{U}) \right\|^2_2 \\
		&\leq \prod_{l=1}^k \left\| \Delta(x_l,\mathcal{U}) \right\|^2_{2k} \\
		&\leq C(r,k).
	\end{align*}
	
	The first inequality is the multivariable H\"older inequality; the second inequality follows because (\ref{eqn:tensor-to-2k-moment}) allows us to rewrite the hypothesis (\ref{eqn:uniform-L2-bound}) as $$\|\Delta(x,\mathcal{U})\|_{2k} \leq C(r,k)^{1/2k}.$$ Now we can complete the proof by appeal to Montel's theorem: indeed, we have satisfied the hypotheses of Proposition~\ref{prop:L2-bound-is-sufficient} with the added restriction that $\mathcal{X}$ is also upper triangular.
	
\end{proof}

Proposition~\ref{prop:uniform-L2-bound} tells us that in order to prove Conjecture (\ref{eqn:pencil-limit-conjecture}) for upper triangular $\mathcal{X},\mathcal{Y}$, it suffices to show the bound (\ref{eqn:local-L2-bound}) when each $X_j$ is a scalar multiple of the $k \times k$ identity matrix i.e. it suffices to prove (\ref{eqn:uniform-L2-bound}). This is exactly what we shall accomplish in the remainder of this section.

The initial manipulations will imitate the $k=1$ case treated in Section~\ref{sec:scalar}, but it will turn out that we will have to consider central projections $Q_\lambda$ onto summands corresponding to more general irreducible representations of $S_n$, rather than only the sign representation as was needed in the scalar case. We obtain explicit (but rather more complicated) combinatorial expressions for the coefficients in the homogeneous expansion, and the desired $L^2$ bound will ultimately follow from combinatorial arguments which bound these coefficients independently of $d$. 

\begin{prop} For fixed $k\geq 1$ and complex numbers $x_1, \dots, x_g$ we have
	
	\begin{align}\label{eqn:integral-to-trace-identity-matrix}
		\int_{U(d)^g} &|\det(I_k \otimes I_d + \sum_{j=1}^{g} x_j I_k \otimes U_j)|^2 \, d\mathcal{U} = \\
		&\sum_{n=0}^{kd} \sum_{|\alpha|=n} |x|^{2\alpha}\binom{n}{\alpha}^2  \sum_{ \substack{ \lambda \vdash n \\ wd(\lambda)\leq k } } \left(\frac{s_{\lambda^*}(k)}{\chi_\lambda(1)}\right)^2 \tr[\mathbb E_\alpha \circ C_{Q_\lambda}] \nonumber 
	\end{align}
  
\end{prop}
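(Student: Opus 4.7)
The plan is to substitute the specialized coefficients $\mathcal{X}=(x_1 I_k,\dots, x_g I_k)$ into the expansion of Lemma~\ref{lem:pencil-expansion-matrix}, then rewrite the sum over $S_n$ in terms of central projections via Schur-Weyl duality, and finally square and integrate using the orthogonality of the $Q_\lambda$.

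First, since $\mathcal{X}^{\otimes \alpha} = x^\alpha I_{k^n}$ under this specialization, we have
\[
  p_{\sigma,\alpha}(\mathcal{X}) = x^\alpha\tr(\rho_{S_n}^k(\sigma^{-1})) = x^\alpha \chi_{S_n}^k(\sigma) = x^\alpha k^{c(\sigma)}
\]
by (\ref{eqn:character-of-permutation-representation-direct}). Substitution into (\ref{eqn:pencil-expansion-matrix}) yields
\[
  \det L_{\mathcal{X}}(\mathcal{U}) = \sum_{n=0}^{kd} \sum_{|\alpha|=n}\binom{n}{\alpha} x^\alpha\cdot\frac{1}{n!}\tr\!\left[\Big(\sum_{\sigma\in S_n}\sign(\sigma)\chi_{S_n}^k(\sigma)\rho_{S_n}^d(\sigma^{-1})\Big)\circ \mathcal{U}^{\otimes \alpha}\right].
\]

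Next, I will use the character identity (\ref{eqn:character-of-permutation-representation-decomposed}) to expand $\chi_{S_n}^k(\sigma)=\sum_{\lambda\vdash n,\ ht(\lambda)\leq k} s_\lambda(k)\chi_\lambda(\sigma)$, together with the classical identity $\sign(\sigma)\chi_\lambda(\sigma) = \chi_{\lambda^*}(\sigma)$ coming from the isomorphism $V_{\lambda^*}\cong V_\lambda\otimes\sign$. Because every permutation of $S_n$ is conjugate to its inverse, $\chi_\mu(\sigma)=\chi_\mu(\sigma^{-1})$, so the definition (\ref{eqn:Q-formula}) of $Q_\mu$ gives $\sum_\sigma \chi_\mu(\sigma)\rho_{S_n}^d(\sigma^{-1}) = (n!/\chi_\mu(1))\rho_{S_n}^d(Q_\mu)$. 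Reindexing by $\mu=\lambda^*$ (so $wd(\mu)=ht(\lambda)\leq k$ and $s_\lambda(k)=s_{\mu^*}(k)$) produces
\[
  \det L_{\mathcal{X}}(\mathcal{U}) = \sum_{n=0}^{kd} \sum_{|\alpha|=n}\binom{n}{\alpha} x^\alpha \sum_{\substack{\mu\vdash n\\ wd(\mu)\leq k}} \frac{s_{\mu^*}(k)}{\chi_\mu(1)}\,\tr\!\left[\rho_{S_n}^d(Q_\mu)\circ \mathcal{U}^{\otimes \alpha}\right].
\]

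Finally, take $|\det L_{\mathcal{X}}(\mathcal{U})|^2$ and integrate. The rotation-invariance argument used to derive (\ref{eqn:expanded-integral}) (replacing $U_j$ by $zU_j$ for unimodular $z$) kills every cross-term with unequal multi-indices, leaving only pairs with $\alpha=\beta$ and so producing the factor $|x|^{2\alpha}$. Applying (\ref{eqn:trace-integral}) (with $Q_\nu^* = Q_\nu$) converts the remaining integrals into traces of the form $\tr[\mathbb{E}_\alpha\circ L_{Q_\mu}\circ R_{Q_\nu}]$. Since $\mathbb{E}_\alpha$ projects onto $\rho_{S_n}^d(\mathbb{C}[S_\alpha])$, one can compute this trace on that range: for $T=\rho_{S_n}^d(s)$ with $s\in\mathbb{C}[S_\alpha]$, centrality of $Q_\mu$ and the orthogonality relation $Q_\mu Q_\nu=\delta_{\mu\nu}Q_\mu$ of Proposition~\ref{prop:properties-of-Q} give $Q_\mu T Q_\nu = \rho_{S_n}^d(sQ_\mu Q_\nu) = \delta_{\mu\nu}\rho_{S_n}^d(sQ_\mu) = \delta_{\mu\nu} Q_\mu T Q_\mu$. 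Hence $\tr[\mathbb{E}_\alpha\circ L_{Q_\mu}\circ R_{Q_\nu}] = \delta_{\mu\nu}\tr[\mathbb{E}_\alpha\circ C_{Q_\mu}]$, collapsing the double sum over $(\mu,\nu)$ to the claimed single sum (\ref{eqn:integral-to-trace-identity-matrix}). No step should present a genuine obstacle; the most care-intensive bookkeeping lies in the sign-twist identifying $\sign\cdot\chi_\lambda$ with $\chi_{\lambda^*}$, which is what swaps the height constraint $ht(\lambda)\leq k$ coming from Schur-Weyl into the width constraint $wd(\lambda)\leq k$ appearing in the final formula.
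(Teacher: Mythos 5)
Your proof is correct and follows essentially the same route as the paper: specialize $p_{\sigma,\alpha}(\mathcal{X})$ to $x^\alpha k^{c(\sigma)}$, expand $k^{c(\sigma)}$ via Schur--Weyl characters, absorb the sign twist into the conjugate partition to trade $ht(\lambda)\leq k$ for $wd(\lambda)\leq k$, pass to the projections $Q_\lambda$ via (\ref{eqn:Q-formula}), convert the integral to $\tr(\mathbb{E}_\alpha\circ L_{Q_\lambda}\circ R_{Q_\mu})$ by (\ref{eqn:trace-integral}), and collapse the off-diagonal terms using $Q_\lambda Q_\mu=\delta_{\lambda\mu}Q_\lambda$ together with the fact that the range of $\mathbb{E}_\alpha$ lies inside $\rho_{S_n}^d(\mathbb{C}[S_n])$. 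The only cosmetic difference is that you perform the $Q_\mu$ re-packaging on the un-integrated expansion of $\det L_{\mathcal{X}}(\mathcal{U})$ before squaring, while the paper starts from the already-integrated (\ref{eqn:expanded-integral}) and repackages afterward; this changes nothing substantive.
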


\begin{proof}

	We begin by invoking (\ref{eqn:expanded-integral}) with $X_j =x_j I$. Observe that
	\begin{align*}
		p_{\sigma, \alpha}(\mathcal{X}) &= \tr( \rho_{S_k}^d(\sigma^{-1}) \circ \mathcal{X}^{\otimes \alpha} ) \\
		&= x^{\alpha} \tr( \rho_{S_k}^d(\sigma^{-1}) ) \\
		&\overset{(\ref{eqn:character-of-permutation-representation-direct})}{=} x^{\alpha} k^{c(\sigma)}
	\end{align*}\\
	which then yields
	
	\begin{align*}
		(\ref{eqn:integral-to-trace-identity-matrix}) &= \sum_{n=0}^{kd} \frac{1}{(n!)^2} \sum_{|\alpha| = n} \binom{n}{\alpha}^2 \sum_{ \sigma , \tau \in S_n } \sign(\sigma) \sign(\tau) x^{\alpha}k^{c(\sigma)} \overline{x^{\alpha}}k^{c(\tau)} \int_{ U(d)^g } p_{\sigma,\alpha}(\mathcal{U}) \overline{ p_{\tau,\alpha}(\mathcal{U}) } \, d\mathcal{U} \\
		&= \sum_{n=0}^{kd} \sum_{|\alpha| = n} |x|^{2\alpha} \binom{n}{\alpha}^2 \left[ \frac{1}{(n!)^2} \sum_{ \sigma , \tau \in S_n } \sign(\sigma) k^{c(\sigma)} \sign(\tau) k^{c(\tau)} \int_{ U(d)^g } p_{\sigma,\alpha}(\mathcal{U}) \overline{ p_{\tau,\alpha}(\mathcal{U}) } \, d\mathcal{U} \right] .
	\end{align*}
	Proving the proposition requires rewriting the expression in square brackets.
		
	We invoke (\ref{eqn:character-of-permutation-representation-decomposed}) and (\ref{eqn:character-of-permutation-representation-direct}) along with the fact that $\text{sgn}(\sigma) \chi_\lambda(\sigma) = \chi_{\lambda^*}(\sigma)$ \cite[Theorems 35.1, 35.2, 37.4]{Bump-2004} \cite[Pg 116, Example 2]{Macdonald-2015} to obtain
	\begin{align*}
		\sign(\sigma) k^{c(\sigma)} &= \sum_{\substack{\lambda \vdash n \\ ht(\lambda)\leq k}} s_\lambda(k) \sign(\sigma)\chi_\lambda(\sigma) \\
		&= \sum_{\substack{\lambda \vdash n \\ ht(\lambda)\leq k}} s_\lambda(k) \chi_{\lambda^\ast}(\sigma) \\
		&= \sum_{\substack{\lambda \vdash n \\ wd(\lambda)\leq k}} s_{\lambda^*}(k) \chi_\lambda(\sigma)
	\end{align*}
	where we have changed the summation index $\lambda \to \lambda^*$.
	
	Recalling the formula for the projection $Q_\lambda$ (\ref{eqn:Q-formula}), (and noting that the character $\text{sgn}(\sigma)k^{c(\sigma)}$ is unchanged on replacing $\sigma$ by $\sigma^{-1}$) we have
	\begin{align*}
		&\phantom{=} \frac{1}{n!^2}\sum_{\sigma, \tau\in S_n} \text{sgn}(\sigma)k^{c(\sigma)}\text{sgn}(\tau)k^{c(\tau)}\int \tr(\sigma \circ U^\alpha)\overline{\tr(\tau \circ U^\alpha)}\, d\mathcal{U} \\
		&=  \frac{1}{n!^2}\sum_{\sigma, \tau\in S_n} \sum_{ \substack{ \lambda \vdash n \\ wd(\lambda)\leq k } } s_{\lambda^*}(k) \chi_\lambda(\sigma^{-1})\sum_{ \substack{ \mu \vdash n \\ wd(\mu)\leq k } } s_{\mu^*}(k) \chi_\mu(\tau^{-1})\int \tr(\sigma \circ U^\alpha)\overline{\tr(\tau \circ U^\alpha)}\, d\mathcal{U} \\
		&=  \sum_{\lambda, \mu} \frac{s_{\lambda^*}(k)}{\chi_\lambda(1)}\frac{s_{\mu^*}(k)}{\chi_\mu(1)} \int \tr(Q_\lambda \circ U^\alpha)\overline{\tr(Q_\mu \circ U^\alpha)}\, d\mathcal{U}.
	\end{align*}
	Our full integral (\ref{eqn:integral-to-trace-identity-matrix}) is now 
	\begin{equation*}
		\sum_{n=0}^{kd} \sum_{|\alpha|=n} |x|^{2\alpha}\binom{n}{\alpha}^2  \sum_{\lambda, \mu} \frac{s_{\lambda^*}(k)}{\chi_\lambda(1)}\frac{s_{\mu^*}(k)}{\chi_\mu(1)} \int \tr(Q_\lambda \circ U^\alpha)\overline{\tr(Q_\mu \circ U^\alpha)}\, d\mathcal{U}.
	\end{equation*}
	We can rewrite as in (\ref{eqn:trace-integral})
	\begin{equation*}
		\int \tr(Q_\lambda \circ U^\alpha)\overline{\tr(Q_\mu \circ U^\alpha)}\, d\mathcal{U} = \tr(\mathbb E_\alpha \circ L_{Q_\lambda} \circ R_{Q_\mu})
	\end{equation*}
	By orthogonality of the distinct characters discussed in Proposition~\ref{prop:properties-of-Q}, we have $Q_\lambda X Q_\mu = Q_\lambda Q_\mu X = 0$ whenever $\lambda\neq \mu$ and $X\in \mathbb C[S_n]$, hence $L_{Q_\lambda} \circ R_{Q_\mu} = 0$ (for $\lambda\neq \mu$) when restricted to $\rho^{d}_{S_n}(\mathbb{C}[S_n])$. As discussed at the beginning of the proof of Proposition~\ref{prop:trace-projection}, this restriction has no effect on the above trace, so it follows that our full integral (\ref{eqn:integral-to-trace-identity-matrix}) is now equal to 
	\begin{equation*}
		\sum_{n=0}^{kd} \sum_{|\alpha|=n} |x|^{2\alpha}\binom{n}{\alpha}^2  \sum_{ \substack{ \lambda \vdash n \\ wd(\lambda)\leq k } } \left(\frac{s_{\lambda^*}(k)}{\chi_\lambda(1)}\right)^2 \tr(\mathbb E_\alpha \circ C_{Q_\lambda})
	\end{equation*}
	as desired.
	
\end{proof}
	
Our next major task is to obtain (as we did in Propositions~\ref{prop:trace-projection}~and~\ref{thm:trace-centalt}) a combinatorial expression for $\tr(\mathbb E_\alpha \circ C_{Q_\lambda})$. At this point, to avoid overburdening the notation, we shall work in $g=2$ variables. Once we have proved the full theorem in that case, we will indicate the modifications necessary for $g>2$ (this will be straightforward).

\begin{lem}\label{thm:key-trace-formula}
	Let $n\geq 1$ be an integer, let $\alpha=(\alpha_1, \alpha_2)$ with $\alpha_1+\alpha_2=n$ be a multi-index of order $n$, and let $\lambda$ be a partition of $n$. We have
	\begin{equation}\label{eqn:trace-centproj}
		\tr(\mathbb E_\alpha\circ C_{Q_\lambda}) =  \sum_{\substack{\mu\vdash \alpha_1 \\ \nu\vdash\alpha_2}} c_{\mu\nu}^\lambda  \frac{s_\lambda(d)}{s_\mu(d)s_\nu(d)} \chi_\mu(1)^2\chi_\nu(1)^2
	\end{equation}
	when $ht(\lambda) \leq d$, otherwise $\tr(\mathbb E_\alpha\circ C_{Q_\lambda}) = 0$.
\end{lem}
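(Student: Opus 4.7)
The plan is to reduce the trace computation to a $U(d)^2$-integral that can be evaluated by Schur orthogonality. Imitating the matrix-unit derivation of $(\ref{eqn:trace-integral})$, combined with the self-adjointness of $P := \rho_{S_n}^d(Q_\lambda)$, one establishes the identity
\[ \tr(\mathbb{E}_\alpha \circ C_{Q_\lambda}) = \int_{U(d)^2} \bigl|\tr(P \, U^{\otimes \alpha})\bigr|^2 \, d\mathcal{U}. \]
If $ht(\lambda) > d$, then $P$ vanishes by the canonical decomposition $(\ref{eqn:tensor-representation-decomp})$ and both sides of the claimed formula equal zero; assume henceforth that $ht(\lambda) \leq d$.

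Next I would apply Schur--Weyl duality $(\ref{eqn:schur-weyl-duality})$ separately to each of the two factors of $(\mathbb{C}^d)^{\otimes n} = (\mathbb{C}^d)^{\otimes \alpha_1} \otimes (\mathbb{C}^d)^{\otimes \alpha_2}$, decomposing the whole space into blocks indexed by pairs $(\mu,\nu)$ with $\mu \vdash \alpha_1$, $\nu \vdash \alpha_2$, on which $U^{\otimes \alpha}$ acts as $I \otimes \rho^\mu_{U(d)}(U_1) \otimes I \otimes \rho^\nu_{U(d)}(U_2)$. To identify $P$ within this refined decomposition I would exploit two parallel instances of the Littlewood--Richardson rule, which share the same coefficients $c^\lambda_{\mu\nu}$: the $S_n \downarrow S_\alpha$ branching rule $(\ref{eqn:splitting-rule})$, and the Clebsch--Gordan decomposition $V^\mu_{U(d)} \otimes V^\nu_{U(d)} = \bigoplus_\lambda c^\lambda_{\mu\nu}\, V^\lambda_{U(d)}$. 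Together with Schur--Weyl duality, which identifies the $S_n$-$\lambda$-isotypic with the diagonal-$U(d)$-$\lambda$-isotypic subspace of $(\mathbb{C}^d)^{\otimes n}$, these show that on each $(\mu,\nu)$-block $P$ acts as $I \otimes I \otimes \Pi^\lambda_{\mu,\nu}$, where $\Pi^\lambda_{\mu,\nu}$ is the orthogonal projection from $V^\mu_{U(d)} \otimes V^\nu_{U(d)}$ onto its $\lambda$-isotypic subspace, a subspace of rank $c^\lambda_{\mu\nu} \, s_\lambda(d)$.

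Taking the block-wise trace then yields
\[ \tr(P U^{\otimes \alpha}) = \sum_{\mu,\nu} \chi_\mu(1) \chi_\nu(1) \, T_{\mu,\nu}(U_1, U_2), \qquad T_{\mu,\nu}(U_1,U_2) := \tr\bigl(\Pi^\lambda_{\mu,\nu} (\rho^\mu_{U(d)}(U_1) \otimes \rho^\nu_{U(d)}(U_2))\bigr), \]
which is a linear combination of matrix coefficients of the $U(d)^2$-representation $\rho^\mu_{U(d)} \otimes \rho^\nu_{U(d)}$. Squaring the absolute value and integrating, Schur orthogonality applied independently in the $U_1$ and $U_2$ integrals annihilates all cross terms with $(\mu,\nu) \neq (\mu',\nu')$, while the diagonal integrals evaluate to
\[ \int_{U(d)^2} |T_{\mu,\nu}|^2 \, d\mathcal{U} = \frac{\tr(\Pi^\lambda_{\mu,\nu})}{s_\mu(d) s_\nu(d)} = \frac{c^\lambda_{\mu\nu} \, s_\lambda(d)}{s_\mu(d) s_\nu(d)}, \]
using that $\Pi^\lambda_{\mu,\nu}$ is a self-adjoint idempotent of the stated rank. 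Summing over $(\mu,\nu)$ produces the asserted formula; the restriction $ht(\mu), ht(\nu) \leq d$ is automatic because $c^\lambda_{\mu\nu} > 0$ forces $\mu$ and $\nu$ to be subdiagrams of $\lambda$. The principal obstacle is the second step, where one must carefully coordinate the two parallel applications of Littlewood--Richardson and invoke Schur--Weyl to pass between $S_n$-isotypes and $U(d)$-isotypes; once that structural identification is in hand, the remainder is a routine orthogonality computation.
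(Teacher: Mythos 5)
Your proposal is correct, and it is a genuinely different argument from the paper's. The paper works entirely inside the group algebra $\mathbb{C}[S_n]$ using the Collins--\'Sniady operators $\Phi_\alpha$ and $\mathbb{E}_\alpha$: it first shows $\tr(\mathbb{E}_\alpha \circ C_{Q_\lambda}) = \alpha!\,[\mathbb{E}_\alpha(Q_\lambda)]_{(1)}$, then computes $\Phi_\alpha(Q_\lambda)$ by applying the $S_n\downarrow S_\alpha$ branching rule $(\ref{eqn:splitting-rule})$ to the character $\chi_\lambda$, rescales by the $s$-dimensions, converts $\Phi$ to $\mathbb{E}$, and extracts the coefficient of the identity. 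You instead return to the unitary-group integral $\int_{U(d)^2}|\tr(\rho^d_{S_n}(Q_\lambda)U^{\otimes\alpha})|^2\,d\mathcal{U}$ (which is exactly $(\ref{eqn:trace-integral})$ with $T_1=T_2=Q_\lambda$), apply Schur--Weyl duality separately to the two factors $(\mathbb{C}^d)^{\otimes\alpha_1}$ and $(\mathbb{C}^d)^{\otimes\alpha_2}$, identify the block-diagonal form of $\rho^d_{S_n}(Q_\lambda)$ in that refined decomposition, and finish by Schur orthogonality. The identification you flagged as the principal obstacle does go through: on the $(\mu,\nu)$-block $(V^\mu_{S_{\alpha_1}}\otimes V^\nu_{S_{\alpha_2}})\otimes(V^\mu_{U(d)}\otimes V^\nu_{U(d)})$, the projection $P$ commutes with both $S_\alpha$ and the diagonal $U(d)$, so by Schur's lemma it has the form $I\otimes T_{\mu,\nu}$ with $T_{\mu,\nu}$ a $U(d)$-equivariant orthogonal projection whose range lies in the $\lambda$-isotypic of $V^\mu_{U(d)}\otimes V^\nu_{U(d)}$; a dimension count (range of $P$ restricted to the block has dimension $c^\lambda_{\mu\nu}\chi_\mu(1)\chi_\nu(1)s_\lambda(d)$ by the $S_n$ branching rule, while the $\lambda$-isotypic has dimension $c^\lambda_{\mu\nu}s_\lambda(d)$ by the $U(d)$ Clebsch--Gordan rule) then forces $T_{\mu,\nu}=\Pi^\lambda_{\mu,\nu}$. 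One can observe the trade-off: the paper's argument invokes only the $S_n$ branching rule and stays algebraic in $\mathbb{C}[S_\alpha]$, while yours requires coordinating \emph{two} appearances of the Littlewood--Richardson coefficients ($S_n$ restriction and $U(d)$ tensor product), but in exchange makes the structural origin of the factor $s_\lambda(d)/(s_\mu(d)s_\nu(d))$ transparent as a ratio of $U(d)$-irrep dimensions arising from Schur orthogonality.
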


\begin{proof}
	
	Using the fact that $Q_\lambda$ is a central projection, we may imitate the argument in the proof of Proposition~\ref{prop:trace-projection} to check that
	\begin{equation}
		\tr(\mathbb E_\alpha\circ C_{Q_\lambda}) = \alpha! [\mathbb{E}_\alpha(Q_\lambda)]_{(1)}
	\end{equation}
	Indeed, recalling equations (\ref{eqn:e-and-phi-maps-multi}) and (\ref{eqn:phi-map-multi}), which say $\mathbb{E}_\alpha = \Phi_\alpha(I)^{-1} \circ \Phi_\alpha$ and $\Phi_\alpha(X) = \sum_{\delta \in S_\alpha} \tr(X\delta^{-1}) \delta$, we have 
	\begin{align*}
		\Phi_\alpha(C_{Q_\lambda}(\gamma)) &= \sum_{\delta \in S_\alpha} \tr(Q_\lambda \gamma Q_\lambda\delta^{-1}) \delta \\
		&=\sum_{\delta \in S_\alpha} \tr(Q_\lambda \gamma\delta^{-1}) \delta \\
		&= \sum_{\delta \in S_\alpha} \tr(Q_\lambda \delta^{-1}) \delta\gamma \\
		&=\Phi_\alpha(Q_\lambda) \circ \gamma.
	\end{align*}
	It follows that $\mathbb{E}_\alpha (C_{Q_\lambda}(\gamma))= (\mathbb E_\alpha(Q_\lambda)) \circ \gamma$. Hence, the coefficient of $\gamma$ in the expansion of $\mathbb E_\alpha (C_{Q_\lambda}(\gamma))$ is equal to the coefficient of the identity element $(1) \in S_\alpha$ in the expansion of $\mathbb{E}_\alpha(Q_\lambda)$. Summing over $\gamma\in S_\alpha$ yields ${\tr(\mathbb E_\alpha\circ C_{Q_\lambda}) =\alpha! [\mathbb{E}_\alpha(Q_\lambda)]_{(1)}}$ as claimed. 
	
	So like we did in Proposition~\ref{prop:trace-projection}, we need to compute $\mathbb E_\alpha(Q_\lambda)$. The situation now is more complicated since we will need to apply the splitting rule (\ref{eqn:splitting-rule}). The result will be a kind of splitting rule for the central projections $Q_\lambda$, which will decompose $\mathbb E_\alpha(Q_\lambda)$ as a linear combination of central projections $Q_\mu\otimes Q_\nu$ in the group algebra $\mathbb C[S_\alpha]$.
	
	To continue, recall (\ref{eqn:tensor-representation-decomp})
	\begin{equation*}
		(\mathbb{C}^d)^{\otimes n} = \bigoplus_{\lambda\vdash n, \ ht(\lambda)\leq d} s_\lambda(d) V_{S_n}^{\lambda}.
	\end{equation*}
	which told us how to decompose $(\mathbb{C}^d)^{\otimes n}$ into $S_n$-irreps.  Since $\rho_{S_n}^{d}(Q_\lambda)$ is the central projection onto the component $s_\lambda(d) V^{\lambda}_{S_n}$, it follows that $\tr(Q_\lambda \gamma) = s_\lambda(d) \chi_\lambda(\gamma)$ when $ht(\lambda)\leq d$, otherwise $\tr(Q_\lambda \gamma) = 0$ because then $Q_\lambda$ projects onto a component of $(\mathbb{C}^d)^{\otimes n}$ with multiplicity $0$.
		
	Thus we can compute 
	\begin{align*}
		\Phi_\alpha(Q_\lambda) &= \sum_{\gamma \in S_\alpha} \tr(Q_\lambda\gamma^{-1}) \gamma\\
		&= s_\lambda(d) \sum_{\gamma\in S_\alpha} \chi_\lambda(\gamma^{-1}) \gamma \\ &\overset{(\ref{eqn:splitting-rule})}{=} s_\lambda(d) \sum_{\gamma_1\in S_{\alpha_1}, \gamma_2\in S_{\alpha_2}} \sum_{\mu,\nu} c_{\mu\nu}^\lambda \chi_\mu(\gamma_1^{-1})\chi_\nu(\gamma_2^{-1})\gamma_1\otimes\gamma_2
	\end{align*}
	
	Multiplying and dividing by $s_\mu(d), s_\nu(d)$ and applying the formula for $\Phi(Q)$ recursively we obtain
	\begin{equation*}
		\Phi_\alpha(Q_\lambda) = \sum_{\mu, \nu} c_{\mu\nu}^\lambda  \frac{s_\lambda(d)}{s_\mu(d)s_\nu(d)} \Phi_{\alpha_1}(Q_\mu) \otimes \Phi_{\alpha_2}(Q_\nu)
	\end{equation*}
	so that now multiplying by the $\Phi_\alpha(I)^{-1}$ terms and performing manipulations very similar to those used in the proof of Proposition~\ref{prop:trace-projection} yields
	\begin{equation*}
		\mathbb E_\alpha(Q_\lambda) =  \sum_{\mu, \nu} c_{\mu\nu}^\lambda  \frac{s_\lambda(d)}{s_\mu(d)s_\nu(d)} Q_\mu\otimes Q_\nu .
	\end{equation*}

	Using the formula~(\ref{eqn:Q-formula}) for the $Q's$ and extracting the coefficient $[\mathbb{E}_\alpha(Q_\lambda)]_{(1)}$, and then multiplying by $\alpha!$, we get
	\begin{equation*}
		\tr(\mathbb E_\alpha\circ C_{Q_\lambda}) =  \sum_{\mu, \nu} c_{\mu\nu}^\lambda  \frac{s_\lambda(d)}{s_\mu(d)s_\nu(d)} \chi_\mu(1)^2\chi_\nu(1)^2.
	\end{equation*}
	
\end{proof}

Substituting (\ref{eqn:trace-centproj}) into (\ref{eqn:integral-to-trace-identity-matrix}), the integral is now equal to
\begin{equation}\label{eqn:main-term}
	\sum_{n=0}^{kd}  \binom{n}{\alpha} |x|^{2\alpha} \left[ \binom{n}{\alpha}\ \sum_{ \substack{ ht(\lambda)\leq d \\ wd(\lambda)\leq k } } \frac{s_{\lambda^*}(k)^2}{\chi_\lambda(1)^2} \sum_{\mu, \nu} c_{\mu\nu}^\lambda  \frac{s_\lambda(d)}{s_\mu(d)s_\nu(d)} \chi_\mu(1)^2\chi_\nu(1)^2 \right].
\end{equation}
We turn to bounding the expression inside the square brackets in (\ref{eqn:main-term}).

\begin{lem}\label{thm:schur-bound}
	If $\lambda$ is a partition of $n$ with $ht(\lambda) \leq k$, then
	\begin{equation*}
		s_{\lambda}(k) \leq n^{\binom{k}{2}} = n^{\frac{k^2}{2}-\frac{k}{2}}.
	\end{equation*}
\end{lem}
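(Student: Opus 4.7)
My plan is to invoke the \emph{Weyl dimension formula} for $GL_k$, which is the specialization of the hook-content formula (\ref{eqn:schur-hook-and-content}) rewritten in product form:
$$s_\lambda(k) \;=\; \prod_{1 \le i < j \le k} \frac{\lambda_i - \lambda_j + j - i}{j - i},$$
where $\lambda$ is padded with zero parts out to length $k$ (legitimate since $ht(\lambda) \le k$). The decisive observation is that this product contains \emph{exactly} $\binom{k}{2}$ positive-integer factors, precisely matching the exponent in the claimed bound. So the strategy is the obvious one: bound each individual factor by $n$ and multiply.

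For the per-factor estimate, I would write each factor as
$$\frac{\lambda_i - \lambda_j + j - i}{j - i} \;=\; \frac{\lambda_i - \lambda_j}{j - i} + 1$$
and apply $\lambda_i - \lambda_j \le \lambda_1 \le n$ together with $j - i \ge 1$. The crude version of this gives each factor at most $n + k - 1$, hence $s_\lambda(k) \le (n+k-1)^{\binom{k}{2}}$; taking the product over the $\binom{k}{2}$ pairs already yields the bound up to a lower-order multiplicative error that shrinks to $1$ as $n\to\infty$.

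The main (and in my view only) obstacle is tightening each factor from $n + O(k)$ down to exactly $n$. The point is that not all of the numerators $\lambda_i - \lambda_j$ can simultaneously saturate $n$, since $\sum_i \lambda_i = n$. A convenient way to encode this is to reparametrize via $a_i \vcentcolon= \lambda_i + k - i$, so that the $a_i$ form a strictly decreasing sequence of non-negative integers with $\sum_{i=1}^k a_i = n + \binom{k}{2}$, and the formula becomes
$$s_\lambda(k) \;=\; \frac{\prod_{1\le i<j\le k}(a_i - a_j)}{\prod_{1\le i<j\le k}(j - i)}.$$
One can then bound the Vandermonde-like numerator by induction on $k$ (using branching of $GL_k$ to $GL_{k-1}$, which reduces $s_\lambda(k)$ to a sum of $s_\mu(k-1)$) or by a direct AM-style estimate on the $a_i$ that exploits the sum constraint together with the Vandermonde denominator $\prod(j-i)$. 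Either route absorbs the $+k$ correction and delivers the stated bound $s_\lambda(k) \le n^{\binom{k}{2}}$.
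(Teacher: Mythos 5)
Your route is the same as the paper's: both pass from the hook-content specialization to a Vandermonde-over-superfactorial form, with your $a_i$ being exactly the paper's shifted parts $\mu_i = \lambda_i + k - i$, and $\prod_{1\le i<j\le k}(j-i) = \prod_{i=1}^{k}(k-i)!$. Where you differ is that you flag, correctly, that the naive per-factor estimate gives only $a_i - a_j \le n + k - 1$ rather than $n$, and then you assert without carrying it out that an inductive or averaging argument ``absorbs the $+k$ correction.'' That step cannot be completed, because the inequality you are aiming for is actually false. Take $\lambda = (n)$ and $k = 2$, so $ht(\lambda) = 1 \le k$: then $s_{(n)}(2) = n+1$ while $n^{\binom{2}{2}} = n$, and the claimed bound fails for every $n \ge 1$. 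In shifted coordinates, $a_1 = n+1$, $a_2 = 0$, and the sole Vandermonde factor is $n+1 > n$, so nothing in the structure of the $a_i$ can rescue the factor-by-factor bound. The paper's own proof contains the same overreach at the same spot: the assertion there that each $\mu_i - \mu_j$ is at most $n$ is what fails.

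Both proofs are repaired by settling for the slightly weaker, and correct, bound $s_\lambda(k) \le (n+k-1)^{\binom{k}{2}}$, which your crude estimate already delivers; for $n \ge k$ this is at most $(2n)^{\binom{k}{2}}$. This costs only a $k$-dependent multiplicative constant and leaves the downstream argument in the proof of Theorem~\ref{thm:true-for-triangular} intact, since all that is used there is that $s_{\lambda^*}(k)^2$ is bounded by a polynomial in $n$ whose degree and coefficients depend only on $k$. So your proposal locates the real difficulty, follows the same computation as the paper, but the promised finishing move is a mirage; you should instead stop at the $(n+k-1)^{\binom{k}{2}}$ bound you already have and adjust the lemma's statement to match.
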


\begin{proof} Our hypothesis is $ht(\lambda) \leq k$, which means $\lambda$ has at most $k$ parts. From the Stanley hook-content formula (\ref{eqn:schur-hook-and-content}) we have for any partition $\lambda$ 
  \[
  s_{\lambda}(k)= \prod_{u\in\lambda} \frac{k+c_u}{h_u}
  \]
  where $c_u$ is the content and $h_u$ is the hook length of the cell $u$. On the other hand also from \cite[Lemma 7.21.1]{Stanley-2024} we have for any partition $\lambda$ with $k$ parts
  \[
  \prod_{u\in\lambda} h(u) = \frac{\prod_{i=1}^k \mu_i!}{\prod_{1\leq i<j\leq k}(\mu_i-\mu_j)}\quad \text{and}\quad \prod_{u\in\lambda} (k+c_u) = \prod_{i=1}^k \frac{\mu_i!}{(k-i)!}.
  \]
  where $\mu_i = \lambda_i+k-i$. Thus
  \[
  s_{\lambda}(k) = \frac{\prod_{1\leq i<j\leq k}(\mu_i-\mu_j)}{\prod_{i=1}^k(k-i)!} \leq \prod_{1\leq i<j\leq k}(\mu_i-\mu_j)
  \]
Each number $\mu_i-\mu_j$ is at most $n$, and there are $\binom{k}{2}$ pairs $1\leq i<j\leq k$. The conclusion follows.    
\end{proof}

Apply this lemma to $\lambda^\ast$ in (\ref{eqn:main-term}), we are left to estimate
\begin{equation}\label{eqn:character-and-schur-sum}
	\binom{n}{\alpha} \sum_{ \substack{ ht(\lambda)\leq d \\ wd(\lambda)\leq k } } \sum_{\mu, \nu} c_{\mu\nu}^\lambda  \frac{s_\lambda(d)}{s_\mu(d)s_\nu(d)} \frac{\chi_\mu(1)^2\chi_\nu(1)^2}{\chi_\lambda(1)^2}
\end{equation}
where $\mu, \nu$ are partitions of $\alpha_1, \alpha_2$ respectively. We first apply the Stanley hook-content formula to make the $d$ dependence more explicit. For a partition $\lambda$ we let $C_\lambda(d)$ denote what we will call the {\em content polynomial}
\begin{equation}\label{eqn:content-poly-definition}
	C_\lambda(d) = \prod_{u\in \lambda} (d+c_u).	
\end{equation}

\begin{lem}
	
	\begin{equation}\label{eqn:reduction-to-C(d)}
		\binom{n}{\alpha} \sum_{ \substack{ ht(\lambda)\leq d \\ wd(\lambda)\leq k } } \sum_{\mu, \nu} c_{\mu\nu}^\lambda  \frac{s_\lambda(d)}{s_\mu(d)s_\nu(d)} \frac{\chi_\mu(1)^2\chi_\nu(1)^2}{\chi_\lambda(1)^2} = \sum_{ \substack{ ht(\lambda)\leq d \\ wd(\lambda)\leq k } } \sum_{\mu, \nu} c_{\mu\nu}^\lambda \frac{C_\lambda(d)}{C_\mu(d)C_\nu(d)} \frac{\chi_\mu(1)\chi_\nu(1)}{\chi_\lambda(1)}
	\end{equation}
  
\end{lem}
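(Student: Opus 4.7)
The plan is a direct computation using the hook-content formula for Schur polynomials (\ref{eqn:schur-hook-and-content}) together with the hook length formula for the dimension of an $S_n$-irrep (\ref{eqn:character-hook-length}). These two formulas share the same hook-length denominator $\prod_{u \in \lambda} h_u$, which will let us eliminate the hook lengths entirely in favor of the content polynomial $C_\lambda(d)$ and the character dimension $\chi_\lambda(1)$.

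First I would observe that combining (\ref{eqn:schur-hook-and-content}) with (\ref{eqn:character-hook-length}) gives the identity
\begin{equation*}
s_\lambda(d) = \frac{C_\lambda(d)\,\chi_\lambda(1)}{n!}
\end{equation*}
for any $\lambda\vdash n$ with $ht(\lambda)\leq d$, and similarly $s_\mu(d) = C_\mu(d)\chi_\mu(1)/\alpha_1!$ for $\mu\vdash\alpha_1$, and $s_\nu(d) = C_\nu(d)\chi_\nu(1)/\alpha_2!$ for $\nu\vdash\alpha_2$.

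Next I would substitute these three identities into the ratio $s_\lambda(d)/(s_\mu(d)s_\nu(d))$ appearing on the left of (\ref{eqn:reduction-to-C(d)}) to get
\begin{equation*}
\frac{s_\lambda(d)}{s_\mu(d)s_\nu(d)} = \frac{\alpha_1!\,\alpha_2!}{n!}\cdot\frac{C_\lambda(d)}{C_\mu(d)C_\nu(d)}\cdot\frac{\chi_\lambda(1)}{\chi_\mu(1)\chi_\nu(1)}.
\end{equation*}
Since $\binom{n}{\alpha} = n!/(\alpha_1!\alpha_2!)$, the prefactor $\binom{n}{\alpha}$ on the left-hand side of (\ref{eqn:reduction-to-C(d)}) cancels the factor $\alpha_1!\alpha_2!/n!$ produced above. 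What remains inside the summand is
\begin{equation*}
\frac{C_\lambda(d)}{C_\mu(d)C_\nu(d)}\cdot\frac{\chi_\lambda(1)}{\chi_\mu(1)\chi_\nu(1)}\cdot\frac{\chi_\mu(1)^2\chi_\nu(1)^2}{\chi_\lambda(1)^2} = \frac{C_\lambda(d)}{C_\mu(d)C_\nu(d)}\cdot\frac{\chi_\mu(1)\chi_\nu(1)}{\chi_\lambda(1)},
\end{equation*}
which is precisely the summand on the right-hand side. The summation ranges over $\lambda$, $\mu$, $\nu$ are unchanged, so the identity follows. Since this is essentially an algebraic rewriting, there is no real obstacle; the only point to note is that the Littlewood--Richardson coefficients $c_{\mu\nu}^\lambda$ vanish unless $|\mu|+|\nu|=|\lambda|$, so the identity of partition sizes needed to apply the three hook-content formulas is automatic from the constraint $\mu\vdash\alpha_1$, $\nu\vdash\alpha_2$, $\lambda\vdash n = \alpha_1+\alpha_2$.
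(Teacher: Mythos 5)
Your proof is correct and follows essentially the same route as the paper's: both proofs rest on the identity $s_\lambda(d)/\chi_\lambda(1) = C_\lambda(d)/n!$ obtained by combining the hook-content formula (\ref{eqn:schur-hook-and-content}) with the hook-length formula (\ref{eqn:character-hook-length}), and then cancel the multinomial coefficient against the factor $\alpha_1!\alpha_2!/n!$. Your writeup is a bit more explicit about the cancellation, but the underlying argument is the same.
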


\begin{proof}
	Recall $(\ref{eqn:schur-hook-and-content})$ and $(\ref{eqn:character-hook-length})$:
	\begin{equation*}
		s_\lambda(d)= \prod_{u\in\lambda} \frac{d+c_u}{h_u}, \quad \chi_\lambda(1) = \frac{n!}{\prod_{u\in\lambda} h_u}.
	\end{equation*}
	Dividing these immediately yields
	\begin{equation*}
		\frac{s_\lambda(d)}{\chi_\lambda(1)} =\frac{C_\lambda(d)}{n!}
	\end{equation*}
	and likewise for $\mu,\nu$, hence the conclusion follows.
\end{proof}

The splitting rule (\ref{eqn:splitting-rule}) applied to $\gamma = (1)$ rearranges to
\[
\sum_{\mu, \nu} c_{\mu\nu}^\lambda \frac{\chi_\mu(1)\chi_\nu(1)}{\chi_\lambda(1)} =1.
\]
Thus, each term associated to fixed $\lambda$ in the sum in the right hand side of (\ref{eqn:reduction-to-C(d)}) is a convex combination of the rational functions of $d$
\[
r_{\mu,\nu}^\lambda(d) \vcentcolon= \frac{C_\lambda(d)}{C_\mu(d)C_\nu(d)}
\]
which we call the {\em content ratios}. The following is the key estimate in our proof: 
\begin{thm}\label{thm:content-ratio-bound}
  Fix the following:
  \begin{itemize}
  \item integers $k,d\geq 1$,
  \item an integer $k \leq n \leq kd$,
  \item a partition $\lambda\vdash n$ with at most $d$ parts, and each part of size at most $k$ (i.e. $ht(\lambda) \leq d$ and $wd(\lambda) \leq k$),
  \item partitions $\mu, \nu$ such that the Littlewood-Richardson coefficient $c^\lambda_{\mu\nu}$ is nonzero.
  \end{itemize}
  Then

  \[
  r_{\mu,\nu}^\lambda(d) \leq (n+1)^{k^2}.
  \]
\end{thm}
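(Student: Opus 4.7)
The plan is to factor the content ratio column by column, bound each column factor by elementary means, and aggregate using constraints coming from the non-vanishing of the Littlewood--Richardson coefficient. Since each of $\lambda, \mu, \nu$ has width at most $k$, reading off contents column by column yields
\[
C_\pi(d) = \prod_{j=1}^k \frac{(d+j-1)!}{(d+j-1-\pi^*_j)!}
\]
for $\pi \in \{\lambda, \mu, \nu\}$. Substituting into the definition of $r^\lambda_{\mu\nu}(d)$ gives $r^\lambda_{\mu\nu}(d) = \prod_{j=1}^k R_j$, with
\[
R_j = \frac{(d+j-1-\mu^*_j)!\,(d+j-1-\nu^*_j)!}{(d+j-1)!\,(d+j-1-\lambda^*_j)!}.
\]

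Writing $a_j = \lambda^*_j$, $b_j = \mu^*_j$, $c_j = \nu^*_j$ and $s_j = a_j - b_j - c_j$, the identity $|\lambda|=|\mu|+|\nu|$ becomes $\sum_j s_j = 0$. I would then bound each $R_j$ by a case analysis on the relative sizes of $a_j, b_j, c_j$: after pairing $\min(a_j-b_j,\ c_j)$ factors in the numerator with factors in the denominator (each paired ratio being $\le 1$), one finds $R_j \le 1$ when $s_j \le 0$, and $R_j \le (d+k-1)^{s_j}$ otherwise.

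These per-column estimates should then be combined with the Horn-type inequalities forced by $c^\lambda_{\mu\nu} \neq 0$: in particular $\lambda_{i+p-1} \le \mu_i + \nu_p$ (and its transpose $a_{i+p-1} \le b_i + c_p$) hold, which gives $s_j \le \min(b_1-b_j,\ c_1-c_j)$ and, in particular, $s_1\le 0$. This restricts where positive $s_j$'s may occur and supplies some control of $\sum_j s_j^+$.

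The main obstacle, and the reason the bound deserves its own section, is that the naive estimate $r \le (d+k-1)^{\sum_j s_j^+}$ becomes vacuous when $d$ is much larger than $n$, even though $r(d)$ is a rational function of $d$ tending to $1$ as $d\to\infty$. Closing this gap requires refining the per-column bound: the leftover numerator factors after cancellation actually lie in the narrower interval $[d+j-a_j,\ d+j-1-b_j-c_j]$ rather than the full $[1,d+k-1]$, and combining these sharper column estimates with the Horn constraints (while tracking cancellations between columns with $s_j>0$ and $s_j<0$) should produce the uniform bound $r^\lambda_{\mu\nu}(d) \le (n+1)^{k^2}$. I expect this aggregation step to be the lengthiest and most delicate part of the argument.
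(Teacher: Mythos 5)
Your column-by-column decomposition of $C_\pi(d)$ and the resulting factorization $r^\lambda_{\mu\nu}(d)=\prod_{j=1}^k R_j$ are correct, as are the per-column estimates ($R_j\le 1$ when $s_j\le 0$, $R_j\le (d+k-1)^{s_j}$ otherwise) and the Horn-type consequences you extract (in particular $s_1\le 0$). But the proposal stops exactly where the difficulty begins, and you say so yourself: the aggregation step is only sketched. That gap is genuine, not cosmetic. The per-column bound throws away the cancellation that must occur between columns with $s_j>0$ and columns with $s_j<0$, and once that is lost the exponent $\sum_j s_j^+$ can be of order $d$ (not $n$ or $k$), so $(d+k-1)^{\sum s_j^+}$ is nowhere near $(n+1)^{k^2}$. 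A concrete instance: $\lambda=(k^d)$, $\mu=(1^d)$, $\nu=((k-1)^d)$ gives $s_1=-d$, $s_k=+d$, $s_j=0$ otherwise, and $R_1=1/d!$, $R_k=(d+k-1)!/(k-1)!$. Separately these blow up and vanish; only their product, $\binom{d+k-1}{k-1}$, is polynomial in $d$, and recovering the bound requires matching the $d!$ in the denominator of $R_1$ against $d!$ of the factors inside $R_k$ — factors that live in a shifted range. Your proposed fix (``leftover factors lie in a narrower interval'' plus ``tracking cancellations between columns'') gestures at this, but no mechanism is given for pairing a surplus factor in column $j$ with a deficit factor in column $j'$ (the ranges are offset by $j-j'$, and the heights $a_j,b_j,c_j$ can be as large as $d$), and it is not clear that the Horn inequalities alone supply enough combinatorial structure to close it.

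The paper sidesteps this entirely by a different reduction: it first shows (via dominance-monotonicity of $C_\pi(d)$, McNamara's dominance-minimality of $\text{rows}(\lambda/\mu)$, and two explicit ``update'' moves on $\mu$) that the content ratio is maximized when $\lambda=\mu+\nu$ as a row-wise sum, and only then bounds the ratio for this special shape. In that special case the $\mu$-block cancels cleanly against part of $\lambda$, so the ratio becomes $\prod_{u\in\nu}\frac{d+c_u+\mu_{\text{row}(u)}}{d+c_u}$, where each shift $\mu_i\le wd(\lambda)\le k$; the entire $d$-dependence thus collapses to ratios of nearby integers, and a column-reading of $\nu$ finishes with a short case analysis. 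The cancellation you are trying to engineer across columns of $\lambda^*,\mu^*,\nu^*$ happens for free, within rows, once $\lambda=\mu+\nu$ is enforced. If you want to pursue your route, you would need a replacement for that reduction — for instance a telescoping identity for $\prod_j R_j$ or a sharper joint use of the Horn inequalities — but as written the proposal does not constitute a proof.
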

\begin{proof} See Section~\ref{sec:content-ratio-lemma}. 
\end{proof}

\begin{proof}[Proof of Theorem~\ref{thm:true-for-triangular}] By Proposition~\ref{prop:uniform-L2-bound}, it suffices to fix integer $k\geq 1$ and real $0\leq r< 1$ and prove there is a constant $C(r,k)$ so that the $L^2$ bound (\ref{eqn:uniform-L2-bound}) holds. By Equations~(\ref{eqn:integral-to-trace-identity-matrix}), (\ref{eqn:trace-centproj}), (\ref{eqn:reduction-to-C(d)}) we have that for each fixed $d\geq 1$, the integral is equal to

\begin{align}\label{eqn:thing-to-bound}
	\int_{U(d)^g} |\det(I_k \otimes I_d + &\sum_{j=1}^{g} x_j I_k \otimes U_j)|^2 \, d\mathcal{U} = \\
	\sum_{n=0}^{kd} \binom{n}{\alpha} |x|^{2\alpha} &\sum_{ \substack{ \lambda\vdash n \\ wd(\lambda)\leq k \\ ht(\lambda) \leq d } } s_{\lambda^*}(k)^2\sum_{ \substack{ \mu \vdash \alpha_1 \\ \nu \vdash \alpha_2 } } c_{\mu\nu}^\lambda \frac{\chi_\mu(1)\chi_\nu(1)}{\chi_\lambda(1)} r_{\mu,\nu}^\lambda(d) \nonumber
\end{align}

The terms in the sum with $0\leq n < k$ are uniformly bounded in $d$, since they converge as $d\to \infty$ by Corollary~\ref{cor:termwise-limit}, and there is a fixed finite number of them. We will bound the portion of the sum for $n\geq k$ by the tail of a convergent series. By Theorem~\ref{thm:content-ratio-bound} and the fact that a convex combination of nonnegative real numbers is bounded by the largest one, we have for fixed $\lambda\vdash n, wd(\lambda)\leq k, ht(\lambda) \leq d$
\[
\sum_{\mu, \nu} c_{\mu\nu}^\lambda \frac{\chi_\mu(1)\chi_\nu(1)}{\chi_\lambda(1)} r_{\mu,\nu}^\lambda(d) \leq (n+1)^{k^2}
\]
when $n\geq k$. By Lemma~\ref{thm:schur-bound} the quantity $s_{\lambda^*}(k)^2$ is bounded by $n^{k^2-k}$. The number of partitions $\lambda$ of $n$ into parts of size at most $k$ is trivially bounded by $n^k$. So altogether we have
\[
\sum_{ \substack{ \lambda\vdash n \\ wd(\lambda)\leq k \\ ht(\lambda) \leq d } } s_{\lambda^*}(k)^2\sum_{\mu, \nu} c_{\mu\nu}^\lambda \frac{\chi_\mu(1)\chi_\nu(1)}{\chi_\lambda(1)} r_{\mu,\nu}^\lambda(d)\leq n^{k}\cdot n^{k^2-k} \cdot (n+1)^{k^2} \leq C_k\cdot n^{2k^2}.
\]
where $C_k$ is a constant depending only on $k$. Applying this bound to the expression (\ref{eqn:thing-to-bound}), we have for $\|x\|_2\leq r<1$
\begin{align*}
  \sum_{n=k}^{kd}  \sum_{|\alpha|=n}\binom{n}{\alpha} |x|^{2\alpha} &\sum_{ \substack{ \lambda\vdash n \\ wd(\lambda)\leq k \\ ht(\lambda) \leq d } } s_{\lambda^*}(k)^2\sum_{\mu, \nu} c_{\mu\nu}^\lambda \frac{\chi_\mu(1)\chi_\nu(1)}{\chi_\lambda(1)} r_{\mu,\nu}^\lambda(d)\\
  &\leq C_k\cdot \sum_{n=k}^{kd} \sum_{|\alpha|=n}\binom{n}{\alpha} |x|^{2\alpha}(n^{2k^2})\\
  &=  C_k\cdot \sum_{n=k}^\infty \|x\|_2^{2n}(n^{2k^2})\\
  &\leq C_k\cdot \sum_{n=k}^\infty n^{2k^2}r^{2n}<\infty.
\end{align*}

\end{proof}

\subsection{The $g>2$ case} Beginning at Lemma~\ref{thm:key-trace-formula} we assumed $g=2$ to keep the statements and proofs easily readable. However, everything works out when $g>2$ as long as we apply the general splitting rule $(\ref{eqn:splitting-rule-general})$ where appropriate. Indeed, Lemma~\ref{thm:key-trace-formula} becomes

\begin{lem}\label{thm:key-trace-formula-general} Let $n\geq 1$ be an integer, let $\alpha=(\alpha_1, \ldots, \alpha_g)$ be a multi-index of order $n$, and let $\lambda$ be a partition of $n$. We have
	\begin{equation}\label{eqn:key-trace-formula-general}
		\tr(\mathbb E_\alpha\circ C_{Q_\lambda}) =  \sum_{\mu^i\text{'s},\nu^i\text{'s}} c^{\lambda}_{\mu^1 , \nu^1} \left( \prod_{i=2}^{g-2} c^{\nu^{i-1}}_{\mu^i, \nu^i} \right) c^{\nu^{g-2}}_{\mu^{g-1},\mu^{g}} \frac{s_\lambda(d)}{\prod_{i=1}^{g} s_{\mu^i}(d)} \prod_{i=1}^{g} \chi_{\mu^i}(1)^2
	\end{equation}
	when $ht(\lambda) \leq d$, otherwise $\tr(\mathbb E_\alpha\circ C_{Q_\lambda}) = 0$.
\end{lem}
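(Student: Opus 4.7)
The plan is to carry over the proof of Lemma~\ref{thm:key-trace-formula} essentially verbatim, with the two-factor splitting rule (\ref{eqn:splitting-rule}) replaced by its iterated version (\ref{eqn:splitting-rule-general}). The structural steps are identical; only the bookkeeping changes.

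First, I would check that the opening reduction goes through unchanged: centrality of $Q_\lambda$ and the definition of $\Phi_\alpha$ give $\mathbb{E}_\alpha(C_{Q_\lambda}(\gamma)) = \mathbb{E}_\alpha(Q_\lambda)\circ\gamma$ for any $\gamma\in S_\alpha$, so $\tr(\mathbb{E}_\alpha\circ C_{Q_\lambda}) = \alpha!\cdot[\mathbb{E}_\alpha(Q_\lambda)]_{(1)}$. The identity $\tr(Q_\lambda\gamma^{-1}) = s_\lambda(d)\chi_\lambda(\gamma^{-1})$ (with vanishing when $ht(\lambda) > d$) follows from (\ref{eqn:tensor-representation-decomp}) and does not depend on $g$.

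The main step is the computation of $\Phi_\alpha(Q_\lambda)$. I would write each $\gamma\in S_\alpha$ as $\gamma = \gamma_1\gamma_2\cdots\gamma_g$ with $\gamma_i\in S_{\alpha_i}$, apply (\ref{eqn:splitting-rule-general}) to $\chi_\lambda(\gamma^{-1})$, and then factor the sum over $S_\alpha$ as a product of sums over the $S_{\alpha_i}$. Multiplying and dividing by $\prod_i s_{\mu^i}(d)$ lets us recognize each inner sum as $\Phi_{\alpha_i}(Q_{\mu^i})/s_{\mu^i}(d)$, yielding
\[
\Phi_\alpha(Q_\lambda) = \sum_{\mu^i,\nu^i} c^\lambda_{\mu^1,\nu^1}\left(\prod_{i=2}^{g-2}c^{\nu^{i-1}}_{\mu^i,\nu^i}\right)c^{\nu^{g-2}}_{\mu^{g-1},\mu^g}\,\frac{s_\lambda(d)}{\prod_{i=1}^g s_{\mu^i}(d)}\bigotimes_{i=1}^g \Phi_{\alpha_i}(Q_{\mu^i}).
\]
Applying $\Phi_\alpha(I)^{-1} = \bigotimes_i\Phi_{\alpha_i}(I)^{-1}$ termwise, (\ref{eqn:e-and-phi-maps}) converts each $\Phi_{\alpha_i}(I)^{-1}\circ\Phi_{\alpha_i}(Q_{\mu^i})$ into $Q_{\mu^i}$, giving the analogous formula for $\mathbb{E}_\alpha(Q_\lambda)$.

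Finally, extracting the coefficient of the identity element of $S_\alpha$ from $\bigotimes_i Q_{\mu^i}$ produces $\prod_i \chi_{\mu^i}(1)/\alpha_i!$, and multiplying by $\alpha! = \prod_i \alpha_i!$ cancels the factorials to leave $\prod_i \chi_{\mu^i}(1)^2$, which is the claimed formula (\ref{eqn:key-trace-formula-general}). There is no real obstacle here, since the iterated splitting rule was already established in Section~\ref{sec:representations}; the argument is essentially bookkeeping. Alternatively, one can avoid writing out the iterated rule by induction on $g$, splitting off one factor at a time via (\ref{eqn:splitting-rule}) and invoking the $g=2$ case recursively.
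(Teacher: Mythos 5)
Your proposal is correct and is exactly the argument the paper intends; the paper in fact only gestures at this proof (``everything works out when $g>2$ as long as we apply the general splitting rule where appropriate''), so what you have written fills in the details in precisely the way the authors had in mind. One small slip worth fixing: the coefficient of the identity element of $S_{\alpha_i}$ in $Q_{\mu^i}$ is $\chi_{\mu^i}(1)^2/\alpha_i!$, not $\chi_{\mu^i}(1)/\alpha_i!$, since by (\ref{eqn:Q-formula}) one has $[Q_{\mu^i}]_{(1)} = \frac{\chi_{\mu^i}(1)}{\alpha_i!}\chi_{\mu^i}(1)$; as you wrote it, multiplying by $\alpha!$ would give $\prod_i\chi_{\mu^i}(1)$ rather than the stated (and correct) $\prod_i\chi_{\mu^i}(1)^2$. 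Your suggestion of proving the lemma by induction on $g$ via repeated use of (\ref{eqn:splitting-rule}) is also perfectly sound and amounts to the same computation unrolled differently.
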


Likewise, we have an analogue to (\ref{eqn:reduction-to-C(d)}), namely
\begin{lem}
	
	\begin{align}\label{eqn:reduction-to-C(d)-general}
		\binom{n}{\alpha} &\sum_{ \substack{ ht(\lambda)\leq d \\ wd(\lambda)\leq k } } \sum_{\mu^i\text{'s},\nu^i\text{'s}} c^{\lambda}_{\mu^1 , \nu^1} \left( \prod_{i=2}^{g-2} c^{\nu^{i-1}}_{\mu^i, \nu^i} \right) c^{\nu^{g-2}}_{\mu^{g-1},\mu^{g}} \frac{s_\lambda(d)}{\prod_{i=1}^{g} s_{\mu^i}(d)} \frac{\prod_{i=1}^{g} \chi_{\mu^i}(1)^2}{\chi_\lambda(1)^2} \\
		= &\sum_{ \substack{ ht(\lambda)\leq d \\ wd(\lambda)\leq k } } \sum_{\mu^i\text{'s},\nu^i\text{'s}} c^{\lambda}_{\mu^1 , \nu^1} \left( \prod_{i=2}^{g-2} c^{\nu^{i-1}}_{\mu^i, \nu^i} \right) c^{\nu^{g-2}}_{\mu^{g-1},\mu^{g}} \frac{C_\lambda(d)}{\prod_{i=1}^{g} C_{\mu^i}(d)} \frac{\prod_{i=1}^{g} \chi_{\mu^i}(1)}{\chi_\lambda(1)} \nonumber
	\end{align}
	
\end{lem}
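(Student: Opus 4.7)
The approach will directly mimic the proof of the $g=2$ case in Lemma (\ref{eqn:reduction-to-C(d)}), since the Littlewood-Richardson structure (the coefficients $c^\lambda_{\mu^1,\nu^1}$, $c^{\nu^{i-1}}_{\mu^i,\nu^i}$, and $c^{\nu^{g-2}}_{\mu^{g-1},\mu^g}$) plays no role in the algebraic identity: the claim reduces to matching, for each fixed admissible choice of $\lambda, \mu^1, \dots, \mu^g$, the ``non-$c$" factors on the two sides. So the plan is to prove term-by-term, for each such admissible tuple, that
\[
\binom{n}{\alpha}\, \frac{s_\lambda(d)}{\prod_{i=1}^g s_{\mu^i}(d)}\, \frac{\prod_{i=1}^g \chi_{\mu^i}(1)^2}{\chi_\lambda(1)^2} \;=\; \frac{C_\lambda(d)}{\prod_{i=1}^g C_{\mu^i}(d)}\, \frac{\prod_{i=1}^g \chi_{\mu^i}(1)}{\chi_\lambda(1)}.
\]

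The key input is the identity $\frac{s_\lambda(d)}{\chi_\lambda(1)} = \frac{C_\lambda(d)}{n!}$ that was already extracted in the proof of the $g=2$ version, which itself follows immediately from combining the Stanley hook-content formula (\ref{eqn:schur-hook-and-content}) with the hook-length formula (\ref{eqn:character-hook-length}). Applied to $\lambda \vdash n$ this gives $\frac{s_\lambda(d)}{\chi_\lambda(1)^2} = \frac{C_\lambda(d)}{n!\,\chi_\lambda(1)}$, and applied to each $\mu^i \vdash \alpha_i$ (using $|\mu^i| = \alpha_i$, which follows from the nonvanishing constraints on the Littlewood-Richardson coefficients) it gives $\frac{\chi_{\mu^i}(1)^2}{s_{\mu^i}(d)} = \frac{\alpha_i!\,\chi_{\mu^i}(1)}{C_{\mu^i}(d)}$.

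Multiplying these $g+1$ identities together, the left-hand factor becomes
\[
\frac{s_\lambda(d)}{\prod_i s_{\mu^i}(d)} \cdot \frac{\prod_i \chi_{\mu^i}(1)^2}{\chi_\lambda(1)^2} \;=\; \frac{\alpha!}{n!}\cdot \frac{C_\lambda(d)}{\prod_i C_{\mu^i}(d)} \cdot \frac{\prod_i \chi_{\mu^i}(1)}{\chi_\lambda(1)},
\]
where $\alpha! = \prod_i \alpha_i!$. Since $\binom{n}{\alpha} = n!/\alpha!$, the factor $\binom{n}{\alpha}\cdot\alpha!/n!$ is exactly $1$, which produces the right-hand side of the claimed identity. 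Summing over the admissible $\lambda$ and $\mu^i, \nu^i$ (which enter only through the unchanged Littlewood-Richardson coefficients) yields (\ref{eqn:reduction-to-C(d)-general}). There is no real obstacle here; the only thing to verify carefully is that the size constraints $|\mu^i| = \alpha_i$ and $|\lambda| = n$ are forced by the nonvanishing of the chain of Littlewood-Richardson coefficients appearing in the summand, so that the hook-content identity applies with the correct $n$ and $\alpha_i$ factorials.
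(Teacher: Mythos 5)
Your proof is correct and follows essentially the same route as the paper's argument for the $g=2$ case of this lemma: derive $\frac{s_\lambda(d)}{\chi_\lambda(1)} = \frac{C_\lambda(d)}{n!}$ from the hook-content and hook-length formulas, apply it to $\lambda$ and each $\mu^i$, and observe that the resulting $\frac{\alpha!}{n!}$ factor cancels against $\binom{n}{\alpha}$. The paper leaves this bookkeeping implicit even in the $g=2$ case; your write-up simply makes it explicit, including the (correct) observation that the nonvanishing of the chain of Littlewood-Richardson coefficients forces $|\mu^i|=\alpha_i$ and $|\lambda|=n$.
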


Again, evaluating the splitting rule $(\ref{eqn:splitting-rule-general})$ on the identity permutation $(1) \in S_n$ tells us that
\begin{equation*}
	\sum_{\mu^i\text{'s},\nu^i\text{'s}} c^{\lambda}_{\mu^1 , \nu^1} \left( \prod_{i=2}^{g-2} c^{\nu^{i-1}}_{\mu^i, \nu^i} \right) c^{\nu^{g-2}}_{\mu^{g-1},\mu^{g}} \frac{\prod_{i=1}^{g} \chi_{\mu^i}(1)}{\chi_\lambda(1)} = 1
\end{equation*}
so we are still trying to bound a convex combination of the terms
\begin{equation*}
	\frac{C_\lambda(d)}{\prod_{i=1}^{g} C_{\mu^i}(d)}.
\end{equation*}
This is a product of the already defined content ratios $r^{\lambda}_{\mu,\nu}$, so we finally have

\begin{thm}\label{thm:content-ratio-bound-general}
	Fix the following:
	\begin{itemize}
		\item integers $k,d\geq 1$,
		\item an integer $k \leq n \leq kd$,
		\item a multi-index $\alpha = (\alpha_1 , \ldots , \alpha_g)$ with $|\alpha| = n$,
		\item a partition $\lambda\vdash n$ with at most $d$ parts, and each part of size at most $k$ (i.e. $ht(\lambda) \leq d$ and $wd(\lambda) \leq k$),
		\item partitions $\mu \vdash \alpha_i$ $(i=1 , \ldots , g)$, $\nu_i \vdash \alpha_{i+1} + \cdots \alpha_g$ $(i=1 , \ldots , g-2)$ such that the Littlewood-Richardson coefficients $c^\lambda_{\mu^1,\nu^1}$, $c^{\nu^{g-2}}_{\mu^{g-1},\mu^{g}}$, and $c^{\nu^{i-1}}_{\mu^i, \nu^i}$ $(i=2, \ldots , g-2)$ are nonzero.
	\end{itemize}
	Then
	
	\[
	\frac{C_\lambda(d)}{\prod_{i=1}^{g} C_{\mu^i}(d)} \leq (n+1)^{(g-1)k^2}.
	\]
\end{thm}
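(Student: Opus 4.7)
The strategy is to reduce the multi-way content ratio to a product of $g-1$ binary ($g=2$) content ratios by telescoping, and then apply the binary bound from Theorem~\ref{thm:content-ratio-bound} to each factor. Setting $\nu^0 := \lambda$ and $\nu^{g-1} := \mu^g$, the hypotheses of the theorem state precisely that $c^{\nu^{i-1}}_{\mu^i, \nu^i} > 0$ for each $i = 1, \dots, g-1$, and one has the purely algebraic identity
\[
\frac{C_\lambda(d)}{\prod_{i=1}^{g} C_{\mu^i}(d)} \;=\; \prod_{i=1}^{g-1} \frac{C_{\nu^{i-1}}(d)}{C_{\mu^i}(d)\, C_{\nu^i}(d)}\;=\; \prod_{i=1}^{g-1} r^{\nu^{i-1}}_{\mu^i,\nu^i}(d).
\]
Thus it suffices to bound each binary factor by $(n+1)^{k^2}$, since multiplying the resulting $g-1$ bounds yields exactly $(n+1)^{(g-1)k^2}$.

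The main work lies in verifying that each factor satisfies the hypotheses of Theorem~\ref{thm:content-ratio-bound}. The key input is a standard consequence of the Littlewood-Richardson rule: whenever $c^\tau_{\alpha,\beta}>0$, both $\alpha$ and $\beta$ are contained in a $ht(\tau) \times wd(\tau)$ box. Concretely, $\alpha \subseteq \tau$ gives $ht(\alpha), wd(\alpha) \leq ht(\tau), wd(\tau)$ directly; for $\beta$, the LR filling of $\tau/\alpha$ uses entries from $\{1, \dots, ht(\tau)\}$ (by the column-strict condition applied to columns of length at most $ht(\tau)$), so the content partition $\beta$ has height at most $ht(\tau)$, and similarly $wd(\beta) \leq wd(\tau)$ since the $1$'s of a semistandard skew tableau can fill at most $wd(\tau)$ boxes. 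Iterating this observation from the initial bounds $ht(\lambda) \leq d$ and $wd(\lambda) \leq k$, every intermediate $\nu^i$ and every $\mu^i$ fits inside a $d \times k$ box. Since also $|\nu^{i-1}| \leq n \leq kd$, the binary Theorem~\ref{thm:content-ratio-bound} applies (with $n$ replaced by $|\nu^{i-1}|$) and yields $r^{\nu^{i-1}}_{\mu^i,\nu^i}(d) \leq (|\nu^{i-1}|+1)^{k^2} \leq (n+1)^{k^2}$.

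The only anticipated obstacle is the lower-bound condition $|\nu^{i-1}| \geq k$ required in the statement of Theorem~\ref{thm:content-ratio-bound}: it is possible that $|\nu^{i-1}| = \alpha_i + \dots + \alpha_g < k$ for some $i$, so the binary theorem does not literally apply to every factor. However, for such a factor both numerator and denominator are products of at most $|\nu^{i-1}|<k$ terms of the form $(d+c)$ with $|c|\leq k$, so one may estimate
\[
r^{\nu^{i-1}}_{\mu^i,\nu^i}(d) \;\leq\; \prod_{u \in \nu^{i-1}} \frac{d+c_u}{d-ht(\nu^{i-1})+1} \;\leq\; \left(1 + \frac{2k}{d-k+1}\right)^{k}
\]
(using $ht \leq d$ and $d \geq 1$), which is bounded by a constant depending only on $k$, and in particular by $(n+1)^{k^2}$ for $n$ beyond a fixed threshold. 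For the finitely many small values of $d$ not handled by this bound, the assertion reduces to a finite check. Once this edge case is absorbed, the telescoped product gives the bound $(n+1)^{(g-1)k^2}$, completing the proof.
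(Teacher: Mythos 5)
Your proposal follows the same route as the paper's proof: expand the multi-way ratio as a telescoping product of $g-1$ binary content ratios $r^{\nu^{i-1}}_{\mu^i,\nu^i}(d)$ (with $\nu^0=\lambda$, $\nu^{g-1}=\mu^g$) and invoke Theorem~\ref{thm:content-ratio-bound} once per factor. You correctly flag a point the paper's proof glosses over: Theorem~\ref{thm:content-ratio-bound} carries the hypothesis $k\leq n$, which may fail for an intermediate $\nu^{i-1}$ of size $\alpha_i+\cdots+\alpha_g<k$.

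However, your proposed workaround for that case is not sound as written. The displayed estimate $\left(1+\tfrac{2k}{d-k+1}\right)^k$ presupposes $d>k-1$, which is not implied by the hypotheses; and the closing appeal to ``finitely many small values of $d$'' plus ``a finite check'' is not a proof — you neither perform the check nor explain why it must succeed. (It does in fact reduce to a bounded search, because $n\leq kd$ caps all the parameters when $d$ is small, but that has to be said.) A much cleaner repair is to notice that the lower bound $n\geq k$ in Theorem~\ref{thm:content-ratio-bound} is superfluous: in the proof of Proposition~\ref{thm:special-form-bound} it is used only at the end of Case~1 to pass from $(1+k)^{h_j}$ (with $h_j\leq k$) to $(1+n)^k$; but one also has $h_j\leq |\nu|\leq n$ unconditionally, so $(1+k)^{h_j}\leq (1+k)^{\min(k,n)}\leq (1+n)^k$ for every $k,n\geq 1$, the last step because $\log(1+x)/x$ is decreasing. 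With this observation, the binary bound applies verbatim to every telescoping factor with no case split. Finally, your box-containment remark is correct but can be reached more directly: $c^{\tau}_{\alpha,\beta}\neq 0$ forces $\alpha\subseteq\tau$ and, by symmetry of the Littlewood--Richardson coefficient, $\beta\subseteq\tau$ as well, so each $\mu^i,\nu^i$ is a subpartition of $\lambda$ and automatically satisfies $ht\leq d$, $wd\leq k$.
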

\begin{proof}
	
	This is essentially a corollary of Theorem~\ref{thm:content-ratio-bound}:
	
	\begin{align*}
		\frac{C_\lambda(d)}{\prod_{i=1}^{g} C_{\mu^i}(d)} &= \frac{C_{\lambda}(d)}{C_{\mu^1}(d) C_{\nu^1}(d)} \left( \prod_{i=2}^{g-2} \frac{C_{\nu^{i-1}}(d)}{C_{\mu^i}(d) C_{\nu^i}(d)} \right) \frac{C_{\nu^{g-2}}(d)}{C_{\mu^{g-1}}(d) C_{\mu^g}(d)} \\
		&= r^{\lambda}_{\mu^1 , \nu^1} \left( \prod_{i=2}^{g-2} r^{\nu^{i-1}}_{\mu^i, \nu^i} \right) r^{\nu^{g-2}}_{\mu^{g-1},\mu^{g}} \\
		&\leq \prod_{i=1}^{g-1} (n+1)^{k^2} \\
		&= (n+1)^{(g-1)k^2}
	\end{align*}
\end{proof}

\noindent Finally, Theorem~\ref{thm:true-for-triangular} holds for any $g$ with the same proof as before by applying Theorem~\ref{thm:content-ratio-bound-general} in place of Theorem~\ref{thm:content-ratio-bound}.

\section{Content ratio lemma}\label{sec:content-ratio-lemma}

This section is devoted to the proof of Theorem~\ref{thm:content-ratio-bound}. For fixed $\lambda$ under the hypotheses of the theorem, we will
\begin{enumerate}[i)]
	\setlength\itemsep{-0.5em}
	\item Find which $\nu$ maximizes $r_{\mu,\nu}^\lambda(d)$ for fixed $\mu$ and $d$. \\
	\item Show that the \emph{pair} $\mu,\nu$ which maximizes $r_{\mu,\nu}^\lambda(d)$ for fixed $d$ has a special form. \\
	\item Show that $r^{\lambda}_{\mu,\nu}(d) \leq (n+1)^{k^2}$ for $\mu,\nu$ of the special form.
\end{enumerate} 

Essentially, starting from arbitrary $\lambda,\mu,\nu$, we will successively modify $\mu,\nu$ in a way that only increases content ratio until it is somehow obvious that the bound in the theorem holds. 

We will need to make use of a partial order on partitions known as \textit{dominance order}, typically denoted $\trianglelefteq$. For two partitions $\omega, \Omega$, we write $\omega \trianglelefteq \Omega$ to mean
\begin{equation}\label{eqn:dominance-order}
	\sum_{i=1}^{t} \omega_i \leq \sum_{i=1}^{t} \Omega_i, \quad t=1,2,\ldots,p .
\end{equation}
This is written as if $\omega$, $\Omega$ have the same number of parts, but the shorter one can be padded out with parts of size $0$ if necessary. Dominance order typically assumes that $\omega$, $\Omega$ partition the same number, but this isn't a necessary restriction for us.

Since we are trying to obtain a bound on $r^{\lambda}_{\mu,\nu}(d)$, throughout this section we will manipulate many inequalities involving content polynomials $C_{\Lambda}(D)$ which shall require multiplying or dividing by the $D + c_u$ defining $C_{\Lambda}(D)$ (recall (\ref{eqn:content-poly-definition})). As long as the height of the partition $\Lambda$ is less than the input $D$, these $D + c_u$ terms will be positive, so our inequalities will never reverse.

We begin with a monotonicity property of the content polynomials:
\begin{lem}\label{lem:content-polynomial-monotone}
	
	The content polynomial $C_\omega(d)$ is dominance-monotone in $\omega$. That is, $\omega \trianglelefteq \Omega$ implies $C_\omega(d) \leq C_\Omega(d)$ when $ht(\omega), ht(\Omega) \leq d$, and the inequality is strict when $\omega \neq \Omega$.
	
\end{lem}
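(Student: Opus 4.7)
My plan is to establish the monotonicity by reducing $\omega \trianglelefteq \Omega$ to a chain of elementary combinatorial moves on partitions, and then checking that each move does not decrease the content polynomial. The standard characterization of dominance order asserts that $\omega \trianglelefteq \Omega$ if and only if there is a finite sequence of partitions $\omega = \pi^{(0)}, \pi^{(1)}, \ldots, \pi^{(r)} = \Omega$ where each step is one of: (i) a \emph{box addition} that appends a cell to the end of some row of $\pi^{(t)}$ while preserving partition shape; or (ii) a \emph{raising move} that removes the last cell of row $j$ and adjoins it to the end of row $i$ for some $i < j$, again preserving partition shape. To produce this characterization, I would argue by induction on a measure of the gap between $\pi$ and $\Omega$: given $\pi \triangleleft \Omega$, identify the smallest row $i$ with $\pi_i < \Omega_i$; if this row can legally receive a box without breaking dominance against $\Omega$, apply a box addition there; otherwise the gap forces some lower row of $\pi$ to exceed the matching row of $\Omega$, and one can find a compensating raising move.

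With the chain in hand, it suffices to verify that each elementary move weakly (and typically strictly) increases $C(d) = \prod_{u}(d + c_u)$. A box addition at cell $(i, \pi^{(t)}_i + 1)$ multiplies $C$ by the single new factor $d + \pi^{(t)}_i + 1 - i \geq d - i + 1 \geq 1$, using $ht(\pi^{(t+1)}) \leq d$ so $i \leq d$. A raising move from $(j, \pi^{(t)}_j)$ to $(i, \pi^{(t)}_i + 1)$ replaces the single factor $d + \pi^{(t)}_j - j$ by $d + \pi^{(t)}_i + 1 - i$; the change in contents is
\[
\bigl(\pi^{(t)}_i + 1 - i\bigr) - \bigl(\pi^{(t)}_j - j\bigr) = \bigl(\pi^{(t)}_i - \pi^{(t)}_j\bigr) + (j - i) + 1 \geq 2,
\]
since $\pi^{(t)}_i \geq \pi^{(t)}_j$ (partition monotonicity for $i < j$) and $j - i \geq 1$. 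Both factors are strictly positive under the height hypothesis, so the ratio of new-to-old factor exceeds one and $C(d)$ strictly increases at this step. Telescoping along the chain gives $C_\omega(d) \leq C_\Omega(d)$.

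For the strict inequality when $\omega \neq \Omega$, I would need to rule out the only degenerate case: a box addition at the cell $(i,j) = (d,1)$, which contributes a factor of exactly $1$. The plan is to argue that a shortest chain can always be chosen to contain at least one non-degenerate step — a raising move, or a box addition at a cell different from $(d,1)$ — whenever $\omega \neq \Omega$, so that strict increase occurs somewhere along the chain. I expect this boundary case to be the main subtlety of the proof. An alternative, more analytic route, which sidesteps elementary moves, rewrites
\[
C_\omega(d) = \frac{\prod_{i=1}^d (d - i + \omega_i)!}{\prod_{i=1}^d (d-i)!}
\]
and invokes the weak-majorization inequality $\sum_i \phi(a_i) \leq \sum_i \phi(A_i)$ (valid whenever $\sum_{i \leq t} a_i \leq \sum_{i \leq t} A_i$ for all $t$) applied to the convex, non-decreasing function $\phi(x) = \log(x!)$ on non-negative integers; in this framing, strictness reduces to checking that some increment $(d - i + \omega_i) \to (d - i + \Omega_i)$ traverses the region $x \geq 1$ where $\phi$ is strictly increasing.
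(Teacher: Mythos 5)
Your approach is genuinely different from the paper's. The paper argues by induction on the number of rows, exploiting a factorization $C_\omega(d) = C_{(\omega_1,\ldots,\omega_t)}(d)\,C_{(\omega_{t+1},\ldots,\omega_p)}(d-t)$ at any index $t$ where the partial sums of $\omega$ and $\Omega$ coincide, together with a single hand-crafted raising move (bottom row to top row) for the case that all the partial-sum inequalities are strict. Your primary route instead decomposes $\omega \trianglelefteq \Omega$ into a chain of box additions and raising moves and checks that each elementary move increases $C$; your alternative rewrites
$C_\omega(d) = \prod_{i=1}^d (d - i + \omega_i)!\big/\prod_{i=1}^d (d-i)!$
and applies a weak-majorization (Tomi\'c/Karamata) inequality to the convex, nondecreasing function $\phi(x)=\log(x!)$. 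The weak-majorization route is, to my eye, the cleanest of the three: the weak-majorization hypothesis on the decreasing sequences $a_i = d-i+\omega_i$ and $A_i = d-i+\Omega_i$ is precisely the definition of $\trianglelefteq$ shifted by the deterministic sequence $(d-i)$, and nothing else is needed for the non-strict bound.

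Two points need tightening. First, the chain decomposition of dominance into box additions and raising moves is a standard theorem (Brylawski, Muirhead) only for partitions of the same weight; for $|\omega| < |\Omega|$ it requires an argument, and you must also verify that every intermediate $\pi^{(t)}$ satisfies $ht(\pi^{(t)}) \le d$ so that all the factors stay positive. Note that $\pi^{(t)} \trianglelefteq \Omega$ does \emph{not} bound height from above (compare $(1^m)\trianglelefteq(m)$), so a box addition that opens a new row needs separate justification; your sketch of the induction is plausible but is not yet a proof, and the paper avoids this issue entirely by never invoking the elementary-move characterization. Second, you are right to flag the strictness boundary case, and it is in fact a genuine counterexample rather than a mere subtlety: with $\omega = \varnothing$, $\Omega = (1)$, and $d = 1$, the hypotheses of the lemma hold and $\omega \ne \Omega$, yet $C_\omega(1) = 1 = C_\Omega(1)$. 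The paper's own base case contains the same gap (it asserts the factor $d+j-1$ at $j = \omega_1 + 1$ is ``strictly larger than $1$'', which fails precisely when $\omega_1=0$ and $d=1$). Fortunately strictness is not actually used downstream — Corollary~\ref{cor:maximal-nu} and the proof of Theorem~\ref{thm:content-ratio-bound} only need the non-strict inequality — so the cleanest fix is to drop the strictness claim, or add a hypothesis such as $d > ht(\Omega)$.
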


\begin{proof}
	
	Let $p = \max\{ ht(\omega), ht(\Omega) \}$ and note $p \leq d$. First, we observe that a content polynomial can be split into a product of two smaller content polynomials. That is, for $t \in \{ 1 , 2 , \ldots , p - 1\}$, we have
	\begin{align}\label{eqn:content-poly-split}
		C_\omega(d) &= \prod_{i=1}^{p}\prod_{j=1}^{\omega_i} (d + j - i) \\
		&= \prod_{i=1}^{t}\prod_{j=1}^{\omega_i} (d + j - i) \cdot \prod_{i=t+1}^{p}\prod_{j=1}^{\omega_i} (d + j - i) \nonumber \\
		&= C_{(\omega_1 , \ldots , \omega_t)}(d) \cdot \prod_{i=1}^{p-t}\prod_{j=1}^{\omega_{t+i}} (d + j - (t+i)) \nonumber \\
		&= C_{(\omega_1 , \ldots , \omega_t)}(d) \cdot C_{(\omega_{t+1} , \ldots , \omega_p)}(d-t). \nonumber
	\end{align}
	Assuming $ht(\omega) \geq t$, we have
	\begin{equation*}
		ht(\omega_{t+1} , \ldots , \omega_p) \leq ht(\omega) - t \leq d - t 
	\end{equation*}
	and so all factors in $C_{(\omega_{t+1} , \ldots , \omega_p)}(d-t)$ are positive. Otherwise, $C_{(\omega_{t+1} , \ldots , \omega_p)}(d-t)$ is an empty product and thus equal to $1$.
	
	Proceeding by induction on the number of rows of $\Omega$, for $p = 1$ we see $\omega \trianglelefteq \Omega$ is the same as $\omega_1 \leq \Omega_1$, in which case
	\begin{align*}
		C_\omega(d) &= \prod_{j=1}^{\omega_1} (d + j - 1) \\
		&\leq \prod_{j=1}^{\omega_1} (d + j - 1) \cdot \prod_{j = \omega_1 + 1}^{\Omega_1} (d + j - 1) \\
		&= \prod_{j=1}^{\Omega_1} (d + j - 1) \\
		&= C_\Omega(d)
	\end{align*}
	If $\omega_1 < \Omega_1$, then the term for which $j = \omega_1 + 1$ is strictly larger than $1$, yielding strict inequality $C_\omega(d) < C_\Omega(d)$
	
	From now on take $\omega \neq \Omega$. Assume the lemma holds for all partitions with $p-1$ or fewer rows. The definition of $\omega \trianglelefteq \Omega$ gives us $(\omega_1 , \ldots , \omega_t) \trianglelefteq (\Omega_1 , \ldots , \Omega_t)$ for any $t \in \{ 1 , 2 , \ldots , p - 1\}$, but let us examine what happens if we further assume that there exists a $t_0 \leq p-1$ for which $\sum_{i=1}^{t_0} \omega_i = \sum_{i=1}^{t_0} \Omega_i$. Combining this with the definition of $\omega \trianglelefteq \Omega$ immediately yields $(\omega_{t_0+1} , \ldots , \omega_p) \trianglelefteq (\Omega_{t_0+1} , \ldots , \Omega_p)$, and so
	\begin{gather*}
		C_{(\omega_1 , \ldots , \omega_{t_0})}(d) \leq C_{(\Omega_1 , \ldots , \Omega_{t_0})}(d) \\
		C_{(\omega_{t_0+1} , \ldots , \omega_p)}(d-t_0) \leq C_{(\Omega_{t_0+1} , \ldots , \Omega_{p})}(d-t_0)
	\end{gather*}\\
	by our induction hypothesis. Since $\omega \neq \Omega$, one inequality must be strict, so multiplying gives $C_\omega(d) < C_\Omega(d)$ by (\ref{eqn:content-poly-split}).
	
	It remains to examine the situation where no such $t_0$ exists, meaning
	\begin{equation*}
		\sum_{i=1}^{t} \omega_i < \sum_{i=1}^{t} \Omega_i \text{ for all } t = 1 , \ldots , p - 1, \text{ and } \sum_{i=1}^{p} \omega_i \leq \sum_{i=1}^{p} \Omega_i .
	\end{equation*}
	We will now update $\omega$ by decreasing its smallest (nonzero) part $\omega_r$ by $1$ and increasing its largest part $\omega_1$ by $1$, then show that this operation strictly increases $C_\omega(d)$, and that the new partition continues to be dominated by $\Omega$. Repeating this process $(\Omega_1 - \omega_1)$-many times, we eventually arrive at a $\Lambda \trianglelefteq \Omega$ such that $\Lambda_1 = \Omega_1$, placing us in the earlier case (with $t_0 = 1$). Thus $C_\omega(d) < C_{\Lambda}(d) \leq C_\Omega(d)$ where the first inequality is by construction and the second is by $\Lambda_1 = \Omega_1$.
	
	Given $\omega$, let $\hat\omega = (\omega_1 + 1 , \omega_2 , \ldots , \omega_{r-1} , \omega_r - 1)$. Let us check the content polynomials: the factors are identical except that the term $d+\omega_r-r$ in $C_\omega(d)$ (contributed by the box removed from the bottom row of $\omega$) gets replaced by $d+(\omega_1+1)-1=d+\omega_1$ (contributed by the new box added in the top row) in $C_{\hat\omega} (d)$, and since $\omega$ is a partition we must have $\omega_1 \geq \omega_r > \omega_r-r$. Equivalently, $d+\omega_r-r < d+(\omega_1+1)-1=d+\omega_1$ and so $C_\omega(d) < C_{\hat\omega}(d)$. 
	
	Finally, we show $\hat\omega \trianglelefteq \Omega$. This means $\sum_{i=1}^{t} \hat\omega_i \leq \sum_{i=1}^{t} \Omega_i \text{ for all } t = 1 , \ldots , p$. For $t < p$
	\begin{align*}
		\sum_{i=1}^{t} \hat\omega_i = 1 + \sum_{i=1}^{t} \omega_i < 1 + \sum_{i=1}^{t} \Omega_i
	\end{align*}
	by assumption, hence $\sum_{i=1}^{t} \hat\omega_i \leq \sum_{i=1}^{t} \Omega_i$. Finally, $\sum_{i=1}^{p} \hat\omega_i = \sum_{i=1}^{p} \omega_i \leq \sum_{i=1}^{p} \Omega_i$.
	
\end{proof}

We can now prove part $i)$ of our outline:
\begin{cor}\label{cor:maximal-nu}
	
	Given $\lambda \vdash n$ with $ht(\lambda) \leq d$ and $\mu,\nu$ such that $c^{\lambda}_{\mu,\nu} \neq 0$, we have
	\begin{equation*}
		r^{\lambda}_{\mu,\nu}(d) \leq r^{\lambda}_{\mu,\text{rows}(\lambda / \mu)}(d) .
	\end{equation*}

\end{cor}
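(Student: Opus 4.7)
The plan is to reduce the inequality to a dominance-order comparison to which Lemma~\ref{lem:content-polynomial-monotone} applies. Since $C_\lambda(d)$ and $C_\mu(d)$ are positive and do not depend on $\nu$, the desired inequality $r^{\lambda}_{\mu,\nu}(d) \leq r^{\lambda}_{\mu,\text{rows}(\lambda/\mu)}(d)$ is equivalent to
$$C_{\text{rows}(\lambda/\mu)}(d) \leq C_\nu(d).$$
Both $\nu$ and $\text{rows}(\lambda/\mu)$ partition $n-|\mu|$. The height bounds $ht(\nu), ht(\text{rows}(\lambda/\mu)) \leq ht(\lambda) \leq d$ needed to invoke the lemma hold: $ht(\text{rows}(\lambda/\mu))$ is at most the number of nonempty rows of $\lambda/\mu$, hence at most $ht(\lambda)$; and for any LR tableau of shape $\lambda/\mu$, column strictness forces any occurrence of the label $k$ to lie in a row of index $\geq k$, which gives $ht(\nu) \leq ht(\lambda)$ whenever $c^{\lambda}_{\mu,\nu} \neq 0$. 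Thus, by Lemma~\ref{lem:content-polynomial-monotone}, it suffices to prove the combinatorial statement

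\medskip
\noindent\textbf{Claim.} For every $\nu$ with $c^{\lambda}_{\mu,\nu} \neq 0$, one has $\text{rows}(\lambda/\mu) \trianglelefteq \nu$ in dominance order.
\medskip

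This claim is a classical fact about Littlewood-Richardson coefficients, and the plan is to prove it by exploiting the standard LR-tableau decomposition. Given an LR tableau $T$ of shape $\lambda/\mu$ with content $\nu$, the cells of $T$ containing labels $\leq t$ form a sub-skew shape $\lambda^{(t)}/\mu$ arising from a chain $\mu = \lambda^{(0)} \subseteq \lambda^{(1)} \subseteq \cdots \subseteq \lambda^{(\ell)} = \lambda$ whose successive differences $\lambda^{(j)}/\lambda^{(j-1)}$ are horizontal strips of size $\nu_j$. In particular $|\lambda^{(t)}/\mu| = \nu_1 + \cdots + \nu_t$, and the dominance inequality reduces to showing that this quantity is at least $\text{rows}(\lambda/\mu)_1 + \cdots + \text{rows}(\lambda/\mu)_t$, the sum of the $t$ largest row lengths of $\lambda/\mu$. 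The Yamanouchi (lattice word) condition forces the labels $\leq t$ to ``cover long rows first'', and the horizontal-strip constraint controls how such coverings can spread vertically. I would proceed by induction on $t$: the base case $t=1$ uses that row $1$ of $\lambda/\mu$ consists entirely of $1$'s (argued directly from the lattice word property together with row weak increase); the inductive step removes the positions labeled $1$ in $T$, yielding an LR tableau of the smaller skew shape $\lambda^{(1)}/\mu$-complement inside $\lambda/\mu$ with content $(\nu_2, \nu_3, \ldots)$, and applies induction.

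The main obstacle is the combinatorial claim, and within it specifically that $\text{rows}(\lambda/\mu)$ is \emph{attained} as the content of some LR tableau (so the minimum is realized, not merely a lower bound). Attaining this content requires an explicit construction: assign labels so that the $i$-th longest row of $\lambda/\mu$ receives as many copies of $i$ as possible, breaking ties by row index from top to bottom, and verify the lattice/column-strict conditions; in general one must adjust boxes near the column boundaries because column-strictness can conflict with the naive ``assign label by row rank'' rule. Once this attainment and the dominance lower bound are both in hand, the Claim follows, and Lemma~\ref{lem:content-polynomial-monotone} then completes the proof of the corollary.
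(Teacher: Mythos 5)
Your overall strategy matches the paper's: reduce the inequality to the dominance statement $\text{rows}(\lambda/\mu)\trianglelefteq\nu$ and then invoke Lemma~\ref{lem:content-polynomial-monotone}. The paper proves the corollary at this point simply by citing McNamara \cite[Proposition 3.1]{McNamara-2008} for that dominance fact (and also for $c^\lambda_{\mu,\text{rows}(\lambda/\mu)}\neq 0$, though only the dominance is actually needed for the present corollary; the nonvanishing first matters in Proposition~\ref{thm:special-form-gives-max-ratio}). Your explicit check of the height hypothesis of Lemma~\ref{lem:content-polynomial-monotone} is a worthwhile observation that the paper leaves implicit, though the stated reason is slightly misattributed: in a column-strict \emph{skew} filling, label $k$ can appear in a row $<k$ when the column is truncated by $\mu$; it is the lattice word condition (not column-strictness alone) that forces label $k$ into rows $\geq k$. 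In any case, $ht(\nu)\leq ht(\lambda)$ follows at once from the elementary fact that $c^\lambda_{\mu\nu}\neq 0$ requires $\nu\subseteq\lambda$.

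The genuine gap is in your sketch of the dominance claim, which you try to prove directly rather than cite. The base case as argued gives only $\nu_1\geq \lambda_1-\mu_1$ (the length of the top nonempty row of the skew shape), whereas what is required is $\nu_1\geq \text{rows}(\lambda/\mu)_1=\max_i(\lambda_i-\mu_i)$, and the largest row of $\lambda/\mu$ need not be the top one. For example with $\lambda=(3,3)$, $\mu=(2,0)$, the top row of $\lambda/\mu$ has a single box while $\text{rows}(\lambda/\mu)_1=3$, so your base case delivers $\nu_1\geq 1$ when $\nu_1\geq 3$ is needed (and in that example the unique LR filling indeed has $\nu_1=3$). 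The inductive step is also not supplied: after deleting the $1$'s one is left with an LR tableau of shape $\lambda/\lambda^{(1)}$ with content $(\nu_2,\nu_3,\ldots)$, and the inductive hypothesis gives $\sum_{i=2}^t\nu_i\geq\sum_{i=1}^{t-1}\text{rows}(\lambda/\lambda^{(1)})_i$; but removing the horizontal strip $\lambda^{(1)}/\mu$ can simultaneously shorten several of the largest rows, so there is no evident inequality relating $\nu_1+\sum_{i=1}^{t-1}\text{rows}(\lambda/\lambda^{(1)})_i$ to $\sum_{i=1}^{t}\text{rows}(\lambda/\mu)_i$. A correct direct proof of the dominance bound does exist (it is essentially McNamara's), but it requires a more careful pairing/chain argument on the reading word than what is outlined here; as written, the sketch fails already on the example above.
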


The partition $\text{rows}(\lambda / \mu)$ is obtained by listing the numbers $( \lambda_i - \mu_i )_{i=1}^{p}$ in descending order. McNamara \cite[Proposition 3.1]{McNamara-2008} proves that the Littlewood-Richardson coefficient $c^{\lambda}_{\mu,\text{rows}(\lambda / \mu)}$ is nonzero, and that $\text{rows}(\lambda / \mu) \trianglelefteq \nu$ for every $\nu$ satisfying $c^{\lambda}_{\mu,\nu} \neq 0$.

\begin{proof}
	
	\begin{align*}
		\text{rows}(\lambda / \mu) \trianglelefteq \nu &\Longrightarrow C_{\text{rows}(\lambda / \mu)}(d) \leq C_{\nu}(d) \\
		&\Longrightarrow \frac{1}{C_{\nu}(d)} \leq \frac{1}{C_{\text{rows}(\lambda / \mu)}(d)} \\
		&\Longrightarrow \frac{C_\lambda(d)}{C_\mu(d) C_\nu(d)} \leq \frac{C_\lambda(d)}{C_\mu(d) C_{\text{rows}(\lambda / \mu)}(d)} \\
		&\Longrightarrow r^{\lambda}_{\mu,\nu}(d) \leq r^{\lambda}_{\mu,\text{rows}(\lambda / \mu)}(d) .
	\end{align*}
	
\end{proof}

Next we prove $ii)$, showing that the maximizing $\mu,\nu$ have a special form. This special form is $\lambda_i = \mu_i + \nu_i$ for $i = 1 , \ldots , p$ which we write as $\lambda = \mu + \nu$. This implies $\nu = \text{rows}(\lambda/\mu)$ and so $c^{\lambda}_{\mu,\nu} \neq 0$.

\begin{prop}\label{thm:special-form-gives-max-ratio}
	
	For $\lambda \vdash n$ with $ht(\lambda) \leq d$, any $\mu,\nu$ which satisfy
	\begin{equation*}
		r^{\lambda}_{\mu,\nu}(d) = \max \{ r^{\lambda}_{\hat\mu,\hat\nu}(d) : \hat\mu,\hat\nu \text{ such that } c^{\lambda}_{\hat\mu, \hat\nu} \neq 0 \}
	\end{equation*}
	necessarily satisfy $\lambda = \mu + \nu$.
	
\end{prop}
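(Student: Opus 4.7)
The plan is first to invoke Corollary~\ref{cor:maximal-nu}, which lets us restrict attention to pairs $(\mu,\nu)$ with $\nu=\text{rows}(\lambda/\mu)$. By McNamara's theorem cited after that corollary, $c^\lambda_{\mu',\text{rows}(\lambda/\mu')}\ne 0$ for any sub-partition $\mu'\subseteq\lambda$, so we may freely modify $\mu$ to any sub-partition of $\lambda$. I would argue by contrapositive: assume the sequence $(\lambda_i-\mu_i)_i$ is not non-increasing, let $i$ be the position of the first ascent, and set $a:=\lambda_i-\mu_i<\lambda_{i+1}-\mu_{i+1}=:b$. Since $\mu$ and $\lambda$ are both partitions, $\mu_i-\mu_{i+1}=(\lambda_i-\lambda_{i+1})+(b-a)\ge b-a\ge 1$. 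The goal is to exhibit $\mu'\subseteq\lambda$ and $\nu'=\text{rows}(\lambda/\mu')$ such that $r^\lambda_{\mu',\nu'}(d)>r^\lambda_{\mu,\nu}(d)$, contradicting maximality.

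When $\mu_i-\mu_{i+1}\ge 2$, the single-box transfer $\mu'_i=\mu_i-1$, $\mu'_{i+1}=\mu_{i+1}+1$ keeps $\mu'$ a sub-partition of $\lambda$, and a direct content computation gives
\[
\frac{C_{\mu'}(d)}{C_\mu(d)}=\frac{d+\mu_{i+1}-i}{d+\mu_i-i}<1.
\]
At the same time, the multiset $\lambda-\mu'$ is obtained from $\lambda-\mu$ by the equalizing replacement $(a,b)\mapsto(a+1,b-1)$, so that $\text{rows}(\lambda/\mu')\trianglelefteq \nu$ by majorization; Lemma~\ref{lem:content-polynomial-monotone} then gives $C_{\nu'}\le C_\nu$. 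Combining, $C_{\mu'}C_{\nu'}<C_\mu C_\nu$ and the content ratio strictly increases.

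The remaining tight case $\mu_i-\mu_{i+1}=1$ forces $\lambda_i=\lambda_{i+1}$ and $b=a+1$, blocking the simple transfer; this is the main technical obstacle. I would resolve it by the following dichotomy. Set $X:=\lambda_i-2\mu_i+i+1$ and let $N$ denote the number of entries of $\nu$ that are $\ge a+1$. \emph{Move (A):} delete the last box of row $i$ of $\mu$, giving $\mu'_i=\mu_i-1=\mu_{i+1}$; here $\nu'$ is obtained from $\nu$ by adjoining a new row of length $a+1$ at sorted position $N+1$, and a hook-content calculation yields
\[
\frac{C_{\mu'}(d)\,C_{\nu'}(d)}{C_\mu(d)\,C_\nu(d)}=\frac{d+a-N}{d+\mu_i-i},
\]
which is $<1$ exactly when $N\ge X$. \emph{Move (B):} add a box to row $i+1$ of $\mu$, giving $\mu'_{i+1}=\mu_{i+1}+1=\mu_i$; here $\nu'$ is obtained from $\nu$ by shortening row $N$ from length $a+1$ to $a$, and
\[
\frac{C_{\mu'}(d)\,C_{\nu'}(d)}{C_\mu(d)\,C_\nu(d)}=\frac{d+\mu_{i+1}-i}{d+a+1-N},
\]
which is $<1$ exactly when $N\le X$. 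Since $N$ is an integer, at least one of the inequalities $N\ge X$, $N\le X$ holds, so one of moves (A) and (B) always strictly increases the content ratio. Iterating eliminates every ascent of $\lambda-\mu$, forcing $\lambda=\mu+\nu$ at any maximizer, as required.
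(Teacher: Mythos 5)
Your proof is correct and follows essentially the same strategy as the paper's: reduce to $\nu=\text{rows}(\lambda/\mu)$ via Corollary~\ref{cor:maximal-nu}, argue by contrapositive at a position $i$ of ascent, and show that one of two single-box moves on $\mu$ strictly increases the content ratio. Your Moves (A) and (B) coincide exactly with the paper's Updates (B) and (A), and your explicit dichotomy ``$N\ge X$ or $N\le X$'' is logically the same as the paper's argument that the negations of the two update conditions are incompatible (the paper adds the two negated inequalities and derives $(\mu_{i+1}-\mu_i)+(q-q')\ge 0$, a contradiction). Your preliminary case $\mu_i-\mu_{i+1}\ge 2$, where the simultaneous transfer works unconditionally by Lemma~\ref{lem:content-polynomial-monotone}, is a pleasant streamlining that the paper skips (it handles all cases with the same dichotomy), but it is not logically necessary. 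One small expository slip: in Move~(A), $\nu'$ is not obtained by adjoining a new row of length $a+1$ but by lengthening the existing row $N+1$ (of length $\nu_{N+1}=a$) to length $a+1$ --- unless $a=0$, in which case your phrasing is literally correct since a length-$0$ row is absent from the diagram; in either case the new box has content $a-N$ and your formula $\frac{C_{\mu'}C_{\nu'}}{C_\mu C_\nu}=\frac{d+a-N}{d+\mu_i-i}$ is right, so this does not affect the argument.
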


Appendix A1.3 of \cite{Stanley-2024} gives many combinatorial methods for computing the Littlewood-Richardson coefficients. While the actual value isn't important to us, a necessary condition for $c^{\lambda}_{\mu, \nu} \neq 0$ is that $\mu,\nu$ are both subpartitions of $\lambda$, so for fixed $\lambda$ there are finitely many $\mu,\nu$ to consider and the maximum $r^{\lambda}_{\mu,\nu}(d)$ has to be attained by some pair.

We prove the contrapositive: given $\mu,\nu$ such that $c^{\lambda}_{\mu, \nu} \neq 0$ and $\lambda \neq \mu + \nu$, we shall update $\mu,\nu$ to new partitions $\hat\mu, \hat\nu$ in a way that strictly increases the content ratio and still satisfies $c^{\lambda}_{\hat\mu, \hat\nu} \neq 0$. 

\begin{proof}
	
	When $( \lambda_i - \mu_i )_{i=1}^{p}$ is a weakly decreasing sequence (a partition), we clearly have $\lambda = \mu + \text{rows}(\lambda / \mu)$. Thus $\lambda \neq \mu + \nu$ just means $\nu \neq \text{rows}(\lambda / \mu)$. In this case, applying Corollary \ref{cor:maximal-nu} proves (the contrapositive of) the proposition.
	
	Now assume $( \lambda_i - \mu_i )_{i=1}^{p}$ isn't decreasing, meaning that there exists at least one index $i_0$ such that
	\begin{equation}\label{eqn:non-decreasing}
		\lambda_{i_0} - \mu_{i_0} < \lambda_{i_0+1} - \mu_{i_0+1} .
	\end{equation}
	Notice that this restriction forces $\mu_{i_0} > \mu_{i_0+1}$ since $\lambda_{i_0}-\lambda_{i_0+1}\geq 0$. Moreover, every $\nu$ will automatically satisfy $\lambda \neq \mu + \nu$, so without loss of generality, take $\nu = \text{rows}(\lambda/\mu)$.
	
	We will update $\mu$ to $\hat\mu$ in one of two possible ways:
	\begin{enumerate}[(A)]
		\setlength\itemsep{-1em}
		\item Increase $\mu_{i_0+1}$ by 1, or \\
		\item decrease $\mu_{i_0}$ by 1.
	\end{enumerate}
	These provide us with our updated $\hat\mu$, and in either cases we set $\hat\nu = \text{rows}(\lambda / \hat\mu)$. We should make sure that (A) and (B) are actually possible. The only way for Update (A) to be impossible is if $\mu_{i_0+1} = \lambda_{i_0+1}$ already, in which case $(\ref{eqn:non-decreasing})$ becomes $\lambda_{i_0} < \mu_{i_0}$, which is absurd since $\mu$ is a subpartition of $\lambda$. The only way for Update (B) to be impossible is if $\mu_{i_0} = 0$, but we've already observed that $\mu_{i_0} > \mu_{i_0+1} \geq 0$.
	
	Now let us investigate each update.
	
	\textbf{Update (A):} Recall that the content ratio is given by
	\begin{equation*}
		r_{\mu,\nu}^\lambda(d) = \frac{C_\lambda(d)}{C_\mu(d)C_\nu(d)}.
	\end{equation*}
	
	Because the $d + c_u$ factors are all strictly positive, we see that the inequality $r^{\lambda}_{\mu,\nu}(d) < r^{\lambda}_{\hat\mu,\hat\nu}(d)$ is equivalent to $C_{\hat\mu}(d) C_{\hat\nu}(d) < C_\mu(d) C_\nu(d)$, which is

	\begin{equation*}
		\prod_{i=1}^{p}\prod_{j=1}^{\hat\mu_i} (d + j - i) \cdot \prod_{i=1}^{p}\prod_{j=1}^{\hat\nu_i} (d + j - i) < \prod_{i=1}^{p}\prod_{j=1}^{\mu_i} (d + j - i) \cdot \prod_{i=1}^{p}\prod_{j=1}^{\nu_i} (d + j - i)
	\end{equation*}\\
	Since $\hat\mu_{i_0+1} = \mu_{i_0+1} + 1$ and $\hat\mu_i = \mu_i$ for $i \neq i_0+1$, there is a great deal of cancellation in the above inequality, resulting in
	
	\begin{equation*}
		(d + (\mu_{i_0+1} + 1) - (i_0+1)) \cdot \prod_{i=1}^{p}\prod_{j=1}^{\hat\nu_i} (d + j - i) < \prod_{i=1}^{p}\prod_{j=1}^{\nu_i} (d + j - i) .
	\end{equation*}\\
	The parts of $\nu$ are just the numbers $\lambda_1 - \mu_1 , \ldots , \lambda_p - \mu_p$ listed in descending order. Meanwhile the parts of $\hat\nu$ are the exact same list except that $\lambda_{i_0+1} - \mu_{i_0+1}$ has been decreased by 1. This means $\nu$ and $\hat\nu$ only differ by a single block in a single row, say row $q$, where $\hat\nu_q = \nu_q - 1$. Evidently, $q$ must be the last row of $\nu$ whose length is $\lambda_{i_0+1} - \mu_{i_0+1}$, which in particular tells us that $\nu_q = \lambda_{i_0+1} - \mu_{i_0+1}$, although we shall not use this quite yet. The earlier inequality is now further equivalent to
	
	\begin{equation*}
		d + (\mu_{i_0+1} + 1) - (i_0+1) < d + \nu_q - q
	\end{equation*}\\
	or just
	
	\begin{equation}\label{eqn:update-A}
		\mu_{i_0+1} - i_0 < \nu_q - q .
	\end{equation}\\
	To summarize, what we have just shown is that if $\hat\mu$ is obtained by Update (A), then $r^{\lambda}_{\mu,\nu}(d) < r^{\lambda}_{\hat\mu,\hat\nu}(d)$ if and only if $(\ref{eqn:update-A})$ holds.
	
	\textbf{Update (B):} As before, $r^{\lambda}_{\mu,\nu}(d) < r^{\lambda}_{\hat\mu,\hat\nu}(d)$ is equivalent to
	
	\begin{equation*}
		\prod_{i=1}^{p}\prod_{j=1}^{\hat\mu_i} (d + j - i) \cdot \prod_{i=1}^{p}\prod_{j=1}^{\hat\nu_i} (d + j - i) < \prod_{i=1}^{p}\prod_{j=1}^{\mu_i} (d + j - i) \cdot \prod_{i=1}^{p}\prod_{j=1}^{\nu_i} (d + j - i)
	\end{equation*}\\
	This time, $\hat\mu_{i_0} = \mu_{i_0} - 1$ and $\hat\mu_i = \mu_i$ for $i \neq i_0$, so the above inequality is the same as
	
	\begin{equation*}
		\prod_{i=1}^{p}\prod_{j=1}^{\hat\nu_i} (d + j - i) < (d + \mu_{i_0} - i_0) \cdot \prod_{i=1}^{p}\prod_{j=1}^{\nu_i} (d + j - i) .
	\end{equation*}\\
	Again, $\nu$ and $\hat\nu$ differ by a single block in a single row $q'$ where $\hat\nu_{q'} = \nu_{q'} + 1$. Now, $q'$ must be the first row of $\nu$ of length $\lambda_{i_0} - \mu_{i_0}$, and $\nu_{q'} = \lambda_{i_0} - \mu_{i_0}$. Proceeding, the latest inequality equivalent to
	
	\begin{equation*}
		d + \nu_{q'} + 1 - q' < d + \mu_{i_0} - i_0
	\end{equation*}\\
	or just
	
	\begin{equation}\label{eqn:update-B}
		\nu_{q'} + 1 - q' < \mu_{i_0} - i_0 .
	\end{equation}\\
	So if $\hat\mu$ is obtained by Update (B), then $r^{\lambda}_{\mu,\nu}(d) < r^{\lambda}_{\hat\mu,\hat\nu}(d)$ if and only if $(\ref{eqn:update-B})$ holds.
	
	We claim that either Update (A) or (B) increases the content ratio, so assume to the contrary that neither does, which is to say that both (\ref{eqn:update-A}) and (\ref{eqn:update-B}) are false:
	\begin{gather*}
		\mu_{i_0+1} - i_0 \geq \nu_q - q \\
		\nu_{q'} + 1 - q' \geq \mu_{i_0} - i_0.
	\end{gather*}
	Adding these two inequalities together and rearranging terms results in
	\begin{equation*}
		\nu_{q'} - \nu_q + q - q' + 1 \geq \mu_{i_0} - \mu_{i_0+1} \overset{(\ref{eqn:non-decreasing})}{>} \lambda_{i_0} - \lambda_{i_0+1}
	\end{equation*}
	Now, substituting in the values of $\nu_q , \nu_{q'}$ yields
	\begin{align}\label{eqn:contradiction}
		&\lambda_{i_0} - \mu_{i_0} - (\lambda_{i_0+1} - \mu_{i_0+1}) + q - q' + 1 > \lambda_{i_0} - \lambda_{i_0+1} \nonumber \\
		\Longrightarrow (&\mu_{i_0+1} - \mu_{i_0}) + (q - q') + 1 > 0 \nonumber \\
		\Longrightarrow (&\mu_{i_0+1} - \mu_{i_0}) + (q - q') \geq 0.
	\end{align}
	
	We've already established that $\mu_{i_0+1} - \mu_{i_0}$ is a strictly negative number. It turns out that $q - q'$ is too; recall that $q$ is the last row of $\nu$ with length $\lambda_{i_0+1} - \mu_{i_0+1}$, and $q'$ is the first row of $\nu$ with length $\lambda_{i_0} - \mu_{i_0}$. Since $\lambda_{i_0+1} - \mu_{i_0+1}$ is the larger number by $(\ref{eqn:non-decreasing})$, it occurs in an earlier (higher) row than $\lambda_{i_0} - \mu_{i_0}$, hence $q < q'$. Which is to say $q - q'$ is also a negative number, so (\ref{eqn:contradiction}) is impossible and the proof is done.
	
\end{proof}

\textbf{Example)} Consider $\lambda = (4,4,3)$ and $\mu = (3,2,0)$. In the following Young diagram of $\lambda$, the boxes of $\mu$ are shaded.
\begin{equation*}
	\ytableausetup{nobaseline,boxsize=1.5em}
	\begin{ytableau}
		*(lightgray) & *(lightgray) & *(lightgray) & \\
		*(lightgray) & *(lightgray) & & \\
		 & & \\
	\end{ytableau}
\end{equation*} \\
Clearly $\nu = \text{rows}(\lambda/\mu) = (3,2,1)$, and $r^{\lambda}_{\mu,\nu}(3) = 2$. Since $(\lambda_i - \mu_i)_{i=1}^3$ is not already decreasing, we must find a row where the sequence of differences increases and apply update $(A)$ or $(B)$. Let us choose $i_0 = 1$.

\textbf{Update (A):} Increase $\mu_2$ so that $\hat\mu = (3,3,0)$, yielding
\begin{equation*}
	\ytableausetup{nobaseline,boxsize=1.5em}
	\begin{ytableau}
		*(lightgray) & *(lightgray) & *(lightgray) & \\
		*(lightgray) & *(lightgray) & *(lightgray) & \\
		& & \\
	\end{ytableau}
\end{equation*} \\
for which $\hat\nu = (3,1,1)$ and $r^{\lambda}_{\hat\mu,\hat\nu}(3) = 1.5$.

\textbf{Update (B):} Decrease $\mu_1$ so that $\hat\mu = (2,2,0)$, yielding
\begin{equation*}
	\ytableausetup{nobaseline,boxsize=1.5em}
	\begin{ytableau}
		*(lightgray) & *(lightgray) & & \\
		*(lightgray) & *(lightgray) & & \\
		& & \\
	\end{ytableau}
\end{equation*} \\
for which $\hat\nu = (3,2,2)$ and $r^{\lambda}_{\hat\mu,\hat\nu}(3) = 5$.

Update (A) failed to increase content ratio compared to our original $\lambda,\mu,\nu$, but Update (B) succeeded. Although the results of (B) still do not satisfy $\lambda = \hat\mu + \hat\nu$, we could repeatedly apply this process to eventually arrive at the desired form.

We are ready to prove part $iii)$ of the initial outline.

\begin{prop}\label{thm:special-form-bound}
	
	Let $\lambda$ be a partition of $n$ such that $ht(\lambda) \leq d$ and $wd(\lambda) \leq k$. If $\mu,\nu$ are subpartitions such that $\lambda = \mu + \nu$, then $r^{\lambda}_{\mu,\nu}(d) \leq (n+1)^{k^2}$ for all $n\geq k$.
	
\end{prop}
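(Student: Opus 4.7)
The plan is to rewrite $r^\lambda_{\mu,\nu}(d)$ as a product of at most $k^2$ elementary factors, each easily bounded by $n+1$. Let $p=ht(\lambda)$. Starting from the relation $\lambda=\mu+\nu$, the row-$i$ contribution to $C_\lambda(d)$ factors as the product over the first $\mu_i$ boxes (matching the corresponding row of $C_\mu(d)$) times the product over the last $\nu_i$ boxes, giving
\[
r^\lambda_{\mu,\nu}(d)=\prod_{i=1}^{p}\prod_{t=1}^{\nu_i}\frac{d-i+t+\mu_i}{d-i+t}.
\]
The key move is to expand each inner factor by the elementary telescoping identity $(a+\mu)/a=\prod_{s=1}^{\mu}(a+s)/(a+s-1)$, applied with $a=d-i+t$, which produces the triple product
\[
r^\lambda_{\mu,\nu}(d)=\prod_{\substack{1\le i\le p\\ 1\le s\le\mu_i\\ 1\le t\le\nu_i}}\frac{d-i+s+t}{d-i+s+t-1}.
\]

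Next, swap the order of multiplication so that $(s,t)$ becomes the outer index. Since $\mu$ and $\nu$ are partitions, the set of rows satisfying $\mu_i\ge s$ and $\nu_i\ge t$ is precisely $\{1,\dots,p_{s,t}\}$ where $p_{s,t}\vcentcolon=\min(\mu^*_s,\nu^*_t)$. Writing $u_i\vcentcolon=d-i+s+t$, one has $u_{i+1}=u_i-1$, so the inner product over $i$ telescopes:
\[
\prod_{i=1}^{p_{s,t}}\frac{u_i}{u_i-1}=\prod_{i=1}^{p_{s,t}}\frac{u_i}{u_{i+1}}=\frac{u_1}{u_{p_{s,t}+1}}=\frac{d+s+t-1}{d+s+t-1-p_{s,t}}.
\]
This yields the compact formula
\[
r^\lambda_{\mu,\nu}(d)=\prod_{s=1}^{\mu_1}\prod_{t=1}^{\nu_1}\frac{d+s+t-1}{d+s+t-1-p_{s,t}}.
\]

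To finish, observe that $p_{s,t}\le ht(\mu)\le ht(\lambda)\le d$, so the denominator satisfies $d+s+t-1-p_{s,t}\ge s+t-1\ge 1$. Hence each factor equals $1+p_{s,t}/(d+s+t-1-p_{s,t})\le 1+p_{s,t}\le 1+ht(\lambda)\le n+1$. The number of factors is at most $\mu_1\cdot\nu_1\le wd(\lambda)^2\le k^2$, yielding $r^\lambda_{\mu,\nu}(d)\le(n+1)^{k^2}$.

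The main obstacle is identifying the correct reorganization of the product: a naive row-by-row estimate only yields bounds of order $2^n$ (from $\binom{\lambda_i}{\nu_i}\le 2^k$ on each of up to $n$ rows), which is far too weak. The double telescoping above has the effect of collapsing the potentially $n$-many row contributions into exactly $k^2$ ``column-pair'' contributions, each with an elementary bound of $n+1$, matching the exponent $k^2$ in the stated inequality.
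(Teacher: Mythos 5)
Your proof is correct, and it takes a genuinely different route from the paper's. The paper starts from the same row-by-row ratio $\prod_i\prod_{t=1}^{\nu_i}\frac{d-i+t+\mu_i}{d-i+t}$ but then uniformly overestimates $\mu_i$ by $k$ in the numerator, reindexes the resulting product by the columns of $\nu$, and telescopes down each column, splitting into two cases depending on whether the column height of $\nu$ exceeds $k$. Your approach instead keeps $\mu_i$ exact, performs a first telescope inside each row to produce a triple product over $(i,s,t)$, swaps the multiplication order so $(s,t)$ is outer, and telescopes over $i$ to obtain the exact identity
\[
r^\lambda_{\mu,\nu}(d)=\prod_{s=1}^{\mu_1}\prod_{t=1}^{\nu_1}\frac{d+s+t-1}{d+s+t-1-p_{s,t}},\qquad p_{s,t}=\min(\mu^*_s,\nu^*_t),
\]
and then bounds each of the at most $k^2$ factors by $1+ht(\lambda)\le n+1$. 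This buys you an exact closed form for the content ratio (rather than an intermediate overestimate), a single unified argument with no case split, and, as a bonus, the hypothesis $n\ge k$ is not needed (the paper uses it in Case 1 to pass from $(1+k)^{h_j}$ to $(1+n)^k$; your factor-by-factor bound $p_{s,t}\le ht(\lambda)\le n$ avoids this). Both arguments arrive at the same exponent $k^2$ because both ultimately collapse the product over rows into a product over a $k\times k$ grid of column-pair data.
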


\begin{proof}
	To begin, we observe
	\begin{align}
		r^{\lambda}_{\mu,\nu}(d) &= \frac{ \prod_{i=1}^{p} \prod_{j=1}^{\mu_i + \nu_i} (d + j - i) }{ \prod_{i=1}^{p} \prod_{j=1}^{\mu_i} (d + j - i) \cdot \prod_{i=1}^{p} \prod_{j=1}^{\nu_i} (d + j - i) } \label{eqn:ratio} \\
		&= \frac{ \prod_{i=1}^{p} \prod_{j=\mu_i + 1}^{\mu_i + \nu_i} (d + j - i) }{ \prod_{i=1}^{p} \prod_{j=1}^{\nu_i} (d + j - i) } \label{eqn:ratio-cancel-mu} \\
		&\text{Change of variables in numerator: } \hat{\jmath} = j - \mu_i \nonumber \\
		&= \frac{ \prod_{i=1}^{p} \prod_{j=1}^{\nu_i} (d + j + \mu_i - i) }{ \prod_{i=1}^{p} \prod_{j=1}^{\nu_i} (d + j - i) } \nonumber \\
		&\leq \frac{ \prod_{i=1}^{p} \prod_{j=1}^{\nu_i} (d + j - i + k) }{ \prod_{i=1}^{p} \prod_{j=1}^{\nu_i} (d + j - i) } \nonumber
	\end{align}
	where the last inequality holds by the assumption that every part of $\lambda$, and hence every part of $\mu$, has size at most $k$.
	
	At this point, the product $\prod_{i=1}^{p} \prod_{j=1}^{\nu_i}$ says to multiply over the boxes of $\nu$ across rows then down columns. Denoting the heights of columns of $\nu$ by $h_1 , \ldots , h_k$, we can flip the indexing to $\prod_{j=1}^{k} \prod_{i=1}^{h_j}$ to go down columns then across rows. We then have
	\begin{align}\label{eqn:ratio-comparison}
		r^{\lambda}_{\mu,\nu}(d) &\leq \frac{ \prod_{j=1}^{k} \prod_{i=1}^{h_j} (d + j - ( i - k )) }{ \prod_{j=1}^{k} \prod_{i=1}^{h_j} (d + j - i) } \nonumber \\ 
		&\text{Change of variables in numerator: } \hat{\imath} = i - k \nonumber \\
		&= \prod_{j=1}^{k} \frac{ \prod_{i=-k + 1}^{-k + h_j} (d + j - i) }{ \prod_{i=1}^{h_j} (d + j - i) } 
	\end{align}

	We split the product over $j$ into two parts, consisting of those columns $h_j$ for which $h_j\leq k$ (in which there will be no cancellation between numerator and denominator) and those for which $h_j>k$ (in which there will be cancellation). 	
	
	\textbf{Case 1 (no cancellation):} For fixed $j$ assume $h_j \leq k$ i.e. $-k + h_j \leq 0$. Then
	\begin{align*}
		&\hphantom{=}\frac{ \prod_{i=-k + 1}^{-k + h_j} (d + j - i) }{ \prod_{i=1}^{h_j} (d + j - i) } \\
		&\text{Change of variables in denominator: } \hat{\imath} = i - k \\
		&= \prod_{i=-k + 1}^{-k + h_j} \frac{ d + j - i }{ d + j - (i + k) } \\
		&= \prod_{i=-k + 1}^{-k + h_j} \left( 1 + \frac{ k }{ d - i - k + j } \right) \\
	\end{align*}
	
	The upper bound of $i \leq -k + h_j$ implies $d - h_j \leq d - i - k$. But $d - h_j \geq 0$ and $j \geq 1$, so $d - i - k + j \geq 1$. It follows that
	\begin{align*}
		\frac{ \prod_{i=-k + 1}^{-k + h_j} (d + j - i) }{ \prod_{i=1}^{h_j} (d + j - i) } &= \prod_{i=-k + 1}^{-k + h_j} \left( 1 + \frac{ k }{ d - i - k + j } \right) \\
		&\leq \prod_{i=-k + 1}^{-k + h_j} ( 1 + k ) \\
		&= ( 1 + k )^{h_j} \\
		&\leq (1 + n)^{k}.
	\end{align*}
	assuming $n\geq k$. 	
	
	\textbf{Case 2 (cancellation):} For fixed $j$ assume $h_j > k$. Then terms will cancel between the numerator and denominator, yielding
	\begin{align*}
		\frac{ \prod_{i=-k + 1}^{-k + h_j} (d + j - i) }{ \prod_{i=1}^{h_j} (d + j - i) } &= \frac{ \prod_{i=-k + 1}^{0} (d + j - i) }{ \prod_{i=-k + h_j + 1}^{h_j} (d + j - i) } \\
		&\text{Change of variables in denominator: } \hat{\imath} = i - h_j \\
		&= \prod_{i=-k + 1}^{0} \frac{ d + j - i }{ d + j - (i + h_j) } \\
		&= \prod_{i=-k + 1}^{0} \left( 1 + \frac{ h_j }{ d - h_j + j - i } \right) \\
	\end{align*}
		
	\noindent Now $d - h_j \geq 0$ since $ht(\nu) \leq ht(\lambda) \leq d$. Also $j - i \geq 1$ since $j \geq 1$ while $i \leq 0$. Hence $d - h_j + j - i \geq 1$ and
	\begin{align*}
		\frac{ \prod_{i=-k + 1}^{-k + h_j} (d + j - i) }{ \prod_{i=1}^{h_j} (d + j - i) } &= \prod_{i=-k + 1}^{0} \left( 1 + \frac{ h_j }{ d - h_j + j - i } \right) \\
		&\leq \prod_{i=-k + 1}^{0} ( 1 + h_j ) \\
		&= (1+h_j)^{k} \\
		&\leq (1+n)^{k}.
	\end{align*}
	
	Thus for the whole product (\ref{eqn:ratio-comparison}) we obtain, assuming $n\geq k$, 
	\begin{align*}
		r^{\lambda}_{\mu,\nu}(d) &\leq \prod_{j=1}^{k} \frac{ \prod_{i=-k + 1}^{-k + h_j} (d + j - i) }{ \prod_{i=1}^{h_j} (d + j - i) } \\
		&\leq \prod_{j=1}^{k} (1+n)^{k} \\
		&= (1+n)^{k^2}.
	\end{align*}
	
\end{proof}

\begin{proof}[Proof of Theorem~\ref{thm:content-ratio-bound}]
	Combine Propositions \ref{thm:special-form-gives-max-ratio} and \ref{thm:special-form-bound}.
\end{proof}

\section{Further discussion and remarks}\label{sec:remarks}

\subsection{Determinants of more general polynomials} It turns out that Conjecture~\ref{eqn:pencil-limit-conjecture} implies a stronger version of itself, where we can replace the linear pencils $L_{\mathcal X}(\mathcal U)$ with more general (noncommutative) polynomial functions of $\mathcal U$ (still allowing matrix coefficients). One must find some condition on the polynomials to stand in for the spectral radius condition on the pencil; the following notion of {\em stability} turns out to be suitable:  a noncommutative polynomial $p \in \matrixspace{d}(\mathbb{C}\langle x_1 , \ldots , x_g \rangle)$ will be called {\em stable} if $\det(p(\mathcal Z)) \neq 0$ whenever $\mathcal Z$ is a $g$-tuple of matrices with $\| \mathcal Z \|_{row} \leq 1$.

\begin{prop}
	
Let $p$ be a noncommutative polynomial with $p(0)=I_d$.  Then $p$ is stable if and only if there exists $\mathcal{X} \in \matrixspace{k}^g$ with ${\bf rad}(\mathcal{X}) < 1$ so that
	\begin{align}\label{eqn:det-rep}
		\det(p(\mathcal{Z})) &= \det ( I_k \otimes I_d - \sum_{j=1}^g X_j \otimes Z_j )  \\
		&= \det ( L_\mathcal{X}(\mathcal{Z}) ) . \nonumber
	\end{align}
	
\end{prop}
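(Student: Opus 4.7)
The plan is to prove each direction of the equivalence separately.

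For the $(\Leftarrow)$ direction, assume $\mathcal{X}$ satisfies $\textbf{rad}(\mathcal{X})<1$ and the identity $\det p(\mathcal{Z})=\det L_{\mathcal{X}}(\mathcal{Z})$ holds. Given any row contraction $\mathcal{Z}$, the completely positive map $T\mapsto\sum_j Z_j T Z_j^*$ has operator norm at most $\|\mathcal{Z}\|_{row}^2\le 1$, so its spectral radius, which equals $\textbf{rad}(\mathcal{Z})^2$, is at most $1$. Applying Proposition~\ref{prop:CS-for-outer} to the pair $(\mathcal{X},\overline{\mathcal{Z}})$ (noting that entrywise conjugation preserves the outer spectral radius) gives
\[
rad\Bigl(\sum_{j=1}^{g} X_j\otimes Z_j\Bigr)\le \textbf{rad}(\mathcal{X})\cdot \textbf{rad}(\mathcal{Z})<1,
\]
so $1$ is not an eigenvalue of $\sum_j X_j\otimes Z_j$, whence $\det L_{\mathcal{X}}(\mathcal{Z})\ne 0$ and $p$ is stable.

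The harder direction $(\Rightarrow)$ proceeds in two stages. First, one produces \emph{some} linear pencil representation via the standard noncommutative linearization theorem for polynomial matrices (Higman--Cohn, or in modern free-analysis form, the pencil realization theorem; see e.g.\ Kaliuzhnyi-Verbovetskyi--Vinnikov, \emph{Foundations of Free Noncommutative Function Theory}). Since $p(0)=I_d$, this delivers an integer $k\ge 1$ and matrices $A_0,\dots,A_g\in M_k(\mathbb{C})$ with $A_0$ invertible and $\det A_0=1$ such that
\[
\det p(\mathcal{Z})=\det\Bigl(A_0+\sum_{j=1}^{g} A_j\otimes Z_j\Bigr)
\]
identically as polynomial functions of the entries of $\mathcal{Z}$. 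Setting $X_j\vcentcolon=-A_0^{-1}A_j$ then rewrites this as $\det p(\mathcal{Z})=\det(I_k\otimes I_d-\sum_j X_j\otimes Z_j)$.

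The second stage is the spectral estimate $\textbf{rad}(\mathcal{X})<1$. Since $p$ is stable, $\det L_{\mathcal{X}}$ does not vanish on the compact closed row ball, and hence by continuity it is nonzero on a slightly larger open set $\{\mathcal{Z}:\|\mathcal{Z}\|_{row}<1+\delta\}$ for some $\delta>0$. For any fixed row contraction $\mathcal{Z}$ this forces $\det(I-t\sum_j X_j\otimes Z_j)\ne 0$ for every $|t|\le 1+\delta$, hence $rad(\sum_j X_j\otimes Z_j)<1/(1+\delta)$. Now apply this with $\mathcal{Z}=\overline{\mathcal{X}'}/\|\mathcal{X}'\|_{row}$ where $\mathcal{X}'\vcentcolon= S\mathcal{X}S^{-1}$ for an invertible $S$; both stability and $\textbf{rad}$ are similarity-invariant, so
\[
\textbf{rad}(\mathcal{X})^2 \;=\; \|\mathcal{X}'\|_{row}\cdot rad\Bigl(\sum_{j=1}^{g} X_j'\otimes Z_j\Bigr) \;<\; \frac{\|\mathcal{X}'\|_{row}}{1+\delta}.
\]
By the quantitative form of Rota's theorem (which follows from Proposition~\ref{prop:rota-for-outer} by scaling: $\inf_S\|S\mathcal{X}S^{-1}\|_{row}=\textbf{rad}(\mathcal{X})$), taking the infimum over $S$ yields $\textbf{rad}(\mathcal{X})^2\le\textbf{rad}(\mathcal{X})/(1+\delta)$, and hence $\textbf{rad}(\mathcal{X})\le 1/(1+\delta)<1$.

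The main obstacle is the noncommutative linearization theorem: although standard in the free analysis literature, it is not elementary, being proved by induction on total degree via block-matrix Schur-complement identities. A secondary subtlety is obtaining the strict inequality $\textbf{rad}(\mathcal{X})<1$, which requires exploiting both the openness of the stability condition (through compactness of the row ball) and the quantitative form of Rota's theorem in order to close the loop.
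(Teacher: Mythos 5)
Both directions of your proof are correct, but the forward implication takes a genuinely different route from the paper. For $(\Leftarrow)$ you invoke the Cauchy--Schwarz bound for the outer spectral radius (Proposition~\ref{prop:CS-for-outer}) directly, whereas the paper first uses Rota (Proposition~\ref{prop:rota-for-outer}) to pass to a strict column contraction and then applies a Cauchy--Schwarz estimate at the level of operator norms; these are close cousins and both work. The real divergence is in $(\Rightarrow)$. The paper realizes $r=p^{-1}$ by a \emph{minimal} noncommutative rational realization, uses Vol\v{c}i\v{c}'s identification of the domain of $r$ with the invertibility locus of the minimal pencil, appeals to \cite[Theorem~A]{Jury-Martin-Shamovich-2021} to get $\textbf{rad}(\mathcal{X})<1$ from regularity on the closed row ball, and then imports the determinant identity from Helton--Klep--Vol\v{c}i\v{c}. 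You instead obtain the determinant identity up front via Higman-type linearization of $p$ itself, bypass minimality entirely, and derive the spectral-radius bound by a bare-hands argument: compactness of the level-$k$ row ball gives a $\delta$-enlargement on which $\det L_{\mathcal{X}}$ remains nonzero, plugging $\mathcal{Z}=\overline{\mathcal{X}'}/\|\mathcal{X}'\|_{\mathrm{row}}$ yields $\textbf{rad}(\mathcal{X})^2 < \|\mathcal{X}'\|_{\mathrm{row}}/(1+\delta)$ for every similarity $S$, and taking the infimum over $S$ together with the quantitative Rota identity $\inf_S\|S\mathcal{X}S^{-1}\|_{\mathrm{row}}=\textbf{rad}(\mathcal{X})$ closes the loop. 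This trades the nontrivial characterization from Jury--Martin--Shamovich for a more elementary but more delicate scaling argument (and for the linearization theorem, which is comparable in weight to the realization theory the paper cites). One small point worth spelling out if you write this up: the $\delta$ produced by compactness depends only on $\mathcal{X}$ and the level $k$, and since $\det L_{S\mathcal{X}S^{-1}}=\det L_{\mathcal{X}}$, the same $\delta$ serves for every similarity $S$; this uniformity is exactly what makes the infimum step legitimate.
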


\begin{proof}
  Suppose $p$ is such that (\ref{eqn:det-rep}) holds. Since the tuple $\mathcal X$ has ${\bf rad}(\mathcal X)<1$, it is jointly similar to a strict column contraction, i.e. there exists an invertible $S$ and tuple $\mathcal Y$ such that $S^{-1}X_jS =Y_j$ for each $j$, and for which $\| Y_j^*Y_j\|<1$. Clearly (\ref{eqn:det-rep}) still holds with $\mathcal Y$ in place of $\mathcal X$. Since $\mathcal Z$ is a row contraction and $\mathcal Y$ is a column contraction, the sum $\sum Y_j\otimes Z_j$ is a strict contraction, and hence has spectral radius strictly less than $1$. (i.e. ${\|\sum Y_j\otimes Z_j\|\leq \|\sum Y_j^*Y_j\|^{1/2} \|\sum Z_jZ_j^*\|^{1/2} <1}$.)  Therefore $I \otimes I - \sum Y_j \otimes Z_j$ (and hence ${I \otimes I - \sum X_j \otimes Z_j}$) is invertible for all $\mathcal Z$ in the closed row ball, and so by (\ref{eqn:det-rep}) $p$ is stable.

  Conversely, suppose $p$ is stable and $p(0)=1$. Consider the nc rational function $r(\mathcal Z)\vcentcolon=p(\mathcal Z)^{-1}$, this function is regular on the closed row ball. Any nc rational function has a {\em realization} $r(\mathcal Z)=w^*(I \otimes I-\sum X_j\otimes Z_j)^{-1}y$, which we choose to be {\em minimal}, i.e. the $\mathcal X$ tuple has smallest size $k$ among all possible realizations. It follows then from \cite{Volcic-2017} that the domain of $r$ is equal to the set of $\mathcal Z$ for which ${\det(I \otimes I -\sum X_j\otimes Z_j)\neq 0}$; which by construction is exactly the set where $\det(p(\mathcal Z))\neq 0$. By \cite[Theorem A]{Jury-Martin-Shamovich-2021}, since $r$ is regular in the closed row ball, any minimal tuple $\mathcal X$ must have ${\bf rad}(\mathcal X)<1$. (The result in \cite{{Jury-Martin-Shamovich-2021}} is stated and proved only for rational functions with scalar coefficients, but the proof goes through in the matrix coefficient case.) On the other hand, by \cite[Lemma 5.3]{Helton-Klep-Volcic-2018}, for this pencil $L_{\mathcal{X}}$ which appears in the realization of $p(\mathcal Z)^{-1}$, we have $\det p(\mathcal{Z}) = \det L_{\mathcal X}(\mathcal Z)$ for all $\mathcal Z$. This completes the proof.

\end{proof}

Thus, if Conjecture~\ref{eqn:pencil-limit-conjecture} holds, then the limit
\begin{equation}
\lim_{d\to \infty} \int_{\mathcal U(d)^g} \det p(\mathcal U) \overline{\det q(\mathcal U)}\, d\mathcal U
\end{equation}
will exist for all stable nc polynomials $p,q$. If $L_{\mathcal X}$, $L_{\mathcal Y}$ are pencils linearizing $p,q$ as in (\ref{eqn:det-rep}), then the limiting value will be
\[
\det(I \otimes I-\sum X_j\otimes \overline{Y_j})^{-1} = \det p(\overline{Y})^{-1} = \overline{(\det q(X)})^{-1}  
\]
We note that this accords with the one-variable setting: if $p$ is a polynomial normalized to have $p(0)=1$, we may factor $p$ as $p(z):=\prod_{j=1}^k (1+x_jz)$. Letting $X$ be any $k\times k$ matrix with eigenvalues $x_1, \dots, x_k$, we have $\det p(U)=\det (I_k\otimes I_d+X\otimes U)$, so that by (\ref{eqn:classical-limit}) the large $d$ limit exists only when all $|x_j|<1$, which is precisely the condition that $p$ has no zeroes in the unit disk $|z|\leq 1$.

Even in the one-variable case the stability condition cannot be weakened further, as one can see putting $x=y=1$ in (\ref{eqn:HS-ident})---in this case $p$ will have a single zero on the unit circle, and the integral blows up as $d\to \infty$.

\subsection{More on asymptotics}
Our results will imply some asymptotics for the expected value of $|\det (x_0U_0+x_1U1+\cdots +x_gU_g)|^{2k}$ when the associated pencil is {\em conic}. We say the pencil $\sum_{j=0}^g x_j U_j$ is {\em conic} if $|x_0|^2 -|x_1|^2-\cdots -|x_g|^2>0$.

Given a conic pencil, put $\tilde{x_j} = \frac{x_j}{x_0}$ for $j=1, \dots, g$. Then $\sum|\tilde{x_j}|^2<1$. Now if $U_0, U_1, \dots, U_g$ are independent Haar unitaries, it is easy to check that $U_0^*U_1, \dots, U_0^*U_g$ are independent Haar unitaries, and that at size $d\times d$ 
\begin{equation*}
	\int_{U(d)^{g+1}} |\det(x_0U_0+\cdots + x_gU_g)|^{2k} \, d\mathcal{U} = |x_0|^{2dk} \int_{U(d)^{g}} |\det(I+\tilde{x_1}U_1+\cdots +\tilde{x_g}U_g)|^{2k} \, d\mathcal{U}.
\end{equation*}
The limit of the last integral exists by Corollary~\ref{cor:scalar-limit}, and has the value $(1-\|\tilde{x}\|_2^2)^{-k^2}$. Writing $t=\exp(\log t)$ for $t>0$ we obtain the following asymptotics for conic unitary pencils:

\begin{cor}\label{cor:conic-asymptotics}For conic pencils we have the asymptotic
\begin{equation}\label{eqn:szego-asymptotic}
\int_{\mathcal U(d)^{g+1}} |\det(x_0U_0+\cdots + x_gU_g)|^{2k}\, d\mathcal{U} = \exp\left( d\cdot k\cdot c_0  + k^2 c_1 +o(1)\right)
\end{equation}
where
\[
c_0 = \log|x_0|^2 \quad\text{and}\quad c_1 = \log\left(\frac{|x_0|^2}{|x_0|^2-\sum_{j=1}^g|x_j|^2} \right)
\]
\end{cor}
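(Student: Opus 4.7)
The plan is to complete the calculation sketched in the paragraph immediately preceding the statement, filling in the two ingredients: a Haar-invariance reduction that extracts the $|x_0|^{2dk}$ prefactor, and the identification of the $d\to\infty$ limit of the remaining integral as $(1-\|\tilde x\|_2^2)^{-k^2}$.

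First I would make the change of variables explicit. Writing $\tilde x_j = x_j/x_0$ and factoring
\[
x_0 U_0 + x_1 U_1 + \cdots + x_g U_g \;=\; x_0\, U_0\bigl(I + \tilde x_1 U_0^{-1}U_1 + \cdots + \tilde x_g U_0^{-1}U_g\bigr),
\]
taking $|\det(\cdot)|^{2k}$ produces $|x_0|^{2dk}|\det U_0|^{2k} = |x_0|^{2dk}$ out front (since $U_0$ is unitary) times $|\det(I + \sum_j \tilde x_j V_j)|^{2k}$ where $V_j := U_0^{-1}U_j$. By left-invariance of Haar measure on $U(d)$, for each fixed $U_0$ the tuple $(V_1,\ldots,V_g)$ is again $g$ independent Haar unitaries, and the inner integral becomes independent of $U_0$. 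Hence
\[
\int_{U(d)^{g+1}}\bigl|\det(x_0U_0+\cdots+x_gU_g)\bigr|^{2k}\,d\mathcal U \;=\; |x_0|^{2dk}\int_{U(d)^{g}}\bigl|\det(I+\tilde x_1 V_1+\cdots+\tilde x_g V_g)\bigr|^{2k}\,dV_1\cdots dV_g,
\]
which is exactly the identity asserted in the preamble.

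Next, since the conicity hypothesis is precisely $\|\tilde x\|_2<1$, I would identify the large-$d$ limit of the remaining integral using Corollary~\ref{cor:diagonal-cor}, taking $k'=k$ and setting all $k$ of the vectors $x_l$ and all $k$ of the vectors $y_{l'}$ equal to the common vector $\tilde x \in \mathbb C^g$. The product on the left-hand side of \eqref{eqn:diagonal-cor} then collapses to $|\det(I+\sum_j \tilde x_j V_j)|^{2k}$, while the right-hand side becomes $\prod_{l,l'=1}^{k}(1-\|\tilde x\|_2^2)^{-1} = (1-\|\tilde x\|_2^2)^{-k^2}$. (This is the step where the exponent $k^2$ appears; note that the preamble's invocation of Corollary~\ref{cor:scalar-limit} strictly speaking only handles $k=1$, and it is the diagonal matrix-coefficient corollary that is needed for general $k$.)

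Combining gives
\[
\int_{U(d)^{g+1}}|\det(x_0U_0+\cdots+x_gU_g)|^{2k}\,d\mathcal U \;=\; |x_0|^{2dk}\bigl[(1-\|\tilde x\|_2^2)^{-k^2} + o(1)\bigr].
\]
Taking logarithms and using $2dk\log|x_0| = dk\log|x_0|^2 = dk\,c_0$ together with
\[
-\log(1-\|\tilde x\|_2^2) \;=\; \log\frac{|x_0|^2}{|x_0|^2-\sum_{j=1}^g|x_j|^2} \;=\; c_1,
\]
yields the claimed asymptotic. There is no real obstacle: both the Haar reduction and the limit are either routine or already established. The only point requiring mild care is ensuring that the matrix-diagonal form of the main limit (Corollary~\ref{cor:diagonal-cor}) is what delivers the exponent $k^2$; once that is done, the rest is bookkeeping with logarithms.
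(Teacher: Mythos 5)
Your proof is correct and follows the same two-step route as the paper's sketch: the Haar-invariance change of variables $V_j=U_0^*U_j$ to extract the $|x_0|^{2dk}$ prefactor, then identification of the $d\to\infty$ limit of the remaining $2k$-th moment. Your observation that the needed limit is furnished by Corollary~\ref{cor:diagonal-cor} (with all vectors set equal to $\tilde x$) rather than by Corollary~\ref{cor:scalar-limit} --- which only covers $k=1$ --- is a correct and useful clarification of the paper's citation; the rest is the same logarithmic bookkeeping.
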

When $g=1$, this result reduces to a simple instance of the Strong Szeg\H{o} Limit Theorem for Toeplitz determinants, so we will refer to these as {\em Szeg\H{o} asymptotics}. (We refer to \cite{Bump-Diaconis-2002} for an explanation of the connection to Toeplitz determinants.) As an example we can obtain Szeg\H{o} asymptotics for characteristic polynomials of linear combinations of unitaries. Let $x_1, \dots, x_g$ be fixed complex numbers and put $U(x):= \sum_{j=1}^g x_j U_j$. Then for all complex numbers $z$ with $|z|^2 > \sum_{j=1}^g |x_j|^2$, we will have
\[
\int_{\mathcal U(d)^g} |\det(zI_d -U(x))|^{2k} \, d\mathcal U \sim \left(\frac{|z|^2}{|z|^2-\sum|x_j|^2}\right)^{k^2} |z|^{2dk} \quad \text{as } d\to \infty.
\]
Thus for example when all the $x_j=1$ we get asymptotics for the moments of the characteristic polynomial for $U_1+U_2+\cdots +U_g$ when $|z|>\sqrt{g}$, namely
\[
\int_{\mathcal U(d)^g} |\det(zI_d -(U_1+\cdots +U_g)|^{2k} \, d\mathcal U \sim \left(\frac{|z|^2}{|z|^2-g}\right)^{k^2} |z|^{2dk} \quad \text{as } d\to \infty.
\]
The restriction $|z|>\sqrt{g}$ may seem an artifact of our theorem, but it is natural in an intrinsic sense: it is known \cite{Basak-Dembo-2013} that the empirical spectral distribution of a sum of $g$ independent Haar unitaries converges, as the size $d\to \infty$, to the so called {\em Brown measure} of the sum $u_1+\cdots +u_g$ where the $u_j$ are {\em freely independent Haar unitaries} in the sense of free probability. This Brown measure is known explicitly, and has support equal to the closed disk of radius $\sqrt{g}$ (when $g\geq 2$) \cite[Example 5.5]{Haagerup-Larsen-2000}. (When $g=1$ the Brown measure of a single Haar unitary is just normalized arc length measure on the circle. We refer \cite{Haagerup-Larsen-2000} and its references for background on the Brown measure.) Thus our corollary says that the even moments of the characteristic polynomials obey Szeg\H{o} asymptotics when $z$ lies {\em outside the support of the Brown measure}.

For another example, consider $x_1U_1+x_2U_2$ with $0<x_1<x_2$. As before if $|z|^2>x_1^2+x_2^2$ we have the Szeg\H{o} asymptotic as in (\ref{eqn:szego-asymptotic}). But also, if instead $|z|^2< x_2^2 -x_1^2$ we can rearrange to $x_1^2+|z|^2<x_2^2$, which is again conic but now with respect to $x_2$ rather than $|z|$. So our result implies that in this regime the asymptotic will be 
\[
\int_{\mathcal U(d)^g} |\det(zI_d -(x_1U_1+x_2U_2))|^{2k} \, d\mathcal U \sim \left(\frac{x_2^2}{x_2^2-x_1^2-|z|^2}\right)^{k^2} x_2^{2dk} \quad \text{as } d\to \infty.
\]
Thus, we get asymptotics for $z$ outside of the annulus $\sqrt{x_2^2-x_1^2}\leq |z|\leq \sqrt{x_1^2+x_2^2}$. This annulus is precisely the support of the Brown measure for the sum of freely independent Haar unitaries $x_1u_1+x_2u_2$  \cite[Example 5.5]{Haagerup-Larsen-2000}.  One expects that this Brown measure should be the limit of the empirical spectral distributions of $x_1U_1+x_2U_2$, though this result does not seem to be known. 

The case of $z$ inside the support of the Brown measure will still be governed by Theorem~\ref{thm:scalar-closed-form} of course, but the asymptotics seem rather more difficult to analyze in this regime, especially for $k>1$. In the single variable setting, the asymptotics for $z$ inside the support of the Brown measure (that is, on the unit circle), are quite different, and are related to the the so-called {\em Fisher-Hartwig asymptotics} of Toeplitz determinants.

\bibliographystyle{plain}
\bibliography{citations.bib}

\end{document}